\newtheorem{theorem}{Theorem}[section]
\newtheorem{lemma}[theorem]{Lemma}
\newtheorem{proposition}[theorem]{Proposition}
\newtheorem{corollary}[theorem]{Corollary}
\theoremstyle{definition}
\newtheorem{definition}[theorem]{Definition}
\newtheorem{example}[theorem]{Example}
\newtheorem{remark}[theorem]{Remark}
\newtheorem{num}[theorem]{}
\numberwithin{equation}{section}
\newcommand\rank{\operatorname{rank}}
\newcommand\tor{\operatorname{tor}}
\newcommand\ab{\operatorname{ab}}
\newcommand\abpt{\ab(PT)}
\newcommand\aperf{\cA_6^{\rm perf}}
\newcommand\avor{\cA_6^{\rm vor}}
\newcommand\asat{\cA_6^{\rm sat}}
\newcommand\agperf{\cA_g^{\rm perf}}
\newcommand\agvor{\cA_g^{\rm vor}}
\newcommand\agsat{\cA_g^{\rm sat}}
\newcommand{\ratmap}{\dashrightarrow}
\newcommand{\acts}{\curvearrowright}
\newcommand{\onto}{\twoheadrightarrow}
\newcommand{\into}{\hookrightarrow}
\newcommand{\lra}{\longrightarrow}
\newcommand\vor{^{\rm vor}}
\newcommand\vapath{\operatorname{path}}
\newcommand\Hur{\operatorname{Hur}}
\newcommand{\wt}{\widetilde}
\newcommand{\Sym}{\operatorname{Sym}}
\newcommand{\R}{\operatorname{R}}
\newcommand{\codim}{\operatorname{codim}}
\newcommand{\im}{\operatorname{Im}}
\newcommand{\inv}{^{-1}}
\newcommand{\ocM}{\overline{\cM}}
\DeclareMathOperator{\Aut}{Aut}
\DeclareMathOperator{\Pic}{Pic}
\DeclareMathOperator{\PGL}{PGL}
\DeclareMathOperator{\rk}{rk}
\newcommand\wD{\widetilde D}
\newcommand{\bC}{\mathbb C}
\newcommand{\bH}{\mathbb H}
\newcommand{\bP}{\mathbb P}
\newcommand{\bQ}{\mathbb Q}
\newcommand{\bR}{\mathbb R}
\newcommand{\bZ}{\mathbb Z}
\newcommand{\cA}{\mathcal A}
\newcommand{\cC}{\mathcal C}
\newcommand{\cD}{\mathcal D}
\newcommand{\cF}{\mathcal F}
\newcommand{\cL}{\mathcal L}
\newcommand{\cM}{\mathcal M}
\newcommand{\cO}{\mathcal O}
\newcommand{\cP}{P}
\newcommand{\cR}{\mathcal R}
\newcommand{\cT}{\mathcal T}
\newcommand{\cV}{\mathcal V}
\newcommand\wcA{\widetilde\cA}
\newcommand{\of}{\overline{f}}
\newcommand{\ophi}{\overline{\varphi}}
\newcommand{\oq}{\overline{q}}
\newcommand{\tC}{\widetilde{C}}
\newcommand{\tE}{\widetilde{E}}
\newcommand{\ra}{\rightarrow}
\newcommand{\inj}{\hookrightarrow}
\newcommand{\Alb}{\operatorname{Alb}}
\newcommand{\hh}{{\overline{\mathcal H}}}
\newcommand{\hhu}{\overline{\Hur}}
\newcommand{\mm}{{\overline{\mathcal M}}}
\newcommand{\rr}{\overline{\mathcal R}}
\newcommand\hur{\operatorname{Hur}}
\newcommand\ohur{\overline{\hur}}
\begin{document}
\title[The uniformization of $\cA_6$]
{The uniformization of the moduli space\\ of principally polarized abelian 6-folds}

\author[V. Alexeev]{Valery Alexeev}
\address{Valery Alexeev: Department of Mathematics, University of Georgia
\hfill \newline\texttt{}
 \indent Athens GA 30602, USA}
\email{{\tt valery@math.uga.edu}}

\author[R. Donagi]{Ron Donagi}
\address{Ron Donagi: Department of Mathematics, University of Pennsylvania
\hfill \newline\texttt{}
\indent 209 South 33rd Street,
Philadelphia, PA 19104-6395, USA} \email{{\tt donagi@math.upenn.edu}}

\author[G. Farkas]{Gavril Farkas}
\address{Gavril Farkas: Institut f\"ur Mathematik, Humboldt-Universit\"at zu Berlin \hfill \newline\texttt{}
\indent Unter den Linden 6,
10099 Berlin, Germany}
\email{{\tt farkas@math.hu-berlin.de}}

\author[E. Izadi]{Elham Izadi}
\address{Elham Izadi: Department of Mathematics, University of California, San Diego \hfill
\indent \newline\texttt{}
\indent La Jolla, CA 92093-0112, USA}
 \email{{\tt eizadi@math.ucsd.edu}}

\author[A. Ortega]{Angela Ortega}
\address{Angela Ortega: Institut f\"ur Mathematik, Humboldt-Universit\"at zu Berlin \hfill \newline\texttt{}
\indent Unter den Linden 6, 10099 Berlin, Germany}
\email{{\tt ortega@math.hu-berlin.de}}

\maketitle
\tableofcontents

\section*{Introduction}

It is a classical idea that general principally polarized abelian varieties
(ppavs) and their moduli spaces are hard to understand, and that one
can use algebraic curves to study some special classes, such as
Jacobians and Prym varieties.  This works particularly well in small dimension, where in this way one reduces the study of all abelian varieties to the rich and concrete theory of
curves. For $g\le3$, a general ppav is a
Jacobian, and the Torelli map $\cM_g\to \cA_g$ between the moduli spaces
of curves and ppavs respectively, is birational. For $g\le5$, a general ppav is a
Prym variety by a classical result of Wirtinger
\cite{wirtinger1895untersuchungen-uber}.  In particular, for $g=5$, this gives a
uniformization of $\cA_5$ by curves, as follows. We denote by $\cR_g$
the Prym moduli space of pairs $[C,\eta]$ consisting of
a smooth curve $C$ of genus $g$ and a non-trivial $2$-torsion point $\eta \in \Pic^0(C)$.  By Donagi-Smith
\cite{donagi1981the-structure-of-the-prym}, the Prym map
$P\colon \cR_6\to \cA_5$ is generically of degree $27$, with fibers
corresponding to the configuration of the $27$ lines on a cubic surface.

The uniformization  of $\cA_g$ for $g\leq 5$ via the Prym map $P:\cR_{g+1}\rightarrow \cA_{g}$ has been used for many problems concerning ppav of small dimension.  Important applications of the Prym uniformization include the proof of Clemens and Griffiths \cite{ClemensGriffiths} respectively Mumford \cite{M74} of the irrationality of smooth cubic threefolds, which rely on the distinctions between Pryms and Jacobians, the proofs of the general Hodge conjecture for the theta divisors of general ppav, see \cite{IvS} and \cite{ITW}, or the detailed study of the cohomology and stratification of $\mathcal{A}_5$ in terms of singularities of theta divisors, see for instance \cite{CF05} or \cite{FGSMV}. The Prym map $P:\cR_6\to \cA_5$ has been also used to determine the birational type of $\mathcal{A}_5$. It has been proven in \cite{Don} that $\mathcal{R}_6$ (and hence $\mathcal{A}_5$) is unirational. Other proofs followed in  \cite{MM83} and \cite{V84}.

\vskip 4pt

The purpose of this paper is to prove a similar uniformization result
for the moduli space $\cA_6$ of principally polarized abelian varieties of dimension $6$. The idea of this construction is due to Kanev
\cite{kanev1989spectral-curves} and it uses the geometry of the $27$ lines on a cubic surface. Suppose $\pi \colon C\to \bP^1$ is a cover of degree $27$  whose monodromy group equals the Weyl group
$W(E_6)\subset S_{27}$ of the $E_6$ lattice. In particular, each smooth fibre of $\pi$ can be identified with the set of $27$ lines on an abstract cubic surface and, by monodromy, this identification carries over from one fibre to another. Assume furthermore that $\pi$ is branched over $24$ points and that over each of them the local monodromy of $\pi$ is given by a reflection in $W(E_6)$. A prominent example of such a covering $\pi:C\rightarrow \bP^1$ is given by the \emph{curve of lines} in the cubic surfaces of a Lefschetz pencil of hyperplane sections of a cubic threefold $X\subset \bP^4$, see \cite{Kan1}, as well as Section \ref{sec:Kanev-construction} of this paper. Since $\mbox{deg}(X^{\vee})=24$, such a pencil contains precisely $24$ singular cubic surfaces, each having exactly one node.

By the Hurwitz formula, we find that each such $E_6$-cover $C$ has genus $46$. Furthermore, $C$ is endowed with a symmetric correspondence $\widetilde{D}$ of degree $10$, compatible with the covering $\pi$ and defined using the intersection form on a cubic surface. Precisely, a  pair $(x,y)\in C\times C$ with $x\neq y$ and $\pi(x)=\pi(y)$ belongs to $\widetilde{D}$ if and only if the lines corresponding to the points $x$ and $y$ are incident. The correspondence $\widetilde{D}$ is disjoint from the diagonal of $C\times C$. The associated endomorphism $D:JC\rightarrow JC$ of the Jacobian  satisfies the
quadratic relation $(D-1)(D+5)=0$. Using this, Kanev \cite{Kan87} showed that the associated \emph{Prym-Tyurin-Kanev} or \emph{PTK
variety} $$PT(C,D):=\mbox{Im}(D-1)\subset JC$$ of this pair is a $6$-dimensional ppav of exponent $6$. Thus, if $\Theta_C$ denotes the Riemann theta divisor on $JC$, then $\Theta_{C | P(C,D)} \equiv6\cdot \Xi$, where $\Xi$ is a principal polarization on $P(C,D)$.

\vskip 3pt

Since the map $\pi$ has $24$ branch points
corresponding to choosing $24$ roots in $E_6$ specifying the local monodromy at each branch point, the Hurwitz scheme
$\Hur$ parameterizing degree $27$ covers $\pi\colon C\to \bP^1$ with $W(E_6)$ monodromy as above is $21$-dimensional (and
also irreducible, see \cite{kanev2006hurwitz-spaces}). The geometric construction described above induces the \emph{Prym-Tyurin}
map $$PT\colon\Hur\to \cA_6$$
between two moduli spaces  of the same dimension. The following theorem answers a conjecture raised by Kanev, see also \cite[Remark 5.5]{lange2008a-galois-theoretic-approach}:

\begin{theorem}\label{thmmain1} The Prym-Tyurin map $PT:\Hur\rightarrow \cA_6$ is generically finite. It follows that the general principally polarized abelian variety of dimension $6$ is a Prym-Tyurin-Kanev (PTK) variety of exponent $6$ corresponding to a $W(E_6)$-cover $C\rightarrow \bP^1$.
\end{theorem}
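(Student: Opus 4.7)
The plan is a dimension-plus-differential argument. Both $\Hur$ (irreducibility by Kanev, as cited above) and $\cA_6$ are irreducible of dimension $21$, so it suffices to exhibit a single cover $[\pi:C\to\bP^1]\in\Hur$ at which the differential $dPT$ is injective; generic finiteness of $PT$ then follows by upper semicontinuity of fiber dimension, and the second assertion of the theorem is an immediate consequence.

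First I would set up the codifferential. At a ppav $(P,\Xi)$ the cotangent space $T^*_P\cA_6$ is canonically $\Sym^2 H^0(P,\Omega^1_P)$, of dimension $21$. The tangent space $T_{[\pi]}\Hur$, by the standard deformation theory of branched covers with prescribed local monodromy, is isomorphic to the tangent space at the branch divisor of $\cM_{0,24}$, hence also $21$-dimensional.

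Next I would factor the codifferential through bicanonical sections of $C$. The relation $(D-1)(D+5)=0$ implies that the pullback $D^*$ acts on $H^0(C,\omega_C)$ with eigenvalues $1$ and $-5$; denote by $V\subset H^0(C,\omega_C)$ the $6$-dimensional $(-5)$-eigenspace. The inclusion $P=\im(D-1)\hookrightarrow JC$ identifies $V$ with $H^0(P,\Omega^1_P)$. The codifferential $dPT^*$ should then take the form
\[
\Sym^2 V \xrightarrow{\;m\;} H^0(C,\omega_C^{\otimes 2}) \xrightarrow{\;\rho\;} T^*_{[\pi]}\Hur,
\]
where $m$ is the multiplication of holomorphic $1$-forms and $\rho$ is the transpose of the Kodaira--Spencer map for the cover $\pi$, extracting an appropriate ``value at the $24$ branch points''. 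This is the natural Prym--Tyurin analogue of the classical codifferential to the Torelli map.

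The main obstacle is to verify that $\rho\circ m$ is injective at a single, well-chosen cover. My proposal is to specialize to the Kanev cover arising from a Lefschetz pencil of hyperplane sections on a generic cubic threefold $X\subset\bP^4$ (cf.\ Section \ref{sec:Kanev-construction}), where the $6$-dimensional space $V$ acquires an explicit Hodge-theoretic description and the $W(E_6)$-symmetry reduces $\rho\circ m$ to a concrete equivariant linear map. Since source and target have equal dimension there is no slack, and classical infinitesimal Torelli applied to $C$ does not suffice because it works on the full $46$-dimensional space $H^0(C,\omega_C)$ rather than the $6$-dimensional slice $V$. The plausible technical workaround is a further degeneration to the boundary of $\Hur$, where $\pi$ becomes an admissible $E_6$-cover of a chain of rational curves; then $V$ and $\rho\circ m$ decompose into pieces indexed by the $E_6$-root data at the nodes, and the required injectivity can hopefully be checked piece by piece using the combinatorial $W(E_6)$-invariant theory already controlling the global $\Hur$-structure.
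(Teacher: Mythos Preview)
Your codifferential setup is correct and is in fact developed in Section~9 of the paper: the codifferential of $PT$ factors as $\Sym^2 H^0(K_C)^{(-5)} \xrightarrow{m} H^0(K_C^{\otimes 2}) \xrightarrow{\mathrm{Tr}} H^0(K_{\bP^1}^{\otimes 2}(B))$, and Theorem~\ref{rampt} shows that (for Petri-general covers) non-injectivity of this composite is equivalent to non-injectivity of $m$ alone. So in principle one could prove dominance by exhibiting a single cover for which $m$ is injective.

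However, your proposed test point --- a Lefschetz pencil on a cubic threefold --- cannot work. The paper shows (Lemma~\ref{lemP(C)}, Corollary~\ref{pencils2}) that the locus $\cT\subset\Hur$ of such pencils is $16$-dimensional and $PT(\cT)=\mathcal{J}_5\times\cA_1$ is $11$-dimensional. Hence $dPT$ has at least a $5$-dimensional kernel at every point of $\cT$; the Lefschetz-pencil covers lie in the ramification locus. The remark following Corollary~\ref{pencils2} observes further that even a first-order blow-up along $\cT$ does not suffice to detect dominance.

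Your fallback suggestion, degenerating to the boundary, is the right instinct but your description (``check piece by piece'') misses the actual mechanism. The paper does \emph{not} attempt to evaluate the codifferential at a boundary point. Instead it degenerates to a maximally degenerate admissible cover over a trivalent tree of $\bP^1$'s (Figure~\ref{fig:cover}), where the Prym--Tyurin variety becomes the torus with character lattice $H_1(\Gamma,\bZ)^{(-5)}\cong\bZ^6$. Near such a point the map $PT$ is controlled by $21$ monodromy matrices $M_i\in\Sym^2(\bZ^6)$, one for each internal edge of the tree, computed via an explicit formula (Theorem~\ref{thm:pt-monodromy}, Lemma~\ref{lem:explicit-fla}). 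Dominance then reduces to the linear independence of these $21$ matrices, which is verified for a specific choice of $12$ roots by a direct computer computation (the resulting $21\times21$ determinant is $2^{12}$). This is the analogue of Lemma~\ref{lem:torelli-full-dim} for the ordinary Torelli map: one replaces Hodge-theoretic injectivity at a smooth point by combinatorial full-rank at a toric degeneration.
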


This result, which is the main achievement of this paper, gives a structure theorem for general abelian varieties of dimension $6$ and offers a uniformization for $\cA_6$ by curves with additional discrete data. Just like the classical Prym map $P:\mathcal{R}_6\rightarrow \cA_5$, it is expected that the Prym-Tyurin map $PT$ will open the way towards a systematic study of abelian $6$-folds and their moduli space. What is essential is less the fact that a general $6$-dimensional ppav is a PTK variety, but rather the rich geometric structure that Theorem \ref{thmmain1} provides, which is then of use for other applications  presented in Sections \ref{sec-Hurglobal}-\ref{sec:Petri}. An immediate consequence of Theorem \ref{thmmain1} is the following:

\begin{corollary}\label{minclass}
For every ppav $[A, \Theta]\in \cA_6$, the class $6\cdot \theta^5/5!\in H^{10}(A, \mathbb Z)$ is represented by an effective curve.
\end{corollary}

It is expected that for a general $[A, \Theta]\in \cA_6$, the minimal cohomology class $\theta^5/5!$ is not even algebraic. Coupled with Corollary \ref{minclass}, this would mean that $[A, \Theta]$ should not admit any Prym-Tyurin realization of exponent relatively prime to $6$.


\vskip 4pt

The main idea of the proof of Theorem \ref{thmmain1} is to study degenerations of
PTK varieties as the branch locus $(\bP^1, p_1+\cdots+p_{24})$ of the cover $\pi\colon C\to\bP^1$ approaches a maximally degenerate
point of $\overline{\cM}_{0,24}$. The map $PT$ becomes toroidal and its
essential properties can be read off a map of fans. Then, to show that
$PT$ is dominant, it is sufficient to show that the rays in the fan
describing the image span a $21$-dimensional vector space, i.e. that a
certain $(21\times 21)$-matrix has full rank. This can be done by an
explicit computation, once the general theory is in place. The theory of degenerations of Jacobians  \cite{alexeev2004compactified-jacobians} and  Prym
varieties in \cite{alexeev2002degenerations-of-prym} is known. One of the main goals of the present paper is an extension of the theory to
the case of PTK varieties. For our purposes we do not require
the answer to the more delicate problem of understanding the indeterminacy locus of the
period map.

\vskip 3pt

The remainder of this work focuses on several birational problems that
are related to the geometry of $\cA_6$ by Theorem \ref{thmmain1}, and on several quite
non-obvious parallels between the Prym map and the Prym-Tyurin map $PT$. Consider the space $\mathcal{H}$ classifying
$E_6$-covers $[\pi:C\rightarrow \bP^1, p_1, \ldots, p_{24}]$ together with a labeling of the set of their $24$ branch points.
In view of the structure Theorem \ref{thmmain1}, it is of compelling interest to
understand the birational geometry of this space. It admits a compactification $\hh$ which is the moduli space of \emph{twisted stable maps} from curves of genus zero into the classifying stack $\mathcal B W(E_6)$, that is, the normalization of the stack of admissible covers with monodromy group $W(E_6)$ having as source a nodal curve of genus $46$ and as target a stable $24$-pointed curve of genus $0$  (see Section \ref{sec-Hurglobal} for details). One has a finite morphism $$\mathfrak{b}:\hh\rightarrow \overline{\cM}_{0,24}.$$
In Section \ref{sec:pos}, we show that the canonical class of $\hh$ is big (Theorem \ref{kodhh}). From the point of view of $\mathcal{A}_6$, it is more interesting to study the global geometry of the  quotient space $$\hhu:=\hh/S_{24},$$ compactifying the Hurwitz space $\Hur$ of $E_6$-covers (without a labeling of the branch points). The Prym-Tyurin map $PT$ extends to a regular morphism
$PT^{\mathrm{Sat}}:\hhu\rightarrow \cA_6^{\mathrm{Sat}}$ to the Satake compactification $\cA_6^{\mathrm{Sat}}$ of $\cA_6$. Denoting by $\overline{\cA}_g:=\cA_g^{\mathrm{perf}}$ the perfect cone (first Voronoi) compactification of $\cA_g$, we establish the following result on the birational geometry of $\hhu$, which we regard as a compact master space for ppav of dimension $6$:

\begin{theorem}\label{thmmain2}
There exists a boundary divisor $E$ of $\hhu$ that is contracted by the Prym-Tyurin map $PT:\hhu\dashrightarrow \overline{\mathcal{A}}_6$, such that $K_{\hhu}+E$ is a big divisor class.
\end{theorem}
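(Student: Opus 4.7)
The plan is to exploit the generic finiteness established in Theorem~\ref{thmmain1}, together with explicit formulas for the canonical class of an admissible cover compactification, to write $K_{\hhu}+E$ as a sum of a pulled-back big class and effective boundary classes.

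First, I would identify the divisor $E$. The map $PT^{\mathrm{Sat}}\colon\hhu\to\overline{\cA}_6^{\mathrm{Sat}}$ sends a boundary point of $\hhu$ to the Satake boundary whenever the Prym--Tyurin--Kanev variety of the corresponding admissible $E_6$-cover has positive torus rank, so in codimension one $PT$ contracts precisely those boundary divisors whose generic cover produces a degeneration of the PT variety to a semi-abelian variety. I would take $E$ to be the sum of all such boundary divisors of $\hhu$, which one can enumerate by running through the $S_{24}$-orbits of boundary divisors $\Delta_{0:T}\subset\ove{\cM}_{0,24}$ and inspecting, for each $T$, whether the associated admissible $E_6$-cover (determined by the local monodromy data of the $27$ sheets above each component) has a generic fibre of the PT-quotient with positive torus rank. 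The analysis here is parallel to the computation of contracted boundary divisors for the classical Prym map $\cR_6\to\cA_5$, but uses the root-theoretic structure of $W(E_6)$ and the quadratic endomorphism $(D-1)(D+5)=0$.

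Next, I would set up the canonical-class formula. Since $\mathfrak{b}\colon\hh\to\ove{\cM}_{0,24}$ is a finite branched cover ramified only along the boundary, one has
\[
K_{\hh}=\mathfrak{b}^{*}K_{\ove{\cM}_{0,24}}+\mathrm{Ram}(\mathfrak{b}),
\]
and $K_{\hhu}$ descends from the $S_{24}$-invariant class. Writing $\lambda\in\mathrm{Pic}(\hhu)_{\bQ}$ for the Hodge class of the universal Prym--Tyurin--Kanev variety, i.e.\ $\lambda=(PT)^{*}\lambda_1^{\cA_6}$, the core technical step is to derive a relation of the shape
\[
K_{\hhu}\;\equiv\;\alpha\,\lambda\;+\;\sum_{B\not\subset E} c_{B}\,B\;-\;\sum_{B\subset E} d_{B}\,B,
\]
with $\alpha>0$ and $c_{B}\geq 0$, by combining Keel's formula for $K_{\ove{\cM}_{0,n}}$, Kanev's Hodge-bundle relations for $W(E_6)$-covers (expressing $\lambda$ via boundary and tautological $\psi$-classes through the eigenspace decomposition for the operator $D$), and the computation of $\mathrm{Ram}(\mathfrak{b})$ at each reflection-type branching. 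The point is that all boundary components appearing with strictly negative coefficient in $K_{\hhu}$ lie in $E$, which is exactly the content of the choice of $E$ above.

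Finally, bigness follows formally. By Theorem~\ref{thmmain1}, $PT$ is generically finite, and the Hodge class $\lambda_1$ on the Satake compactification $\ove{\cA}_6^{\mathrm{Sat}}$ is big (indeed ample on the Satake model of each toroidal resolution and big upstairs), hence $\lambda=(PT)^{*}\lambda_{1}$ is big on $\hhu$. Adding $E$ to the formula above gives
\[
K_{\hhu}+E\;\equiv\;\alpha\,\lambda\;+\;\sum_{B\not\subset E} c_{B}\,B\;+\;\sum_{B\subset E}(1-d_{B})\,B,
\]
and after possibly replacing $E$ by a positive integer multiple so that $1-d_{B}\geq 0$, the right-hand side is $\alpha$ times a big class plus an effective divisor, hence big. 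The main obstacle, and where the bulk of the work lies, is the combinatorial bookkeeping in the middle step: one must identify the possible local pictures of admissible $W(E_6)$-covers over each boundary stratum of $\ove{\cM}_{0,24}$, compute the ramification of $\mathfrak{b}$ and the torus rank of the limiting PT variety in each case, and keep track of the coefficients precisely enough to verify that the negative contributions in $K_{\hhu}$ are all absorbed by $E$.
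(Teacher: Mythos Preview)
Your overall strategy---write $K_{\hhu}$ as a positive multiple of the pulled-back Hodge class $\lambda^{(-5)}:=PT^*\lambda_1$ plus boundary, then absorb the negative boundary contributions into a contracted divisor $E$---is indeed the shape of the paper's argument. But two of your intermediate steps are incorrect as stated, and the gap is not merely ``combinatorial bookkeeping''.

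First, your identification of $E$ is wrong. You characterize the contracted boundary divisors as those whose generic Prym--Tyurin--Kanev variety has positive torus rank. This is not the criterion: the component $D_{E_6}\subset D_0$ has toric rank $1$ and is \emph{not} contracted (it maps onto the boundary divisor of $\overline{\cA}_6$), whereas the components $D_L\subset D_0$ for proper root sublattices $L\subsetneq E_6$ have toric rank $0$ and \emph{are} contracted (their image in $\cA_6$ has codimension at least $2$). The paper's Section~6 shows that the only non-contracted boundary divisors are $D_{E_6}$, $D_{\mathrm{syz}}$, and $D_{\mathrm{azy}}$.

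Second, your central claim---that in an expression $K_{\hhu}=\alpha\,\lambda^{(-5)}+\sum c_B B$ every non-contracted component has $c_B\ge 0$---is precisely the hard part, and it does not follow from Keel's formula, the ramification of $\mathfrak{b}$, and existing ``Kanev Hodge-bundle relations''. From the boundary expression for $K_{\hhu}$ (Theorem~\ref{kanonisch}) the coefficient of $[D_{E_6}]$ is $-\tfrac{25}{46}$, and since $D_{E_6}$ is not contracted, no multiple of $E$ removes this negativity. The paper resolves this via a genuinely new ingredient: the Weyl--Petri identification of the $(-5)$-Hodge eigenbundle $\mathbb{E}^{(-5)}\cong\bigl(H^0(L^{\otimes 2})/\Sym^2 H^0(L)\bigr)^{\vee}\otimes\bigwedge^2 H^0(L)$ (Theorem~\ref{incarnations}). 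This lets one realize $D_{\mathrm{azy}}$ as a degeneracy locus and obtain
\[
[D_{\mathrm{azy}}]=\tfrac{25}{16}\lambda+\tfrac{51}{16}\lambda^{(-5)}+\tfrac{3}{4}[D_{E_6}]+\cdots
\]
(Theorem~\ref{azy3}, Proposition~\ref{prop:syzazy}). Substituting this back into $K$ on the partial compactification $\mathcal{G}_{E_6}$ yields $K_{\mathcal{G}_{E_6}}\ge \tfrac{867}{736}\lambda^{(-5)}+\tfrac{425}{736}\lambda-\tfrac{49}{184}[D_{E_6}]$, and only then does comparison with the boundary expression for the \emph{full} Hodge class $\lambda$ (equation~\eqref{lamhhu}) show the last two terms combine to something effective. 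The Petri identification of $\mathbb{E}^{(-5)}$ and the resulting $D_{\mathrm{azy}}$-formula are the heart of the proof; they are established in Section~8 of this paper and are not supplied by the references you cite.
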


The proof of Theorem \ref{thmmain2} is completed after numerous preliminaries at the end of Section \ref{sec:WP}.

\vskip 4pt

In the course of proving Theorem \ref{thmmain2}, we establish numerous facts concerning the geometry of the space $\hhu$. One of them is a surprising link between the splitting of the rank $46$ Hodge bundle $\mathbb E$ on $\hhu$ into Hodge eigenbundles and the Brill-Noether theory of $E_6$-covers, see Theorem \ref{incarnations}. For a point $[\pi:C\rightarrow \bP^1]\in \hhu$, we denote by $D:H^0(C,\omega_C)\rightarrow H^0(C,\omega_C)$ the map induced at the level of cotangent spaces by the Kanev endomorphism and by $$H^0(C, \omega_C)=H^0(C,\omega_C)^{(+1)}\oplus H^0(C, \omega_C)^{(-5)},$$ the decomposition into the $(+1)$ and the $(-5)$-eigenspaces of holomorphic differentials respectively. Setting $L:=\pi^*(\mathcal{O}_{\bP^1}(1))\in W^1_{27}(C)$, for a general point $[\pi:C\rightarrow \bP^1]\in \Hur$, we show that the following canonical identifications hold:
$$
H^0(C,\omega_C)^{(+1)}=H^0(C,L)\otimes H^0(C,\omega_C\otimes L^{\vee}) \ \ \  $$
and
$$   H^0(C,\omega_C)^{(-5)}=\left(\frac{H^0(C,L^{\otimes 2})}{\mbox{Sym}^2 H^0(C,L)}\right)^{\vee}\otimes \bigwedge^2 H^0(C,L).
$$
In particular, the $(+1)$-Hodge eigenbundle is fibrewise isomorphic to the image of the Petri map $\mu(L): H^0(C,L)\otimes H^0(C, \omega_C\otimes L^{\vee})\rightarrow H^0(C, \omega_C)$, whenever the Petri map is injective (which happens generically along $\Hur$, see Theorem \ref{petri}). The identifications above are instrumental in expressing in Section \ref{sec:WP} the class of the  $(-5)$-Hodge eigenbundle $\mathbb E^{(-5)}$ on a partial compactification $\mathcal{G}_{E_6}$ of $\Hur$ in terms of boundary divisors. The moduli space $\mathcal{G}_{E_6}$ differs from $\hhu$ only along divisors that are contracted under the Prym-Tyurin map. Note that the class $\lambda^{(-5)}=c_1(\mathbb E^{(-5)})$ is equal to the pull-back $PT^*(\lambda_1)$ of the Hodge class $\lambda_1$  on $\overline{\cA}_6$. The explicit realization of the class $\lambda^{(-5)}$ is then used to establish positivity properties of the canonical class $K_{\hhu}$.

\vskip 4pt

An obvious question is to what extent the geometry of $\hhu$ can be used to answer the notorious problem on the Kodaira dimension of $\cA_6$. Recalling that $PT:\hhu\dashrightarrow \overline{\cA}_6$  denotes the extension of the Prym-Tyurin map outside a codimension $2$ subvariety of $\hhu$,  the pull-back divisor $PT^*(\partial \overline{\cA}_6)$ contains a unique boundary divisor $D_{E_6}$ of $\hhu$ that is not contracted by $PT$. The statement that $\cA_6$ is of general type is then equivalent to the bigness of the divisor class $7\lambda^{(-5)}-[D_{E_6}]$ on $\hhu$ (see Corollary \ref{koda6} for a more precise statement). Theorem \ref{thmmain1} implies that $\lambda^{(-5)}$ is a big class on $\hhu$, which is a weaker result. Note that it has been established in \cite{FV14} that the boundary divisor $\partial \overline{\cA}_6$ of the perfect cone compactification $\overline{\cA}_6$ is unirational.

\vskip 4pt

We are also able to describe the ramification divisor of the Prym-Tyurin map in terms of the geometry of the \emph{Abel-Prym-Tyurin curve}
$\varphi_{(-5)}=\varphi_{H^0(\omega_C)^{(-5)}}: C\rightarrow \bP^5$ given by the linear system of $(-5)$-invariant holomorphic forms on $C$.

\begin{theorem}\label{rampt}
An $E_6$-cover $[\pi:C\rightarrow \bP^1]\in \Hur$  such that the Petri map $\mu(L)$ is injective lies in the ramification divisor of the map $PT:\Hur \rightarrow \cA_6$ if and only if the Abel-Prym-Tyurin curve $\varphi_{(-5)}(C)\subset \bP^5$ lies on a quadric.
\end{theorem}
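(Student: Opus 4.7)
The plan is to identify the codifferential of $PT$ at $[\pi]$ with a multiplication map on holomorphic differentials and to interpret its kernel as quadrics through the Abel-Prym-Tyurin image, in the same spirit as the infinitesimal Torelli and Prym theorems. Since $PT$ factors as $\Hur \to \cM_{46} \xrightarrow{j} \cA_{46} \dashrightarrow \cA_6$, with the last arrow sending $JC$ to $\mbox{Im}(D-1)$, and since the Noether codifferential of $j$ at $[C]$ is the standard multiplication $\Sym^2 H^0(K_C) \to H^0(K_C^{\otimes 2})$, the codifferential of $PT$ decomposes as
\[
dPT^*_{[\pi]} \colon \Sym^2 H^0(K_C)^{(-5)} \xrightarrow{\mu_{(-5)}} H^0(K_C^{\otimes 2}) \xrightarrow{q_\pi} T^*_{[\pi]}\Hur,
\]
where $\mu_{(-5)}$ is the multiplication restricted to the $(-5)$-eigenspace and $q_\pi$ is the codifferential of the forgetful morphism $\Hur \to \cM_{46}$. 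Both source and target of $dPT^*_{[\pi]}$ have dimension $21$, so $[\pi]$ lies in the ramification divisor of $PT$ precisely when $dPT^*_{[\pi]}$ has nontrivial kernel.

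The forward implication is immediate: if $\varphi_{(-5)}(C) \subset \bP^5$ lies on a quadric $Q \in \Sym^2 H^0(K_C)^{(-5)}$, then $\mu_{(-5)}(Q) = \varphi_{(-5)}^*(Q) = 0$ in $H^0(K_C^{\otimes 2})$, so $dPT^*_{[\pi]}(Q) = 0$ and $[\pi]$ is in the ramification divisor. The converse requires establishing
\[
\mu_{(-5)}\bigl(\Sym^2 H^0(K_C)^{(-5)}\bigr) \,\cap\, \ker(q_\pi) \,=\, 0,
\]
which guarantees that every element of $\ker(dPT^*_{[\pi]})$ is already annihilated by $\mu_{(-5)}$, hence corresponds to a genuine quadric through $\varphi_{(-5)}(C)$. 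This is where the Petri injectivity of $\mu(L)$ enters: via Theorem \ref{incarnations} it supplies the explicit incarnations $H^0(K_C)^{(+1)} \cong H^0(L) \otimes H^0(K_C \otimes L^{\vee})$ and $H^0(K_C)^{(-5)} \cong \bigl(H^0(L^{\otimes 2})/\Sym^2 H^0(L)\bigr)^{\vee} \otimes \wedge^2 H^0(L)$, which express the $(-5)$-forms explicitly in terms of sections of $L$ and $L^{\otimes 2}$. Dualizing the identification $T_{[\pi]}\Hur \cong T_{[b]}\overline{\cM}_{0,24} \cong \bigoplus_{i=1}^{24} T_{p_i}\bP^1/\mathrm{Lie}\,\PGL(2)$, the map $q_\pi$ becomes the ``branch trace'' map, sending $\omega \in H^0(K_C^{\otimes 2})$ to the tuple of residue sums of $\omega/d\pi$ over the ramification points lying above each branch point $p_i$; a residue computation using the above incarnations then shows that a product of $(-5)$-forms cannot have vanishing branch trace at every $p_i$ unless it already vanishes as a global section of $K_C^{\otimes 2}$. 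A nonzero $Q \in \ker(\mu_{(-5)})$ is, by construction, a quadric on $\bP(H^0(K_C)^{(-5),\vee}) = \bP^5$ whose pullback under $\varphi_{(-5)}$ vanishes, equivalent to saying $\varphi_{(-5)}(C) \subset \{Q = 0\}$.

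The main obstacle is the branch trace computation proving $\mu_{(-5)}\bigl(\Sym^2 H^0(K_C)^{(-5)}\bigr) \cap \ker(q_\pi) = 0$: rather than a formal dimension argument, it requires combining the sheaf-theoretic description of $H^0(K_C)^{(-5)}$ from Theorem \ref{incarnations} with the local residue behavior of $(-5)$-forms at the ramification divisor of $\pi$, thereby genuinely invoking the Kanev correspondence and the Brill-Noether structure of the pencil $L \in W^1_{27}(C)$.
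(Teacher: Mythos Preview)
Your architecture is correct: the codifferential of $PT$ is the composite
\[
\Sym^2 H^0(K_C)^{(-5)} \xrightarrow{\mu_{(-5)}} H^0(K_C^{\otimes 2}) \xrightarrow{\mathrm{Tr}} H^0\bigl(\bP^1, K_{\bP^1}^{\otimes 2}(B)\bigr),
\]
the forward implication is trivial, and the converse reduces to showing that $\mathrm{Im}(\mu_{(-5)})\cap\ker(\mathrm{Tr})=0$. But the paragraph you label ``the main obstacle'' is not a proof; it is a description of what would need to be done, and the route you sketch through Theorem~\ref{incarnations} is not the one that works. The paper never invokes the global Petri incarnations of the eigenspaces at this point.

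The missing mechanism is a purely local computation of how the Kanev correspondence $D$ acts on germs of holomorphic $1$-forms near a ramification fibre. Over a branch point $p_i$ the six ramification points $r_{i1},\ldots,r_{i6}$ acquire a common local parameter $x$ with $y=x^2$, and any $\alpha\in H^0(K_C)$ splits into an odd part $\alpha^{\mathrm{odd}}_{r_{ij}}(x^2)\,dx$ and an even part. On the six odd parts, $D$ acts by the $6\times 6$ matrix with $0$ on the diagonal and $-1$ elsewhere, whose unique $(-5)$-eigenvector is $(1,\ldots,1)$. Hence for every $\alpha\in H^0(K_C)^{(-5)}$ the six values $\alpha^{\mathrm{odd}}_{r_{ij}}(0)$ coincide, and consequently for any $\beta\in\Sym^2 H^0(K_C)^{(-5)}$ the quadratic differential $\gamma=\mathrm{mul}(\beta)$ satisfies $\varphi_{i1}(r_{i1})=\cdots=\varphi_{i6}(r_{i6})$. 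On the other hand, $\ker(\mathrm{Tr})$ is characterized by $\sum_{j=1}^6\varphi_{ij}(r_{ij})=0$ for every $i$. Together these force $\varphi_{ij}(r_{ij})=0$ for all $i,j$, i.e.\ $\gamma\in H^0(C,2K_C-R)$. Now the Petri hypothesis enters, but not via Theorem~\ref{incarnations}: since $R=K_C+2L$ one has $2K_C-R=K_C-2L$, and the base point free pencil trick gives $H^0(K_C-2L)=\ker\mu(L)=0$. Thus $\mathrm{mul}(\beta)=0$, which is exactly the statement that $\varphi_{(-5)}(C)$ lies on the quadric $\beta$.

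So the gap is twofold: you have not carried out the local eigenvalue analysis that forces the six ramification values to agree, and you have misattributed the role of Petri injectivity. It enters at the very end, through $H^0(K_C-2L)=\ker\mu(L)$, not through the eigenbundle identifications of Theorem~\ref{incarnations}.
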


The conclusion of Theorem \ref{rampt} can be equivalently formulated as saying that  the  map
$$\mathrm{Sym}^2 H^0(C,\omega_C)^{(-5)}\longrightarrow H^0(C,\omega_C^{\otimes 2})$$
given by multiplication of sections is not injective. Note the striking similarity between this description of the ramification divisor of the Prym-Tyurin map and that of the classical Prym map $P:\cR_{g+1}\rightarrow \cA_{g}$, see \cite{BeauvilleSchottkyPrym}: A point $[C, \eta]\in \cR_{g+1}$ lies in the ramification divisor of $P$ if and only if the multiplication map for the Prym-canonical curve
$$\mbox{Sym}^2 H^0(C,\omega_C\otimes \eta)\rightarrow H^0(C, \omega_C^{\otimes 2})$$ is not injective. An important difference must however be noted. While the general Prym-canonical map $\varphi_{\omega_C\otimes \eta}:C\rightarrow \bP^{g-2}$ is an embedding when $g\geq 5$, the Abel-Prym-Tyurin map $\varphi_{(-5)} : C \ra \bP^5$ sends the ramification points lying over a branch point of the cover $\pi:C\rightarrow \bP^1$ to the same point of $\bP^5$ (see Section \ref{sec:ramif} below).

\vskip 3pt

It is natural to ask in what way the Prym-Tyurin-Kanev (PTK) varieties considered in this paper generalize classical Prym varieties. It is classical \cite{wirtinger1895untersuchungen-uber} that the Prym variety of the \emph{Wirtinger cover} of a $1$-nodal curve of genus $g$ is the Jacobian of its normalization. Thus, if $\Delta_{0}^{''}\subset \overline{\cR}_{g+1}$ is the boundary divisor of such covers and $P:\rr_{g+1}\dashrightarrow \overline{\cA}_g$ is the extension of the Prym map outside a codimension $2$ subvariety of $\rr_g$, then $P(\Delta_0^{''})$ contains the closure of the Jacobian locus in $\overline{\cA}_g$. In particular,  Jacobians arise as limits of Prym varieties. We generalize this situation and explain how ordinary Prym varieties appear as limits of PTK varieties.

\vskip 4pt

Via the Riemann Existence Theorem, a general $E_6$-cover $\pi:C\rightarrow \bP^1$ is determined by a branch divisor $p_1+\cdots+p_{24}\in \mbox{Sym}^{24}(\bP^1)$ and discrete data involving a collection of roots $r_1, \ldots, r_{24}\in E_6$ which describe the local monodromy of $\pi$ at the points $p_1, \ldots, p_{24}$. Letting two branch points, say $p_{23}$ and $p_{24}$, coalesce such that $r_{23}=r_{24}$, whereas the reflections in the remaining roots $r_1, \ldots, r_{22}$ span the Weyl group $W(D_5)\subset W(E_6)$, gives rise to a boundary divisor $D_{D_5}$ of $\hhu$. We show in Section \ref{sec:PvPTK} that the general point of $D_{D_5}$ corresponds to the following geometric data:

\vskip 4pt

\noindent (i) A genus $7$ Prym curve $[Y, \eta]\in \cR_7$, together with a degree $5$ pencil $h:Y\rightarrow \bP^1$ branched simply along the divisor $p_1+\cdots+p_{22}$; the unramified double cover $F_1\rightarrow Y$ gives rise to a degree $10$ map $\pi_{1}:F_1\rightarrow \bP^1$ from a curve of genus $13$.

\noindent (ii) A genus $29$ curve $F_2\subset F_1^{(5)}$, which is pentagonally related to $F_1$, and is thus completely determined by $F_1$. Precisely, $F_2$ is one of the two irreducible components of the locus  $$\bigl\{x_1+ \cdots+ x_5\in F_1^{(5)}: \pi_1(x_1)=\cdots =\pi_1(x_5)\bigr\}$$ inside the symmetric power $F_1^{(5)}$ of $F_1$. One has a degree $16$ cover $\pi_2:F_2\rightarrow \bP^1$ induced by $\pi_1$.

\noindent (iii) A distinguished point $q_1+\cdots+q_5\in F_2$, which determines $5$ further pairs of points $$\bigl(q_i, q_1+\cdots+\iota(q_i)+\cdots+q_5\bigr)\in F_1\times F_2$$ for $i=1, \ldots, 5$, which get identified. To $F_2$ we attach a rational curve $F_0$ at the point $q_1+\cdots+q_5$. The resulting nodal curve $C_1=F_0\cup F_1\cup F_2$ has genus $46$ and admits a map $\pi:C_1\rightarrow \bP^1$ of degree $27$ with $\pi_{|F_i}=\pi_i$ for $i=0,1,2$, where $\pi_0$ is an isomorphism. The map $\pi$ can easily be turned into an $E_6$-admissible cover having as source a curve stably equivalent to $C_1$. A general point of the divisor $D_{D_5}$ is realized in this way.

\vskip 4pt

We show in Section \ref{sec:PvPTK} that $PT([C_1, \pi])=P([F_1/Y])=P([Y, \eta]) \in \cA_6$;  furthermore, the general $6$-dimensional Prym variety from $P(\cR_7)\subset \cA_6$ appears in this way. We summarize the above discussion, showing that the restriction $PT_{D_{D_5}}$ of the Prym-Tyurin map factors via the (generically injective) Prym map $P:\rr_7\dashrightarrow \overline{\cA}_6$ in the following way.

\begin{theorem}\label{prym6}
If $D_{D_5}\subset \hhu$ is the boundary divisor of $W(D_5)$-covers defined above, one has the following commutative diagram:
\begin{equation}
\xymatrix{
              D_{D_5} \ar[r] \ar@{-->}[d]_{PT_{D_5}}  & \hhu  \ar@{-->}[d]^{PT}\\
              {\overline{\cR}_7} \ar@{-->}[r]^{P} &  {\overline{\cA}_6}
    }
\end{equation}
The fibre   $PT_{{D_5}}^{-1}\bigl(P[F_1/Y]\bigr)$ of the Prym-Tyurin map $PT_{D_5}:D_{D_5}\dashrightarrow \rr_7$ over a general genus $7$ Prym curve $[F_1/Y]\in \rr_7$ is the fibration over the curve $W^1_5(Y)$ of degree $5$ pencils on $Y$ with fibre over a pencil $A\in W^1_5(Y)$ the curve $F_2$ obtained by applying the $5$-gonal construction to $A$.
\end{theorem}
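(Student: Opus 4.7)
The plan is to derive both the commutative diagram and the fiber description by combining the explicit description (i)--(iii) of a general point of $D_{D_5}$ given just before the theorem with the degeneration theory for Prym-Tyurin-Kanev varieties developed earlier. The starting observation is that a general $[\pi:C_1\to\bP^1]\in D_{D_5}$ is an $E_6$-admissible cover whose local monodromy over the collided pair $p_{23}=p_{24}$ factors through $W(D_5)\subset W(E_6)$, and whose source $C_1=F_0\cup F_1\cup F_2$ has the tripartite structure already pinned down in (i)--(iii).

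First, I would establish commutativity, i.e.\ $PT([C_1,\pi])=P([Y,\eta])$. For this I would feed the explicit combinatorics of $C_1$ (the node between $F_1$ and $F_2$ encoding the pentagonal identification, the bridging node from $F_0$, and the reduced $W(D_5)$-monodromy) into the toroidal description of $PT$ developed in Sections 4--6. The Kanev endomorphism $D$ decomposes along $C_1$; the rational component $F_0$ contributes trivially, and the pentagonal construction matches $\im(D-1)_{|JF_2}$ with the pullback of the classical Prym $P(F_1/Y)$ under the natural map $JF_2\to JF_1$. The gluing conditions imposed by the nodes then collapse these two contributions onto a single copy of $P(F_1/Y)\subset\overline{\cA}_6$. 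This step relies essentially on the extension of the Alexeev--Birkenhake--Hulek degeneration theory \cite{alexeev2002degenerations-of-prym} to the Kanev setting that constitutes one of the main technical inputs of the paper.

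Second, I would read off the description of $PT_{D_5}^{-1}\bigl(P[Y,\eta]\bigr)$ by inverting the construction (i)--(iii). Given $[Y,\eta]\in\cR_7$, the \'etale double cover $F_1\to Y$ is determined by $\eta$. The pencil $h\in W^1_5(Y)$ inducing $\pi_1=h\circ(F_1\to Y)$ is a free parameter, and since $\rho(7,1,5)=7-2\cdot 3=1$, for general $[Y,\eta]$ the Brill-Noether locus $W^1_5(Y)$ is a smooth irreducible projective curve. For each $A\in W^1_5(Y)$ the pentagonal construction canonically produces the genus $29$ curve $F_2(A)\subset F_1^{(5)}$, and the remaining datum is the attaching point $q_1+\cdots+q_5\in F_2(A)$, which also fixes the five pairs of identified points on $F_1$. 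This yields precisely the two-dimensional fibration over $W^1_5(Y)$ with fiber $F_2(A)$ asserted in the theorem.

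The main obstacle is the first step: rigorously identifying $\im(D-1)$ on the reducible nodal cover $C_1$ with $P(F_1/Y)$ requires careful bookkeeping of the lattice and cocharacter contributions of both the $W(D_5)$-node and the attaching node between $F_0$ and $F_2$, within the toroidal machinery built up earlier. A secondary verification is that every pair $(A,q_1+\cdots+q_5)$ is actually realized by some $W(D_5)$-admissible cover in $\hhu$; this follows from the Riemann Existence Theorem applied to a standard system of generating reflections of $W(D_5)$ specified in Section 7, so that the parametrization gives a surjection onto $D_{D_5}$ with the claimed fiber structure.
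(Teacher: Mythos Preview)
Your fiber description in the second step is correct and matches the paper's argument.

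The first step, however, misidentifies where the real work lies. You propose to obtain $PT([C,\pi])\cong P(F_1/Y)$ via ``toroidal machinery'' and ``gluing conditions imposed by the nodes'', invoking the Alexeev--Birkenhake--Hulek framework. This is not how the paper proceeds, and would not by itself yield an isomorphism of principally polarized abelian varieties. The toroidal input is used only once, and only negatively: one writes down the dual graph $\Gamma$ of the stable model $F_1\cup F_2$, checks by hand that $D$ acts as the identity on $H_1(\Gamma,\bZ)$, and concludes $(D-1)H_1(\Gamma,\bZ)=0$, i.e.\ the toric rank is zero (see \ref{num:example-toric-rank}). After this, $PT(C,D)$ is an honest abelian subvariety of $JF_1\times JF_2$, and the nodes contribute nothing further---there are no ``gluing conditions'' left to collapse.

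The actual identification is purely algebraic. The correspondence $D$ restricts to a fixed-point-free involution $\iota$ on $F_1$ (so $(\iota-1)JF_1=P(F_1,\iota)=:Q_1$), to a valence-$5$ correspondence $D_2$ on $F_2$, and to cross-correspondences $D_{12},D_{21}$; thus $P=\operatorname{Im}(D-1)$ is the image of an explicit $2\times 2$ block endomorphism of $JF_1\times JF_2$. One then recognizes $F_2$ as one component of the pentagonal construction applied to the tower $F_1\to Y\to \bP^1$, placing $(F_2,D_2,D_{12},D_{21})$ inside the framework of \cite{IzLaSt} and \cite{IL}. The decisive step (Proposition~\ref{isoPryms}) is that the restriction $\rho\colon Q_1\to P$ of $D-1$ factors through multiplication by $2$---because $D_{21}\circ D_{12}=8_{Q_1}$ and $\iota-1=-2_{Q_1}$ on $Q_1$---and the resulting map $Q_1\to P$ is injective, hence an isomorphism of ppav. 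None of this follows from lattice or cocharacter bookkeeping; your proposal sketches the wrong mechanism for the central isomorphism.
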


We close the introduction by discussing the structure of the paper. In
Section \ref{sec:Kanev-construction} we discuss Kanev's construction, whereas in Section \ref{sec:e6-lattice} we
collect basic facts about the $E_6$ lattice and the group $W(E_6)$
that are used throughout the paper. After recalling the theory of
degenerations of Jacobians and ordinary Prym varieties in Section \ref{sec:degs-jacobians},
we complete the proof of Theorem \ref{thmmain1} in Section \ref{sec:degPTK}, by
describing the Prym-Tyurin map in the neighborhood of a maximally
degenerate point of the space $\hhu$ of $E_6$-admissible
covers. Sections \ref{sec-Hurglobal} and \ref{sec:pos} are devoted to the birational geometry of this
Hurwitz space. The most important result is Theorem \ref{lam}
describing the Hodge class $\lambda$ on $\hhu$ in terms of boundary
divisors.  In Section \ref{secboundary} we completely describe the extended Prym-Tyurin map
$PT:\hhu \dashrightarrow \overline{\cA}_6$ to the perfect cone (first Voronoi)
toroidal compactification of $\cA_6$
at the level of divisors
and show that only three boundary divisors of $\hhu$, namely
$D_{E_6}, D_{\mathrm{syz}}$ and $D_{\mathrm{azy}}$ are not contracted
by the map PT (Theorem \ref{contractions}). After proving Theorem
\ref{prym6} in Section \ref{sec:PvPTK}, we complete in Section \ref{sec:WP} the proof of
Theorem \ref{thmmain2} after a detailed study of the divisors
$D_{\mathrm{azy}}$ and $D_{\mathrm{syz}}$ of azygetic and syzygetic
$E_6$-covers respectively on a partial compactification
$\mathcal{G}_{E_6}$ of $\Hur$. The ramification divisor of the
Prym-Tyurin map is described in Section \ref{sec:ramif}. Finally, in Section \ref{sec:Petri}, we
prove by degeneration a Petri type theorem on $\Hur$.

\vskip 4pt

\noindent {\bf Acknowledgments:} We owe a great debt to the work of
Vassil Kanev, who first constructed the Prym-Tyurin map $PT$ and
raised the possibility of uniformizing $\cA_6$ in this way.
The authors acknowledge partial support by the NSF:
VA under grant DMS 1200726,
RD under grant DMS 1603526, EI under grant DMS-1103938/1430600.
The work of GF and AO has been partially supported by the DFG Sonderforschungsbereich 647 ``Raum-Zeit-Materie".

\section{Kanev's construction and Prym-Tyurin varieties of $E_6$-type}
\label{sec:Kanev-construction}

Consider a cubic threefold $X\subset\bP^4$ and a smooth hyperplane
section $S\subset X$. The cubic surface $S$ contains a set of
$27$ lines $\Lambda:=\{\ell_s\}_{1\leq s\leq 27}$
forming a famous classical
configuration, which we shall review below in Section~\ref{sec:e6-lattice}.
Consider the lattice $\bZ^{\Lambda}=\bZ^{27}$ with the standard basis corresponding to
$\ell_s$'s, and let $\deg\colon \bZ^{\Lambda}\to \bZ$ be the degree
homomorphism, so that $\deg(\ell_s) =1$ for all $s=1, \ldots, 27$.

\begin{num}\label{subsec-D_L} By assigning to  each line $\ell_s$ the sum $\sum_{\{s':\ \ell_s\cdot \ell_{s'}=1\}} \ell_{s'}$ of
the $10$ lines on $S$ intersecting $\ell_s$, we define a homomorphism $D'_{\Lambda}\colon
\bZ^{27}\to \bZ^{27}$ of degree $10$. It is easy to check that $D'_{\Lambda}$
satisfies the following quadratic equation:
\begin{displaymath}
  (D'_{\Lambda}+5)(D'_{\Lambda}-1) = 5 \left( \sum_{s=1}^{27} \ell_s \right) \cdot \deg
\end{displaymath}
The restriction $D_{\Lambda}$ of $D'_{\Lambda}$ to the subgroup
$\mbox{Ker}(\deg)$ satisfies the equation $(D_{\Lambda}+5)(D_{\Lambda}-1)=0$.

Consider a generic pencil $\{S_t \}_{t\in\bP^1}$ of cubic
hyperplane sections of $X$. This defines:
\begin{itemize}
\item a degree $27$ smooth curve cover $\pi\colon C\to\bP^1$; the points in
  the fiber $\pi\inv(t)$ correspond to the lines lying on $S_t$;
\item a symmetric \emph{incidence} correspondence $\wt D\subset C\times C$. Let $p_i\colon \wt D\to C$ denote the two projections. Then
$\wt D$ has degree $\mbox{deg}(p_1)=\mbox{deg}(p_2)=10$;
\item a homomorphism $D'=p_{2*}\circ p_1^* \colon \Pic(C)\to \Pic(C)$
satisfying the following quadratic equation (see also \cite{kanev1989spectral-curves}):

  \begin{math}
    \ \ (D'+5)(D'-1) = 5 \pi\inv(0) \cdot \deg
  \end{math};
\item the restriction $D$ of $D'$ to $JC=\Pic^0(C)$, satisfying
  $(D+5)(D-1)=~0$.
\end{itemize}

For a generic such pencil the map $\pi\colon
C\to \bP^1$ has $24$ branch points on $\bP^1$, corresponding to singular
cubic surfaces in the pencil, each with one node. Over each of
the $24$ points, the fibre consists of $6$ points of multiplicity two and
$15$ single points. By the Riemann-Hurwitz formula, we compute $g(C)=46$.
\end{num}

\begin{num} We refer to \cite{kanev1989spectral-curves,
  lange2008a-galois-theoretic-approach} for the following facts.  The
cover $\pi\colon C\to\bP^1$ is not Galois. The Galois group of its Galois closure
is $W(E_6)$, the reflection group of
the $E_6$ lattice. As we shall review in Section~\ref{sec:e6-lattice}, the
lattice $E_6$ appears as the lattice $K_S^\perp \subset\Pic (S)$.  The
27 lines can be identified with the $W(E_6)$-orbit of the fundamental
weight $\omega_6$, and one has a natural embedding $W(E_6)\subset
S_{27}$.  The intermediate non-Galois cover $C\to\bP^1$ is associated
with the stabilizer subgroup of $\omega_6$ in $W(E_6)$, that is, with the
subgroup $W(E_6)\cap S_{26} \cong W(D_5)$.
\end{num}

\begin{num} By Riemann's Existence Theorem, a $27$-sheeted cover $C\to\bP^1$ ramified over 24
points is defined by a choice of $24$ elements $w_i\in S_{27}$
satisfying $w_1 \cdots w_{24}=1$. For a cover coming from a pencil
of cubic surfaces, each $w_i\in W(E_6)$ is a reflection in a root of
the $E_6$. It is a \emph{double-six}, that is, viewed as an element of $S_{27}$, it is a product of $6$ disjoint transpositions.
\end{num}

\begin{definition}
Let $\Hur$ be the Hurwitz space parametrizing irreducible smooth
Galois $W(E_6)$-covers $\tC\to\bP^1$ ramified in 24 points, such that
the monodromy over each point is a reflection in a root of the $E_6$ lattice.
\end{definition}

\begin{num} Note that points in the space $\Hur$ correspond to covers where we do not choose a labeling of the branch points. The data for the cover $\tC$ consists of the
branch divisor $p_1+\ldots+p_{24}$ on $\bP^1$, and, for each of
these points, the monodromy $w_i\in W(E_6)$ given by a reflection in a root, once a base point $p_0\in \mathbb P^1$ and a system of arcs
$\gamma_i$ in $\pi_1(\bP^1\setminus \{p_1,\dotsc,p_{24}\}, p_0)$ with
$\gamma_1\cdots \gamma_{24}=1$ has been  chosen. The elements $\{w_i\}_{i=1}^{24}$
generate $W(E_6)$ and satisfy the relation $w_1\cdots w_{24}=1$.
The monodromy data being finite, the space $\Hur$ comes with a
finite unramified cover $$\mathfrak{br}: \Hur\to \cM_{0,24}/S_{24}$$ to the moduli space of $24$
unordered points on $\bP^1$. Thus $\dim(\Hur) =21$.
An important fact about this space is the following result of Kanev \cite{kanev2006hurwitz-spaces}:
\end{num}

\begin{theorem}
For any irreducible root system $R$, the Hurwitz scheme
parameterizing Galois $W(R)$-covers such that the monodromy around any
branch point is a reflection in $W(R)$, is irreducible.
\end{theorem}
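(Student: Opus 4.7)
The plan is to apply the Riemann Existence Theorem to reformulate irreducibility as a combinatorial statement about the Hurwitz braid group action on Nielsen classes, and then prove transitivity of this action using Coxeter-theoretic normal forms.

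Fix the number of branch points $n$ and let $\mathcal{C} \subset W(R)$ denote the conjugacy class of reflections. By the Riemann Existence Theorem, the fibre of the finite unramified cover $\mathfrak{br}\colon \Hur \to \cM_{0,n}/S_n$ over a labeled configuration of branch points is in natural bijection with the set of Nielsen tuples
$$\mathcal{N} := \Bigl\{(w_1, \ldots, w_n) \in \mathcal{C}^n : w_1 \cdots w_n = 1, \ \langle w_1, \ldots, w_n \rangle = W(R)\Bigr\}\Big/W(R),$$
where $W(R)$ acts by simultaneous conjugation. The monodromy of $\mathfrak{br}$ is the Hurwitz braid group action, generated by the elementary moves
$$\sigma_i\colon (w_1, \ldots, w_n) \longmapsto (w_1, \ldots, w_{i-1}, w_i w_{i+1} w_i^{-1}, w_i, w_{i+2}, \ldots, w_n).$$
Since the base $\cM_{0,n}/S_n$ is irreducible, the irreducibility of $\Hur$ is equivalent to the transitivity of the braid group $B_n$ on $\mathcal{N}$.

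The key structural observation is that each $\sigma_i$ sends reflection tuples to reflection tuples, preserves the product $w_1 \cdots w_n$, and preserves the subgroup generated by the entries. Given this, the strategy I would follow has three steps: (i) a \emph{stabilization} step, showing that by braid moves any tuple in $\mathcal{N}$ can be rewritten in the form $(s_1, \ldots, s_\ell, t_1, t_1, t_2, t_2, \ldots, t_k, t_k)$, where $\ell = \mathrm{rank}(R)$, the initial subword satisfies $s_1 \cdots s_\ell = c$ for a fixed Coxeter element $c \in W(R)$, and each consecutive pair $(t_j, t_j)$ multiplies to the identity; (ii) invoking the theorem of Bessis on the dual braid monoid of a finite Coxeter group, according to which the braid group acts transitively on reduced reflection factorizations of a Coxeter element, bringing the initial $(s_1, \ldots, s_\ell)$-segment to a canonical form; (iii) verifying that the trivial pairs $(t_j, t_j)$ may be replaced and permuted among themselves by further braid moves, so that the final tuple depends only on the Coxeter-element data.

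The hard step is (i). One has to peel off, by braid moves, consecutive pairs of reflections into redundant identities without ever destroying the generation condition. I would attempt this by an induction on the rank of $R$ combined with an induction on $n$: locate a consecutive pair $w_i, w_{i+1}$ whose two roots span a rank-$2$ parabolic $W(R')$, use Hurwitz moves to float this pair to the end of the tuple, and within $W(R')$ use the classical transitivity of the braid action on $A_1 \times A_1$, $A_2$, $B_2$ and $G_2$ Nielsen tuples to reduce the pair to a trivial one, after which the problem drops either to a smaller $n$ or to a subsystem of strictly smaller rank. Base cases follow by direct inspection. This is essentially the strategy executed uniformly by Kanev in \cite{kanev2006hurwitz-spaces}.
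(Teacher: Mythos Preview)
The paper does not give its own proof of this statement; it is quoted as a theorem of Kanev with a citation to \cite{kanev2006hurwitz-spaces}, so there is no in-paper argument to compare against.

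Your outline contains a genuine inconsistency. The Nielsen class $\mathcal{N}$ you (correctly) defined consists of tuples with $w_1\cdots w_n=1$, and Hurwitz moves preserve this product. But the normal form you target in step~(i), namely $(s_1,\ldots,s_\ell,t_1,t_1,\ldots,t_k,t_k)$ with $s_1\cdots s_\ell=c$ a Coxeter element and each $t_j$ a reflection (so $t_j^2=1$), has total product $c\neq 1$. Hence no tuple in $\mathcal{N}$ can be braid-equivalent to such a tuple, and step~(i) as stated is impossible. Step~(ii) then misfires for the same reason: Bessis's transitivity theorem concerns reduced reflection factorizations of a \emph{Coxeter element}, not of the identity, and does not transport directly to the problem at hand. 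The broad strategy---peel off redundant pairs by Hurwitz moves until a minimal core remains, then normalize the core---is sound, and the rank-$2$ parabolic reduction you sketch at the end of step~(i) is much closer to what Kanev actually does; but the core must itself have product~$1$, and its normalization is carried out by direct inductive combinatorics in the Weyl group rather than via the dual braid monoid.
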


\begin{num} In particular, the space $\Hur$ is irreducible. If $[\widetilde{\pi}:\tC\to\bP^1]\in \Hur$, let $\pi:C\to\bP^1$ be an intermediate
non-Galois cover of degree $27$, that is, the quotient of $\tC$ by a subgroup $W(E_6)\cap
S_{26} \cong W(D_5)$ in $S_{27}$. Since $W(E_6)$ acts transitively on the set
$\{1,\dotsc,27\}$, the $27$ subgroups $S_{26}\subset S_{27}$ are
conjugate, and the corresponding curves $C$ are isomorphic. Thus,
$\Hur$ is also a coarse moduli space for degree $27$ non-Galois covers $\pi\colon
C\to\bP^1$, branched over $24$ points such that the monodromy at each branch point is a reflection of $W(E_6)$.
\end{num}

\begin{num} Let $\pi\colon C\to\bP^1$ be an $E_6$-cover as above. Each fiber of $\pi$ can be
identified consistently with the set of $27$ lines on a cubic surface.
The incidence of lines, in the same way as for the correspondence $D_{\Lambda}$ in \ref{subsec-D_L},
induces a symmetric correspondence $\widetilde D\subset C\times C$ of degree
$10$, which is disjoint from the diagonal $\Delta\subset C\times C$.  In turn, $\widetilde D$ induces a homomorphism $D'\colon \Pic(C) \to \Pic(C)$,
whose restriction $D:JC\rightarrow JC$ to the degree zero part $JC:=\mbox{Pic}^0(C)$ satisfies the  quadratic relation
\begin{equation}\label{quadkanev}
(D-1)(D+5)=0\in \mbox{End}(JC).
\end{equation}

\begin{definition}
The \emph{Prym-Tyurin-Kanev (PTK) variety} $PT(C,D)$ is defined as the connected
component of the identity $PT(C,D):=\big(\mbox{Ker} (D+5)\big)^0 = \mbox{Im}
(D-1)\subset JC$.
\end{definition}
\end{num}

\begin{num} Using \cite{Kan87}, Equation \eqref{quadkanev} implies that the restriction of the principal polarization $\Theta_C$ of $JC$ to
$PT(C,D)$ is a multiple of a principal polarization. Precisely, $\Theta_{C| PT(C,D)}=6\cdot \Xi$, where $(PT(C,D), \Xi)$ is a ppav.
Since
\[
0=\widetilde{D} \cdot \Delta=2\mbox{deg}(\widetilde{D})-2\mbox{tr}\bigl\{D:H^0(C,\omega_C)\rightarrow H^0(C,\omega_C)\bigr\},
\]
we obtain that
\begin{equation}\label{dimpt1}
\mbox{dim } PT(C,D)=\frac{1}{6}\Bigl(g(C)-\mbox{deg}(\widetilde{D})\Bigr)=\frac{1}{6}(46-10)=6,
\end{equation}
see also \cite[Proposition 5.3]{lange2008a-galois-theoretic-approach}. We have the morphism of moduli stacks
\[
\begin{array}{cccc}
PT \colon & \Hur & \lra & \cA_6 \\
& [C,D] & \longmapsto & [PT(C,D), \Xi].
\end{array}
\]
Both stacks are irreducible and $21$-dimensional.
The main result of
this paper (Theorem~\ref{thmmain1}) is that $PT$ is a dominant, i.e., generically finite, map.
\end{num}

\begin{num} Our main concrete examples of $E_6$-covers of $\bP^1$ are the \emph{curves of lines} in Lefschetz pencils of cubic surfaces.
The subvariety $\cT\subset \Hur$ corresponding to pencils $\{S_t\}_{t\in \mathbb P^1}$ of hyperplane
sections of cubic $3$-folds $X\subset\bP^4$ has expected dimension
\begin{displaymath}
  {7 \choose 3}-1 + \dim\operatorname{Gr}(2,5) - \dim\PGL_5 = (35-1) + 6 - (25-1) = 16.
\end{displaymath}
\end{num}

\begin{num} We now describe the restriction of the map $PT$ to the locus $\cT\subset \Hur$ parametrizing such covers. Let $V$ be a $5$-dimensional vector space over
$\bC$ whose projectivization contains $X$ and let $F\in \Sym^3(V^{\vee})$ be a defining equation for $X$. Denote by $\cF:=\cF(X)$ the Fano variety of lines in
$X$. Let $JX := H^{2,1}(X)^{\vee} / H_3 (X, \bZ)$ be the intermediate Jacobian of $X$. It is well known  \cite{ClemensGriffiths} that the Abel-Jacobi map defines
an isomorphism $JX \cong \Alb \cF$, where $\Alb \cF$ is the Albanese variety of $\cF$. Let $\Lambda$ be a Lefschetz pencil of hyperplane sections of $X$ and
 denote by $E$ its base curve. The curve $C$ classifying the lines lying on the surfaces contained in $\Lambda$ lives naturally in $\cF$. The map sending a line
 to its point of intersection with $E$ induces a degree $6$ cover $C\rightarrow E$. Furthermore, the choice of a base point of $C$ defines a map $C\ra JX$. So
 we obtain a well-defined induced map $JC\ra E\times JX$. The transpose $E \times \Pic^0(\cF) = E \times JX \ra JC$ of this map is given by pull-back on divisors
 on each of the factors, using the map $C\ra E$ and the embedding $C\inj \cF$ respectively. On the locus $\cT$ we can explicitly determine the PTK
 variety:

\begin{lemma}\label{lemP(C)}
The map $JC\ra E\times JX$ (or its transpose $E \times JX \ra JC$) induces an isomorphism of ppav $PT(C,D)\stackrel{\cong}\rightarrow E\times JX$.
\end{lemma}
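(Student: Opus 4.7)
My plan is to construct the inverse map $\phi = f^*\oplus j^* \colon E \times JX \to JC$, show that its image sits inside $PT(C,D)$, verify $\phi$ is injective with image of the right dimension, and finally identify the principal polarizations. Here $f^*\colon JE \to JC$ is pull-back along the degree-$6$ map $f \colon C \to E$, $\ell\mapsto \ell\cap E$, and $j^*\colon JX \cong \Pic^0(\cF(X)) \to JC$ is pull-back along the natural embedding $j\colon C \hookrightarrow \cF(X)$; here I use the Clemens--Griffiths isomorphism $JX \cong \Alb(\cF(X))$ combined with the principal self-duality of $JX$.

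The key geometric step is to verify that $D$ acts as multiplication by $-5$ on each summand. For $j^*(JX)$, the correspondence $\widetilde D\subset C\times C$ is, by construction, the restriction to $C\times C$ of the incidence correspondence $I\subset \cF(X)\times \cF(X)$; hence $D\circ j^* = j^*\circ I$, and the claim reduces to the classical identity $I=-5\cdot\mathrm{id}$ on $JX$ for the Fano surface of a cubic threefold (cf.\ \cite{Kan1}). For $f^*(JE)$, apply the quadratic identity $(D'_\Lambda+5)(D'_\Lambda-1)=5\bigl(\sum_s\ell_s\bigr)\cdot\deg$ of \ref{subsec-D_L} to the star $f^*[e]=\sum_{i=1}^6 \ell_i$; the right side becomes a multiple of a fiber class of $\pi\colon C\to\bP^1$, and therefore cancels on degree-zero differences $[e_1]-[e_2]\in JE$, giving $(D+5)(D-1)f^*([e_1]-[e_2])=0$ in $JC$. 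A short trace computation then excludes $(+1)$-eigenvectors in $f^*(JE)$, forcing $D=-5$ there.

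Both $E\times JX$ and $PT(C,D)$ have dimension $6$, so to conclude $\phi$ is an isogeny it remains to establish injectivity: $f^*$ is automatically injective on an elliptic curve; $j^*$ is injective because the irreducible $W(E_6)$-monodromy prevents $C$ from lying in a proper abelian subvariety of $\Alb(\cF(X))$; and $f^*(JE)\cap j^*(JX)$ is finite, as seen on tangent spaces by comparing $f^*H^0(K_E)$ and $j^*H^{1,0}(\cF(X))$ inside $H^0(K_C)^{(-5)}$. For the polarization identification, the transpose of $f^*$ is the norm $f_*$, so $(f^*)^*\Theta_C = f_*f^* = \deg f = 6$, i.e.\ $(f^*)^*\Theta_C\equiv 6\Theta_E$; analogously $(j^*)^*\Theta_C\equiv 6\Theta_{JX}$ via the intersection number $[C]\cdot\theta_\cF = 6$ on the Fano surface. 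The cross-term equals the composite $\Alb(j)\circ f^*\colon JE\to JX$, which one checks is trivial (equivalently, the $0$-cycle $\sum_i\ell_i(e)$ on $\cF(X)$ has constant Albanese class as $e$ varies on $E$). Combined with Kanev's equality $\Theta_C|_{PT(C,D)} = 6\Xi$, this yields $\phi^*\Xi = \Theta_E\boxtimes\Theta_{JX}$, and $\phi$ is an isomorphism of principally polarized abelian varieties. The main obstacle is precisely the two Fano-surface identities---$I = -5\cdot\mathrm{id}$ on $JX$ and the triviality of $\Alb(j)\circ f^*$---each requiring a nontrivial intersection computation on $\cF(X)$ rather than a formal manipulation.
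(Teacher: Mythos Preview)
Your overall strategy matches the paper's: show that $D$ acts as $-5$ on the images of both $E$ and $JX$ inside $JC$, then verify that the polarization pulls back correctly. However, your execution of the two eigenvalue computations has genuine gaps.

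For the $JX$ factor, the assertion that $\widetilde D$ is the restriction of the incidence correspondence $I\subset\cF(X)\times\cF(X)$ to $C\times C$ is false. Two lines $\ell,\ell'\in C$ lying in \emph{distinct} surfaces $S_t,S_{t'}$ of the pencil still intersect in $X$ whenever they pass through a common point of the base curve $E$; this produces an extra one-dimensional component $(C\times_E C)\setminus\Delta$ inside $I\cap(C\times C)$ beyond $\widetilde D$. Consequently the commutation $D\circ j^*=j^*\circ I$ does not follow, and the appeal to a ``classical identity $I=-5$'' on $JX$ is not available as stated. The paper argues directly instead: for $\ell\in C$ one has $\widetilde D(\ell)+5\ell\equiv 5(-K_{S_\ell})$ as divisors on $S_\ell$, and since a hyperplane section of $S_\ell$ represents a fixed rational-equivalence class of $1$-cycles on $X$, the Abel--Jacobi image of $\widetilde D(\ell)+5\ell$ in $JX$ is constant.

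For the $E$ factor, invoking the quadratic relation $(D+5)(D-1)=0$ on $f^*(JE)$ yields nothing new, since that relation already holds on all of $JC$; the entire content is pushed into the unspecified ``trace computation'' ruling out $(+1)$-eigenvectors. The paper's argument is again explicit: writing the ten incident lines as five pairs $(\ell_i,\ell_i')$ with $\ell+\ell_i+\ell_i'\in|-K_{S_\ell}|$, one identifies $\widetilde D(\ell)|_E+5f(\ell)$ with the intersection of $E$ and the cone over the discriminant quintic for projection from $\ell$, hence linearly equivalent on $E$ to $5H|_E$, a constant class.

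Your treatment of the polarization is close to the paper's (degree $6$ on the $E$ side, the Clemens--Griffiths description of $\Theta_{JX}$ via the difference map $\cF\times\cF\to JX$ on the $JX$ side), and your attention to the vanishing of the cross-term $\Alb(j)\circ f^*$ is a legitimate point that the paper leaves implicit.
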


\begin{proof}
We first show that the correspondence $D$ restricts to multiplication by $(-5)$ on both factors $E$ and $JX$. For $\ell\in C$, let $\widetilde{D}(\ell)$ be the sum of
the lines incident to $\ell$ and $E$ inside $X$. We denote by $H_{\ell}$ the hyperplane spanned by $E$ and $\ell$ and put $S_{\ell} := H_{\ell} \cap X$. The lines
incident to $E$ and $\ell$  form $5$ pairs $(\ell_1, \ell_1'), \ldots , (\ell_5, \ell_5')$, with $\ell+ \ell_i + \ell_i'\in |-K_{S_{\ell}}|$ for $i=1, \ldots, 5$.

Consider first the intermediate Jacobian $JX$. We have
\[
\widetilde{D}(\ell) = \sum_{i=1}^5 (\ell_i + \ell_i') \equiv 5 |-K_{S_{\ell}}| - 5\ell,
\]
where $\equiv$ denotes linear equivalence in $S_{\ell}$. Since $|-K_{S_{\ell}}|$ is constant as $\ell$ varies, it follows that $D$ restricts to multiplication
by $(-5)$ on $JX$.

Consider the elliptic curve $E$. Then $\widetilde{D}(\ell)$ in $E$ is the sum of the intersection points of $\ell_i, \ell_i'$ with $E$. Note that $(\ell + \ell_i +\ell_i' )|_E$ is
also the intersection of the plane $\Pi_i:=\langle \ell, \ell_i, \ell_i' \rangle$ with $E$. Hence $\sum_{i=1}^5 (\ell + \ell_i +\ell_i' ) |_E$ is the intersection of the $5$
planes $\Pi_1 , \ldots , \Pi_5$ with $E$. Projecting from $\ell$, we see that the union of these planes is the intersection of $H_{\ell}$ with the inverse image $Q$ of
the plane quintic in $\bP^2 = \bP (V /\ell)$ parametrizing singular conics (the discriminant curve for the projection of $X$ from $\ell$). Therefore $\sum_{i=1}^5
 (\ell + \ell_i + \ell_i' ) |_E$ is contained in the intersection $Q \cap E$ and since the two divisors have the same degree, we obtain that $\sum_{i=1}^5 (\ell + \ell_i
 + \ell_i' ) |_E = Q \cap E$ is constant. This implies that $D$ is multiplication by $(-5)$ on $E$ as well.

So the PTK variety is isogenous to $E\times JX$. To show that they are isomorphic, we show that the pull-back of the polarization of $JC$ to $E\times JX$ is $6$ times a principal polarization. This is immediate on the factor $E$, since the map $C \ra E$ has degree $6$. To see it on the $JX$ factor as well, we again use the Abel-Jacobi embedding $C\inj\cF\inj JX$ and recall the fact \cite{ClemensGriffiths} that one model of the theta divisor in $JX$ is the image of the degree $6$ difference map $\psi:\cF \times \cF \ra \Alb \cF = JX$, defined by $\psi(\ell,\ell')=\ell-\ell'$.
\end{proof}

We denote by $\mathcal{IJ}_5$ the closure in $\cA_5$ of the moduli space of intermediate Jacobians of cubic threefolds.
We have the following result:

\begin{corollary}\label{pencils2}
We have the following equality of $11$-dimensional irreducible cycles in $\cA_6$:
$$\overline{{PT}(\cT)}=\mathcal{IJ}_5\times \cA_1\subset \cA_5\times \cA_1\subset \cA_6,$$
where the closure on the left hand side is taken inside $\cA_6$.
\end{corollary}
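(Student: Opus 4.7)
The inclusion $\overline{PT(\cT)}\subseteq \mathcal{J}_5\times \cA_1$ inside $\cA_5\times \cA_1\subset \cA_6$ is immediate from Lemma \ref{lemP(C)}: any $[C]\in \cT$ arising from a cubic threefold $X$ and a Lefschetz pencil $\Lambda$ of hyperplane sections with base curve $E$ is sent by $PT$ to $[E\times JX]$. The target $\mathcal{J}_5\times \cA_1$ is irreducible of dimension $11$, and $\overline{PT(\cT)}$ is irreducible because $\cT$ is. Containment of two irreducible varieties of the same dimension forces equality, so it suffices to prove that $\dim PT(\cT)=11$.

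My plan is to study the natural morphism
\[
\Phi\colon \cT \lra \mathcal{J}_5 \times \cA_1, \qquad [X,\Lambda]\longmapsto \bigl([JX],\,[E_\Lambda]\bigr),
\]
which by Lemma \ref{lemP(C)} is exactly the composition of $PT_{|\cT}$ with the inclusion $\mathcal{J}_5\times \cA_1\hookrightarrow \cA_6$, and to show that $\Phi$ is dominant. For the first factor, the Clemens--Griffiths Torelli theorem identifies $\mathcal{J}_5$ with the $10$-dimensional moduli of smooth cubic threefolds; since every $X$ admits a $6$-dimensional family of Lefschetz pencils of hyperplane sections, the projection $\cT\to \mathcal{J}_5$ is dominant with general fibre of dimension $6$.

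The step I expect to require a bit of care is the dominance of the second factor: for a general cubic threefold $X$, the rational map
\[
j_X\colon \mathrm{Gr}(3,5)\dashrightarrow \cA_1,\qquad \Pi\longmapsto [\Pi\cap X]
\]
should be dominant, i.e., non-constant. I would verify this by exhibiting a single $1$-parameter family of planes $\{\Pi_t\}\subset \bP^4$ for which the corresponding family of plane cubic sections $\{\Pi_t\cap X\}$ is non-isotrivial; for instance, one can arrange the family to acquire a nodal member at some $t_0$, where $j(\Pi_t\cap X)\to\infty$, which precludes constancy. Combining the two factors then yields $\dim \Phi(\cT)=10+1=11$ with general fibre of dimension $5$, so $\Phi$ is dominant and hence $\overline{PT(\cT)}=\mathcal{J}_5\times \cA_1$, as claimed.
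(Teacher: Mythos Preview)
Your proposal is correct. The paper states this corollary without proof, regarding it as an immediate consequence of Lemma~\ref{lemP(C)} and the preceding dimension count $\dim\cT=16$; your argument supplies the natural details. The only substantive point beyond Lemma~\ref{lemP(C)} is the dominance of $\Phi$, and your justification---surjectivity onto $\mathcal{J}_5$ by Clemens--Griffiths Torelli plus non-constancy of $j_X$ via a nodal plane section---is exactly the expected one. Note that you do not actually need the precise value $\dim\cT=16$; dominance of $\Phi$ suffices, which is a slight streamlining over how the paper sets things up.
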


\end{num}

\section{The $E_6$ lattice}
\label{sec:e6-lattice}

In this section we recall basic facts about the $E_6$ lattice.
Our reference for these is \cite[Chapters 8,9]{dolgachev2012classical-algebraic}.
\begin{num} Let $I^{1,6}$ be the standard Lorenzian lattice with the quadratic
form $x_0^2-\sum_{i=1}^6 x_i^2$. The negative definite $E_6$ lattice is
identified with $k^\perp$, where $k=(-3,1,\dotsc,1)$. Its dual $E_6^{\vee}$
is identified with $I^{1,6}/\bZ k$. Let us denote the standard basis
of $I^{1,6}$ by $f_0,f_1,\dotsc,f_6$, to avoid confusion with the
edges $e_i$ in a graph.

The roots of $E_6$ are the vectors with square $-2$. There are ${6
  \choose 2}+ {6\choose 3}+1 = 36$ pairs of roots corresponding to
$\alpha_{ij}=f_i-f_j$, $\alpha_{ijk}=f_0-f_i-f_j-f_k$ and
$\alpha_{\max}=2f_0 - f_1-\dotsc -f_6$. Obviously, if $r\in E_6$ is a root then $-r$ is a root as well.  The simple roots,
corresponding to the $E_6$ Dynkin diagram can be chosen to be
$r_1=\alpha_{123}$, $r_2=\alpha_{12}$, $r_3=\alpha_{23}$, $r_4=\alpha_{34}$,
$r_5=\alpha_{45}$ and $r_6=\alpha_{56}$.
\end{num}

\begin{num} The Weyl group $W(E_6)$ is the group generated by the reflections in the
roots. It has 51,840 elements.
The fundamental weights
$\omega_1,\dotsc,\omega_6$ are the vectors in $E_6^{\vee}$ with
$(r_i,\omega_j)=\delta_{ij}$.

The exceptional vectors are the vectors in the $W(E_6)$-orbit of
$\omega_6$. They can be identified with vectors $\ell$ in $I^{1,6}$ satisfying
$\ell^2=k\ell=-1$.
There are $6+6+15=27$ of them, namely:
\begin{align*}
  &a_i = f_i, \ \mbox{ for } \quad i=1,\dotsc,6; \\
  &b_i = 2f_0-f_1-\dots-f_6+f_i, \ \mbox{ for }\quad i=1,\dotsc,6; \\
  &c_{ij}= f_0-f_i-f_j, \  \mbox{ for } \quad 1\le i<j \le 6.
\end{align*}
\end{num}

\begin{num} For each root $r\in E_6$, there are $15$ exceptional vectors that are
orthogonal to it, $6$ exceptional vectors with $r\cdot \ell=1$ and $6$ vectors with
$r\cdot \ell=-1$. The collections of the $6$ pairs of exceptional vectors
non-orthogonal to a root vector are called \emph{double-sixes}. The
elements in each pair are exchanged by the reflection $w_r\in W(E_6)$ in the
root $r$.

There are $36$ double-sixes, one for each pair $\pm r$ of roots. For
example, the double-six for the root $r=\alpha_{\max}$ is
$\{a_1,a_2,\dotsc,a_6\}$, $\{b_1,b_2,\dotsc,b_6\}$. The reflection group acts transitively
on the set of the exceptional
vectors. This gives rise to an embedding $W(E_6)\subset S_{27}$. Under this
embedding, each reflection corresponds to a product of 6
transpositions. For example, the reflection in the root
$r=\alpha_{\max}$ is the permutation $(a_1, b_1)\cdots (a_6, b_6)\in S_{27}$.

Note that the choice of a root is equivalent to an ordering of a pair: when we write the same element of
$W(E_6)$ as a product $(b_1, a_1)\cdots (b_6, a_6)$, it corresponds to the root
$-\alpha_{\max}$.
The $W(E_6)$-action by conjugation is transitive on the set of reflections, i.e., double
sixes, so to study their properties it is usually sufficient to make
computations for one representative.
\end{num}

\begin{num} For a smooth cubic surface $S$, the above objects have the following incarnation:
\begin{itemize}
\item $I^{1,6} = \Pic(S)$ together with the intersection form,
\item $k= K_S$ and $E_6= K_S^\perp\subset \mbox{Pic}(S)$,
\item the exceptional vectors are identified with the lines
  $\ell_1,\dotsc, \ell_{27}$ on $S$,
\item a sixer is a set of 6 mutually disjoint lines, a double-six is
  the set of two sixers corresponding to the opposite roots.
\end{itemize}
\end{num}

The relationship between the $W(E_6)$-action and
the correspondence given by the line incidence is as follows.

\begin{definition}
The correspondence on the set of exceptional vectors is defined by setting
$$D(\ell) := \sum_{\{\ell':\ \ell'\cdot \ell=1\}} \ell'.$$
\end{definition}

\begin{remark}\label{num:corr-computation}
For further use, we retain the following computation:
\begin{align*}
  &D(a_1) = b_2+\dotsb + b_6 + c_{12}+\dotsb +c_{16} \\
  &D(b_1) = a_2+\dotsb + a_6 + c_{12}+\dotsb +c_{16} \\
  &D(a_1-b_1) = (b_2-a_2) + \dotsc (b_6-a_6).
\end{align*}
\end{remark}

\begin{num} The group $W(E_6)$ has $25$ irreducible representations corresponding to its $25$ conjugacy classes, which will  appear several times
in this paper. For conjugacy classes we use the ATLAS or GAP notation 1a, 2a, 2b, 2c, \ldots, 12a, (command `CharacterTable("U4(2).2")'). The
number refers to the order of the elements in the conjugacy class.
For instance, the reflections in $W(E_6)$ (products of
six transpositions) belong to the conjugacy class 2c, the product of two syzygetic reflections belongs
to the class 2b, whereas the product of two azygetic reflections belongs to the class 3b (see Section 5 for precise definitions).
\end{num}

\section{Degenerations of Jacobians and Prym varieties}
\label{sec:degs-jacobians}

\begin{num} By a theorem of Namikawa and Mumford, the classical Torelli map
$\cM_g\to \cA_g$ sending a smooth curve to its Jacobian extends to a
regular morphism $\ocM_g \to \cA_g\vor$ from the
Deligne-Mumford compactification of $\cM_g$ to the toroidal
compactification of $\cA_g$ for the second Voronoi fan. See
\cite{alexeev2011extending-torelli} for a transparent modern treatment
of this result, and extension results for other toroidal
compactifications of $\cA_g$. The result applies equally to the stacks
and to their coarse moduli spaces. Here, we will work with stacks, so that we have
universal families over them.
\end{num}

\begin{num}\label{subsec:homology-gps}
At the heart of the result of Namikawa and Mumford lies the Picard-Lefschetz formula for the
  monodromy of Jacobians in a family of curves, see
  e.g. \cite[Proposition 5]{namikawa1973on-the-canonical-holomorphic}.
  The map of fans for the toroidal morphism $\ocM_g \to \cA_g\vor$ is
  described as follows.  Fix a stable curve $[C]\in\ocM_g$, and let
  $\Gamma$ be its dual graph, with a chosen orientation. Degenerations
  of Jacobians are described in terms of the groups
\begin{displaymath}
  C_0(\Gamma,\bZ) = \bigoplus_{\text{vertices } v} \:\: \bZ v, \quad
  C_1(\Gamma,\bZ) = \bigoplus_{\text{edges } e} \:\: \bZ e  ,\quad
  H_1(\Gamma,\bZ) = \mbox{Ker} \bigl\{\partial\colon C_1(\Gamma, \bZ) \to C_0(\Gamma, \bZ)\bigr\}.
\end{displaymath}

The Jacobian $JC=\Pic^0(C)$ is a semiabelian group variety, that is, an extension
\begin{equation}\label{eqext}
  1\to H^1(\Gamma,\bC^*)
  \to \mbox{Pic}^0(C) \to \mbox{Pic}^0(\wt C) \to 0,
\end{equation}
where  $\wt C$  is the normalization of $C$.
In particular, $\Pic^0(C)$ is a multiplicative torus if and only if  $\wt C$ is a union
of $\bP^1$'s, or equivalently, if $b_1=h^1(\Gamma)=g$.


The monodromy of a degenerating family of
Jacobians is described as follows.  Fix a lattice
$\Lambda\simeq\bZ^g$ and a surjection $\Lambda \twoheadrightarrow
 H_1(\Gamma,\bZ)$. The rational polyhedral cone for a neighborhood of $[C]\in\ocM_g$
lives in the space $\Lambda^{\vee}\otimes\bR$ with the lattice
$\Lambda^{\vee}$. It is a simplicial cone of dimension $b_1=h^1(\Gamma)$
with the rays $e_i^*$ corresponding to the edges of $\Gamma$. Here,
$e_i^*$ is the linear function on $H_1(\Gamma,\bZ)\subset
C_1(\Gamma,\bZ)$ taking the value $\delta_{ij}$ on the edge $e_j\in
C_1(\Gamma,\bZ)$.

\vskip 4pt

The rational polyhedral cone corresponding to a neighborhood of $[JC]\in\cA_g\vor$
lives in the space $\Gamma^2(\Lambda^{\vee})\otimes\bR =
(\Sym^2(\Lambda)\otimes\bR)^{\vee}$, where the lattice $\Gamma^2(\Lambda^{\vee})$
is the second divided power of $\Lambda^{\vee}$. It is a simplicial cone with
the rays $(e_i^*)^2$ for all $e_i^*\ne0$, which means that $e_i$ is
not a bridge of the graph $\Gamma$.  We explain what this means in down to earth terms.
In an open analytic neighborhood
$U$ of $[C]$, one can choose local analytic coordinates $z_1,\dotsc,
z_{3g-3}$ so that the first $N$ coordinates correspond to smoothing
the nodes of $C$, labeled by the edges $e_i$ of the graph
$\Gamma$. Thus, we have a family of smooth curves over the open subset
$V=U-\bigcup_{i=1}^N\{z_i=0\}$.

\vskip 4pt

Then a complex-analytic map $V\to \mathcal H_g$ to the Siegel upper
half-plane is given by a formula (see
\cite[Thm.2]{namikawa1973on-the-canonical-holomorphic} or
\cite[18.7]{namikawa1976a-new-compactification-of-the-siegel2})
\begin{displaymath}\label{for-maptoSiegel}
  (z_i) \mapsto \sum_{i=1}^N M_i \cdot
  \frac{1}{2\pi\sqrt{-1}} \log z_i +
  \text{(a bounded holomorphic function)},
\end{displaymath}
where $M_i$ are the $g\times g$ matrices with integral coefficients corresponding to the
quadratic functions $(e_i^*)^2$ on $\Lambda\twoheadrightarrow
 H_1(\Gamma,\bZ)$.
After applying the coordinatewise exponential map
\begin{displaymath}
  \bC^{\frac{g(g+1)}2} \to (\bC^*)^{\frac{g(g+1)}2}, \quad
  u_{ij} \mapsto \exp(2\pi \sqrt{-1}\ u_{ij}),
\end{displaymath}
the matrices $M_i\cdot (\log z_i/2\pi\sqrt{-1})$ become Laurent
monomials in $z_i$. This monomial map describes a complex-analytic map
from a small complex-analytic neighborhood $U$ of $[C]\subset\ocM_g$
to an appropriate \'etale neighborhood of $\cA_g$. For the arguments
below the above two formulas suffice.  In particular, we do not need
to know the indeterminacy locus of the extended maps. Thus, we will
not need explicit coordinates near a boundary of $\agvor$.
\end{num}

\begin{num} The following weak form of Torelli's theorem is a sample of
our degeneration technique. This is far from being the easiest
way to prove the Torelli theorem, but it gives a good illustration of our
method which we later apply to PTK varieties.

\begin{lemma}\label{lem:torelli-full-dim}
  The image of the Torelli map $\cM_g\to \cA_g$ has full dimension $3g-3$.
\end{lemma}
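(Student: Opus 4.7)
The plan is to reduce the statement to a linear-algebra check on the dual graph of a maximally degenerate stable curve, using the explicit description of the map of fans just recalled.

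First I would choose a stable curve $C_0\in\ocM_g$ whose dual graph $\Gamma$ is trivalent, connected, and bridgeless; for $g\ge 2$ such a graph exists and has $2g-2$ vertices and $3g-3$ edges, with $h^1(\Gamma,\bZ)=g$. The local toroidal cone at $[C_0]$ is the simplicial cone of dimension $3g-3$ with rays $e_1^*,\ldots,e_{3g-3}^*$, one per node. By the Namikawa--Mumford formula displayed above, the induced map of fans for $\ocM_g\to\ocA_g\vor$ sends each $e_i^*$ to the rank-one quadratic form $(e_i^*)^2\in \Gamma^2(H_1(\Gamma,\bZ)^\vee)\otimes\bR$, viewed as an element of the $g(g+1)/2$-dimensional space $\Sym^2(H_1(\Gamma,\bR)^\vee)$. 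Once these $3g-3$ rank-one forms are shown to be linearly independent, the image cone has dimension $3g-3$, which forces the Torelli map to be generically of maximal rank and its image to have dimension $3g-3$.

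It therefore remains to verify that the $(e_i^*)^2$ are linearly independent for some bridgeless trivalent $\Gamma$. I would proceed by induction on $g$: the base case $g=2$ is the theta graph, where a direct computation in $\Sym^2(\bR^2)$ shows that the three rank-one forms span. For the inductive step I would pass from a bridgeless trivalent graph of genus $g-1$ to one of genus $g$ by attaching a handle, i.e.\ inserting a new vertex on an interior edge and joining it by two parallel edges to a second new vertex that carries a loop. The associated new $H_1$-generator provides a direction pairing nontrivially with exactly the two new edge functionals, and a short check shows that the corresponding two new rank-one forms raise the dimension of the span by exactly two.

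The main obstacle is the bookkeeping in the inductive step needed to certify that the two new rank-one forms are independent of the old ones. In effect, the entire proof is reduced to computing the rank of an explicit integer $(3g-3)\times g(g+1)/2$ matrix associated with $\Gamma$; the comforting inequality $3g-3\le g(g+1)/2$ for $g\ge 2$ ensures there is enough room in the target, but one must still exhibit a specific bridgeless trivalent graph where maximal rank is achieved.
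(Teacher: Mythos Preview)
Your overall strategy---pick a maximally degenerate curve with trivalent dual graph and show that the rank-one forms $(e_i^*)^2$ span a $(3g-3)$-dimensional subspace of $\Sym^2 H_1(\Gamma,\bR)^\vee$---is exactly the paper's. The difference lies in how the linear independence is established, and here your proposal has two concrete problems.

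First, the inductive construction is garbled. As written (``insert a new vertex on an interior edge, join it by two parallel edges to a second new vertex that carries a loop'') the resulting graph has vertices of degree $4$, so it is not trivalent; and the genus jumps by more than one. More seriously, passing from genus $g-1$ to genus $g$ takes the edge count from $3(g-1)-3$ to $3g-3$, so you need \emph{three} new rank-one forms independent of the old span, not two. Until the construction and the count are repaired, there is no inductive step.

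Second, ``bridgeless'' alone is too weak a hypothesis to aim for. If $\Gamma$ is $2$-edge-connected but has a $2$-edge cut $\{e_i,e_j\}$, then every cycle meets $e_i$ and $e_j$ with equal (up to sign) multiplicity, so $e_i^*=\pm e_j^*$ on $H_1(\Gamma,\bZ)$ and $(e_i^*)^2=(e_j^*)^2$; the forms are then certainly not independent. The paper avoids this by working with a \emph{$3$-edge-connected} trivalent graph (which exists for every $g$): this is precisely the condition guaranteeing $e_i^*\ne\pm e_j^*$ for $i\ne j$, and for such graphs the linear independence of the $(e_i^*)^2$ is a known fact, cited from \cite{alexeev2011extending-torelli}. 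So rather than an induction, the paper invokes the right connectivity hypothesis and a reference; if you want to carry out your inductive argument, you should at minimum arrange that each step preserves $3$-edge-connectedness, fix the handle attachment so the graph stays trivalent, and track three new forms rather than two.
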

\begin{proof}
  For every $g$, there exists a $3$-edge connected trivalent graph $\Gamma$
  of genus $g$ (exercise in graph theory). By Euler's formula, it has
  $3g-3$ edges. Recall that a connected graph is \emph{2-edge connected}
  if it has no bridges, i.e., the linear functions $e_i^*$ on
  $H_1(\Gamma,\bZ)$ are all nonzero, and it is \emph{3-edge connected} if
  for $i\ne j$ one has $e_i^*\ne \pm e_j^*$,
  i.e., $(e_i^*)^2\ne(e_j^*)^2$.

\vskip 3pt

  Let $C$ be a stable curve whose dual graph is $\Gamma$ and
  whose normalization is a disjoint union of $\bP^1$'s. Then the $3g-3$
  matrices $M_i$ in Formula \eqref{for-maptoSiegel}, i.e. the functions
  $(e_i^*)^2$, are linearly independent in $\Sym^2(\bZ^g)$,
  cf. \cite[Remark 3.6]{alexeev2011extending-torelli}.  By looking at the
  leading terms as $z_i\to 0$, this easily implies that the image
  has full dimension $3g-3$.

After applying the exponential function, the map becomes
  \begin{displaymath}
    (z_1,\dotsc,z_{3g-3}) \mapsto
    \text{(monomial map)} \times \text{(invertible function)}.
  \end{displaymath}
  Since the monomial part is given by monomials generating an algebra
  of transcendence degree $3g-3$, the image is full-dimensional.
\end{proof}

\begin{remark}\label{rem:regularity-not-needed}
  Note that the regularity of the extended Torelli map
  $\ocM_g\to\cA_g\vor$ played no role in the proof of
  Lemma \ref{lem:torelli-full-dim}. All we need for the conclusion is the fact that the monodromy
  matrices $M_i$ are linearly independent.
\end{remark}
\end{num}

\begin{num} The theory for Jacobians was extended to the case of Prym varieties in
\cite{alexeev2002degenerations-of-prym}. We briefly recall it. Let
$\overline{\cR}_g$ be the stack of Prym curves of genus $g$, classifying admissible pairs $[C, \iota]$ consisting of a stable curve with
involution $\iota:C\rightarrow C$, so that $C/\iota$ is a stable curve of genus $g$ and the map $C\rightarrow C/\iota$ is an admissible map of
stable curves. We refer to \cite{BeauvilleSchottkyPrym} and \cite{FL} for background on $\overline{\cR}_g$.   Consider one pair $[C,\iota]\in\overline{\cR}_g$ and
a small analytic neighborhood $U$ of it. As before, $\Gamma$ is
the dual graph of $C$.

Then the space $H_1(C,\bZ)$ of the Jabobian case is replaced by the
lattice $H_1 / H_1^{+}$. Here,
$H_1^+$ and $H_1^-$ are the $(+1)$- and the $(-1)$-eigenspaces of the
involution action $\iota_*$ on $H_1(C,\bZ)$ respectively. Via the natural projection $H_1 \twoheadrightarrow
 H_1 / H_1^{+}$, we identify $H_1^-$ with a finite index sublattice of $H_1 / H_1^{+}$.

The degeneration
of Prym varieties as groups is
\begin{displaymath}
  P(C,\iota) = \mbox{Ker}(1+\iota^*)^0 = \mbox{Im}(1-\iota^*),
  \qquad \iota^*\colon \Pic^0(C)\to\Pic^0(C).
\end{displaymath}

The monodromy of a degenerating family of
Prym varieties is obtained by restricting the
monodromy map for $JC$ to the $(-1)$-eigenspace. Combinatorially, it
works as follows: For every edge $e_i$ of $\Gamma$ we have a linear function $e_i^*$ on
the group $H_1^{-}$, the restriction of the linear function on $H_1(C,\bZ)$.
For the divisor $\{z_i=0\}$ on $U$ corresponding to smoothing the
node $P_i$ of $C$, the monodromy is given by the quadratic form
$(e_i^*)^2$ restricted to $H_1(\Gamma,\bZ)^-$.
Similarly to Lemma~\ref{lem:torelli-full-dim}, this can be used to
prove various facts about the Prym-Torelli map, but we will not pursue
it here.
\end{num}

\section{Degenerations of Prym-Tyurin-Kanev varieties}\label{sec:degPTK}

We choose a concrete boundary point in a compactification of the Hurwitz
scheme $\Hur$.  We start with a single cubic surface $S$ and the set
$\{\ell_1,\dotsc,\ell_{27}\}$ of 27 lines on it. Sometimes we shall use
the Schl\"afli notation $\{a_i,b_i,c_{ij}\}$ for them, as in Section \ref{sec:e6-lattice}. We fix an embedding of $W(E_6)$ into the symmetric group
$S_{27}$ permuting the $27$ lines on $S$.

\begin{num}
We choose $12$ roots $r_i$ which generate the root system
$E_6$. Let $w_i\in W(E_6)$ be the reflections in $r_i$; they
generate $W(E_6)$.  As we saw in Section \ref{sec:e6-lattice}, each $w_i$ is a
double-six. Fixing the root $r_i$ gives it an orientation.
\end{num}

\begin{num}
Consider a nodal genus $0$ curve $E$ whose normalization is a union of
$\bP^1$'s and whose dual graph is the tree $T$ shown in the left half
of Figure~\ref{fig:cover}.  The $24$ ends of this tree correspond to
$24$ points $p_1,\dotsc, p_{24}$ on $E$. We label the points by roots
$r_1,\dotsc,r_{12}$.  Each of the outside vertices has two ends, we
use the same label $r_i$ for both of them.
\end{num}

\begin{figure}[h]
 \centering
  \includegraphics[width=5in]{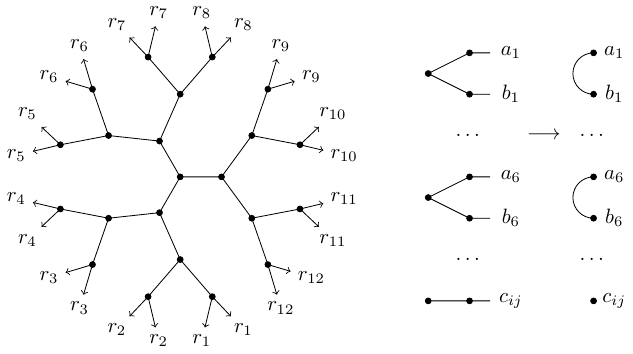}
  \caption{The tree $T$ for the target curve $E$ of genus 0}
  \label{fig:cover}
\end{figure}

\begin{definition}
Let $\pi\colon C\to E$ be an admissible $27:1$ cover ramified at
  the point $p_i$ with monodromy $w_i$ for $i=1, \ldots, 24$.
\end{definition}
For every irreducible component of $E$, the product of the monodromy elements
equals~$1$; this count includes the nodes. Since we required that for
every component on the boundary the two $w_i$'s are the same, the map
is unramified at the nodes. Thus, $\pi$ is \'etale over
$E\setminus\{p_1,\dotsc,p_{24}\}$.

\begin{num}
  Here is a concrete description of the dual graph $\Gamma$ of $C$. It
  has
\begin{displaymath}
    10 \times 27 + 12 \times (6+15) \text{ vertices and }
    21\times 27 \text{ edges}
\end{displaymath}
Each vertex $v$ of $T$ in the \emph{\'etale part} has $27$ vertices
over it. Over each of the outside $12$
vertices, there are $6$ vertices, where the map $\bP^1\to\bP^1$ is
$2:1$ and ramified at a pair of the points $p_i$ and $p_{i+12}$, and $15$ other vertices
where the map $\bP^1\to\bP^1$ is $1:1$.

  All the nodes of $E$ lie in the \'etale part, so for each internal
  edge $e$ of the tree $T$ there are $27$ edges of $\Gamma$.
  \end{num}

\begin{num}\label{num:G-prime}
The graph $\Gamma$ is homotopically  equivalent to the following much
simpler graph $\Gamma'$. It has:
\begin{enumerate}
\item $27$ vertices $\{v_s\}_{s=1}^{27}$, labeled by the lines on $S$. (Here, $s$ stands for ``sheets''.)
\item $12\times 6$ edges $e_{ij}$.  For each of the twelve roots
$r_i$, there are $6$ edges. For example, for $r=r_{\max}$, the edges
are $(a_1,b_1), \dotsc, (a_6,b_6)$. The first edge is directed
from $a_1$ to $b_1$, etc.
\end{enumerate} The graph $\Gamma'$ is obtained from $\Gamma$ by contracting the
tree in each sheet to a point, and removing the middle vertex of
degree $2$ for each of the $12\times 6$ paths corresponding to the
double-sixes. The process is illustrated in the right half of
Figure~\ref{fig:cover}.

By Euler's formula, the genus of $\Gamma$ is $12\times
6-27+1=46$. Thus, the curve $C$ has arithmetic genus $46$.
\end{num}

\begin{num}
Next we define a symmetric correspondence $\widetilde D\subset C\times C$ of
degree $10$, as follows. To each point $Q\in C$ over the
\'etale part in the sheet labeled $\ell_i$, associate 10 points in the
same fiber of $\pi$ that are labeled $\ell_{ij}$ by the lines that
intersect $\ell_i$.

This defines the curve $\wt D^0\subset C^0\times C^0$, where
$C^0=C\setminus \pi^{-1}\{p_1,\dotsc, p_{24}\}$.
The correspondence $\wt D\subset C\times C$ is the closure of $\wt
D^0$.
Let $p_i$ be a ramification point with
monodromy $w_i$. Without loss of generality, we may assume
$w=w_{\max}$. The points in the fiber $\pi\inv(p_i)$ are labeled
$a_1b_1$, \dots, $a_6b_6$ and  $c_{ij}$ for $i\neq j$. Then the correspondence is described by:
\begin{displaymath}
  a_1b_1 \mapsto \sum_{i=2}^6 (a_ib_i + c_{1i} ), \quad
  c_{12} \mapsto a_1b_1 + a_2b_2 + \sum_{i,j\ne 1,2} c_{ij},
  \quad\text{etc.}
\end{displaymath}

\end{num}

\begin{lemma}
There exists an analytic neighborhood $U\subset \overline{\cM}_{0,24}$ of
the point $[E, p_1,\dots, p_{24}]$ and a family of covers
$\pi_t\colon C_t\to E_t$ together with correspondences $\wt
D_t\subset C_t\times C_t$ over $U$, which extends $\pi\colon C\to E$
and $\wt D$.
\end{lemma}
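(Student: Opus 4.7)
The plan is to split the proof into two parts: first extending the admissible cover to a family $\pi_t$, then extending the correspondence $\widetilde D$ to a family $\widetilde D_t$.

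For the first part, I would rely on the Hurwitz stack $\ohur$ of admissible $W(E_6)$-covers of 24-pointed stable rational curves with monodromy a reflection at each branch point. By the general theory of admissible covers (Harris--Mumford, Abramovich--Corti--Vistoli's twisted stable maps), this stack is representable and finite over $\overline{\cM}_{0,24}$. The $W(E_6)$-Galois closure $\pi':C'\to E$ of $\pi:C\to E$ determines a point of $\ohur$ lying over $x=[E,p_1,\ldots,p_{24}]$. Choose a small analytic neighborhood $V\subset\ohur$ of this lift, pull back the universal admissible cover, and let $U\subset\overline{\cM}_{0,24}$ be the image of $V$ (shrunk so that $V\to U$ is an analytic local isomorphism of uniformizing charts). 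Taking the quotient of the universal Galois cover by the subgroup $W(D_5)\subset W(E_6)$ stabilizing an exceptional vector yields the desired non-Galois family $\pi_t:C_t\to E_t$.

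For the second part, let $\mathcal{E}\to U$ denote the family of target stable curves and $\mathcal{C}\to\mathcal{E}$ the family of sources. Over the complement $\mathcal{E}^\circ$ of the $24$ branch sections, $\pi$ is a degree-$27$ \'etale cover whose monodromy lies in $W(E_6)\subset S_{27}$, acting on the $27$ exceptional vectors of an abstract cubic surface. Since the incidence correspondence $D_\Lambda$ of \ref{subsec-D_L} is $W(E_6)$-invariant, the locus
\[
\widetilde{\mathcal D}^\circ=\bigl\{(x,y)\in\mathcal{C}^\circ\times_{\mathcal{E}^\circ}\mathcal{C}^\circ \ \bigl|\ x\neq y,\ \ell_x\cdot\ell_y=1\bigr\}
\]
is a well-defined algebraic, symmetric subscheme of bidegree $(10,10)$ over the base. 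I would then define $\widetilde{\mathcal D}$ to be the scheme-theoretic closure of $\widetilde{\mathcal D}^\circ$ in $\mathcal{C}\times_U\mathcal{C}$, and take the restrictions $\widetilde D_t$ as the desired family of correspondences.

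The only remaining task is to verify that the central fiber of $\widetilde{\mathcal D}$ coincides with the $\widetilde D$ described just before the statement of the lemma. This is a local problem at each ramification point $p_i$ of $\pi:C\to E$: after passing to a square-root stack along the $i$-th branch section, the Galois cover splits locally as a product of six quadratic covers (one for each disjoint transposition forming the double-six $w_i$) and fifteen trivial factors (the fixed labels $c_{jk}$). The limit of the étale incidence correspondence is controlled by these monodromy orbits, and after a short computation one recovers the identifications $a_1b_1\mapsto \sum_{i=2}^{6}(a_ib_i+c_{1i})$, $\ c_{12}\mapsto a_1b_1+a_2b_2+\sum_{i,j\neq 1,2}c_{ij}$, etc., as displayed just before the lemma. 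I expect this combinatorial bookkeeping to be the main---though essentially mechanical---obstacle; everything else follows from the standard representability and finiteness properties of the Hurwitz stack together with $W(E_6)$-invariance of the incidence pairing.
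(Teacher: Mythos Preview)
Your proposal is correct but takes a heavier route than the paper. The paper's proof is essentially one line: since $\pi$ is \'etale over every node of $E$ (recall that the tree $T$ was designed so that the two ends at each outer vertex carry the \emph{same} root $r_i$, making the local monodromy at each internal edge trivial), both the cover and the correspondence extend over any smoothing of $E$ with no obstruction---the monodromy data determine $C_t$ topologically, and the finite map to $E_t$ then determines its algebraic structure uniquely. You instead invoke the full Hurwitz stack machinery (twisted stable maps, quotient by $W(D_5)$) and the $W(E_6)$-invariance of the incidence pairing, which works but is unnecessary here. Your approach has the advantage of applying even when the cover is ramified over the nodes (the general admissible-cover situation), whereas the paper's argument is tailored to this particular degeneration and is correspondingly cheaper. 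Note also that your ``remaining task'' of checking the central fiber of $\widetilde{\mathcal D}$ against the explicit description at the ramification points is not really part of the lemma's content: that description was already recorded \emph{before} the statement, and the lemma is only asserting the existence of an extension over $U$.
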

\begin{proof}
Since the map $\pi$ is \'etale over each node of $E$, the families
$C_t$ and $\wt D_t$ extend naturally. The monodromy data determine
the $C_t$'s as topological spaces. Then the finite map $C_t\to E_t$
determines a unique structure of an algebraic curve on $C_t$.
\end{proof}

\begin{lemma}
  The correspondence $\wt D\subset C\times C$ induces an endomorphism
  of the homology group $D\colon H_1(\Gamma,\bZ)\to H_1(\Gamma,\bZ)$
  satisfying the relation $(D-1)(D+5)=0$. The $(-5)$-eigenspace
  $H_1^{(-5)}$ can be naturally identified with $\mathrm{Ker}(\phi)$, where
  \begin{displaymath}
    \phi\colon \bigoplus_{i=1}^{12} \bZ R_i \to E_6,
    \qquad R_i \mapsto r_i.
  \end{displaymath}
  Here, $R_i$ is a basis vector for the $(-5)$-eigenspace for the action of $D$ on the rank $6$ lattice generated by the edges of $\Gamma'$ above the root $r_i$. Since the vectors $r_i$ generate $E_6$, one
  has $\rk H_1^{(-5)}=6$.
\end{lemma}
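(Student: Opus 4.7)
The plan is to identify $\Pic^0(C)$ with a torus, descend the endomorphism $D$ to its character lattice $H_1(\Gamma,\bZ)$, deduce the quadratic relation by specialization from Kanev's relation, construct an explicit injection $\iota\colon \mathrm{Ker}(\phi)\hookrightarrow H_1^{(-5)}$, and close by a rank count.

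Since the normalization $\wt C$ is a disjoint union of $\bP^1$'s, $\Pic^0(\wt C)=0$ and the exact sequence \eqref{eqext} identifies $\Pic^0(C)$ with the torus $H^1(\Gamma,\bC^*)$, whose character lattice is $H_1(\Gamma,\bZ)$. Pullback-pushforward along the projections $p_1,p_2\colon\wt D\to C$ produces the desired endomorphism $D$, and the quadratic relation $(D-1)(D+5)=0$ follows by continuity: a smoothing of $C$ inside the Hurwitz space gives a family in which Kanev's identity \eqref{quadkanev} holds on the generic fibre, and the relation extends to the semi-abelian central fibre.

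The key step is identifying the $(-5)$-eigenspace. Passing to the simplified graph $\Gamma'$ of \ref{num:G-prime}, set $S_i:=\sum_{p=1}^{6} e_{i,p}\in C_1(\Gamma',\bZ)$. Since the pair $(a_p^{(i)},b_p^{(i)})$ is exchanged by $s_{r_i}$ with $(r_i,a_p^{(i)})=1$, we have $b_p^{(i)}-a_p^{(i)}=r_i$ in $\Pic(S)=I^{1,6}$ for every $p$; hence, under the natural map $C_0(\Gamma',\bZ)=\bZ^{27}\to I^{1,6}$, $v_\ell\mapsto [\ell]$, one reads off $\partial S_i=\sum_\ell(\ell\cdot r_i)\,v_\ell$ (up to a uniform sign). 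Since the intersection pairing on $I^{1,6}$ is non-degenerate and the twenty-seven lines span, $\partial(\sum c_iS_i)=0$ in $\bZ^{27}$ if and only if $\sum c_ir_i=0$ in $E_6$, which yields the injective map $\iota\colon \mathrm{Ker}(\phi)\hookrightarrow H_1(\Gamma',\bZ)$, $(c_i)\mapsto \sum c_iS_i$. To see that the image lies in $H_1^{(-5)}$, I would use Remark \ref{num:corr-computation} to verify that $D(\partial e_{i,p})=-\sum_{q\neq p}\partial e_{i,q}$ in $\bZ^{27}$; summing over $p$ gives $D(\partial S_i)=-5\,\partial S_i$, and therefore $D(\sum c_iS_i)=-5\sum c_iS_i$ once this vertex-level computation is lifted canonically to $H_1(\Gamma,\bZ)$.

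A rank count then finishes the argument: the twelve roots generate $E_6$, so $\rk \mathrm{Ker}(\phi)=12-6=6$, while $\rk H_1^{(-5)}$ equals the dimension of the sub-torus $\mathrm{Im}(D-1)=PT(C,D)\subset\Pic^0(C)$, which by Kanev's formula equals $\frac{1}{6}(g(C)-\deg \wt D)=\frac{1}{6}(46-10)=6$ (the formula remains valid in the semi-abelian limit because $\wt D$ is disjoint from the diagonal). The main technical obstacle is precisely the passage from the easy vertex-level action on $C_0(\Gamma',\bZ)$ to the canonical endomorphism of $H_1(\Gamma',\bZ)$ induced by $\wt D$: at each of the twelve branch points, every source node maps under $\wt D$ to five other nodes plus five smooth points, and one must verify that the contributions of the smooth points do not perturb the induced action on the character lattice of the toric part of $\Pic^0(C)$. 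This would be handled either by a local analysis of $\wt D$ near each branch point of $E$, or by transporting the calculation from a smooth deformation inside the Hurwitz space.
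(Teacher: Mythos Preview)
Your setup is right and your elements $S_i=\sum_p e_{i,p}$ are exactly the generators the paper uses. But the step you flag as the ``main technical obstacle'' is in fact the heart of the matter, and your proposed route through the vertex module $C_0(\Gamma',\bZ)$ does not close it. Knowing $D^0(\partial e_{i,p})=-\sum_{q\ne p}\partial e_{i,q}$ only tells you that $\partial\bigl(D^1(e_{i,p})+\sum_{q\ne p}e_{i,q}\bigr)=0$; on the subspace $H_1=\mathrm{Ker}(\partial)$ this relation is vacuous, so you cannot conclude $D(S_i)=-5S_i$ from vertex data alone. Neither of your suggested fixes (local analysis near the branch points, or transport from a smooth deformation) is needed.

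The paper's move is to compute the induced endomorphism $D^1$ directly on the edge module $C_1(\Gamma',\bZ)$. This is elementary: the twelve $6$-edge blocks do not interact, and within the block for a single root (say $\alpha_{\max}$) Remark~\ref{num:corr-computation} applied to the \emph{edge} $(a_1,b_1)$ gives $D^1(a_1,b_1)=-\sum_{j\ge 2}(a_j,b_j)$. So $D^1$ is block-diagonal with each block the $6\times6$ matrix $N$ having zeros on the diagonal and $-1$ elsewhere. This single computation buys everything at once: $(N-1)(N+5)=0$ gives the quadratic relation on $C_1$ and hence on $H_1$ with no appeal to specialization; the $(-5)$-eigenspace of $N$ is $\bZ\cdot(1,\dots,1)$, so $C_1^{(-5)}=\bigoplus_i\bZ S_i$ exactly; and then $H_1^{(-5)}=C_1^{(-5)}\cap\mathrm{Ker}(\partial)=\mathrm{Ker}(\phi)$ as sublattices, not merely after tensoring with $\bQ$. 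Your rank count via Kanev's dimension formula in the semi-abelian limit is therefore unnecessary, and in any case would only yield an isomorphism over $\bQ$.
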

\begin{proof}
  We will work with the graph $\Gamma'$ defined in~\ref{num:G-prime},
  since the homology groups of $\Gamma$ and $\Gamma'$ are canonically
  identified.  The group $C_0(\Gamma',\bZ)$ of vertices is
  $\bigoplus_{i=1}^{27}\bZ v_i$. The endomorphism $D^0$ on it is defined
  in the same way as the correspondence on the $27$ lines.  The induced
  endomorphism $D^1$ on $C_1(\Gamma',\bZ)$ is the following. Pick
  one of the roots $r_i$.  Without loss of generality, let us assume
  $r=\alpha_{\max}$. Then
\begin{displaymath}
    D^1 (a_1,b_1) = -(a_2,b_2) - \ldots - (a_6,b_6).
\end{displaymath}
By Remark \ref{num:corr-computation}, $D$ commutes with $\partial$, so
defines an endomorphism on $H_1(\Gamma',\bZ)$.

The endomorphism $D^1$ on $C_1(\Gamma',\bZ)$ splits into $12$
blocks each given by the $(6\times 6)$-matrix $N$ such that
$N_{ii}=0$ and $N_{ij}=-1$ for $i\ne j$. It is easy to see that
$(N-1)(N+5)=0$ and that the $(-5)$-eigenspace of $N$ is
$1$-dimensional and is generated by the vector
  $(a_1,b_1)+\ldots+(a_6,b_6)$.

This gives an identification $C_1(\Gamma',\bZ)^{(-5)}=
\bigoplus_{i=1}^{12}\bZ R_i$. The homomorphism $\partial\colon C_1\to
C_0$ is defined by $R_i\mapsto \sum_{s=1}^{27} (r_i, e^s) v^s$,
where $e^s$ are the 27 exceptional vectors. Since the bilinear form
on $E_6$ is nondegenerate and $e^s$ span $E_6^{\vee}$, one has
\begin{displaymath}
    \partial \Bigl(\sum_{i=1}^{12} n_i R_i\Bigr) = 0 \iff
    \left( \phi\Bigl(\sum_{i=1}^{12} n_i R_i\Bigr), e^s \right)=0 \ \mbox{ for  } s=1, \ldots, 27
    \iff \phi\Bigl(\sum_{i=1}^{12} n_i R_i\Bigr)=0.
  \end{displaymath}
  Therefore, $H_1^{(-5)} = C_1^{(-5)}\cap \mbox{Ker}(\partial) = \mbox{Ker}(\phi)$.
 \end{proof}

 It is an elementary linear algebra exercise to pick an
 appropriate basis in $\mbox{Ker}(\phi)$, which becomes especially easy if
 $r_1,\dotsc,r_6$ form a basis in $E_6$.

\begin{theorem}\label{thm:pt-monodromy}
The limit of PTK varieties $P(C_t,D_t)$ as a group is the
torus $(\bC^*)^6$ with the character group $H_1^{(-5)}$.
For each of the $21$ internal edges $e_i$ of the tree $T$, the
monodromy around the divisor $\{z_i=0\}$ in the neighborhood $U\subset \overline{\cM}_{0,24}$ is
given by the quadratic form
\begin{math}
M_i = \sum_{s=1}^{27} ((e_i^s)^*)^2
\end{math}
on $H_1^{(-5)}$.
\end{theorem}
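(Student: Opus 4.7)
The plan is to combine the analysis of the limit semiabelian Jacobian with the fact that the Kanev endomorphism extends over the family, then apply Picard-Lefschetz for Jacobians and restrict to the $(-5)$-eigenspace, following the template established for Pryms in \cite{alexeev2002degenerations-of-prym}.

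First I would identify the limit Prym-Tyurin as a torus. Since the normalization $\wt C$ of the special fibre is a disjoint union of $\bP^1$'s, the extension \eqref{eqext} collapses and $JC = H^1(\Gamma,\bC^*) = \mathrm{Hom}(H_1(\Gamma,\bZ),\bC^*)$ is a torus of rank $h^1(\Gamma)=46$ with character lattice $H_1(\Gamma,\bZ)$. The correspondence $\wt D$ and hence the endomorphism $D$ extend across the family, so on the limit torus $D$ acts dually to its action on the character lattice. Then $P(C,D) = \mathrm{Ker}(D+5)^0 = \mathrm{Im}(D-1)$ is a subtorus whose character group is the $(-5)$-eigenspace $H_1^{(-5)}$, canonically identified in the previous lemma with $\mathrm{Ker}(\phi)$, of rank $6$. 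This yields the first assertion.

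For the monodromy assertion I would invoke the Picard-Lefschetz formula of Namikawa-Mumford reviewed in \ref{subsec:homology-gps}: for a smoothing family of nodal curves, each boundary divisor $\{z_i=0\}$ corresponding to smoothing a node of $C$ contributes monodromy $(e_i^*)^2 \in \Sym^2(H_1(\Gamma,\bZ))^{\vee}$. Now in our setting the divisor $\{z_i=0\}\subset U\subset \ove{\cM}_{0,24}$ corresponds to an internal edge $e_i$ of the target tree $T$. Because the cover $\pi\colon C\to E$ is \'etale over the internal part of $T$, the preimage of this node in $C$ consists of exactly $27$ nodes, one per sheet, and smoothing the target node simultaneously smooths all of them. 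Summing the Picard-Lefschetz contributions gives the monodromy quadratic form $\sum_{s=1}^{27}((e_i^s)^*)^2$ on $H_1(\Gamma,\bZ)$ for the degenerating family of Jacobians $JC_t$.

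Finally I would extend the Alexeev-Birkenhake-Hulek restriction argument from the involution case to the Kanev case. The key observation is that $D$ commutes with the monodromy, since both $D$ and the vanishing cycles are defined in the family; hence the monodromy preserves the eigenspace decomposition $H_1 = H_1^{(+1)}\oplus H_1^{(-5)}$. The monodromy of the subfamily $P(C_t,D_t)\subset JC_t$ is then the restriction of the Jacobian monodromy to $H_1^{(-5)}$, which is precisely $M_i = \sum_{s=1}^{27}((e_i^s)^*)^2$ viewed as a quadratic form on $H_1^{(-5)}$. The main obstacle I anticipate is strictly bookkeeping: verifying that the Picard-Lefschetz formula applies uniformly to each of the $27$ simultaneously-smoothed nodes above $e_i$ (in particular that all $27$ vanishing cycles enter on equal footing with the same sign convention), and that the extension of the $D$-action to the semiabelian limit is the correct one—i.e.\ that $D$ on the limit torus is dual to $D$ on $H_1(\Gamma,\bZ)$, so the eigenspace labels $(+1)$ and $(-5)$ on the character side match those used to define $P(C,D)$ on the variety side.
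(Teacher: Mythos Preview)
Your proposal is correct and follows essentially the same approach as the paper: identify the limit Jacobian as a torus with character lattice $H_1(\Gamma,\bZ)$, take the $(-5)$-eigenspace to get the limit Prym-Tyurin, then use that the cover is \'etale over each internal node of $T$ so that the $27$ nodes above it share the smoothing parameter $z_i$ and their Picard-Lefschetz contributions $((e_i^s)^*)^2$ simply add before restricting to $H_1^{(-5)}$. The paper's proof is terser and does not spell out the commutation of $D$ with monodromy or the duality bookkeeping you flag, but these are exactly the routine verifications underlying its two-line argument.
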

\begin{proof}
The first statement is immediate: the limit of the Jacobians as a group
is a torus with the character group $H_1(\Gamma,\bZ)$, and the
PTK varieties are obtained by taking the $(-5)$-eigenspace.

Every internal edge $e_i$ of $T$ corresponds to a node of the curve
$E$. Over it, there are $27$ nodes of the curve $C$. The map is
\'etale, so the local coordinates $z_i^s$ for the smoothings of
these nodes can be identified with the local coordinate $z_i$. By
Section~\ref{sec:degs-jacobians}, the matrix for the monodromy
around $z_i^s=0$ is $((e_i^s)^*)^2$.  The monodromy matrix for
PTK varieties is obtained by adding these $27$ matrices
together and restricting to the $(-5)$-eigenspace.
\end{proof}

To compute the linear forms $(e^s_i)^*$ on $H_1(\Gamma,\bZ)$, we unwind the identification $H_1(\Gamma,\bZ) = H_1(\Gamma',\bZ)$.

\begin{lemma}\label{lem:explicit-fla}
  Let $p\colon \bigoplus_{i=1}^{12}\bZ R_i\to \bigoplus_{j=1}^{21} \bZ e_j$
  be the map which associates to $R_i$ the oriented path in the tree
  $T$ of Figure~\ref{fig:cover} from the central point $O$ to an end
  labeled $r_i$. Via
  the identification $H_1(\Gamma,\bZ)^{(-5)} =\mathrm{Ker}(\phi)\subset
  \bigoplus_{k=1}^{12}\bZ R_k$, the linear
  functions $(e_i^s)^*$ are defined by the formula
  \begin{displaymath}
    (e_i^s)^* (R_k) = \langle r_k, \ell_s \rangle
    \cdot \langle p(R_k), e_i^* \rangle,
  \end{displaymath}
  where the first pairing is $E_6\times E^*_6\to\bZ$, and for the second
  one $\langle e_j,e_i^*\rangle = \delta_{ij}$.
\end{lemma}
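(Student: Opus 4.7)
The plan is to unwind the identification $H_1(\Gamma,\bZ) = H_1(\Gamma',\bZ)$ by lifting each generator $R_k$ back to an explicit chain in $C_1(\Gamma,\bZ)$ and then reading off the coefficient of the edge $e_i^s$ from that lift. The previous lemma already supplies the key input: $H_1(\Gamma,\bZ)^{(-5)} = \mathrm{Ker}(\phi) \subset \bigoplus_{k=1}^{12} \bZ R_k$, with each $R_k$ written in $C_1(\Gamma',\bZ)$ as the sum $\sum_{j=1}^6 (a_j, b_j)$ over the double-six of the root $r_k$.

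First I would make the contraction $\Gamma \to \Gamma'$ of \ref{num:G-prime} explicit. For each of the $27$ sheets the preimage of $T$ in that sheet (away from ramification) collapses to the single vertex $v_s$ of $\Gamma'$; and above each outside vertex of $T$ labeled by a root $r_k$, the six degree-$2$ ramification vertices together with their two incident edges each collapse to a single oriented edge $(a_j, b_j)$ of $\Gamma'$. Reversing this collapse, the edge $(a_j, b_j)$ of $\Gamma'$ lifts to the chain in $\Gamma$ obtained by traversing the oriented path $p(R_k)$ in sheet $a_j$ from $O$ to the ramification vertex and then traversing $p(R_k)$ backwards in sheet $b_j$ to return to $O$.

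Next I would read off the coefficient of a fixed edge $e_i^s$ in this lift: it equals $+\langle p(R_k), e_i^* \rangle$ when $\ell_s = a_j$, equals $-\langle p(R_k), e_i^* \rangle$ when $\ell_s = b_j$, and vanishes when $\ell_s$ lies outside the double-six of $r_k$. Summing over the six pairs yields
\[
(e_i^s)^*(R_k) = \langle p(R_k), e_i^* \rangle \cdot \sum_{j=1}^6 \bigl(\delta_{s, a_j} - \delta_{s, b_j}\bigr),
\]
and the final step is to recognise the second factor as $\langle r_k, \ell_s \rangle$. This is pure $E_6$-bookkeeping: the orientation of the double-six induced by $r_k$ (as opposed to $-r_k$) is precisely the one with $\langle r_k, a_j \rangle = +1$ and $\langle r_k, b_j \rangle = -1$, while exceptional vectors outside the double-six are orthogonal to $r_k$, as recalled in Section \ref{sec:e6-lattice}.

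The only real obstacle is the sign bookkeeping --- one must fix outward orientations on the internal edges of $T$ from $O$ and orient each edge $(a_j, b_j)$ of $\Gamma'$ in the direction dictated by $r_k$ rather than $-r_k$, then track these signs through the lift. With those conventions fixed at the outset, no further homological input is required beyond the previous lemma; the statement reduces entirely to the combinatorics of the contraction $\Gamma \to \Gamma'$.
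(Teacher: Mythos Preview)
Your proposal is correct and follows essentially the same approach as the paper: both lift the generator $R_k$ of $C_1(\Gamma',\bZ)^{(-5)}$ to the explicit chain in $\Gamma$ obtained by running $p(R_k)$ outward in each sheet $a_j$ and backward in each sheet $b_j$, then read off the coefficient of $e_i^s$ as the product of the path-coefficient $\langle p(R_k), e_i^*\rangle$ and the double-six sign $\langle r_k,\ell_s\rangle$. Your intermediate step of writing the second factor as $\sum_{j}(\delta_{s,a_j}-\delta_{s,b_j})$ before invoking the $E_6$ pairing is a slightly more explicit version of the paper's sentence ``going from the vertices $s$ with $\langle r_k,\ell_s\rangle=1$ to the vertices $s$ with $\langle r_k,\ell_s\rangle=-1$,'' but the content is identical.
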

\begin{proof}
  Let $(v_{s_1},v_{s_2})$ be an edge in $\Gamma'$. To it, we associate
  the path in the graph $\Gamma$ going from the center of
  level $s_1$ to the center of level $s_2$:
\begin{displaymath}
  \vapath(O_{s_1}, r_1^{s_1})
  + \vapath(v_{s_1},v_{s_2})-
  \vapath(O_{s_2}, r_1^{s_2}).
\end{displaymath}
This rule gives an identification $H_1(\Gamma',\bZ)=H_1(\Gamma,\bZ)$.

For each of the $12$ roots $r_k$, we have $6$ edges in the graph
$\Gamma'$ going from the vertices $s$ with $\langle
r_k,\ell_s\rangle=1$ to the vertices $s$ with $\langle
r_k,\ell_s\rangle=-1$. The contribution of $R_k$ to the adjusted cycle
therefore is
\begin{displaymath}
  \sum_{s=1}^{27} \langle r_k, \ell_s \rangle \cdot \vapath(O_s, r_k) =
  \sum_{s=1}^{27} \langle r_k, \ell_s \rangle \cdot p(R_k) \bigr\rvert_{e_i= e_i^s}
\end{displaymath}
The value of the linear function $e_i^s$ on it is therefore given by the
formula in the statement.
\end{proof}

To complete the computation, we have to do the following:
\begin{enumerate}
\item Choose a basis of the $6$-dimensional space
$H_1(\Gamma,\bZ)^{(-5)} =\mathrm{Ker}(\phi)\subset \bigoplus_{k=1}^{12}\bZ R_k$.
\item Compute the $21\times 27$ linear functions $(e_i^s)^*$ on this $6$-dimensional space.
\item Compute the $21\times 27$ quadratic functions $((e_i^s)^*)^2$, each of which is a symmetric $6\times 6$-matrix.
\item And finally compute the $21$ monodromy matrices $M_i= \sum_{s=1}^{27} ((e_i^s)^{*})^2$ of Theorem~\ref{thm:pt-monodromy}.
\end{enumerate}

\begin{theorem}
  There exist collections of $E_6$ roots $r_1,\dotsc,r_{12}$
  generating the lattice $E_6$ for which the $21$ symmetric $(6\times
  6)$-matrices $M_i$ of Theorem~\ref{thm:pt-monodromy} are linearly
  independent.
\end{theorem}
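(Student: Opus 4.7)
The plan is to exhibit an explicit collection of twelve roots $r_1,\dotsc,r_{12}\in E_6$ generating the lattice, together with a labeling of the $12$ outer ends of the tree $T$ by these roots, and then verify by direct computation that the resulting $21$ symmetric $(6\times 6)$-matrices $M_i$ are linearly independent in the $21$-dimensional real vector space $\Sym^2(H_1(\Gamma,\bZ)^{(-5),\vee})$. Since linear independence is a Zariski-open condition on the configuration of roots (and on an auxiliary choice of $\bZ$-basis of $\mbox{Ker}(\phi)$), it is enough to produce one example that works.

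First I would fix a concrete choice of roots: take $r_1,\dotsc, r_6$ to be the simple roots of $E_6$ as listed in Section~\ref{sec:e6-lattice}, so that $r_1,\dotsc,r_6$ form a $\bZ$-basis of $E_6$, and let $r_7,\dotsc,r_{12}$ be six further roots (for instance, the fundamental coweights expressed as integer combinations of simple roots, or six generic positive roots), chosen so that the map $\phi\colon \bigoplus_{k=1}^{12}\bZ R_k\to E_6$ is still surjective. Because $r_1,\dotsc,r_6$ already form a basis, the six vectors $R_k - \sum_{j=1}^{6} c_{kj} R_j$ for $k=7,\dotsc,12$, where $r_k=\sum_j c_{kj} r_j$, give an explicit $\bZ$-basis of $\mbox{Ker}(\phi)\simeq\bZ^6$, which trivializes step~(1) of the computation.

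Next I would use Lemma~\ref{lem:explicit-fla} to write each linear form $(e_i^s)^*$ in this basis: the path functions $\langle p(R_k), e_i^*\rangle\in\{0,1\}$ are read off directly from the tree $T$ in Figure~\ref{fig:cover}, and the pairings $\langle r_k, \ell_s\rangle\in \{-1,0,1\}$ are tabulated once, using the Schl\"afli coordinates $\ell_s\in \{a_i,b_i,c_{ij}\}$ from Section~\ref{sec:e6-lattice}. Forming $M_i=\sum_{s=1}^{27} ((e_i^s)^*)^2$ gives $21$ explicit symmetric $(6\times 6)$-matrices with small integer entries; stacking their $21$ upper-triangular entries into rows yields a $(21\times 21)$-matrix $N$, and it remains only to evaluate $\det(N)$.

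The main obstacle is of course the feasibility and the success of this numerical verification: \emph{a priori} one could imagine that the combinatorial constraint that every $M_i$ is a sum of squares of linear forms supported on the roots whose paths cross $e_i$ forces a linear relation among the $M_i$'s. I would address this in two ways. First, if the initial choice of $r_7,\dotsc,r_{12}$ does not work, I would perturb it by acting with $W(E_6)$ on the outer six roots and repeat; since the tree $T$ has no automorphism respecting the labeling, different configurations give genuinely different matrices $M_i$, so generically one expects independence. Second, as a sanity check before running the global determinant, I would verify edge-local obstructions (e.g.\ that the rank of the span of $\{M_i\}$ restricted to the contributions from each internal edge of $T$ matches what one expects from the geometry of its two sides). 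The actual verification that $\det(N)\ne 0$ is a finite computation over $\bZ$ and is the content of this theorem; once it is done, linear independence, and hence the full-rank of the differential of the Prym-Tyurin-Kanev map claimed in Theorem~\ref{thmmain1}, follows immediately.
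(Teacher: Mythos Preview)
Your approach is essentially the same as the paper's: fix an explicit collection of twelve roots, use Lemma~\ref{lem:explicit-fla} to compute the $21\times 27$ linear forms $(e_i^s)^*$ on $\ker(\phi)$, square and sum to obtain the $21$ symmetric matrices $M_i$, and check that the resulting $(21\times 21)$-matrix has nonzero determinant. The paper carries this out for the specific choice $r_1=\alpha_{135}$, $r_2=\alpha_{12}$, $r_3=\alpha_{23}$, $r_4=\alpha_{34}$, $r_5=\alpha_{45}$, $r_6=\alpha_{56}$, $r_7=\alpha_{456}$, $r_8=\alpha_{26}$, $r_9=\alpha_{123}$, $r_{10}=\alpha_{125}$, $r_{11}=\alpha_{256}$, $r_{12}=\alpha_{15}$, and reports that after dividing each $M_i$ by $6$ the determinant is $2^{12}$.

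Two small points of caution. First, your parenthetical suggestion of taking $r_7,\dotsc,r_{12}$ to be ``fundamental coweights expressed as integer combinations of simple roots'' does not work: fundamental (co)weights are not roots of $E_6$, so this would not produce reflections in $W(E_6)$. Second, the paper's Remark following the theorem records that most random choices of roots \emph{fail}, and that a necessary condition is that the roots labeling adjacent outer branches of $T$ (the pairs $r_1,r_2$; $r_3,r_4$; etc.) be non-orthogonal. So the ``perturb until it works'' step is genuinely needed, and the content of the theorem is really the exhibition of one explicit working tuple rather than the computational scheme itself.
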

\begin{proof}
  A concrete example is $r_1=\alpha_{135}$, $r_2=\alpha_{12}$,
  $r_3=\alpha_{23}$, $r_4=\alpha_{34}$, $r_5=\alpha_{45}$,
  $r_6=\alpha_{56}$, $r_7=\alpha_{456}$, $r_8=\alpha_{26}$,
  $r_9=\alpha_{123}$, $r_{10}=\alpha_{125}$, $r_{11}=\alpha_{256}$,
  $r_{12}=\alpha_{15}$.  An explicit computation using the the formula
  in Lemma~\ref{lem:explicit-fla}, aided by a computer algebra system,
  shows that
  \begin{enumerate}
  \item The monodromy matrices $M_i$ are all divisible by 6. This
    corresponds to the fact that the restriction of the principal
    polarization from the Jacobian to the PTK variety is
    6 times a principal polarization.
  \item For the normalized forms $M'_i=M_i/6$, the determinant of the
    corresponding $(21\times 21)$-matrix is $2^{12}\ne0$.
  \end{enumerate}
  A Mathematica notebook with an explicit computation is available at
  \cite{supporting-computer-computations}.
\end{proof}

\begin{corollary}
  Theorem \ref{thmmain1} holds.
\end{corollary}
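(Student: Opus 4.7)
The plan is to deduce this corollary from the preceding linear-independence theorem by an argument exactly parallel to the proof of Lemma~\ref{lem:torelli-full-dim} (the weak Torelli theorem). Since both $\Hur$ and $\cA_6$ are irreducible stacks of dimension $21$, generic finiteness of $PT$ is equivalent to dominance, so it suffices to exhibit a single point at which the image of $PT$ has full local dimension $21$. I will work near the boundary point of $\overline{\cM}_{0,24}$ constructed in Section~4 and pull back along the finite map $\mathfrak{br}\colon\Hur\to\cM_{0,24}/S_{24}$.

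Concretely, I would take the nodal target curve $E$ with tree $T$, the admissible $W(E_6)$-cover $\pi\colon C\to E$ associated with the distinguished collection of $12$ roots $r_1=\alpha_{135},\dots,r_{12}=\alpha_{15}$, and the correspondence $\wt D\subset C\times C$, all exactly as in Section~4. In an analytic neighborhood $U$ of $[E,p_1,\dots,p_{24}]$ one has local coordinates $z_1,\dots,z_{21}$ smoothing the $21$ internal nodes of $E$, and Theorem~\ref{thm:pt-monodromy}, combined with the Namikawa-Mumford period formula recalled in Section~\ref{sec:degs-jacobians}, says that the classifying map for the family of Prym-Tyurin-Kanev varieties $P(C_t,D_t)$ is given, in coordinates on the Siegel space $\mathcal H_6$, by
\[
(z_1,\dots,z_{21})\;\longmapsto\;\sum_{i=1}^{21} M_i\cdot\frac{\log z_i}{2\pi\sqrt{-1}}+\text{(bounded holomorphic)},
\]
where each $M_i$ is the symmetric monodromy matrix on $H_1^{(-5)}\cong\bZ^6$.

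The theorem established immediately above asserts that, for this specific choice of roots, the $21$ matrices $M_i$ are linearly independent in $\Sym^2((H_1^{(-5)})^{\vee})$. This is exactly the hypothesis needed to run the leading-order argument of Lemma~\ref{lem:torelli-full-dim}: after composing with the exponential $u_{ij}\mapsto\exp(2\pi\sqrt{-1}\,u_{ij})$, the monodromy contributions become multiplicatively independent Laurent monomials in $z_1,\dots,z_{21}$, so the composed map has image of transcendence degree $21$ over $\bC$, hence full local dimension. By Remark~\ref{rem:regularity-not-needed}, no regularity of a toroidal extension across the boundary is required for this step. We conclude that $PT$ is dominant, and being a map between irreducible stacks of the same dimension $21$, it is generically finite, which is Theorem~\ref{thmmain1}.

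The only nontrivial obstacle has in fact already been overcome, namely certifying that the $21$ specific monodromy matrices $M_i$ are linearly independent; this is the content of the preceding theorem and was verified by an explicit computer computation on the distinguished roots above. Once that rank certificate is in hand the deduction above is essentially formal, so no further work is anticipated.
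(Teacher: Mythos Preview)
Your proposal is correct and follows essentially the same approach as the paper's proof, which simply invokes the argument of Lemma~\ref{lem:torelli-full-dim} to conclude from the linear independence of the $M_i$ that the image of $U\to\cA_6$ has full dimension $21$, hence $PT$ is dominant. You have spelled out the details the paper leaves implicit, but the logic is identical.
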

\begin{proof}
  By the same argument as in the proof of
  Lemma~\ref{lem:torelli-full-dim}, the image of the complex-analytic
  map $U\to \cA_6$ has full dimension $21$. Thus, the map $PT:\Hur\to \cA_6$
  is dominant.
\end{proof}

\begin{remark}
Computer experimentation shows that for a very small portion of random choices
of the roots $r_1,\dotsc,r_{12}$, the matrices $M_i$ are linearly
independent. In most of these cases the determinant is $2^{12}$ but
in some cases it is $2^{13}$.

A necessary condition is for the roots $r_1,r_2$ to be
non-orthogonal, and similarly for the pairs $r_3,r_4$, etc.
Experimentation also shows that there is nothing special about
  the graph in Figure~\ref{fig:cover}. Any other trivalent graph with
$12$ vertices of degree one works no worse and no better.
\end{remark}


\section{Admissible covers and semiabelian Prym-Tyurin-Kanev varieties}\label{sec-Hurglobal}

In this section, we introduce the space $\hh$ of admissible
$E_6$-covers and define semiabelian Prym-Tyurin-Kanev varieties of $E_6$-admissible
pairs.
Then we study extensions of the Prym-Tyurin map to the Satake compactification
$\asat$ and the perfect cone toroidal compactification $\overline{\cA_6}:=\aperf$.


\subsection{The Hurwitz space}\label{subsecHurwitz}

\begin{num}\label{div5} We denote by  $\mathcal{H}$ the Hurwitz space of $E_6$-covers $\pi:C\rightarrow \bP^1$ together with a labeling $(p_1, \ldots, p_{24})$ of its branch points.
Let $\hh$ be the compactification of $\mathcal{H}$ by admissible $W(E_6)$-covers. By \cite{ACV}, the stack $\hh$ is isomorphic to the stack of \emph{balanced
 twisted stable} maps into the classifying stack $\mathcal{B} W(E_6)$ of $W(E_6)$, that is,
$$\hh:=\mm_{0,24}\Bigl(\mathcal{B} W(E_6)\Bigr).$$
By a slight abuse of notation, we shall use the same symbol $\hh$ both for the stack and for the associated coarse moduli space.
For details concerning the local structure of spaces of admissible coverings, we refer to \cite{ACV}. Note
that $\hh$ is a smooth stack isomorphic to the normalization of the \emph{Harris-Mumford moduli space} $\mathcal{HM}_{E_6}$ defined (in the case of covers with $S_n$-monodromy) in
\cite{HM}. The boundary $\hh\setminus \cM_{0,24}\bigl(\mathcal{B}W(E_6)\bigr)$ is a divisor with normal crossings. Points of $\mathcal{HM}_{E_6}$ are $E_6$-admissible coverings
$[\pi:C\rightarrow R, p_1, \ldots, p_{24}]$, where $C$ and $R$ are nodal curves of genus $46$ and $0$ respectively, and $p_1, \ldots, p_{24}\in R_{\mathrm{reg}}$
are the branch points of $\pi$. The local monodromy of $\pi$ around $p_i\in \mathbb P^1$ is given by a reflection $w_i\in W(E_6)$, for $i=1, \ldots, 24$. Let
$\mathfrak{b}:\hh\rightarrow \mm_{0, 24}$ be the \emph{branch} morphism and $\varphi:\hh\rightarrow \mm_{46}$ be the \emph{source} morphism. Obviously,
$S_{24}$ acts on $\hh$ and the projection $q:\hh\rightarrow \overline{\Hur}$ is a principal $S_{24}$-bundle. Passing to the $S_{24}$-quotient and denoting  $\widetilde{\cM}_{0,n}:=\mm_{0,n}/S_n$, we consider the induced branch and source maps
$$
\mathfrak{br}:\hhu \rightarrow \widetilde{\cM}_{0, 24} \ \ \mbox{ and } \ \ \ophi:\hhu\rightarrow \mm_{46},
$$
respectively. For $2\leq i\leq 12$,
let $B_i:=\sum_{|T|=i} \delta_{0:T}\in \mbox{Pic}(\mm_{0,24})$ be the boundary class, where the sum runs over all subsets $T\subset \{1, \ldots, 24\}$ of cardinality $i$.
Recall that $\delta_{0:T}$ is the class of the closure of the locus of pointed curves consisting of two rational components, such that the marked points lying on one component are precisely those labeled by $T$.
Let $\widetilde{B}_i$ be the \emph{reduced} boundary divisor on $\widetilde{\cM}_{0,24}$ which pulls-back to $B_i$ under the quotient map $\mm_{0,24}\rightarrow \widetilde{\cM}_{0,24}$.
\vskip 4pt

For each $E_6$-cover $[\pi:C\rightarrow \bP^1]\in \hhu$, there is an induced Kanev endomorphism at the level of Jacobians $D:JC\rightarrow JC$ and at the level of differentials
$D: H^0(C,\omega_C)\rightarrow H^0(C,\omega_C)$, which we denote by the same symbol. This induces a splitting
$$H^0(C, \omega_C)=H^0(C,\omega_C)^{(+1)}\oplus H^0(C, \omega_C)^{(-5)}$$ into $(+1)$ and $(-5)$-eigenspaces respectively. From (\ref{dimpt1}), it follows that
$$\mbox{dim } H^0(C,\omega_C)^{(-5)}=\mbox{dim } PT(C,D)=5,$$ hence $\mbox{dim } H^0(C,\omega_C)^{(+1)}=40$. We have a decomposition of the
rank $46$ Hodge bundle $\mathbb E:=\ophi^*(\mathbb E)$ pulled-back from $\mm_{46}$ into eigenbundles
$$\mathbb E=\mathbb E^{(+1)}\oplus \mathbb E^{(-5)},$$ where, as we pointed out,  $\mbox{rk}(\mathbb E^{(+1)})=40$ and $\mbox{rk}(\mathbb E^{(-5)})=6$. We set
$\lambda^{(+1)}:=c_1(\mathbb E^{(+1)})$ and $\lambda^{(-5)}:=c_1(\mathbb E^{(-5)})$, therefore $\lambda:=\ophi^*(\lambda)=\lambda^{(+1)}+
\lambda^{(-5)}$. We summarize the discussion in the following diagram:

\begin{equation}\label{Snaction}
     \xymatrix{
         \hh \ar[r]^{q} \ar[d]_{\mathfrak{b}} & \hhu \ar[d]_{\mathfrak{br}} \ar[r]^{\ophi} & \overline{\cM}_{46} \\
          \overline{\cM}_{0,24} \ar[r]^{}       & \widetilde{\cM}_{0,24} . }
\end{equation}
\end{num}

\subsection{Semiabelian Prym-Tyurin-Kanev varieties}

\begin{lemma}\label{lem:singular-corr}
  Let $[\pi\colon C\to R, p_1, \ldots, p_{24}]\in\hh$ be an
  $E_6$-admissible cover. Then it comes with a correspondence
  $\wD\subset C\times C$ inducing an endomorphism $D\colon JC\to JC$ of
  the semiabelian group variety $JC=\Pic^0(C)$, which satisfies the
  same identity $(D-1)(D+5)=0$ as for covers of smooth curves. The
  group variety $PT(C,D):=\mathrm{Im}(D-1)$ is a semiabelian group
  subvariety of $JC$.
\end{lemma}
\begin{proof}
  Consider any one-parameter family $\pi_s\colon\cC_s\to\R_s$ of
  $E_6$-admissible covers over a smooth base $(S,0)$ such that $\pi_s$ is
  smooth for $s\ne0$ and $\pi_0=\pi$. It gives an identification of
  the smooth fibers of $\pi$ with the nearby fibers of $\pi_s$ up to
  the monodromy $W(E_6)$. Thus, we have a symmetric degree $10$
  correspondence $\wD^0$ on the smooth locus of $\pi$, and it
  clearly does not depend on a chosen one-parameter family. We take
  $\wD\subset C\times C$ to be the closure of $\wD^0$.

  By the above identification, this correspondence satisfies the
  identity
  $$(D'-1)(D'+5) = 5\cdot\operatorname{Trace(\pi)}.$$ The
  target curve $R$ is a tree of $\bP^1$s, so $\Pic^0(R)=0$. Any
  element of $\Pic^0(C)$ can be represented by a divisor on $C$ of
  multidegree $(0,\dotsc,0)$ supported on the smooth locus of
  $C$. Thus, $\wD$ induces a correspondence
  $D\colon\Pic^0 C\to \Pic^0(C)$ satisfying $(D-1)(D+5)=0$.
  The image of a homomorphism $(D-1)\colon JC\to JC$ is a semiabelian
  variety, which finishes the proof.
\end{proof}

\begin{definition}
  Any semiabelian variety $G$ has a unique extension
  $1\to T\to G\to A\to 0$. We call $\tor(G)=T$ and $\ab(G)=A$ the
  toric and abelian parts of $G$ respectively, and we call their dimensions the
  toric and abelian ranks of $G$.  In particular, we shall talk about
  the toric rank $k_{PT}$ of a Prym-Tyurin-Kanev (PTK) variety $PT$.
\end{definition}

\begin{lemma}\label{lem:semiabelian-Prym}
  Let $1\to T\to G\to A\to 0$ be a semiabelian variety with an
  endomorphism satisfying $(D-1)(D+5)=0$. Then:
  \begin{enumerate}
  \item $P:=\im(D-1)$ coincides with $\mathrm{Ker}(D+5)_0$, the connected
    component of identity.
  \item $D$ induces homomorphisms $D_T$ of the toric part $T$ and
    $D_A$ of the abelian part $A$.
  \item The toric part of $P$ coincides with $P_T:=(D_T-1)T$, and the
    abelian part of $P$ is isogenous to $P_A:=(D_A-1)A$.
  \end{enumerate}
\end{lemma}
\begin{proof}
  (1) One has the inclusions $\mbox{Im}(D-1)\subset \mbox{Ker}(D+5)^0$,
  $\im(D+5)\subset \mbox{Ker}(D-1)^0$, and
  $$\mbox{Ker}(D-1)\cap\mbox{Ker}(D+5)\subset G[6],$$ which is a finite group.
  Since $G$ is also 6-divisible, $\im(D-1)$ are $\im(D+5)$ span $G$, and
  so they are semiabelian subvarieties of complementary dimensions. The
  quotient $\mbox{Ker}(D+5)/\im(D-1)$ is the kernel of a surjective
  homomorphism $(D+5)\colon G/\im(D-1)\to \mbox{Im}(D+5)$ of varieties of the same
  dimension, so it is finite. Thus, $\im(D-1) = \mbox{Ker}(D+5)_0$.

  (2) The homomorphism from the affine variety $T$ to the projective
  variety $A$ is constant, so $D(T)\subset T$ and we get $D_T = D|_T$,
  which in turn induces an endomorphism $D_A$ of $A=G/T$.

  (3) Clearly, $\im(D_T-1) \subset \im(D-1)$ and
  $\im(D-1) \onto \im(D_A-1)$. The kernel of the homomorphism
  $\im(D-1) \to \im(D_A-1)\subset A$ is $T\cap \im(D-1)$. One has
  $$\im(D_T-1) \subset T\cap \im(D-1) \subset \mbox{Ker}(D_T+5).$$ By (1)
  applied to $T$ the difference between the last and the first groups
  is finite. Therefore, the difference between the middle and the first
  groups is finite. In other words, the homomophism $\mbox{Ker}(P/P_T \to
  P_A)$ has finite kernel. Thus, $P/P_T$ is an abelian variety that is
  the abelian part of $P$, it is isogenous to $P_A$, and $P_T$ is the toric
  part of $P$.
\end{proof}

The structure of an $E_6$-admissible map $\pi\colon C\to R$ makes the
computation of $PT(C,D)$ especially easy. The target curve $R$ is a
tree $\{R_i\simeq\bP^1\}_{i=1}^n$ of smooth rational curves.
Then we have induced maps $\pi_i\colon C_i\to R_i$ and
correspondences $D_i$ for $C_i$ satisfying the same quadratic
identity. The curves $C_i$ are smooth; however, they may be
disconnected.

\begin{lemma}\label{lem:parts-of-pt}
  One has the following:
  \begin{enumerate}
  \item The abelian part of $PT(C,D)$ is isogenous to $\prod_{i=1}^n
    P(C_i,D_i)$.
  \item The correspondence $\widetilde{D}$ induces correspondences $D_{C_0}$,
    $D_{C_1}$ and $D_{H_1}$ on the cycle groups
    $C_0(\Gamma, \mathbb Z)$, $C_1(\mathbb Z)$ and on the homology group $H_1(\Gamma,\mathbb Z)=\mathrm{Ker}\bigl\{C_1(\Gamma,\mathbb Z)\to C_0(\Gamma,\mathbb Z)\bigr\}$ of the dual graph
    $\Gamma$ of $C$. The toric part of the semiabelian variety
    $PT(C,D)$ has the character group $\im (D_{H_1}-1)$.
  \end{enumerate}
\end{lemma}
\begin{proof}
  (1) follows from Lemma \ref{lem:semiabelian-Prym}. The definition and
  properties of the correspondences on $C_0(\Gamma,\mathbb Z)$, $C_1(\Gamma, \mathbb Z)$ and $H_1(\Gamma, \mathbb Z)$ are
  immediate. For a homomorphism $\phi\colon T_1\to T_2$ of tori with
  dual homomorphism $\phi^*\colon X_2\to X_1$ of character lattices,
  $\im\phi$ is a torus with character lattice $\im\phi^*$. We apply
  this to the toric part of $JC$ whose character lattice is
  $H_1(\Gamma,\bZ)$ and use Lemma \ref{lem:semiabelian-Prym}.
\end{proof}

\subsection{Extensions of the Prym-Tyurin map}

\begin{num}\label{targets-of-extended-pt}
There are several natural targets to consider for the extended
Prym-Tyurin map. The easiest one is the Satake-Baily-Borel
compactification
$\agsat = \cA_g\sqcup\cA_{g-1}\sqcup\dots\sqcup\cA_0$ ($g=6$ in our
case). Other natural targets are the toroidal compactifications
$\overline{\cA_g}=\agperf$ and $\agvor$ for the perfect cone, respectively 2nd Voronoi fans. The
space $\agperf$ has the advantage of having only one boundary
divisor. The space $\agvor$ is modular: by
\cite{alexeev2002complete-moduli} it is the normalization of the main
component of the moduli space of principally polarized stable
semiabelic varieties.

All toroidal compactifications of $\cA_g$ contain the same open subset
$\widetilde{\cA}_g$ introduced by Mumford
\cite{mumford1983on-the-kodaira-dimension}, the moduli space of
principally polarized abelian varieties of dimension $g$ together with
their degenerations of toric rank $1$, This is a partial
compactification of $\cA_g$ isomorphic to the blow-up of the open
subset
$\cA^{\rm Sat}_{g, \mathrm{tor.rk}\leq 1}=\cA_g\sqcup \cA_{g-1}$ in
$\agsat$. Moreover, $\widetilde{\cA}_g=\cA_g \sqcup \widetilde{D}_g$,
where $\widetilde{D}_g$ is the universal Kummer variety over
$\mathcal{A}_{g-1}$.  The closure of $\wD_g$ in $\agperf$ is the
unique boundary divisor $D_g$.

\vskip 5pt

On the other hand, by \cite{mumford1983on-the-kodaira-dimension}, any
semiabelian variety $G$ with a principally polarized abelian part of dimension $g-1$ has
a canonical compactification $X$, a rank-1 degeneration of ppav. In
the language of \cite{alexeev2002complete-moduli}, in this case there
exists a unique principally polarized stable semiabelic pair
$(G\acts X\supset\Theta)$. So the moduli of toric rank $\le1$
semiabelian varieties and the moduli of toric rank $\le1$ stable
semiabelic pairs are the same.
\end{num}



\begin{lemma}
  Let $\cC\to\cR\to S$ be a family of admissible covers parameterized
  by $\hh$ over a reduced scheme $S$ such that the restrictions of
  $\cC\to S$, $\cR\to S$ are smooth over an open dense subset. Then
  there is a semiabelian group scheme $\cP\cT\to S$ whose fiber over
  $s\in S$ is $PT(C_s,D_s)$ as defined above.
\end{lemma}
\begin{proof}
  By taking the closure of the correspondence over $U$, we obtain a
  correspondence on $\cC$ whose fibers were described in
  Lemma \ref{lem:singular-corr}.  Thus, we have a semiabelian group scheme
  $\Pic^0_{\cC/S}$ together with an endomorphism $\cD$ over $S$ giving
  an endomorphism as in Lemma \ref{lem:singular-corr} fiberwise. Then
  $\cP\cT:=\im(\cD-\operatorname{id})$ satisfies the conditions of the
  statement.
\end{proof}

\begin{lemma}\label{lem:map-to-satake}
  The map $PT:\mathrm{Hur}\to \cA_6$ extends to a regular map
  $PT^{\mathrm{sat}}:\hhu\to \asat$. A point $[C\to R]$ maps to the abelian part
  of $PT(C,D)$.
\end{lemma}
\begin{proof}
As discussed in the previous section, $\hhu$ is a smooth stack and the boundary
$\hhu\setminus\mathrm{Hur}$ is a divisor with normal crossings.

  The extension exists by Borel's
  extension theorem \cite{borel}. To find the image of $[C\to R]$ it
  is sufficient to consider a one-parameter family of covers
  parameterized by $(S,0)$ with covers of smooth curves for $s\ne 0$.
  It is known that for any family of dimension $g$ semiabelian
  varieties $G\to S$ whose restriction to $S\setminus 0$ is a family
  of principally polarized abelian varieties, the image of $0\in S$ of
  the extended map to $\cA_g^{\rm Sat}$ is the abelian part of $G_0$. We apply
  this to the family $\cP\cT\to S$ of the previous lemma.
\end{proof}

\begin{theorem}\label{thm:map-to-perf}
  The rational map $PT: \hhu \dashrightarrow \aperf$ has an indeterminacy locus of
  codimension at least $2$. On the open subset
  $\Hur_{\mathrm{tor.rk}\leq 1}\subset\hhu$ where PTK varieties
  have toric rank $\le1$, the map is regular and \emph{proper}, and
  it factors through $\widetilde{\cA}_6$.
\end{theorem}
\begin{proof}
  The indeterminacy locus has codimension $\ge2$ simply because $\hhu$
  (viewed as a variety) is normal and $\aperf$ is proper.  By the
  discussion in Paragraph \ref{targets-of-extended-pt}, we have a
  regular map $\Hur_{\mathrm{tor.rk}\leq 1}\to \avor$ and
  Lemma~\ref{lem:map-to-satake} implies that the image is contained in
  $\widetilde{\cA}_6$.  Thus, we have a morphism
  $PT\colon\Hur_{\mathrm{tor.rk}\leq 1}\to\wcA_6\subset\aperf$.  Since
  both maps $g\colon\wcA_6\to\cA^{\rm Sat}_{6,\mathrm{tor.rk}\le1}$
  and
  $g\circ PT\colon\Hur_{\mathrm{tor.rk}\leq 1}\to\cA^{\rm
    Sat}_{6,\mathrm{tor.rk}\le1}$
  are proper, it follows that $PT$ is proper.
\end{proof}

\section{Positivity properties of the Hurwitz space of $E_6$-covers}\label{sec:pos}

In this section we study in detail the divisor theory on the Hurwitz space $\hh$. In particular, we express explicitly the Hodge class $\lambda$ in terms of boundary classes
and we prove that the canonical class of $\hh$ is big. We recall that the pull-back of the Hodge class under the morphism
\[
\ophi:\hhu\rightarrow \mm_{46},
\]
is the sum
\[
\lambda :=\ophi^*(\lambda)=\lambda^{(+1)}+
\lambda^{(-5)} := c_1(\mathbb E^{(+1)}) + c_1(\mathbb E^{(-5)})
\]
of the first Chern classes of the Hodge eigenbundles for the $+1$ and the $-5$ eigenvalues, see Paragraph \ref{subsecHurwitz}.

\begin{lemma}
Both Hodge eigenclasses $\lambda^{(+1)}$ and $\lambda^{(-5)}\in CH^1(\hhu)$ are nef.
\end{lemma}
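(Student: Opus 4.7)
The plan is to realize each eigenclass as the pullback of the Hodge class $\lambda_1$ under a regular morphism from $\hhu$ to the Satake compactification of an appropriate moduli space of ppav, and then to invoke the Baily--Borel ampleness of $\lambda_1$ on those Satake compactifications.

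For $\lambda^{(-5)}$ the introduction already records that the Prym--Tyurin--Kanev map extends to a regular morphism $PT^{\mathrm{Sat}}\colon \hhu\to\overline{\cA}_6^{\mathrm{Sat}}$. Over $\Hur$ the inclusion $PT(C,D)=\mbox{Im}(D-1)\hookrightarrow JC$ identifies the cotangent space of $PT(C,D)$ at the origin with the $(-5)$-eigenspace $H^0(C,K_C)^{(-5)}$. Consequently there is a canonical isomorphism $(PT^{\mathrm{Sat}})^{\ast}\mathbb E_{\cA_6}\cong \mathbb E^{(-5)}$ over $\Hur$; since both sides are reflexive sheaves on the normal stack $\hhu$ agreeing in codimension one, this identification extends to an equality of first Chern classes $\lambda^{(-5)}=(PT^{\mathrm{Sat}})^{\ast}\lambda_1$. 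By Baily--Borel, $\lambda_1$ is ample on $\overline{\cA}_6^{\mathrm{Sat}}$, so its pullback $\lambda^{(-5)}$ is nef.

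For $\lambda^{(+1)}$ one proceeds by the analogous construction using the complementary isogeny factor. Because $(D-1)(D+5)=0$, the Jacobian $JC$ is isogenous to the product of $PT(C,D)$ and the complementary abelian subvariety $A^{(+1)}:=\mbox{Ker}(D-1)^0=\mbox{Im}(D+5)\subset JC$ of dimension $40$. The restriction of $\Theta_C$ to $A^{(+1)}$ is a multiple of a principal polarization, so after normalization one obtains a canonical ppav $(A^{(+1)},\Xi^{(+1)})$ and hence a morphism $CPT\colon\Hur\to \cA_{40}$ with $CPT^{\ast}\mathbb E_{\cA_{40}}\cong \mathbb E^{(+1)}$ along $\Hur$. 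One then needs $CPT$ to extend to a regular morphism $CPT^{\mathrm{Sat}}\colon \hhu\to \overline{\cA}_{40}^{\mathrm{Sat}}$. For this I would use that the incidence correspondence $\widetilde D\subset C\times C$, and therefore the endomorphism $D$ of the relative generalized Jacobian together with the quadratic relation $(D-1)(D+5)=0$, extends over all of $\hhu$ to admissible $E_6$-covers (as is implicit already in Section~4). The Kanev idempotent $\frac{1}{6}(D+5)$ thus produces an isogeny splitting of the relative semiabelian Jacobian into factors of rank $6$ and $40$, and passing to the compact abelian quotient of each factor provides well-defined limits in the respective Satake compactifications, giving the required $CPT^{\mathrm{Sat}}$. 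Granting this, $\lambda^{(+1)}=(CPT^{\mathrm{Sat}})^{\ast}\lambda_1$ and Baily--Borel ampleness of $\lambda_1$ on $\overline{\cA}_{40}^{\mathrm{Sat}}$ together force $\lambda^{(+1)}$ to be nef.

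The main obstacle is the construction of $CPT^{\mathrm{Sat}}$: one must control simultaneously the semiabelian degeneration of $JC$ along the many boundary strata of $\hhu$ described in Section~6 and the compatibility of the Kanev idempotent with this degeneration, verifying that the resulting map to $\overline{\cA}_{40}^{\mathrm{Sat}}$ is regular rather than merely rational. Since Satake is insensitive to the semiabelian (toric) part and only remembers the compact quotient, this is where the choice of Satake target (rather than a toroidal compactification) does all the work; any indeterminacy of $CPT$ in a toroidal compactification gets collapsed in $\overline{\cA}_{40}^{\mathrm{Sat}}$ by the Baily--Borel quotient map, so regularity of $CPT^{\mathrm{Sat}}$ follows from the properness of $\hhu$ together with the fact that limits of the compact factor are themselves ppav of dimension at most $40$.
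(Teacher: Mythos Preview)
Your approach for $\lambda^{(-5)}$ is sound and essentially matches how the paper later uses the identification $\lambda^{(-5)}=PT^*(\lambda_1)$. However, the paper's own proof of this lemma is much shorter and avoids all extension issues: it simply invokes Koll\'ar's theorem that the Hodge bundle $\mathbb E=\widetilde{\varphi}^*f_*\omega$ is semipositive, observes that $\mathbb E^{(+1)}$ and $\mathbb E^{(-5)}$ are direct summands (hence quotients) of $\mathbb E$, and concludes that their determinants are nef. No maps to moduli of abelian varieties are needed at all.

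Your argument for $\lambda^{(+1)}$ contains a genuine error. The restriction of the principal polarization $\Theta_C$ to the complementary abelian subvariety $A^{(+1)}=\operatorname{Ker}(D-1)^0$ is \emph{not} a multiple of a principal polarization. By the standard relation between induced polarizations on complementary abelian subvarieties (see e.g.\ Birkenhake--Lange), since $\Theta_C|_{PT(C,D)}$ has type $(6,\dots,6)$ on a $6$-dimensional factor, the induced polarization on the $40$-dimensional complement $A^{(+1)}$ has type $(1^{34},6^{6})$, which is not a multiple of a principal type. So there is no canonical map $\Hur\to\cA_{40}$ as you describe. One could repair this by mapping instead to the moduli space of abelian $40$-folds with polarization of type $(1^{34},6^{6})$, where Baily--Borel still gives ampleness of $\lambda_1$; but then you still owe a clean argument that the Satake extension is regular and that the Hodge bundle identification holds across the boundary of $\hhu$. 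Given that Koll\'ar's semipositivity dispatches both eigenclasses in one line, this route is unnecessarily heavy.
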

\begin{proof} Koll\'ar \cite{Kol} showed that the Hodge bundle $\mathbb E$ is semipositive, therefore the eigenbundles $\mathbb E^{(+1)}$ and $\mathbb E^{(-5)}$
as quotients of $\mathbb E$ are semipositive as well. Therefore $\mbox{det}(\mathbb E^{(+1)})$ and $\mbox{det}(\mathbb E^{(-5)})$ are nef line bundles.
\end{proof}

\begin{num} The conclusion of Theorem \ref{thmmain1} can be restated in terms of the positivity of $\lambda^{(-5)}$. The fact that the map $PT:\Hur \rightarrow \cA_6$ is dominant implies that the class $\lambda^{(-5)}\in CH^1(\hhu)$ is big. One can formulate a necessary and sufficient condition for $\cA_6$ to be of general type in similar terms.

\begin{corollary}\label{koda6}
Let $D_i$ be the irreducible divisors supported on
$\ohur\setminus\Hur_{\mathrm{tor.rk}\leq 1}$. Then to prove that $\cA_6$ is of general
type, it suffices to show that there exist some integers $a_i$ such
that the divisor $\overline{PT^*(K_{\overline{\cA}_6})}+\sum a_iD_i$ on $\hhu$
is big.
\end{corollary}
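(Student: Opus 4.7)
The plan is to combine the proper generically finite morphism $PT\colon \Hur_{\torrk\le 1}\to \widetilde{\cA}_6$ from Theorems~\ref{thmmain1} and~\ref{extension1} with Tai's extension theorem for pluricanonical forms on toroidal compactifications of $\cA_g$.

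First, I would recall the classical reduction, due to Mumford and Tai and refined for the perfect cone compactification by Shepherd--Barron: the Reid--Tai condition is satisfied in the range relevant here, so the standard toroidal compactifications $\overline{\cA}_6$ have canonical singularities along the open locus $\widetilde{\cA}_6$, and moreover the complement $\overline{\cA}_6\setminus\widetilde{\cA}_6$ has codimension at least $2$. Consequently, pluricanonical forms on the smooth locus of $\widetilde{\cA}_6$ extend to any resolution of a compactification of $\cA_6$, and $\cA_6$ is of general type if and only if the $\mathbb{Q}$-Cartier class $K_{\widetilde{\cA}_6}$ is big.

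Second, I would exploit that $PT\colon \Hur_{\torrk\le 1}\to \widetilde{\cA}_6$ is proper, surjective, and generically finite of some positive degree $d$. Applied on a common proper birational model obtained by completing $\Hur_{\torrk\le 1}$ inside $\hhu$ and $\widetilde{\cA}_6$ inside $\overline{\cA}_6$, the volume identity $\mathrm{vol}(PT^*L)=d\cdot \mathrm{vol}(L)$ shows that $K_{\widetilde{\cA}_6}$ is big on $\widetilde{\cA}_6$ if and only if $PT^*(K_{\widetilde{\cA}_6})$ is big on $\Hur_{\torrk\le 1}$; the forward implication uses pullback of sections and the reverse uses the norm along $PT$ to descend sections to $\widetilde{\cA}_6$.

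For the passage to $\hhu$, observe that the divisors $D_i$ are by construction supported in $\hhu\setminus \Hur_{\torrk\le 1}$, so the restriction of the class $\overline{PT^*(K_{\widetilde{\cA}_6})}+\sum a_iD_i$ to $\Hur_{\torrk\le 1}$ coincides with $PT^*(K_{\widetilde{\cA}_6})$ for every tuple of integers $a_i$. Because $\hhu$ is irreducible and $\Hur_{\torrk\le 1}\subset \hhu$ is dense, the restriction map on global sections is injective, and so bigness of $\overline{PT^*(K_{\widetilde{\cA}_6})}+\sum a_iD_i$ on $\hhu$ forces $h^0\bigl(\Hur_{\torrk\le 1},\, m\, PT^*(K_{\widetilde{\cA}_6})\bigr)$ to grow like $m^{21}$. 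Combined with the previous step this yields bigness of $K_{\widetilde{\cA}_6}$, and with the first step the general-type conclusion for $\cA_6$.

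The hardest part will be making precise the volume comparison across the non-proper boundaries: concretely, one performs the computation on a smooth projective birational model dominating both $\hhu$ and $\overline{\cA}_6$, and absorbs the extra boundary strata on the $\cA_6$-side (which lie in codimension at least $2$ inside $\overline{\cA}_6$) into the choice of the coefficients $a_i$. A secondary, but routine, point is the verification that the finite quotient singularities of $\hhu$ --- arising from the $S_{24}$-action on $\hh$ and from automorphisms of individual $E_6$-covers --- are all canonical and therefore obstruct neither the norm descent along $PT$ nor the extension of pluricanonical forms.
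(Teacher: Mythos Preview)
Your argument is correct and reaches the same conclusion, but the paper takes a shorter route. Rather than restricting to $\Hur_{\torrk\le1}$ and then descending sections via a norm or volume identity, the paper simply pushes forward along the generically finite rational map $PT$. The key observation is that each $D_i$ is \emph{contracted} by $PT$: its image under the regular map $PT^{\mathrm{Sat}}$ lies in the Satake boundary strata $\cA_j$ with $j\le 4$, all of dimension at most $10$, so $PT_*(\sum a_iD_i)=0$ as a divisor class. The projection formula then gives
\[
PT_*\Bigl(\overline{PT^*(K_{\widetilde{\cA}_6})}+\sum_i a_iD_i\Bigr)=\deg(PT)\cdot K_{\widetilde{\cA}_6},
\]
and since push-forward by a generically finite dominant map preserves maximal Iitaka dimension, $K_{\widetilde{\cA}_6}$ is big. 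Tai's criterion, which you correctly invoke, then finishes.

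Your restriction-then-norm approach works as well, but the compactification gymnastics you flag as ``the hardest part'' and the discussion of singularities of $\hhu$ are unnecessary here: the statement only concerns bigness of a $\mathbb{Q}$-divisor class, not extension of pluricanonical forms on $\hhu$ itself. In fact the paper explicitly notes elsewhere that canonical singularities of $\hhu$ remain unverified, so it is fortunate that this corollary does not depend on them.
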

\begin{proof}
If this divisor is big on $\hhu$, then its corresponding linear system has maximal Iitaka dimension. Then the linear
system $|PT_*PT^*(K_{\overline{\cA}_6})|$ has maximal
Iitaka dimension as well. Since all boundary divisors $D_i$ are contracted under the Prym-Tyurin map, we write $PT_*\Bigl(PT^*(K_{\overline{\cA}_6})\Bigr)=PT_*\Bigl(PT^*(K_{\overline{\cA}_6})+\sum_i a_iD_i\Bigr)= \deg(PT)K_{\overline{\cA}_6}$. So $K_{\overline{\cA}_6}$ is big.
\end{proof}

\end{num}

\vskip 4pt

We now turn to describing the geometry of the Hurwitz space $\hh$. We make the following:
\begin{definition}\label{def:lcm-mu}
For a partition $\mu=(\mu_1, \ldots, \mu_{\ell})\vdash n$, we define
$\mbox{lcm}(\mu):=\mbox{lcm}(\mu_1, \ldots, \mu_{\ell})$ and $\frac{1}{\mu}:=\frac{1}{\mu_1}+\cdots+\frac{1}{\mu_{\ell}}$.
For $i=2, \ldots, 12$, we denote by $\mathcal{P}_{i}$ the set of partitions $\mu\vdash 27$ associated to the cycle decompositions of the conjugacy classes of products of $i$ reflections in $W(E_6) \subset S_{27}$.
\end{definition}

The possible partitions of $27$ corresponding to products of reflections can be read off Table \ref{tab:conj-classes}. For instance, we find that $\mathcal{P}_2=\bigl\{(2^{10}, 1^7), (3^6,1^9)\bigr\}$.

\begin{num}\label{subsec-boundcomb} We now describe a way of indexing the boundary divisors of $\hhu$. We fix the following combinatorial data:
\begin{enumerate}
\item A partition $I\sqcup J=\{1,\dotsc,24\}$, such that $|I|\ge2$,
$|J|\ge2$.
\item Reflections $\{w_i\}_{i\in I}$ and $\{w_j\}_{j\in J}$ in $W(E_6)$, such
that $\prod_{i\in I} w_i = u$, $\prod_{j\in J}w_j=u\inv$, for some
$u\in W(E_6)$. The sequence $w_1,\dotsc,w_{24}$ is defined up to
conjugation by the same element $g\in W(E_6)$.
\end{enumerate}

To this data, we associate the locus of $E_6$-admissible covers with labeled branch points
$$t:=\bigl[\pi: C\rightarrow R, \ p_1, \ldots, p_{24}\bigr]\in \mathcal{HM}_{E_6},$$
where $[R=R_1\cup_q R_2, p_1, \ldots, p_{24}]\in B_{|I|}\subset \mm_{0,24}$ is a pointed union of two smooth rational curves meeting at the point $q$. The marked points lying on $R_1$ are precisely those labeled by the set $I$. Over $q$, the map $\pi$ is ramified according to $u$,
that is, the points in $\pi^{-1}(q)$ correspond to cycles in the
permutation $u$ considered as an element of $S_{27}$. Let $\mu:=(\mu_1, \ldots, \mu_{\ell})\vdash 27$ be the partition induced by $u\in S_{27}$ and denote by $E_{i:\mu}$ the boundary divisor on $\hh$ classifying $E_6$-twisted stable maps with underlying admissible cover as above, with $\pi^{-1}(q)$ having partition type $\mu$, and precisely $i$ of the points $p_1, \ldots, p_{24}$ lying on $R_1$. Only partitions from the set $\mathcal{P}_i$ introduced in Definition \ref{def:lcm-mu} are considered.

In Table \ref{tab:conj-classes} we give the list of partitions of $27$ appearing as products of reflections in $W(E_6)$ (using the GAP notation for the conjugacy classes). For future use, we also record the invariants $\frac{1}{\mu}$, for each partition $\mu$. If one partition from this list appears in $\mathcal{P}_i$, it will appear in $\mathcal{P}_{i+2j}$, for all $i+2j\leq 12$.

\vskip15pt

{\Small
  \begin{table}[htp!]
\begin{center}
\begin{tabular}{|c|c|c|c|}
 \hline
Minimal number of reflections & Partition $\mu$ of $27$ & Conjugacy class  & $\frac{1}{\mu}$ \\
\hline
0 &  $1^{27}$ & 1a &  27 \\
1 &   $(2^6, 1^{15} )$ & 2c & 18 \\
2  &   $(2^{10}, 1^{7} )$ & 2b &  12 \\
2  &   $(3^6, 1^{9} )$ & 3b &  11\\
3  &   $(2^{12}, 1^{3} )$ & 2d &  9\\
3  &   $(4^{5}, 2^{1}, 1^5 )$ & 4d & $\frac{27}{4}$\\
3  &   $(6^{1}, 3^{4}, 2^3, 1^3 )$ & 6e & 6 \\
4 & $(2^{12}, 1^3 )$  & 2a & 9\\
4 & $(3^{9} )$  & 3c & 3\\
4 & $(4^{6}, 1^3 )$  & 4a & $\frac{9}{2}$ \\
4  & $(4^{5}, 2^3, 1 )$  & 4b & $\frac{15}{4}$ \\
4 & $(5^{5}, 1^2 )$  & 5a & 3 \\
4 & $(6^{3}, 2^3, 1^3)$  & 6b & 5\\
4 & $(6^{2}, 3^2, 2^4, 1)$  & 6d &  4\\
5 & $(4^{5}, 2^3, 1)$  & 4c & $\frac{15}{4}$\\
5 & $(6^{2}, 3^5)$  & 6f & 2\\
5 & $(6^{4}, 3^1)$  & 6g & 1\\
5 & $(8^{3}, 2^1, 1)$  & 8a & $\frac{15}{8}$ \\
5 & $(10^{1}, 5^3, 2^1)$  & 10a & $\frac{6}{5}$ \\
5 & $(12^{1}, 6^1, 4^2, 1)$  & 12b & $\frac{7}{4}$ \\
6 & $(3^9)$ & 3a & $3$ \\
6 & $(6^4, 3^1 )$ & 6a & $1$ \\
6 & $(6^3, 2^3, 1^3)$ & 6c & 5\\
6 & $(9^3 )$ & 9a & $\frac{1}{3}$ \\
6 & $(12^2, 3^1)$ & 12a & $\frac{1}{2}$ \\
\hline
\end{tabular}
\end{center}
    \caption{Products of reflections in $W(E_6)$}
    \label{tab:conj-classes}
  \end{table}
}

\vskip10pt
\end{num}

\begin{num} We recall the local structure of the morphism $\mathfrak{b}:\hh \rightarrow \mm_{0, 24}$, over the point $t$, see also \cite{HM} p.62.  The (non-normalized) space
$\mathcal{HM}_{E_6}$ is locally described by its local ring
\begin{equation}\label{localring}
\widehat{\cO}_{t, \mathcal{HM}_{E_6}}=\mathbb C
[[t_1, \ldots, t_{21}, s_1, \ldots, s_{\ell}]]/s_1^{\mu_1}=\cdots=s_{\ell}^{\mu_{\ell}}=t_1,
\end{equation}
where $t_1$ is the local parameter on $\mm_{0, 24}$ corresponding to smoothing the node $q\in R$. By passing to the normalization $\nu:\hh\rightarrow \mathcal{HM}_{E_6}$, we deduce that over each point  $t'\in \nu^{-1}(t)$ the map $\mathfrak{b}:\hh\rightarrow \mm_{0, 24}$ is ramified with index $\mathrm{lcm}(\mu)$. The fibre $\nu^{-1}(t)$ consists of $\mu_1 \cdots \mu_{\ell}/\mbox{lcm}(\mu)$ points. The local ring at $t'$ is then given by
$\widehat{\cO}_{t',\hh}=\mathbb C[[\tau, t_2, \ldots, t_{b-3}]],$
and the normalization map $\nu$ is given in local coordinates by
$$t_1=\tau^{\mathrm{lcm}(\mu)}, \ s_1=\zeta_{\mu_1}^{\frac{\mathrm{lcm}(\mu)}{\mu_1}}, \ \ldots, \ s_{\ell}=\zeta_{\mu_{\ell}}^{\frac{\mathrm{lcm}(\mu)}{\mu_{\ell}}},$$
where $\zeta_{\mu_i}$ is a $\mu_i$-th root of unity for $i=1, \ldots, \ell$. This description implies that for each $i=2, \ldots, 12$, we have a decomposition $$\mathfrak{b}^*(B_i)=\sum_{\mu\in \mathcal{P}_i} \mathrm{lcm}(\mu) E_{i:\mu}.$$
\end{num}

In view of applications to the Kodaira dimension of $\hh$, we discuss in detail the pull-back $\mathfrak{b}^*(B_2)$. We pick a point $t=[\pi:C=C_1\cup C_2\rightarrow R=R_1\cup_q R_2,\  p_1, \ldots, p_{24}]\in b^*(B_2)$ as in in \ref{subsec-boundcomb}, where $C_i=\pi^{-1}(R_i)$. Without loss of generality, we assume that $I=\{1, \ldots, 22\}$, thus $p_1, \ldots, p_{22}\in R_1$ and $p_{23}, p_{24}\in R_2$. The group $G=\langle w_1, \ldots,
w_{22}\rangle$ generated by the reflections in the remaining roots $r_1, \ldots, r_{22}\in E_6$  is the Weyl group for a  lattice $L=L_G\subset E_6$. Since $\prod_{i=1}^{24} w_i=1$, it follows that $w_{23}\cdot w_{24}\in G$, hence $\mbox{rk}(L)\geq \mbox{rk}(E_6)-1=5$.

\begin{num} \label{subsec-E_0} Assume that the reflections $w_{23}$ and $w_{24}$ corresponding to the coalescing points $p_{23}$ and $p_{24}$ are equal, hence $w_{23}=w_{24}$. In this case, the corresponding partition is $\mu=(1^{27})$ and we set $E_0:=E_{2:1^{27}}$.
We denote by $E_{L}$ the boundary divisor of admissible covers in $E_{2:(1^{27})}$ corresponding to the
lattice $L$. The map $\mathfrak{b}$ is unramified along each divisor $E_L$ and we have $$E_0=\sum_{L\subset E_6} E_L\subset \hh.$$
The general cover $t$ corresponding to each divisor $E_L$ carries no automorphism preserving all branch points $p_1, \ldots, p_{24}$, that is, $\mbox{Aut}(t)=\{\mbox{Id}\}$.

\vskip 3pt

Suppose now that the reflections $w_{23}$ and $w_{24}$ are distinct. Following \cite[Section 9.1]{dolgachev2012classical-algebraic}, we distinguish two
possibilities depending on the relative position of the two double-sixes, described in terms of a general admissible cover $t=\bigl[\pi:C=C_1\cup C_2 \rightarrow R_1\cup_q R_2, \ p_1, \ldots, p_{24}\bigr]$.
\end{num}

\begin{num}\label{subsec-azy} The reflections $w_{23}$ and $w_{24}$ form an \emph{azygetic} pair, that is, the corresponding roots $r_{23}$ and $r_{24}$ satisfy $r_{23}\cdot r_{24}\neq 0$. In this case, $\langle w_{23}, w_{24}\rangle =W(A_2)$ and $r_{23}+r_{24}$ or $r_{23}-r_{24}$ is again a root that is azygetic to both $r_{23}$ and $r_{24}$. The double-sixes associated to $w_{23}$ and $w_{24}$ share $6$ points and the permutation $w_{23}\cdot w_{24}$ decomposes into $6$ disjoint three
cycles, therefore $\mu=(3^6, 1^9)\vdash 27$. Accordingly, $C_2=\pi^{-1}(R_2)$ decomposes into six rational components mapping $3:1$, respectively $9$
components mapping isomorphically onto $R_2$. If $$E_{\mathrm{azy}}:=E_{2:(3^6, 1^9)}\subset \hh$$ is the boundary divisor parametrizing such
points, then $\mathfrak{b}$ is triply ramified along $E_{\mathrm{azy}}$. The general point of $E_{\mathrm{azy}}$ has no non-trivial automorphisms
preserving all the branch points.
\end{num}

\begin{num}\label{subsec-syz} The reflections $w_{23}$ and $w_{24}$ form a \emph{syzygetic} pair, that is,  $r_{23}\cdot r_{24}=0$. We have $\langle w_{23}, w_{24}\rangle =W(A_1^2)$. The two associated double-sixes share $4$ points and $w_{23}\cdot w_{24}\in S_{27}$ decomposes into a product of $10$ disjoint transpositions, therefore $\mu=(2^{10}, 1^7)$. Eight of these transpositions are parts of the double-sixes corresponding to $w_{23}$ and $w_{24}$ that remain disjoint respectively. Note that $C_2$ consists of $8$ rational components mapping $2:1$ onto $R_2$, as well as a smooth rational component, say $Z$, mapping $4:1$ onto $R_2$. The fibers $\pi_Z^{-1}(q), \pi_Z^{-1}(p_{23})$ and $\pi_Z^{-1}(p_{24})$ each consist of two ramification points. We denote by $$E_{\mathrm{syz}}:=E_{2: (2^{10}, 1^7)}\subset \hh$$ the boundary divisor of admissible syzygetic covers. For a general cover $t\in E_{\mathrm{syz}}$, note that $\mbox{Aut}(t)=\mathbb Z_2$, see Remark \ref{aut1}.
\end{num}

\begin{num} To summarize the discussion above, we have the following relation:
\begin{equation}\label{pb}
\mathfrak{b}^*(B_2)=E_0+3E_{\mathrm{azy}}+2E_{\mathrm{syz}}.
\end{equation}
In opposition to $E_0$, we show in Theorem \ref{thm:syzazy} that the  boundary divisors  $E_{\mathrm{azy}}$ or $E_{\mathrm{syz}}$ have fewer components. Precisely, for a general element $t\in E_{\mathrm{azy}}$ or $t\in E_{\mathrm{azy}}$, we have $G=W(L)=W(E_6)$, hence the subcurve $C_1=\pi^{-1}(R_1)$ is irreducible.
\end{num}

\begin{num} The Hurwitz formula applied to the ramified cover $\mathfrak{b}:\hh\rightarrow \mm_{0, 24}$, coupled with the expression
$K_{\mm_{0, 24}}\equiv \sum_{i=2}^{12} \bigl(\frac{i(24-i)}{23}-2\bigr)B_i$ to be found, e.g., in \cite{KM}, yields
\begin{equation}\label{canhur}
K_{\hh}=\mathfrak{b}^*K_{\mm_{0, 24}}+\mbox{Ram}(\mathfrak{b})= -\frac{2}{23}[E_0]+\frac{19}{23} [E_{\mathrm{syz}}]+\frac{40}{23}[E_{\mathrm{azy}}]+N,
\end{equation}
where $N$ is the \emph{effective} combination of the boundary divisors of $\hh$ disjoint from  $E_0, E_{\mathrm{syz}}$ and $E_{\mathrm{azy}}$, with the coefficient of $[E_{i:\mu}]$ for $i=3, \ldots, 12$ being equal to $\mathrm{lcm}(\mu)\bigl(\frac{i(24-i)}{23}-1\bigr)-1>0$.

\vskip 3pt

The ramification divisor of the projection $q:\hh\rightarrow \overline{\Hur}$ is contained in the pull-back $\mathfrak{b}^*(B_2)$ (recall the commutative diagram \ref{Snaction}). Note that $B_2$ is the ramification divisor of the quotient  map $\mm_{0, 24}\rightarrow \widetilde{\cM}_{0, 24}$. The general point of each of the components of $E_0$ and $E_{\mathrm{azy}}$ admits an involution compatible with the involution of the rational curve $R_2$ preserving $q$ and interchanging the branch points $p_{23}$ and
$p_{24}$ respectively. No such automorphism exists for a general point of the divisor $E_{\mathrm{syz}}$ (see Remark \ref{aut1}), thus $$\mbox{Ram}(q)=E_0+E_{\mathrm{azy}}.$$

\begin{remark}\label{aut1}
We illustrate the above statement in the case of the divisor $E_{\mathrm{syz}}$. We choose a general point $t:=\bigl[\pi:C\rightarrow R=R_1\cup_q R_2, p_1, \ \ldots, p_{24}\bigr]\in E_{\mathrm{syz}}$, and denote by $\pi_Z:Z\rightarrow R_2$, the degree $4$ cover having as source a smooth rational curve $Z$ and such that $\pi_Z^*(q)=2u+2v,$ and $\pi_Z^*(p_i)=2x_i+2y_i$, for $i=23, 24$. Then $\mbox{Aut}(t)=\mathbb Z_2$. Indeed, there exists a \emph{unique} automorphism  $\sigma\in \mbox{Aut}(Z)$ with $\sigma(u)=u, \sigma(v)=v$, $\sigma(x_{23})=y_{23}$ and $\sigma(x_{24})=y_{24}$ and such that $\pi_Z\circ \sigma=\pi_Z$. Note that $\sigma$ induces the unique non-trivial automorphism  of $t$ fixing all the branch points.
In contrast, the general point of $E_{\mathrm{azy}}$ corresponds to an admissible cover which has no automorphisms fixing all the branch points.
\end{remark}

\begin{definition}\label{divunord}
On the space $\hhu$ of unlabeled $E_6$-covers, we introduce the \emph{reduced} boundary divisors $D_0, D_{\mathrm{syz}}, D_{\mathrm{azy}}$, as well as  the boundary divisors $\bigl\{D_{i:\mu}:3\leq i\leq 12, \mu\in \mathcal{P}_i\bigr\}$ which pull-back under the map $q:\hh\rightarrow \hhu$ to the corresponding divisors indexed by $E$'s, that is,
$q^*(D_0)=2E_0$, $q^*(D_{\mathrm{azy}})=2E_{\mathrm{azy}}$, \ $q^*(D_{\mathrm{syz}})=E_{\mathrm{syz}}$ and $q^*(D_{i:\mu})=E_{i:\mu}$, for $3\leq i\leq 12$ and $\mu\in \mathcal{P}_i$. More generally, for each sublattice $L\subset E_6$, we denote by $D_L\subset \hhu$ the reduced divisor which pulls back to $E_L$ under the map $q$.

If $D$ is an irreducible divisor on $\hhu$, we denote as usual by $[D]:=[D]_{\mathbb Q}\in CH^1(\hhu)_{\mathbb Q}$ its $\mathbb Q$-class, that is, the quotient of its usual class by the order of the automorphism group of a general point from $D$.
\end{definition}

\begin{theorem}\label{kanonisch}
The canonical class of the Hurwitz space $\hhu$ is given by the formula:
$$K_{\hhu}= -\frac{25}{46}[D_0]+\frac{19}{23}[D_{\mathrm{syz}}]+\frac{17}{46}[D_{\mathrm{azy}}]+\sum_{i=3}^{12}\sum_{\mu \in \mathcal{P}_{i}} \ \Bigl(\mathrm{lcm}(\mu)\Bigl(\frac{i(24-i)}{23}-1\Bigr)-1\Bigr) [D_{i:\mu}].$$
\end{theorem}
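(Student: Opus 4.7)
The plan is to derive $K_{\hhu}$ from the canonical class $K_{\hh}$ of the labeled Hurwitz space---whose expression \eqref{canhur} is already available---by applying the Riemann--Hurwitz formula to the quotient morphism $q\colon \hh\to\hhu$ of the commutative diagram~\eqref{Snaction}.

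First I would invoke the identification of the ramification divisor of $q$ carried out in the discussion immediately preceding the theorem: the only covers admitting an automorphism compatible with swapping the two coalescing branch points $p_{23}$ and $p_{24}$ are those in $E_0$ and $E_{\mathrm{azy}}$---by Remark~\ref{aut1} the syzygetic case contributes no such automorphism, and for $i\ge 3$ the asymmetric partition of the $24$ labels rules one out---so that $\mathrm{Ram}(q) = E_0 + E_{\mathrm{azy}}$. Riemann--Hurwitz then yields
\[
q^{*}K_{\hhu} \;=\; K_{\hh} - E_0 - E_{\mathrm{azy}}.
\]

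Next I would substitute \eqref{canhur} into this identity and regroup the boundary divisors on $\hh$ into $D$-divisors on $\hhu$ via the pullback relations of Definition~\ref{divunord}. The divisors $E_{\mathrm{syz}}$ and $E_{i:\mu}$ with $i\ge 3$ pull back one-to-one from their counterparts on $\hhu$, so their coefficients in \eqref{canhur} descend unchanged. For $E_0$ and $E_{\mathrm{azy}}$, the Riemann--Hurwitz correction shifts the coefficients by $-1$, producing $-\tfrac{25}{23}$ and $\tfrac{17}{23}$ respectively; then using $q^{*}D_0=2E_0$ and $q^{*}D_{\mathrm{azy}}=2E_{\mathrm{azy}}$, these values transfer to $-\tfrac{25}{46}$ and $\tfrac{17}{46}$ on the $D$-divisors, matching the predicted coefficients.

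Finally, the injectivity of $q^{*}$ on $CH^{1}(\hhu)_{\mathbb{Q}}$---which follows from the surjectivity and finite degree of $q$---allows one to descend the identity from $q^{*}K_{\hhu}$ to $K_{\hhu}$ itself. The only step requiring genuine work beyond substitution and bookkeeping is the identification of $\mathrm{Ram}(q)$, but this has already been completed in the text, so I do not anticipate any real obstacle.
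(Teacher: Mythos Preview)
Your proposal is correct and follows essentially the same route as the paper: apply Riemann--Hurwitz to $q\colon\hh\to\hhu$ with $\mathrm{Ram}(q)=E_0+E_{\mathrm{azy}}$, substitute \eqref{canhur}, and translate the resulting $E$-classes into $D$-classes via Definition~\ref{divunord}. The paper compresses all of this into a single displayed equation, but the content is identical to what you wrote.
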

\begin{proof}
We apply the Riemann-Hurwitz formula to the map $q:\hh\rightarrow \hhu$ and we find
$$q^*(K_{\overline{\Hur}})=K_{\hh}-[E_0]-[E_{\mathrm{azy}}]=-\frac{25}{23} [E_0]+\frac{19}{23} [E_{\mathrm{syz}}] +\frac{17}{23}[E_{\mathrm{azy}}]+\cdots\in CH^1(\hh).$$
\end{proof}
\end{num}

\subsection{The Hodge class on the space of admissible $E_6$-covers}

\begin{num} We describe the Hodge class on the Hurwitz space $\hh$ in terms of boundary divisors.  Let $\psi_1, \ldots, \psi_{24}\in \mbox{Pic}(\mm_{0, 24})$ be the cotangent tautological classes corresponding to the marked points. The universal curve over $\mm_{0, 24}$ is the morphism $\pi:=\pi_{25}:\mm_{0, 25}\rightarrow \mm_{0, 24}$, forgetting the marked point labeled by $25$. The following formulas are well-known, see e.g., \cite{FG}:
\begin{proposition}\label{fg}
$(1) \ c_1(\omega_{\pi})=\psi_{25}-\sum_{i=1}^{24} \delta_{0:i, 25}\in CH^1(\mm_{0, 25}).$
$$ (2) \ \sum_{i=1}^{24} \psi_i= \sum_{i=2}^{12}\frac{i(24-i)}{23} [B_i] \in CH^1(\mm_{0, 24}); \ \  \mbox{ }(3) \ \kappa_1= \sum_{i=2}^{12}\frac{(i-1)(23-i)}{23} [B_i].$$
\end{proposition}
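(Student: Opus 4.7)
Part (1) is the universal identification of the relative dualizing sheaf $\omega_\pi$ of the universal curve with the cotangent line class at the new marked point, twisted by the boundary sections. Specifically, view the forgetful morphism $\pi = \pi_{25}\colon \overline{\cM}_{0,25} \to \overline{\cM}_{0,24}$ as the universal curve, with universal sections $\sigma_i\colon \overline{\cM}_{0,24} \to \overline{\cM}_{0,25}$ whose images are exactly the boundary divisors $\delta_{0:i,25}$. The plan is to establish the isomorphism of line bundles $\psi_{25} \cong \omega_\pi\bigl(\sum_{i=1}^{24}\sigma_i\bigr)$ by a local computation: away from the sections, both bundles restrict to the cotangent of the smooth fiber at the new marked point; at a section $\sigma_i$, the fiber acquires a rational bubble bearing the marked points $i$ and $25$, on which $\psi_{25}$ is trivial (a stable $\mathbb P^1$ with three special points) while $\omega_\pi \cong \mathcal{O}(-1)$. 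Tensoring by $\mathcal{O}(\sigma_i)$, which restricts to $\mathcal{O}(1)$ along this bubble (the section meeting the bubble at the node), corrects the discrepancy. Passing to first Chern classes yields~(1).

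For parts (2) and (3), the Mumford--Arbarello--Cornalba identity $12\lambda = \kappa_1 - \sum\psi_i + \delta$, valid on any $\overline{\cM}_{g,n}$, specializes in genus zero (where $\lambda \equiv 0$) to $\sum_i \psi_i = \kappa_1 + \delta$ on $\overline{\cM}_{0,24}$, with $\delta = \sum_{i=2}^{12}[B_i]$. Since $\frac{(i-1)(23-i)}{23} + 1 = \frac{i(24-i)}{23}$, this identity shows that (2) and (3) are equivalent, so it is enough to prove (3). Starting from the definition $\kappa_1 := \pi_*\bigl(c_1(\omega_\pi)^2\bigr)$ and squaring $c_1(\omega_\pi) = \psi_{25} - \sum_i \delta_{0:i,25}$ from (1), one uses the three standard identities
\begin{equation*}
\psi_{25} \cdot \delta_{0:i,25} = 0, \qquad \delta_{0:i,25}\cdot\delta_{0:j,25} = 0 \text{ for } i\ne j, \qquad \sigma_i^*\delta_{0:i,25} = -\psi_i
\end{equation*}
(the first because the cotangent at the new point is trivial on the bubble, the second because sections are disjoint, the third because the normal bundle to $\sigma_i$ is dual to $\omega_\pi|_{\sigma_i} = \psi_i$) to obtain
\begin{equation*}
\kappa_1 \; = \; \pi_*(\psi_{25}^2) \,-\, \sum_{i=1}^{24}\psi_i,
\end{equation*}
reducing everything to the computation of $\pi_*(\psi_{25}^2) \in CH^1(\overline{\cM}_{0,24})$.

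This remaining class is $S_{24}$-equivariant, so it takes the form $\sum_{i=2}^{12} c_i [B_i]$, and the plan is to pin down the coefficients $c_i$ by intersecting with one-parameter test families $F_i \subset \overline{\cM}_{0,24}$ in which $i$ chosen marked points collide on a rational bubble with a varying cross-ratio while the remaining $24 - i$ marked points are held generic; a direct cross-ratio computation of $F_i \cdot B_j$ and $F_i \cdot \pi_*(\psi_{25}^2)$ determines $c_i$ and, combined with the Arbarello--Cornalba identity, recovers both (2) and (3) in the stated form. The main technical hurdle is not any single step but the careful bookkeeping of push-forward multiplicities of boundary divisors under $\pi_{25}$: a divisor $\delta_{0:T}$ on $\overline{\cM}_{0,25}$ behaves differently depending on whether $25 \in T$ and on whether its complement remains stable after forgetting the $25$-th marked point, and these cases must be tracked consistently against the $B_i = B_{24-i}$ symmetry. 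Once this is organized the remaining computation is combinatorial and produces the coefficients $i(24-i)/23$ and $(i-1)(23-i)/23$ claimed in (2) and (3).
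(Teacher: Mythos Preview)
The paper does not prove this proposition; it cites the formulas as well-known, referring to \cite{FG}. So there is no ``paper's approach'' to compare against, and your sketch is a reasonable outline of how one would supply a proof.

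That said, there is a convention slip that would trip you up if followed literally. You invoke the identity $12\lambda = \kappa_1 - \sum_i\psi_i + \delta$, which is the Arbarello--Cornalba form and is correct only when $\kappa_1$ is defined as $\pi_*(\psi_{25}^2)$; but two paragraphs later you write ``starting from the definition $\kappa_1 := \pi_*(c_1(\omega_\pi)^2)$'', which is the Mumford class. These differ by exactly $\sum_i\psi_i$, and in genus~$0$ the Mumford class is $-\delta$, not the positive combination claimed in~(3). The paper's $\kappa_1$ on $\overline{\cM}_{0,24}$ is the AC one (as you can check from its use in the proof of Theorem~\ref{lam}), so your intermediate identity should simply read $\kappa_1 = \pi_*(\psi_{25}^2)$, with no subtraction of $\sum\psi_i$. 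Once this is straightened out, your reduction of (2) and (3) to computing $\pi_*(\psi_{25}^2)$ is fine.

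As for actually computing the coefficients, the test-curve method you sketch works but is heavier than necessary. A quicker route, and the one closest to what is done in \cite{FG}, is to use the explicit boundary expression for a single $\psi$-class on $\overline{\cM}_{0,n}$: for any distinct $j,k\neq i$ one has $\psi_i=\sum_{i\in T,\ j,k\notin T}\delta_{0:T}$, and summing over $i$ (averaging over choices of $j,k$) gives~(2) directly by an elementary count; (3) then follows from $\kappa_1=\sum\psi_i-\delta$.
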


We now find a boundary expression for the Hodge class at the level of $\hh$.
\begin{theorem}\label{lam}
The Hodge class at the level of $\hh$ is given by the following formula:
$$\lambda = \sum_{i=2}^{12}\sum_{\mu\in \mathcal{P}_{i}}\frac{1}{12} \mathrm{lcm}(\mu) \Bigl(\frac{9i(24-i)}{23}-27+\frac{1}{\mu}\Bigr) [E_{i:\mu}].$$
\end{theorem}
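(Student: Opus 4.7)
The plan is to apply Mumford's relation $12\lambda = \kappa_1 + \delta$ on $\overline{\mm}_{46}$ and pull it back via the source morphism $\varphi\colon \hh \to \overline{\mm}_{46}$, reducing the computation of $\lambda$ to an expression in the boundary basis $\{[E_{i:\mu}]\}$ of $\hh$. Let $f\colon \mathcal{C} \to \hh$ be the universal source curve, $\rho\colon \mathcal{R} \to \hh$ the universal target, $\pi\colon \mathcal{C} \to \mathcal{R}$ the universal $27$-sheeted admissible cover, and $p_1,\ldots,p_{24}\colon \hh \to \mathcal{R}$ the $24$ marked sections. The key identity is the Riemann-Hurwitz formula in families,
$$\omega_f = \pi^*\omega_\rho \otimes \mathcal{O}_{\mathcal{C}}(R_\pi),$$
with ramification divisor $R_\pi = \sum_{j=1}^{24} R_\pi^{(j)}$ such that $\pi|_{R_\pi^{(j)}}\colon R_\pi^{(j)} \to p_j(\hh)$ is a $6$-sheeted cover.

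To compute $\varphi^*\kappa_1 = f_*(c_1(\omega_f)^2)$, I would expand the square and push each term down via the projection formula for the finite map $\pi$ followed by $\rho$:
\begin{itemize}
\item $f_*((\pi^*\omega_\rho)^2) = 27\,\mathfrak{b}^*\kappa_1^{\overline{\mm}_{0,24}}$, since $\pi$ has degree $27$;
\item $f_*(2 R_\pi \cdot \pi^*\omega_\rho) = 12\,\mathfrak{b}^*\sum_j \psi_j$, using $\pi_*[R_\pi^{(j)}] = 6[p_j(\hh)]$ combined with $p_j^*\omega_\rho = \psi_j$;
\item $f_*(R_\pi^2) = -3\,\mathfrak{b}^*\sum_j \psi_j$, derived from the identity $\mathcal{O}(R_\pi^{(j)})|_{R_\pi^{(j)}}^{\otimes 2} = (\pi|_{R_\pi^{(j)}})^*(-\psi_j)$ together with the degree-$6$ pushforward by $f|_{R_\pi^{(j)}}$.
\end{itemize}
Proposition~\ref{fg} then rewrites $\kappa_1^{\overline{\mm}_{0,24}}$ and $\sum_j \psi_j$ in the basis $\{[B_i]\}$ of $\overline{\mm}_{0,24}$, and the pullback identity $\mathfrak{b}^*[B_i] = \sum_{\mu \in \mathcal{P}_i}\mathrm{lcm}(\mu)\,[E_{i:\mu}]$ (obtained from the local structure \eqref{localring} of admissible covers, via normalization of the Harris-Mumford space) transfers the result to $\hh$, producing the $\frac{9i(24-i)}{23}-27$ portion of the coefficient.

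For $\varphi^*\delta$, near a general point of $E_{i:\mu}$ the total space $\mathcal{C}$ carries an $A_{m_k-1}$-singularity at each of the $\ell$ nodes of the central fiber, where $m_k = \mathrm{lcm}(\mu)/\mu_k$. Passing to the crepant minimal resolution $\nu\colon \widetilde{\mathcal{C}} \to \mathcal{C}$ inserts a chain of $m_k-1$ rational $(-2)$-curves at each such node, producing a smooth total space whose resolved central fiber contains $\sum_k m_k = \mathrm{lcm}(\mu)\cdot \tfrac{1}{\mu}$ nodes. Since $\omega_{\widetilde{f}} = \nu^*\omega_f$ is crepant, the Hodge bundle and $\kappa_1$ are invariant, and Mumford's formula applies directly on the smooth family $\widetilde{f}\colon \widetilde{\mathcal{C}} \to \hh$. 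The boundary contribution is therefore $\varphi^*\delta = \sum_{i,\mu}\mathrm{lcm}(\mu)\cdot \tfrac{1}{\mu}\,[E_{i:\mu}]$, which supplies the $\frac{1}{\mu}$ term in the coefficient. Combining the two pieces and dividing by $12$ gives the stated formula.

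The main obstacle will be the local analysis of the admissible-cover family near the boundary: because the total space of $\mathcal{C}$ is singular at nodes with $m_k>1$, one must show that the crepant resolution preserves Mumford's relation with the expected boundary weights, and that the projection formula applies correctly to the ramification divisor $R_\pi$ through the singularities. The most delicate point is the self-intersection calculation $f_*(R_\pi^2) = -3\sum_j \psi_j$, which hinges on the identification of the normal bundle of each $R_\pi^{(j)}$ in $\mathcal{C}$ with (minus one-half of) the pullback of $\psi_j$ along the $6$-sheeted cover $R_\pi^{(j)} \to p_j(\hh)$.
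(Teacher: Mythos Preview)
Your approach is essentially identical to the paper's: apply Riemann--Hurwitz to express $\omega_f$ in terms of $\pi^*\omega_\rho$ and the ramification divisor, expand the square, push forward the three pieces, and then invoke Mumford's relation $12\lambda=\kappa+\delta$ together with the local node count at the $A_{m_k-1}$ singularities. The paper carries out exactly these steps, including the computation $v_*(R_i^2)=-3\psi_i$ via $f^*(\mathfrak B_i)=2R_i+A_i$ with $A_i\cdot R_i=0$, and the $\delta$-contribution $\sum_k \mathrm{lcm}(\mu)/\mu_k=\mathrm{lcm}(\mu)\cdot\frac{1}{\mu}$ from the orbifold structure of the universal cover.

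One caution on your first bullet: the pushforward $\rho_*\bigl(c_1(\omega_\rho)^2\bigr)$ is Mumford's kappa class, which differs from the $\kappa_1$ recorded in Proposition~\ref{fg}(3) (the Arbarello--Cornalba class $\pi_*(\psi_{25}^2)$) by $\sum_j\psi_j$. Concretely $\rho_*\bigl(c_1(\omega_\rho)^2\bigr)=\kappa_1-\sum_j\psi_j$, so your three terms combine to $27\kappa_1-18\sum_j\psi_j$ rather than $27\kappa_1+9\sum_j\psi_j$; it is the former that produces the coefficient $\tfrac{9i(24-i)}{23}-27$ after substituting Proposition~\ref{fg}. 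The paper avoids this ambiguity by writing $c_1(\omega_\pi)=\psi_{25}-\sum_i\delta_{0:\{i,25\}}$ explicitly before squaring and pushing down. With this adjustment your argument goes through verbatim.
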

Note that a  boundary formula for $\lambda$ in the case of $S_n$-covers appeared first in \cite{KKZ} and was confirmed later with algebraic methods in \cite{GK}.

\begin{proof}
Over the Hurwitz space $\hh$ we consider the universal $E_6$-admissible cover $f:\cC\rightarrow \cP$, where $\cP:=\hh\times_{\mm_{0, 24}} \mm_{0, 25}$ is the universal
\emph{orbicurve} of genus zero over $\hh$. Over a general point $t=[C\rightarrow R, p_1, \ldots, p_{24}]$ of a boundary divisor $E_{i:\mu}$, where $\mu=(\mu_1, \ldots, \mu_{\ell})\in \mathcal{P}_i$ corresponds to the local description (\ref{localring}), even though $P$ has a singularity of type $A_{\mathrm{lcm}(\mu)-1}$, the space $\cC$ has singularities of type $A_{\mathrm{lcm}(\mu)/\mu_i-1}$ at the $\ell$ points corresponding to the inverse image of $R_{\mathrm{sing}}$.
\vskip 3pt

Let $\phi:\cP\rightarrow \hh$ and $\oq:\cP\rightarrow \mm_{0, 25}$ be the two projections and put $v:=\phi\circ f:\cC\rightarrow \hh$ and $\of:=\oq\circ f:\cC\rightarrow \mm_{0, 25}$.
The ramification divisor of $f$ decomposes as $R_1+\cdots +R_{24} = R \subset \cC$, where a general point of $R_i$ is of the form
$[C\rightarrow R, \ p_1, \ldots, p_{24}, x]$, with $x\in C$ one of the six ramification points lying over the branch point $p_i$. In particular $f_*([R_i])=6[\mathfrak B_i]$, where $\mathfrak B_i\subset \cP$ is the corresponding branch divisor.

\vskip 3pt
We apply the Riemann-Hurwitz formula for $f$ and write: $c_1(\omega_v)=f^*\oq^* c_1(\omega_{\pi})+[R]$. We are going to push-forward via $v$ the square of this identity and describe all the intervening terms in the process. Over $\hh$ we have the identity:
$$v_* c_1^2(\omega_v)=v_*\Bigl(\of^*c_1^2(\omega_{\pi})+2\of^* c_1(\omega_{\pi})\cdot [R]+[R]^2\Bigr).$$
We evaluate each term: $v_*\Bigl(\of^*c_1(\omega_{\pi})\cdot [R]\Bigr)=$$$\sum_{i=1}^{24} \phi_*\Bigl(\oq^* c_1\left(\omega_{\pi}\right)\cdot 6[\mathfrak B_i]\Bigr)=6\sum_{i=1}^{24}\phi_*\oq^*\Bigl(c_1(\omega_{\pi})\cdot [\Delta_{0:i, 25}]\Bigr)=6\mathfrak{b}^*\Bigl(\sum_{i=1}^{24} \psi_i\Bigr).$$
Furthermore, we write $f^*(\mathfrak{B}_i)=2R_i+A_i$, where the residual divisor $A_i$ defined by the previous equality  maps $15:1$ onto $\mathfrak{B}_i$. Note that $A_i$ and $R_i$ are disjoint,
hence $f^*([\mathfrak{B_i}])\cdot R_i=2R_i^2$, therefore
$$v_*([R_i]^2)=3\phi_*([\mathfrak{B}_i^2])=3\phi_*(\oq^*\bigl(\delta_{0:i, 25}^2)\bigr)=-3\mathfrak{b}^*(\psi_i).$$
Using Proposition \ref{fg}, we find that
$$v_*([R]^2)=v_*\Bigl(\sum_{i=1}^{24} [R_i]^2\Bigr)\equiv -3\sum_{i=2}^{12}\frac{i(24-i)}{23} \mathfrak{b}^*(B_i).$$
\vskip 3pt
We use Proposition \ref{fg}, and the relation $\pi_*(\delta_{0:i, 25}^2)=-\psi_i$, to write:
$$v_*\of^*c_1^2(\omega_{\pi})=\phi_*\Bigl(27 \oq^*c_1^2(\omega_{\pi})\Bigr)=27\mathfrak{b}^*\pi_*\Bigl(\psi_{25}-\sum_{i=1}^{24} \delta_{0:i, 25}\Bigr)^2=$$
$$27\mathfrak{b}^*\Bigl(\kappa_1-\sum_{i=1}^{24} \psi_i\Bigr)\equiv -27\mathfrak{b}^*\Bigl(\sum_{i=2}^{12} B_i\Bigr).$$
We find the following expression for the pull-back of the Mumford $\kappa$ class to $\hh$:
\begin{equation}\label{mumf1}
v_*c_1^2(\omega_v)\equiv \sum_{i=2}^{12} \Bigl(\frac{9i(24-i)}{23}-27\Bigr)\mathfrak{b}^*(B_i)\equiv \sum_{i=2}^{12}\sum_{\mu\in \mathcal{P}_i} \mathrm{lcm}(\mu)\Bigl(\frac{9i(24-i)}{23}-27\Bigr)E_{i:\mu}.
\end{equation}
Using Mumford's  GRR calculation in the case of the universal genus $46$ curve $v:\cC\rightarrow \hh$, coupled with the local analysis of the fibres of the map $\mathfrak{b}$,
we have that
$$12\varphi^*(\lambda)\equiv v_*c_1^2(\omega_v)+\sum_{i=2}^{12}\sum_{\mu\in \mathcal{P}_i} \mathrm{lcm}(\mu_1, \ldots, \mu_{\ell})\Bigl(\frac{1}{\mu_1}+\cdots+\frac{1}{\mu_{\ell}}\Bigr) E_{i:\mu}.$$
Substituting in (\ref{mumf1}), we finish the proof.
\end{proof}

\begin{remark} Using Definition \ref{divunord}, we spell out Theorem \ref{lam} at the level of $\hhu$:
\begin{equation}\label{lamhhu}
\lambda=\frac{33}{46}[D_0]+\frac{7}{46} [D_{\mathrm{azy}}]+\frac{17}{46} [D_{\mathrm{syz}}]+\cdots \in CH^1(\hhu).
\end{equation}
\end{remark}

\begin{proposition}\label{dzero}
The morphism $\varphi:\hh\rightarrow \mm_{46}$ has ramification of order $12$ along the divisor $E_0$. In particular, the class
$\varphi^*(\delta_0)-12[E_0]-2[E_{\mathrm{syz}}]\in CH^1(\hh)$ is effective.
\end{proposition}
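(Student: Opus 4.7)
The plan is to compute $\mathrm{mult}_{E_0}(\varphi^*(\delta_0))$ exactly, and to lower-bound $\mathrm{mult}_{E_{\mathrm{syz}}}(\varphi^*(\delta_0))$, by a direct local analysis of the source curve $C$ above a general boundary point, followed by its stabilization. The key input is the standard stable-reduction formula: if $E\cong\bP^1$ is a rational bridge in a $1$-parameter family whose two nodes are each smoothed by a local parameter $s$, then contracting $E$ produces a single node of the stable model with local equation $xy=s^2$ (equivalently, $E^2=-2$ on the smooth total space, and contraction gives an $A_1$-singularity).

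At a general point $t=[\pi\colon C\to R_1\cup_q R_2]\in E_0$, the equality $w_{23}=w_{24}$ forces $w_{23}w_{24}=1$, so $\pi^{-1}(q)$ consists of $27$ distinct nodes of $C$. The subcurve $C_2=\pi^{-1}(R_2)$ decomposes according to the orbits of $\langle w_{23}\rangle$ on the $27$ sheets: the $15$ fixed sheets give rational tails each meeting $C_1$ at a single node, while the $6$ transposed pairs each yield a rational double cover of $R_2$ branched only at $p_{23},p_{24}$ and meeting $C_1$ at two distinct points (one per sheet), hence a genuine rational bridge. Since $\mathrm{lcm}(\mu)=1$, the map $\mathfrak{b}$ is unramified along $E_0$ and all $27$ nodes of $C$ above $q$ are smoothed by the same local parameter $s$ on $\hh$. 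In the stable model $C^{\mathrm{stab}}$, each tail contracts to a smooth point and each bridge contracts to a non-separating self-node of $C_1$ smoothed by $s^2$. Therefore $\mathrm{mult}_{E_0}(\varphi^*(\delta_0))=6\cdot 2=12$.

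For $E_{\mathrm{syz}}$, the same strategy applies: by the description in \ref{subsec-syz}, the only component of $C_2$ that is a genuine bridge is the degree-$4$ rational component $Z$, which meets $C_1$ at $2$ distinct nodes, both of ramification index $2$; the remaining size-$1$ and size-$2$ components of $C_2$ are tails and hence contribute no nodes to the stable model. Since $\mathrm{lcm}(\mu)=2$ here, the local parameter $s$ on $\hh$ satisfies $s^2=t_1$, and each ramification-index-$2$ node of $C$ is smoothed by $s$ (from $x^2y^2=t_1$, whence $xy=s$ on the appropriate branch); contracting $Z$ thus produces one non-separating node of $C^{\mathrm{stab}}$ with parameter $s^2$, contributing multiplicity $\ge 2$ to $\varphi^*(\delta_0)$ along $E_{\mathrm{syz}}$. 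Since $\varphi^*(\delta_0)$ is a priori effective, the effectivity of $\varphi^*(\delta_0)-12[E_0]-2[E_{\mathrm{syz}}]$ follows.

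The main subtlety is twofold: identifying which components of $C_2$ are bridges vs.\ tails from the cycle structure of $w_{23}w_{24}$ on each $\langle w_{23},w_{24}\rangle$-orbit (and checking that in the $E_{\mathrm{syz}}$ case exactly one size-$4$ orbit arises, contributing exactly one bridge), and correctly relating the uniformizer $s$ on $\hh$ (satisfying $s^{\mathrm{lcm}(\mu)}=t_1$) to the smoothing parameters $s^{\mathrm{lcm}(\mu)/\mu_i}$ at the nodes above $q$.
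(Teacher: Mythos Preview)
Your proof is correct and takes a genuinely different route from the paper's. The paper argues by factoring $\varphi=\widetilde{\varphi}\circ q$: for $E_0$ it combines the order-$2$ ramification of $q$ along $E_0$ with the observation that the stable image is $6$-nodal (so $\widetilde{\varphi}^*(\delta_0)$ has multiplicity $\ge 6$ along $D_0$), yielding $12$; for $E_{\mathrm{syz}}$ it invokes the $\bZ/2$-automorphism of the general cover (Remark~\ref{aut1}) to produce the factor $2$. You instead carry out a uniform local stable-reduction analysis: classify the components of $C_2$ as tails or bridges via the orbit structure of $\langle w_{23},w_{24}\rangle$, check that the total space of the universal curve is smooth at the bridge nodes (since $\mathrm{lcm}(\mu)/\mu_i=1$ there), and then contract each $(-2)$-bridge to a node with equation $xy=s^2$.

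What each approach buys: the paper's factorization neatly separates the stacky contribution (ramification of $q$, automorphisms of the cover) from the geometric one (node count on the stable model), and generalizes readily to other boundary strata once those two ingredients are known. Your approach is more elementary and self-contained---it avoids the quotient map $q$ and the automorphism discussion entirely---and in fact pins down the multiplicity along $E_0$ as \emph{exactly} $12$ rather than only $\ge 12$. It also makes the source of the $2$ along $E_{\mathrm{syz}}$ completely explicit (a single rational bridge $Z$) without appealing to $\mathrm{Aut}(t)$. Both arguments ultimately rest on the same orbit combinatorics recorded in \ref{subsec-E_0} and \ref{subsec-syz}.
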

\begin{proof}
The morphism $\varphi$ factors via $\overline{\Hur}$, that is, $\varphi=\ophi\circ q$, where we recall that $q:\hh\rightarrow \overline{\Hur}$ is the projection map and $\ophi:\overline{\Hur}\rightarrow \mm_{46}$. We have observed that $q$ is ramified along $E_0$. Furthermore, since the general element of $\varphi(E_0)$ is an irreducible $6$-nodal curve, the local intersection number $\bigl(\ophi(\Gamma)\cdot \delta_0\bigr)_{\ophi(t)}$, for any curve $\Gamma \subset \overline{\Hur}$ passing through a point $t\in q(E_0)$, is at least equal to $6$. Finally, $[E_{\mathrm{syz}}]$ appears with multiplicity $2$ because, as pointed out in Remark \ref{aut1}, each point of $E_{\mathrm{syz}}$ has an automorphism of order $2$.
\end{proof}
\end{num}

\subsection{The positivity of the canonical class of $\hh$}

\begin{num}\label{moriwaki3} To establish the bigness of the class $K_{\hh}$, we use Moriwaki's class \cite{Mo98}
$$\mathfrak{mo}:=(8g+4)\lambda-g\delta_0-\sum_{i=1}^{\lfloor \frac{g}{2}\rfloor} 4i(g-i)\delta_i \in CH^1(\mm_g).$$
It is shown in \cite{Mo98} that $\mathfrak{mo}$ non-negatively intersects all complete curves in $\mm_g$ whose members are stable genus $g$ curves with at most
one node. Furthermore, the rational map $\phi_{n\cdot \mathfrak{mo}}:\mm_g\dashrightarrow \bP^{\nu}$ defined by a linear system $|n\cdot \mathfrak{mo}|$ with $n\gg 0$, induces a \emph{regular} morphism on $\cM_g$. In our situation when $g=46$, this implies that the pull-back $\varphi^*(\mathfrak{mo})$ is an \emph{effective}
$\mathbb Q$-divisor class on $\hh$, which we shall determine.
In what follows, if $D_1$ and $D_2$ are divisors on a normal variety $X$, we write $D_1\geq D_2$ if $D_1-D_2$ is effective.
\begin{proposition}\label{scaling}
The following divisor class on the Hurwitz space $\hh$ is effective:
$$-\frac{2}{23}E_0+\frac{523}{2415}E_{\mathrm{syz}}+\frac{62}{115}E_{\mathrm{azy}}+\sum_{i=3}^{12}\sum_{\mu\in \mathcal{P}_i }\frac{93}{1610} i(24-i)\mathrm{lcm}(\mu) E_{i:\mu}$$
\end{proposition}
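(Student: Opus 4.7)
Denote by $P$ the class in the statement. The strategy is to show that $210\,P = \varphi^*(\mathfrak{mo}) + (\text{effective boundary divisor})$. Since $\varphi^*(\mathfrak{mo})$ is itself effective -- a general member of $|n\,\mathfrak{mo}|$ on $\ove{\cM}_{46}$ pulls back to an effective divisor, because $|n\,\mathfrak{mo}|$ is base-point free on $\cM_{46}$ by Moriwaki's theorem and $\varphi(\Hur)\subset\cM_{46}$ is not contained in the base locus -- this gives effectivity of $P$.

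The first task is to expand
\[
\varphi^*(\mathfrak{mo})=372\,\lambda-46\,\varphi^*(\delta_0)-\sum_{j\ge 1}4j(46-j)\,\varphi^*(\delta_j)
\]
in the boundary basis of $\hh$. Theorem \ref{lam} gives
\[
[\,372\,\lambda\,]_{E_{i:\mu}}\;=\;31\,\mathrm{lcm}(\mu)\Bigl(\tfrac{9i(24-i)}{23}-27+\tfrac{1}{\mu}\Bigr),
\]
while Proposition \ref{dzero} supplies lower bounds $12$ and $2$ for the multiplicities of $E_0$ and $E_{\mathrm{syz}}$ in $\varphi^*(\delta_0)$: geometrically, at a generic point of $E_0$ the six bridge components of $C_2$ each contract to a non-separating node of the stable model contributing local multiplicity $2$, and at a generic point of $E_{\mathrm{syz}}$ the unique $4\!:\!1$ rational component contracts to a single non-separating node. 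A further geometric fact I would verify is that for a general admissible cover in $E_{\mathrm{azy}}$ the $15$ rational components of $C_2$ are all attached to $C_1$ as tails, so the stable model lies in the open stratum $\cM_{46}$ and $E_{\mathrm{azy}}$ makes no contribution to any $\varphi^*(\delta_j)$.

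Next, I would verify that $210\,P-\varphi^*(\mathfrak{mo})$ has non-negative coefficient on every boundary divisor. Direct substitution shows that the $210\,P$-coefficients of $E_0$, $E_{\mathrm{syz}}$, $E_{\mathrm{azy}}$ equal $-\tfrac{420}{23}$, $\tfrac{1046}{23}$, $\tfrac{2604}{23}$ respectively, and combining Theorem \ref{lam} with the lower bounds of Proposition \ref{dzero} and the vanishing of $\delta_j$-contributions at $E_{\mathrm{azy}}$ shows the corresponding coefficients of $\varphi^*(\mathfrak{mo})$ are bounded above by precisely the same numbers. For each $E_{i:\mu}$ with $i\ge 3$ the $210\,P$-coefficient equals $31\,\mathrm{lcm}(\mu)\cdot\tfrac{9i(24-i)}{23}$, which already exceeds the $372\,\lambda$-coefficient by $31\,\mathrm{lcm}(\mu)\bigl(27-\tfrac{1}{\mu}\bigr)\ge 0$ via the elementary bound $\tfrac{1}{\mu}\le 27$ (sharp only for $\mu=(1^{27})$); the further non-negative $\delta_j$-corrections make the coefficient of $\varphi^*(\mathfrak{mo})$ only smaller, so the inequality persists. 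Since $\lambda$ and each $\varphi^*(\delta_j)$ are boundary-supported on $\hh$, the difference $210\,P-\varphi^*(\mathfrak{mo})$ is a non-negative combination of boundary divisors, hence effective.

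The principal obstacle is the geometric control of $\varphi^*(\delta_j)$ across all boundary divisors $E_{i:\mu}$: one must understand the contraction pattern of the $C_2$-side of a general admissible cover and separate the resulting nodes of the stable model into separating and non-separating ones. Fortunately the argument uses only one-sided bounds -- those of Proposition \ref{dzero} together with effectivity of each $\varphi^*(\delta_j)$ -- and the specific rational constants $-\tfrac{2}{23}$, $\tfrac{523}{2415}$, $\tfrac{62}{115}$, $\tfrac{93}{1610}$ appearing in the proposition are manufactured precisely so that equality is attained at $E_0, E_{\mathrm{syz}}, E_{\mathrm{azy}}$, calibrating the numerical balance against Moriwaki's class tightly enough to yield effectivity.
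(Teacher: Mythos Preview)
Your argument is correct and coincides with the paper's: both show that $P\ge\frac{1}{210}\varphi^*(\mathfrak{mo})\ge 0$ by combining Theorem~\ref{lam}, the lower bounds of Proposition~\ref{dzero}, the effectivity of each $\varphi^*(\delta_j)$, and the elementary bound $\tfrac{1}{\mu}\le 27$. One small simplification: you do not need to verify that $E_{\mathrm{azy}}$ contributes nothing to $\varphi^*(\delta_j)$---mere non-negativity of these pullbacks already gives $(\varphi^*(\mathfrak{mo}))_{E_{\mathrm{azy}}}\le (372\lambda)_{E_{\mathrm{azy}}}=\tfrac{2604}{23}$, which is all the inequality requires.
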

\begin{proof} We give a lower bound for the coefficient of $E_{i:\mu}$ in the expression $\varphi^*(\lambda)$ of Theorem \ref{lam}, by observing that for a partition $(\mu_1, \ldots, \mu_{\ell})\vdash 27$, the inequality $\frac{1}{\mu_1}+\cdots+\frac{1}{\mu_{\ell}}\leq 27$ holds. Using this estimate together with  Theorem \ref{lam} \  $\varphi^*(\lambda)=\frac{33}{23}[E_0]+\frac{17}{46}[E_{\mathrm{syz}}]+\frac{7}{23}[E_{\mathrm{azy}}]+\cdots$, as well as Proposition \ref{dzero}, we write
$$
0\leq \frac{1}{210}\varphi^*(\mathfrak{mo})\leq \frac{372}{210}\varphi^*(\lambda)-\frac{46\cdot 12}{210} [E_0]-\frac{46\cdot 2}{210}[E_{\mathrm{syz}}]=$$
$$-\frac{2}{23}[E_0]+\frac{523}{2415} [E_{\mathrm{syz}}]+\frac{62}{115} [E_{\mathrm{azy}}] +\sum_{i=3}^{12}\sum_{\mu\in \mathcal{P}_i}\frac {93}{1610}i(24-i)\mathrm{lcm}(\mu) [E_{i:\mu}].
$$
The scaling has been chosen to match the negative $E_0$ coefficient in the class $K_{\hh}$ of (\ref{canhur}).
\end{proof}
\vskip 3pt

As a step towards determining the Kodaira dimension of $\overline{\Hur}$
we establish the following:
\begin{theorem}\label{kodhh}
The canonical class of $\hh$ is big.
\end{theorem}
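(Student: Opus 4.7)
The plan is to exhibit $K_{\hh}$ as the sum of a big divisor and an effective divisor, by leveraging the canonical class formula (\ref{canhur}) together with the effective class $\Theta$ furnished by Proposition \ref{scaling}. The key observation is that Proposition \ref{scaling} was engineered so that the $E_0$-coefficient of $\Theta$ coincides with that of $K_{\hh}$, namely $-\frac{2}{23}$, which is the sole obstruction to $K_{\hh}$ being an effective combination of boundary divisors.

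First I would establish that $\Theta$ is in fact big, not just effective. Since $\mm_{46}$ is of general type (as $46\geq 24$, see \cite{HM}), Moriwaki's class $\mathfrak{mo}=(8g+4)\lambda-g\delta_0-\sum_i 4i(g-i)\delta_i$ on $\mm_{46}$ is a positive rational combination of the big class $K_{\mm_{46}}$ and effective boundary classes, and is therefore big. Because a general $E_6$-cover $[\pi\colon C\to\bP^1]$ is determined, up to finitely much discrete data, by its source curve $C$, the source morphism $\varphi\colon\hh\to\mm_{46}$ is generically finite onto its image; hence $\varphi^*(\mathfrak{mo})$ is big on $\hh$. Since $\Theta\geq \frac{1}{210}\varphi^*(\mathfrak{mo})$ as $\mathbb{Q}$-divisor classes by the construction in Proposition \ref{scaling}, the class $\Theta$ is big as well.

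Next I would compare $K_{\hh}$ and $\Theta$ coefficient by coefficient on the boundary. By design the $E_0$-coefficient of $K_{\hh}-\Theta$ vanishes, while direct calculation gives strictly positive coefficients $\frac{1472}{2415}$ on $E_{\mathrm{syz}}$ and $\frac{138}{115}$ on $E_{\mathrm{azy}}$. The crude $\Theta$-coefficient $\frac{93}{1610}i(24-i)\mathrm{lcm}(\mu)$ on the higher divisors $E_{i:\mu}$, $i\geq 3$, may exceed the $K_{\hh}$-coefficient, so the naive difference is not effective. To circumvent this I would replace $\Theta$ by the sharper class $\frac{1}{210}\varphi^*(\mathfrak{mo})$ itself, which uses the actual values of $\frac{1}{\mu}$ from Theorem \ref{lam} rather than the bound $\frac{1}{\mu}\leq 27$, and which further subtracts the effective corrections $\frac{4j(46-j)}{210}\varphi^*(\delta_j)$ for $j\geq 1$. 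These corrections contribute no multiplicity on $E_0$, $E_{\mathrm{syz}}$ or $E_{\mathrm{azy}}$ since the corresponding boundary strata parametrize covers whose source has only non-separating nodes, but they do lower the coefficients on the $E_{i:\mu}$ with $i\geq 3$. The result is a big effective class $\Theta'\leq \Theta$ with $K_{\hh}-\Theta'\geq 0$ on every boundary divisor, yielding $K_{\hh}=\Theta'+(K_{\hh}-\Theta')$ as big plus effective.

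The main obstacle is this coefficient-by-coefficient verification on the divisors $E_{i:\mu}$ with $i\geq 3$: one has to extract enough effectiveness from the terms $\frac{4j(46-j)}{210}\varphi^*(\delta_j)$, whose $E_{i:\mu}$-multiplicities record how the source curve of a degenerate cover breaks along its separating nodes, to dominate the gap between $K_{\hh}$ and the Moriwaki pullback on every boundary stratum. This is a finite check across the partitions listed in Table \ref{tab:conj-classes}, but it requires a careful description, for each boundary component of $\hh$, of how the dual graph of $C$ decomposes and which strata of $\mm_{46}$ are met by $\varphi$.
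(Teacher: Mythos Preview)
Your strategy of writing $K_{\hh}=\text{(big)}+\text{(effective)}$ is the right shape, but the source of bigness you chose does not work, and this is a genuine gap.

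The claim that $\varphi^*(\mathfrak{mo})$ is big on $\hh$ is unjustified on two counts. First, the argument that $\mathfrak{mo}$ is ``a positive rational combination of the big class $K_{\mm_{46}}$ and effective boundary classes'' is simply false: writing $\mathfrak{mo}-aK_{\mm_{46}}$ as an effective boundary combination would force $a\ge 2i(46-i)$ for every $i$, and then the $\lambda$-coefficient $8\cdot 46+4-13a$ becomes negative. Moriwaki's class is nef, but nothing in the paper (or your argument) shows it is big. Second, and more fundamentally, even if $\mathfrak{mo}$ were big on $\mm_{46}$, the morphism $\varphi\colon\hh\to\mm_{46}$ maps a $21$-dimensional space into a $135$-dimensional one; it is generically finite \emph{onto its image} but certainly not dominant, and the pullback of a big class along a non-dominant morphism need not be big. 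So neither $\frac{1}{210}\varphi^*(\mathfrak{mo})$ nor $\Theta$ can be asserted big by this route, and without a big summand the decomposition yields nothing.

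The paper avoids this problem completely by taking the big part from the \emph{other} side of diagram~(\ref{Snaction}): one has $K_{\hh}=\mathfrak{b}^*(\kappa_1)-\sum_{i,\mu}E_{i:\mu}$, and since $\mathfrak{b}\colon\hh\to\mm_{0,24}$ is \emph{finite} and $\kappa_1$ is ample on $\mm_{0,24}$, the pullback $\mathfrak{b}^*(\kappa_1)$ is genuinely big (indeed nef and big). One then writes $K_{\hh}=(1-\alpha)\mathfrak{b}^*(\kappa_1)+\bigl(\alpha\,\mathfrak{b}^*(\kappa_1)-\sum E_{i:\mu}\bigr)$ and only needs the second bracket to be effective for some $\alpha<1$; Proposition~\ref{scaling} is invoked for this. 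The moral is that the branch map to $\mm_{0,24}$, being finite, is the correct carrier of positivity here, whereas the source map to $\mm_{46}$ is only useful for producing effective classes, not big ones. Your step~5 (the coefficient check on the $E_{i:\mu}$ with $i\ge3$) is, as you note, left open in your proposal; in the paper's setup the analogous verification is what the phrase ``after brief inspection'' refers to.
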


\begin{proof} Recalling that $\mathfrak{b}:\hh\rightarrow \overline{\mathcal{M}}_{0,24}$, for each $0<\alpha <1$, using (\ref{canhur}) we write the equality
$$K_{\hh}=(1-\alpha)\mathfrak{b}^*(\kappa_1)+\alpha \mathfrak{b}^*(\kappa_1)-\sum_{i=2}^{12}\sum_{\mu\in \mathcal{P}_i} E_{i:\mu}.$$
Since the class $\kappa_1\in CH^1(\mm_{0, 24})$ is well-known to be
ample, in order to establish that $K_{\hh}$ is big, it suffices to
show that for $\alpha$ sufficiently close to $1$, the class
$\alpha \mathfrak{b}^*(\kappa_1)-\sum_{i, \mu\in \mathcal{P}_i}
[E_{i:\mu}]$ is effective. After brief inspection, this turns out to
be a consequence of Proposition \ref{scaling}.
\end{proof}
\end{num}

\section{The Prym-Tyurin map along the boundary components of $\hhu$}
\label{secboundary}

In this section we study the extended Prym-Tyurin map and refine the
analysis of the boundary divisors of $\hhu$. In particular we identify
the divisors that are not contracted by the extended Prym-Tyurin map
 $PT:\hhu\dashrightarrow \overline{\cA}_6$.

\begin{num} \label{subsec-not} Following ~\ref{subsec-boundcomb}, we denote by $E_{I: L_1, L_2, \mu}$ the divisor of $\hh$ of $E_6$-admissible covers
$$t:=[\pi:C := C_1 \cup C_2 \ra R_1 \cup_q R_2, p_1, \ldots,
p_{24}],$$
where $I\cup J=\{1, \ldots, 24\}$,
$R_1$ contains the branch points $\{p_i\}_{i\in I}$,
with roots $\{r_i\}_{i\in I}$ generating the lattice $L_1\subset E_6$
and the corresponding reflections generating the group $G:= W(L_1)
\subset W(E_6)$, whereas $R_2$ contains the branch points
$\{p_j\}_{j\in J}$, with roots
$\{r_j\}_{j\in J}$ generating the lattice $L_2\subset E_6$ and
reflections generating the group $H:= W(L_2) \subset W(E_6)$
respectively. We set
$u:=\prod_{i\in I} w_i$, therefore $u^{-1}=\prod_{j\in J} w_j$. As
before, $\mu\vdash 27$ is the partition corresponding to the cycle
type of $u\in S_{27}$ which describes the fibre $\pi^{-1}(q)$. Let
$O_G$ (respectively $O_H$) denote the set of orbits of $G$
(respectively $H$) on the set $\overline{27}:=\{1, \ldots, 27\}$. In
particular, there is a bijection between $O_G$ (respectively $O_H$)
and the set of irreducible components of $C_1$ (respectively
$C_2$). Returning to the notation in \ref{subsec-boundcomb}, for
$\mu\in \mathcal{P}_{i}$ we write
$E_{i:\mu}=\sum_{|I|=i, L_1, L_2} E_{I: L_1, L_2, \mu}$, the sum being
taken over sublattices $L_1$ and $L_2$ of $E_6$ as above.
\end{num}


\begin{definition} \label{def6.4}
Let $u\in W(E_6)$ and let $A=A_1\sqcup \dotsc \sqcup A_a$ and
$B=B_1\sqcup \dotsc \sqcup B_b$ be two $u$-invariant partitions of
the set $\overline{27}$. We define the graph $\Gamma(u,A,B)$ to be the following
bipartite graph:
\begin{enumerate}
\item The vertices are $A_1, \ldots, A_a$ and $B_1, \ldots, B_b$ respectively.
\item The edges correspond to cycles $C_k$ in the cyclic
representation of $u\in S_{27}$, including
cycles of length $1$.
\item For each cycle $C_k$, there exist unique vertices $A_i$ and $B_j$ containing
the set $c_k$. Then the edge $C_k$ joins $A_i$ and $B_j$.
\end{enumerate}

When both partitions $A$ and $B$ are trivial, that is each consists of the single set $\overline{27}$, we set $\Gamma_u:=\Gamma(u,\overline{27},\overline{27})$ and
  $\Gamma_1:=\Gamma(1,\overline{27},\overline{27})$ respectively.
\end{definition}

\begin{example}
  The graph $\Gamma_1$ has $2$ vertices and $27$ edges. One has
  $C_1(\Gamma_1, \mathbb Z)=\mathbb Z^{27}$, and
  $H_1(\Gamma_1, \mathbb Z)\simeq\mathbb Z^{26}$ consists of elements
  $\sum_{s=1}^{27} n_s e_s$ with $\sum_{s=1}^{27} n_s=0$. There is a
  natural degree $10$ homomorphism
  $D_{C_1}\colon C_1(\Gamma_1, \mathbb Q)\to C_1(\Gamma_1, \mathbb Q)$
  with eigenvalues $10,1,-5$, which induces a homomorphism
  $D_{H_1}\colon H_1(\Gamma_1, \mathbb Q)\to H_1(\Gamma_1, \mathbb Q)$ with
  $(+1)$-eigenspace of dimension 20 and $(-5)$-eigenspace of dimension
  $6$ respectively.
\end{example}

In particular, for the dual graph $\Gamma$ of $C$, the group $H_1(\Gamma,\bZ)$ comes with an endomorphism $D_{H_1}$
by Lemma \ref{lem:parts-of-pt}.

\begin{theorem}\label{thm:toric-rank}
  Let $t \in E_{I,J:L_1,L_2}$ be a general point in a boundary divisor
  of $\hh$ corresponding to the above data. Then the toric rank of the
  PTK variety $PT(C,D)$ equals the dimension of the
  $(-5)$-eigenspace
  $H_1\bigl(\Gamma(u,O_G,O_H), \mathbb Q\bigr)^{(-5)}$ of
  the endomorphism $D_{H_1}$ on   $H_1\bigl(\Gamma(u,O_G,O_H), \mathbb
  Q\bigr)$.
\end{theorem}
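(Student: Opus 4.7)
The plan is to adapt the degeneration analysis of Section~\ref{sec:degs-jacobians} and Section~4 to this more general boundary situation, and to identify the first homology of the dual graph of the source curve, as a module over the Kanev correspondence $D$, with $H_1(\Gamma(u,O_G,O_H),\bQ)$. The starting point is that at a general $t\in E_{I:L_1,L_2,\mu}$ the Prym-Tyurin variety $PT(C,D)=\mathrm{Im}(D-1)\subset JC$ is a semiabelian variety sitting inside the semiabelian Jacobian of the reducible nodal curve $C=C_1\cup C_2$. By the standard theory recalled in \S\ref{sec:degs-jacobians}, $JC$ fits into an extension
\[
1\to H^1(\Gamma_C,\bC^*)\to JC\to JC^{\nu}\to 0,
\]
where $\Gamma_C$ is the dual graph of $C$ and $C^{\nu}$ is its normalization. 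The endomorphism induced by $D$ preserves this extension, and the toric part of $PT(C,D)=\mathrm{Im}(D-1)$ is precisely the image of $(D-1)$ on the toric part of $JC$. Dualizing, the character lattice of the toric part of $PT(C,D)$ is the $(-5)$-eigenspace of $D$ acting on $H_1(\Gamma_C,\bQ)$. Thus the toric rank of $PT(C,D)$ equals $\dim H_1(\Gamma_C,\bQ)^{(-5)}$.

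The next step is to identify $\Gamma_C$ combinatorially. For a general $t$, the target $R=R_1\cup_q R_2$ has a single node $q$ and the map $\pi$ is \'etale at $q$. The irreducible components of $C_1=\pi\inv(R_1)$ are then in bijection with the orbits $O_G$ of $G=W(L_1)$ acting on $\{1,\dotsc,27\}$, those of $C_2=\pi\inv(R_2)$ with the orbits $O_H$ of $H=W(L_2)$, and the nodes of $C$ over $q$ are in bijection with the cycles of $u\in S_{27}$. Each cycle $c$ of $u$ is contained in a unique $G$-orbit $o_G\in O_G$ and a unique $H$-orbit $o_H\in O_H$, and the corresponding node joins these two components. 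This bipartite graph with vertex set $O_G\sqcup O_H$ and edge set indexed by the cycles of $u$ is by construction the graph $\Gamma(u,O_G,O_H)$ of Definition~\ref{def6.4}.

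The third step is to transport the Kanev correspondence. As in Section~4, $D$ acts on $C_1(\Gamma_C,\bZ)$ edge-by-edge via the incidence of the 27 lines and commutes with the boundary map $\partial\colon C_1\to C_0$; hence it descends to an endomorphism of $H_1(\Gamma_C,\bQ)$ satisfying $(D-1)(D+5)=0$. Under the identification with $\Gamma(u,O_G,O_H)$, this coincides with the endomorphism attached to $\Gamma(u,O_G,O_H)$ in Definition~\ref{def6.4}. Taking $(-5)$-eigenspaces then yields the claimed equality of dimensions. Note that components of $C_1$ and $C_2$ of positive geometric genus contribute only to the abelian part $JC^{\nu}$ and play no role in the toric rank computation, so the possible complication of a non-trivial $JC^{\nu}$ does not affect the argument.

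The main obstacle will be verifying that the $D$-equivariant identification $H_1(\Gamma_C,\bQ)\cong H_1(\Gamma(u,O_G,O_H),\bQ)$ really respects the action of the correspondence, and not merely the underlying abstract graphs. Concretely, one must check that the action of $D$ on an edge of $\Gamma_C$ corresponding to a cycle $c$ of $u$ is described by the restriction of the incidence correspondence of the 27 lines to $c$, compatibly with the decomposition into $G$- and $H$-orbits. Once this bookkeeping is carried out, the conclusion reduces to the same kind of linear algebra as in Section~4, now applied to the bipartite graph $\Gamma(u,O_G,O_H)$ rather than to the graph $\Gamma'$ of \ref{num:G-prime}.
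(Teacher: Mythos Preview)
Your approach is essentially the same as the paper's, and the overall structure is correct: identify the toric part of $PT(C,D)$ with the torus having character lattice $H_1(\Gamma_C,\bZ)^{(-5)}$, then identify $\Gamma_C$ with $\Gamma(u,O_G,O_H)$. One factual slip: you assert that $\pi$ is \'etale at $q$, but this is false in general. Over the node $q$ the admissible cover is \emph{balanced}, with ramification profile on each side given by the cycle type $\mu$ of $u$; there are $\ell(\mu)$ nodes of $C$ over $q$, one for each cycle, with local degree equal to the cycle length. You in fact use the correct count (nodes $\leftrightarrow$ cycles of $u$) in the very next sentence, so the argument survives, but the \'etale claim should be deleted.

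Your worry about the ``main obstacle''---matching the geometric $D$ with the combinatorial one---is overstated. In the paper the endomorphism $D$ on $H_1(\Gamma(u,O_G,O_H),\bQ)$ is \emph{defined} as the one induced by the Kanev incidence correspondence on the nodal curve, so the compatibility is tautological once you know the dual graph is $\Gamma(u,O_G,O_H)$. The paper's proof is correspondingly brief: it states that the toric character lattice is $H_1(\Gamma,\bZ)^{(-5)}$ and then simply observes that $\Gamma=\Gamma(u,O_G,O_H)$ because its edges are the nodes of $C$, i.e.\ the cycles of $u$. Your longer discussion is fine as exposition, but no additional check is required.
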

\begin{proof}
  By Lemma~\ref{lem:parts-of-pt}, the toric rank of $PT(C,D)$ equals
  $$\rank\im (D_{H_1}-1) = \rank\ker (D_{H_1}+5) = \dim (H_1\otimes\bQ)^{(-5)}.$$
\end{proof}

In case both curves $C_1$ and $C_2$ are irreducible, the above result simplifies considerably.

\begin{corollary} \label{cor:toric-rank}
Assume that $|O_G|=|O_H|=1$, that is, both groups $G$ and $H$ act
transitively on the set $\overline{27}$. Then the toric rank of $PT(C,D)$ equals the
dimension of invariant subspace of $u$ in the $6$-dimensional
representation $E_6\otimes \mathbb Q$ of $W(E_6)$.
\end{corollary}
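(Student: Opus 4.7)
The proof reduces, via Theorem~\ref{thm:toric-rank}, to computing $\dim H_1(\Gamma(u,O_G,O_H),\mathbb Q)^{(-5)}$ under the transitivity hypothesis. My plan is to identify this vector space explicitly with the space of $u$-invariants in the reflection representation $E_6\otimes\mathbb Q$ of $W(E_6)$.

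When $|O_G|=|O_H|=1$, the graph $\Gamma:=\Gamma(u,O_G,O_H)$ has exactly two vertices $v_1,v_2$ (one per orbit of $G$ and of $H$), joined by one edge $e_O$ for each cycle $O$ of $u \in S_{27}$. Thus $C_0(\Gamma,\mathbb Q)\cong\mathbb Q v_1\oplus\mathbb Q v_2$ and $C_1(\Gamma,\mathbb Q)\cong\bigoplus_{O}\mathbb Q e_O$. I will identify both chain groups with subspaces of the standard permutation representation $\mathbb Q^{27}$: send $v_i\mapsto \sum_{s=1}^{27} e_s$ (the unique $G$- respectively $H$-orbit vector) and $e_O\mapsto\sum_{s\in O} e_s$. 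This realizes $C_0(\Gamma,\mathbb Q)$ as two copies of the line $\mathbb Q\cdot\sum_s e_s$ inside $\mathbb Q^{27}$ and $C_1(\Gamma,\mathbb Q)$ as the space $(\mathbb Q^{27})^{\langle u\rangle}$ of $u$-invariants. The essential step is to check that the endomorphism of $C_\bullet(\Gamma,\mathbb Q)$ induced by the Kanev correspondence on $H_1$ coincides, after these identifications, with the restriction of the natural Kanev operator on $\mathbb Q^{27}$; this is obtained by contracting each sheet-orbit of $G$ and of $H$ to a single vertex in the graph of Section~\ref{sec:degs-jacobians}, and then invoking the chain-level computation already carried out in the proof of Lemma~\ref{lem:explicit-fla}.

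Granting this compatibility, the standard decomposition $\mathbb Q^{27}=\mathbb Q\cdot\sum_s e_s\oplus (E_6\otimes\mathbb Q)\oplus V_{20}$ into $D$-eigenspaces with eigenvalues $10,-5,+1$ respectively (the $6$-dimensional $(-5)$-eigenspace being precisely the reflection representation $E_6\otimes \mathbb Q$, as computed from $\operatorname{tr}(D)=0$) shows that the $(-5)$-eigenspace of $C_0(\Gamma,\mathbb Q)$ vanishes, since both $v_1,v_2$ lie on the $+10$-eigenline. Hence the restriction of $\partial$ to $C_1(\Gamma,\mathbb Q)^{(-5)}$ is automatically zero, and
\[
H_1(\Gamma,\mathbb Q)^{(-5)} \;=\; C_1(\Gamma,\mathbb Q)^{(-5)} \;=\; \bigl((\mathbb Q^{27})^{\langle u\rangle}\bigr)^{(-5)} \;=\; (E_6\otimes\mathbb Q)^{\langle u\rangle},
\]
where the last equality uses that $D$ commutes with $u$, so taking $\langle u\rangle$-invariants and passing to $D$-eigenspaces commute. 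Combined with Theorem~\ref{thm:toric-rank}, this yields the stated formula. The only nontrivial point is the $D$-equivariance of the contraction described above; once that is in hand, the rest is formal linear algebra.
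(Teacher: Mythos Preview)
Your proof is correct and follows essentially the same route as the paper: both identify $C_1(\Gamma_u,\mathbb Q)$ with the $u$-invariants in $\mathbb Q^{27}$ (the paper via the averaging projection $e\mapsto\frac1n\sum u^i e$, you via cycle sums $e_O\mapsto\sum_{s\in O}e_s$ --- the same subspace), and then use that the $(-5)$-eigenspace of the incidence operator on $\mathbb Q^{27}$ is exactly $E_6\otimes\mathbb Q$. Your observation that $\partial$ vanishes on $C_1^{(-5)}$ because $C_0$ sits in the $+10$-eigenline is a clean shortcut.

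One small correction: your appeal to Lemma~\ref{lem:explicit-fla} is misplaced --- that lemma computes linear functionals on $H_1$ for a specific trivalent tree in Section~4 and does not address the compatibility you need. The fact that the Kanev operator on $C_1(\Gamma_u)$ agrees with the restriction of $D$ on $\mathbb Q^{27}$ to $u$-invariants follows simply because $u\in W(E_6)$ preserves the incidence relation among the $27$ lines, so $D$ and $u$ commute on $\mathbb Q^{27}$; this is what the paper uses in the lemma immediately following Definition~\ref{def6.4}.
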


Corollary~\ref{cor:toric-rank}
agrees with the result of \cite[p.236]{lange2008a-galois-theoretic-approach} concerning the abelian part of $PT(C,D)$.

\begin{lemma}
For $u\in W(E_6)$, the following statements hold:
\begin{enumerate}
\item $H_1(\Gamma_u,\mathbb Q)^{(-5)} = \left( H_1(\Gamma_1,\mathbb Q)^{(-5)} \right)^u$
(that is, the $u$-invariant subspace), and
\item $H_1(\Gamma(u,A,B),\mathbb Q)^{(-5)} \subset H_1(\Gamma_u,\mathbb Z)^{(-5)}$.
  \end{enumerate}
\end{lemma}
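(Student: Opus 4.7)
The strategy for both parts is to realize all three graphs in terms of the same $27$ edge-labels and to track how the boundary map and the $D$-action behave. The group $W(E_6)\subset S_{27}$ acts on $\Gamma_1$ by permuting the $27$ edges $\{e_s\}$ and fixing the two vertices, and the correspondence $D_1$ on $C_1(\Gamma_1,\mathbb Q)$ is induced by line incidence, which is intrinsic to the cubic surface. Hence $D_1$ commutes with every $u\in W(E_6)$; this $W(E_6)$-equivariance will be used repeatedly.

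For part (1), I would identify the chain complex of $\Gamma_u$ with the $u$-invariant subcomplex of $\Gamma_1$. The $u$-orbits on $\{e_1,\dots,e_{27}\}$ are by definition the cycles of $u\in S_{27}$, i.e.\ the edges of $\Gamma_u$, so the map sending a cycle $C_k$ to $\sum_{s\in C_k} e_s$ gives an isomorphism $C_1(\Gamma_u,\mathbb Q)\xrightarrow{\sim} C_1(\Gamma_1,\mathbb Q)^{u}$; on $0$-chains both vertices are $u$-fixed, so $C_0(\Gamma_u,\mathbb Q)=C_0(\Gamma_1,\mathbb Q)$, and the boundary maps match. Since taking $u$-invariants is exact in characteristic $0$, this gives $H_1(\Gamma_u,\mathbb Q)=H_1(\Gamma_1,\mathbb Q)^{u}$. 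Because $D$ commutes with $u$, the two commuting semisimple endomorphisms have the same simultaneous eigenspace decomposition, so intersecting with the $(-5)$-eigenspace yields $H_1(\Gamma_u,\mathbb Q)^{(-5)}=\bigl(H_1(\Gamma_1,\mathbb Q)^{(-5)}\bigr)^{u}$, which is exactly (1).

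For part (2), I would exhibit a canonical edge-preserving graph morphism $\rho\colon \Gamma(u,A,B)\to \Gamma_u$ that collapses all vertices $A_i$ to the single $A$-vertex of $\Gamma_u$ and all $B_j$ to the single $B$-vertex. By construction both graphs have the same edge set (cycles of $u$), so $\rho_*\colon C_1(\Gamma(u,A,B),\mathbb Q)\to C_1(\Gamma_u,\mathbb Q)$ is an isomorphism, while on $0$-chains it is a surjection commuting with $\partial$. Consequently a cycle in $\Gamma(u,A,B)$ remains a cycle in $\Gamma_u$, giving an injection $H_1(\Gamma(u,A,B),\mathbb Q)\hookrightarrow H_1(\Gamma_u,\mathbb Q)$. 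The endomorphism $D$ acts on the common edge set via the same formula (this is well-defined precisely because $D$ commutes with $u$ and therefore descends to $u$-orbits), so the inclusion is $D$-equivariant, and intersecting with the $(-5)$-eigenspace preserves it, proving (2).

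The main point to get right, rather than a genuine obstacle, is this compatibility of the $D$-action across the three graphs $\Gamma_1$, $\Gamma_u$ and $\Gamma(u,A,B)$: one must verify that the correspondence originally defined on the $27$ lines descends unambiguously to a well-defined endomorphism of the chain groups indexed by $u$-orbits, and that this descended $D$ agrees with the one implicitly used in the statements of the two eigenspace inclusions. This follows formally from the $W(E_6)$-equivariance of the line-incidence relation, but is worth spelling out since everything in the lemma hinges on it.
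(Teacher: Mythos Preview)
Your overall strategy is the same as the paper's, and part~(2) is fine: collapsing the refined vertex partitions to the trivial one gives a graph map $\Gamma(u,A,B)\to\Gamma_u$ that is a bijection on edges, hence injects $H_1$ compatibly with $D$. The paper phrases this as inserting trees and then removing their edges, but it is the same observation.

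There is, however, a genuine slip in your argument for part~(1). Under the sum identification $e_{C_k}\mapsto \sum_{s\in C_k} e_s$ together with the identity on $C_0$, the boundary maps do \emph{not} match: in $\Gamma_u$ one has $\partial e_{C_k}=v_B-v_A$, whereas $\partial\bigl(\sum_{s\in C_k}e_s\bigr)=|C_k|\,(v_B-v_A)$ in $\Gamma_1$. Consequently your map carries $\ker\partial_u=\{\sum_k a_k e_{C_k}:\sum_k a_k=0\}$ to a subspace of $C_1(\Gamma_1,\mathbb Q)^u$ which is different from $\ker\partial_1\cap C_1(\Gamma_1,\mathbb Q)^u=\{\sum_k a_k(\sum_{s\in C_k}e_s):\sum_k a_k|C_k|=0\}$ as soon as the cycles of $u$ have unequal lengths. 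So the asserted identification $H_1(\Gamma_u,\mathbb Q)=H_1(\Gamma_1,\mathbb Q)^u$ does not follow from your map.

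The fix is exactly what the paper does: use the averaging map $e_{C_k}\mapsto \frac{1}{|C_k|}\sum_{s\in C_k}e_s$ (equivalently, the projector $e\mapsto \frac{1}{\mathrm{ord}(u)}\sum_i u^i e$). This \emph{is} a chain map with the identity on $C_0$, it still lands in $C_1(\Gamma_1,\mathbb Q)^u$, and it still commutes with $D$ because $D$ is $W(E_6)$-equivariant. With that correction your argument goes through and coincides with the paper's.
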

\begin{proof}
Suppose we have the following cycle decomposition $u=C_1\cdot C_2 \cdots \cdot C_k\in S_{27}$ and let $n:=\mbox{ord}(u)$ and $\ell(C_i)$ denote the length of $C_i$. We write $C_1(\Gamma_u, \mathbb Z)=\bigoplus_{i=1}^k \mathbb Z e_{C_i}$. Then one has an orthogonal projection
$C_1(\Gamma_1,\mathbb Z)\onto C_1(\Gamma_u,\mathbb Z)$ given by
$e \mapsto \frac1{n}\sum_{i=0}^{n-1} u^i\cdot e$ for an edge $e$,
which identifies $C_1(\Gamma_u,\mathbb Z)$ with a sublattice in
$C_1(\Gamma_1,\mathbb Q)$ via the injection $e_{C_i}\mapsto \frac{1}{\ell(C_i)} \sum_{j\in C_i} e_j$. This induces a surjection from
  $H_1(\Gamma_1,\mathbb Z)$ to $H_1(\Gamma_u,\mathbb Z)$, which clearly commutes
  with $D$, that is, $D(C_1(\Gamma_u, \mathbb Z))\subset C_1(\Gamma_u, \mathbb Z)$.  It follows that $H_1(\Gamma_u,\mathbb Z)^{(-5)}$ is the projection
  of $H_1(\Gamma_1,\mathbb Z)^{(-5)}$ to the $(-5)$-eigenspace in
  $C_1(\Gamma_u,\mathbb Q)$ and that
   $ H_1(\Gamma_u,\mathbb Q)^{(-5)} = \left( H_1(\Gamma_1,\mathbb Q)^{(-5)} \right)^u$.

The graph $\Gamma(u,A,B)$ is obtained from
$\Gamma_u$ by splitting the two
vertices into $a+b$ new vertices. This can be obtained by inserting
in place of the two vertices two trees with $a$ and $b$ vertices
  -- without changing $H_1$ -- and then removing the edges of these
  trees. Thus, one has an inclusion
\begin{math}
H_1(\Gamma(u,A,B),\mathbb Z) \subset
H_1(\Gamma_u,\mathbb Z )     ,
\end{math}
commuting with $D$, which gives an inclusion
$H_1(\Gamma(u,A,B), \mathbb Z)^{(-5)} \subset H_1(\Gamma_u,\mathbb Z )^{(-5)}$.
\end{proof}

\begin{lemma}
The $(-5)$-eigenspace $H_1(\Gamma(u,A,B),\mathbb Q)^{(-5)}$ is a subspace of
the $u$-invariant subspace $(E_{6}\otimes\mathbb Q)^u$ in the standard
$6$-dimensional $W(E_6)$-representation.
\end{lemma}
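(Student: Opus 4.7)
The plan is to combine the inclusion provided by the preceding lemma,
\[
H_1(\Gamma(u,A,B),\mathbb Q)^{(-5)} \subset H_1(\Gamma_u,\mathbb Q)^{(-5)} = \bigl(H_1(\Gamma_1,\mathbb Q)^{(-5)}\bigr)^u,
\]
with a canonical $W(E_6)$-equivariant isomorphism $H_1(\Gamma_1,\mathbb Q)^{(-5)} \cong E_6\otimes\mathbb Q$. Taking $u$-invariants on both sides then gives the required inclusion.

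To construct such an isomorphism, I would define
\[
\Psi\colon E_6\otimes\mathbb Q\lra C_1(\Gamma_1,\mathbb Q)=\mathbb Q^{27}, \qquad r\longmapsto v_r:=\sum_{s=1}^{27}(r\cdot \ell_s)\,e_s,
\]
where $e_s$ is the basis vector of $\mathbb Q^{27}$ indexed by the exceptional vector $\ell_s$. This map is manifestly $W(E_6)$-equivariant, since the intersection pairing $(r,\ell_s)$ is preserved by the simultaneous $W(E_6)$-action on roots and on exceptional vectors.

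Next I would verify that $\mbox{Im}(\Psi)$ lies in $H_1(\Gamma_1,\mathbb Q)^{(-5)}$. A direct calculation in the Schl\"afli basis yields $\sum_{s=1}^{27}\ell_s=-9K_S$ in $\mathrm{Pic}(S)=I^{1,6}$; since $(r,K_S)=0$ for every $r\in E_6=K_S^\perp$, one has $\deg(v_r)=0$, so $v_r\in H_1(\Gamma_1,\mathbb Q)$. By $W(E_6)$-equivariance of $\Psi$ and of $D_\Lambda$, it suffices to check the eigenvalue identity $D_\Lambda(v_r)=-5v_r$ on a single representative root, for which one takes $r=\alpha_{\max}$. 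The intersection numbers $\alpha_{\max}\cdot a_i=1$, $\alpha_{\max}\cdot b_i=-1$, $\alpha_{\max}\cdot c_{ij}=0$ give $v_{\alpha_{\max}}=\sum_{i=1}^{6}(a_i-b_i)$, and Remark \ref{num:corr-computation} yields $D(a_i)-D(b_i)=-\sum_{j\ne i}(a_j-b_j)$, whence $D_\Lambda(v_{\alpha_{\max}})=-5v_{\alpha_{\max}}$, as required.

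Finally, $\Psi$ is a nonzero $W(E_6)$-equivariant map from the six-dimensional irreducible reflection representation $E_6\otimes\mathbb Q$ into the six-dimensional target $H_1(\Gamma_1,\mathbb Q)^{(-5)}$; by Schur's lemma its kernel is zero, and comparing dimensions shows $\Psi$ is an isomorphism. Passing to $u$-invariants,
\[
H_1(\Gamma(u,A,B),\mathbb Q)^{(-5)}\subset \bigl(H_1(\Gamma_1,\mathbb Q)^{(-5)}\bigr)^u\stackrel{\Psi\inv}{\cong}(E_6\otimes\mathbb Q)^u,
\]
which is the desired conclusion. The only mildly delicate step is bookkeeping the intersection numbers to confirm the $(-5)$-eigenvalue identity; once this is done, the rest of the argument is purely formal.
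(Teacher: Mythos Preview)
Your proof is correct and follows exactly the same strategy as the paper: combine the inclusion from the preceding lemma with the identification $H_1(\Gamma_1,\mathbb Q)^{(-5)}\cong E_6\otimes\mathbb Q$, then take $u$-invariants. The paper simply asserts this identification in one line, whereas you construct it explicitly via $\Psi(r)=\sum_s(r\cdot\ell_s)e_s$ and verify the $(-5)$-eigenvalue and equivariance; this extra detail is fine and the checks you sketch are accurate.
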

\begin{proof}
Indeed, $H_1(\Gamma_1,\mathbb Q)^{(-5)} = E_6\otimes\mathbb Q$, therefore
$\left( H_1(\Gamma_1, \mathbb Q)^u \right)^{(-5)} = (E_6\otimes\mathbb Q)^u$.
\end{proof}

\vskip 3pt
\begin{num} In order to illustrate Theorem~\ref{thm:toric-rank} in concrete situations, we classify all root sublattices of $E_6$.  Recall that with the notation of Section \ref{sec:e6-lattice}, the standard roots in the $E_6$ Dynkin diagram are
$r_2=\alpha_{12}, \ldots, r_6=\alpha_{56}$, and
$r_1=\alpha_{123}$. In the extended Dynkin diagram $\tE_6$ there is an additional root
$r_0=-\alpha_{\max}$, so that
$3r_4+2r_1+2r_3+2r_5+r_2+r_6+r_0=0$.
\end{num}

\begin{lemma}\label{lem-sublattices}
The following is the complete list of root sublattices $L \subset E_6$:
\begin{enumerate}
\item If $\dim(L) =6$, then $L$ is either $E_6$, or isomorphic to $A_5A_1$, or $A_2^3$.
\item If $\dim(L)=5$, then $L$ is isomorphic to $A_5$, $D_5$, $A_4A_1$, $A_3A_1^2$, or $A_2^2A_1$.
\item If $\dim(L)=4$, then $L$ is isomorphic to $A_4$, $D_4$, $A_2^2$, $A_3 A_1$, $A_2 A_1^2$, or $A_1^4$.
\item If $\dim(L)=3$, then $L$ is isomorphic to $A_3$, $A_2A_1$, or $A_1^3$.
\item If $\dim(L)=2$, then $L$ is isomorphic to $A_2$, or $A_1^2$.
\item If $\dim(L)=1$, then $L$ is isomorphic to $A_1$.
\end{enumerate}
Furthermore, all the above sublattices (and the associated subgroups) can be obtained by removing vertices from the extended $E_6$ diagram $\tE_6$:
\[
 \xymatrix{
      \stackrel{r_2}{\bullet} \ar@{-}[r] &  \stackrel{r_3}{\bullet}   \ar@{-}[r] &  \stackrel{r_4}{\bullet}  \ar@{-}[r] \ar@{-}[d]
      &  \stackrel{r_5}{\bullet}  \ar@{-}[r]  &  \stackrel{r_6}{\bullet}  \\
       & & \quad  {\bullet}_{r_1}  \ar@{-}[d] &   &  \\
       & & \quad  {\star}_{r_0}. &   &
    }
\]
If the root lattices $L,L'$ corresponding to reflections subgroups $G$
and $G'$ of $W(E_6)$ are isomorphic, then they differ by an
automorphism of the $E_6$ lattice, and the corresponding
subgroups $G$ and $G'$ are conjugate in $W(E_6)$.
\end{lemma}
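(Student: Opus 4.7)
\medskip
\noindent\textbf{Proof plan.} The strategy is the classical Borel--de Siebenthal procedure, which enumerates root sublattices of a simply-laced root lattice via the extended Dynkin diagram. The claim then splits into two parts: (a) enumeration of isomorphism types, and (b) uniqueness of the embedding up to the action of $W(E_6)$.

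For part (a), I would first apply Borel--de Siebenthal to the extended diagram $\tilde E_6$ to classify the \emph{maximal} full-rank root sublattices of $E_6$: removing the affine node $r_0$ reproduces $E_6$ itself, removing $r_1$ disconnects the diagram into a chain of length five and one isolated node, giving $A_5A_1$, and removing the branch node $r_4$ produces three disjoint $A_2$'s, giving $A_2^3$. Since $A_5$, $A_1$, and $A_2$ have no proper full-rank closed subsystems, these three are all the rank-$6$ sublattices. For lower-rank sublattices, the key observation is that any root sublattice $L\subset E_6$ is a proper sub-root-system of one of these three maximal ones, and, conversely, any sub-Dynkin diagram of $\tilde E_6$ gives a root sublattice. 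So I would exhaust all proper subdiagrams of $\tilde E_6$ (up to the obvious symmetries of $\tilde E_6$) and match the resulting types with the list in the statement. The table of subdiagrams of $\tilde E_6$ is short: for ranks $5,4,3,2,1$ one gets precisely the types $A_5, D_5, A_4A_1, A_3A_1^2, A_2^2A_1$; then $A_4, D_4, A_2^2, A_3A_1, A_2A_1^2, A_1^4$; then $A_3, A_2A_1, A_1^3$; then $A_2, A_1^2$; and finally $A_1$. This matches the statement.

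For part (b), the conjugacy claim, I would argue as follows. A root sublattice $L\subset E_6$ is determined by the finite set of roots it contains, which is a closed subsystem of the $E_6$ root system. By a theorem of Borel--de Siebenthal (or, equivalently, Dynkin's classification of semisimple subalgebras of simply-laced Lie algebras), all closed subsystems of a given isomorphism type in a simply-laced irreducible root system are conjugate under the Weyl group. I would verify this for each type in the list by a counting argument: for each isomorphism class of sublattice $L$, compute the number of sublattices of that type (using the explicit realization of the $27$ exceptional vectors and the $36$ roots), compute $|W(E_6)|/|N_{W(E_6)}(W(L))|$, and check equality. As a byproduct, any two embeddings differ by a Weyl element, which is in particular a lattice automorphism, giving both halves of the final statement.

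The main obstacle is the conjugacy step, since there are a priori several orbits of sublattices of the same type. The subtlest cases are $A_1^k$ for $k=2,3,4$ and $A_2A_1$, where one has to distinguish several classes of pairwise orthogonal roots according to whether triples form $A_2$'s or not; the counting must rule out all such splittings other than the claimed unique orbit. In practice one checks, using the realization of roots as $\alpha_{ij}$, $\alpha_{ijk}$, $\alpha_{\max}$ from Section~\ref{sec:e6-lattice}, that the stabilizer $N_{W(E_6)}(W(L))$ has the expected order in each of the fifteen types listed, and that the total count of sublattices of type $L$ matches $|W(E_6)|/|N_{W(E_6)}(W(L))|$; these numerical verifications can be done by hand, or confirmed by a short GAP computation as in the remainder of Section~\ref{secboundary}.
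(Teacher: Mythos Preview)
Your overall strategy matches the paper's: Borel--de Siebenthal for the enumeration, then a case-by-case check for conjugacy. But there is one genuine error and one point where you are looser than the paper.

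\medskip
\noindent\textbf{The error.} The statement you invoke in part (b)---that in a simply-laced irreducible root system, any two closed subsystems of the same isomorphism type are $W$-conjugate---is \emph{false} in general. Already in $E_7$ there are two $W(E_7)$-orbits of sublattices of type $A_1^3$ (and several further types split), and in $E_8$ the phenomenon is widespread; this is precisely what Oshima's tables \cite{oshima2006a-classification-of-subsystems} record. Borel--de Siebenthal and Dynkin classify which isomorphism types occur, not the uniqueness of their embeddings. So you cannot cite this as a theorem; you must actually do the verification. Fortunately, your proposed counting argument (comparing the direct count of sublattices of each type with $|W(E_6)|/|N_{W(E_6)}(W(L))|$) is a valid proof method and would establish the result; this is essentially what the paper does by appealing to Oshima's case-by-case tables. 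Your instinct that the delicate cases are $A_1^k$ and $A_2A_1$ is correct. The paper also spells out the easy reduction from ``differ by an element of $\Aut(E_6)$'' to ``$W(E_6)$-conjugate'': one has $\Aut(E_6)=W(E_6)\times\{\pm1\}$, and since $-1$ fixes every sublattice, the two statements coincide.

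\medskip
\noindent\textbf{The looseness.} In part (a) you pass from ``every lower-rank root sublattice sits inside one of $E_6$, $A_5A_1$, $A_2^3$'' directly to ``so enumerate subdiagrams of $\tE_6$.'' That inference is not automatic: the Borel--de Siebenthal procedure is \emph{iterative} (extend a component, delete a node, repeat), and in general later iterations can produce types not visible as subdiagrams of the original extended diagram. The paper states the method in this iterated form and then observes, \emph{a posteriori}, that for $E_6$ every type that arises already appears as a subdiagram of $\tE_6$. In your write-up you should either run the iteration explicitly (it terminates quickly since the components after the first step are all of type $A$) or note, as the paper does, that the single-step list happens to be complete here.
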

\begin{proof}
We first note that there is a natural bijection between root sublattices $L$ of $E_6$ and subgroups $G$
generated by reflections of $W(E_6)$. One has $\Aut(E_6)=W(E_6)\oplus\bZ_2$, with $\bZ_2$ acting on $E_6$ by multiplication by $\pm1$.  Any
automorphism of $E_6$ induces an automorphism of $W(E_6)$, and
the kernel of $\phi\colon\Aut(E_6)\to \Aut W(E_6)$ is
$\bZ_2$. Finally, by \cite[Section 2.3]{Franzsen-thesis2001}), all
automorphisms of $W(E_6)$ are inner, so that $\Aut W(E_6)=W(E_6)$
and $\phi$ is surjective.

Thus, the proof reduces to showing that all root sublattices of
$E_6$ are of the above types, and that if $L,L'$ are isomorphic as
abstract root lattices, then they differ by an element of
$\Aut(E_6)$. The statement that all such root sublattices correspond
to proper subdiagrams of the extended Dynkin diagram
$\widetilde{E}_6$ is an \emph{a posteriori} observation.

The standard method for finding all root sublattices of a given root
lattice is described in \cite{BS,dy}.  A modern treatment can be found in
\cite[Theorem 1]{DyerLehrer2011}. The method is to repeatedly apply
the following two procedures to Dynkin diagrams $\Gamma$, starting
from $\Gamma=E_6$: (1) remove a node, and/or (2) replace one of the
connected components $\Gamma_s$ of $\Gamma$ by an extended Dynkin
diagram $\widetilde{\Gamma}_s$ and remove a node from it. Applying
the above method repeatedly, we obtain all the lattices listed
above. The fact that isomorphic root sublattices differ by an
automorphism of $E_6$ is a case by case computation. This  can also be found in \cite[Table
  10.2]{oshima2006a-classification-of-subsystems}.
\end{proof}

\begin{num} \label{subsec-roots-diag} Table~\ref{tab:orbits} lists the orbits for one choice of roots (the other choices being similar) for each type of lattice. The last column describes the degrees of the maps from the irreducible components of $C_1$ to $R_1$. We keep the Schl\"afli
notation $a_i, b_i, c_{ij}$ for the elements of the set $\overline{27}$, which is being identified with the set of lines of a cubic surface.
The smooth (possibly disconnected) curve $C_1$ is a $27$-sheeted cover of $R_1=\bP^1$, with branch points $\{p_i\}_{i\in I}$ with local
monodromy given by the reflection $w_i$, and an additional  branch point $q$, with local monodromy $u^{-1}$, where $u=\Pi_{i\in I} w_i$.

{\small
\begin{table}[htbp]
  \centering
\begin{tabular}{|c|c|c|c|}
 \hline
Sublattice & Roots & Orbits  & Degrees \\
\hline
$E_6$ &  $r_1, \ldots, r_6$  &    {\small $\{a_i, b_i, c_{ij}\}$ }   & 27\\
$A_5A_1$ & $r_0, r_2, \ldots, r_6$ &   {\small $\{a_i,b_i\}$, $\{c_{ij}\}$ }   &15, 12 \\
$A_2^3$ &  $r_i, i\neq 4$  &  {\Small $\{a_i, b_i, c_{ij} \mid 1\leq i,j \leq 3\}$,  $\{a_i, b_i, c_{ij}\mid 4\leq i,j \leq 6\}$,}  &  $9^3$  \\
& & {\small $\{c_{ij} \mid 1\leq i \leq 3, 4\leq j \leq 6 \}$ } & \\
$D_5$ &  $r_1, \ldots, r_5$  & {\small $\{a_6 \}$, $\{ a_i, b_6, c_{ij} \mid 1\leq i, j\leq 5 \}, \{b_i, c_{i6} \mid 1\leq i \leq 5 \}$ } & 1, 10, 16\\
$A_5$ &  $r_2, \ldots, r_6$  & {\small $\{a_i \}, \{b_i\}, \{c_{ij}\}$ } & $6^2,15$ \\
$A_4A_1$ & $r_0, r_2, \ldots, r_5$ & {\small $\{ a_i, c_{ij} \mid 1\leq i,j \leq 4\}, \{b_i , c_{56} \mid 1\leq i\leq 4\}$ } & $2, 5, 10^2$ \\
& & {\small $\{ a_5, a_6\}, \{b_j, c_{ij} \mid 1\leq i \leq 4, 5 \leq j \leq 6\} $ } & \\
$A_3A_1^2$ & $r_0, r_2, r_3, r_4, r_6$ &  {\Small $\{a_1, b_i, c_{ij}, c_{56} \mid 2\leq i,j \leq 4 \}, \{ b_1 \}, \{a_i, c_{ij}  \mid 2\leq i,j \leq 4 \}$ } &  $8^2, 6,4,1$\\
& & {\Small $\{a_5, a_6, c_{15}, c_{16} \}, \{ b_j,c_{ij} \mid 2 \leq i \leq 4, 5 \leq j \leq 6  \}$ } & \\
$A_2^2A_1$& $r_i, i\neq 0,4$ &  {\Small $\{ a_i, c_{ij} \mid 1\leq i,j \leq 3 \}, \{ b_i, c_{ij} \mid 4\leq i,j \leq 6 \}, \{a_4, a_5, a_6 \}$ } & $9, 6^2, 3^2 $  \\
& & {\Small $\{b_1, b_2, b_3 \}, \{ c_{ij} \mid 1\leq i \leq 3, 4\leq j \leq 6 \}$ } & \\
$A_4$ & $r_2, \ldots, r_5$ &  {\Small $\{ a_i, c_{ij} \mid 1\leq i,j \leq 4 \}, \{ b_i, c_{56} \mid 1\leq i \leq 4 \}, \{a_5 \}, \{a_6\} $ }   & $10, 5^3, 1^2$ \\
 & & {\Small $\{ b_5, c_{i6} \mid 1\leq i \leq 4 \}, \{b_6, c_{i5}\mid 1\leq i \leq 4 \}$ } & \\
$D_4$ & $r_1, r_3, r_4, r_5$ &  {\Small $\{a_1, c_{ij}, b_6 \mid 2\leq i,j \leq 5\}, \{ a_i, c_{1i} \mid 2\leq i \leq 5 \}, \{ a_6\}$ }   & $8^3, 1^3$\\
& &  {\Small $\{ b_1 \}, \{ b_i, c_{i6} \mid 2\leq i \leq 5 \}, \{ c_{16}\}$ } & \\
$A_2^2$ & $r_2, r_3, r_5, r_6$ &  {\Small $\{a_1, a_2, a_3 \} , \{b_1, b_2, b_3 \}, \{a_4, a_5, a_6 \}, \{b_4, b_5, b_6 \}$ }   & $9, 3^6$\\
& & {\Small $\{ c_{12}, c_{13}, c_{23} \}, \{ c_{45}, c_{46}, c_{56} \}, \{ c_{ij} \mid 1\leq i \leq 3, 4\leq j \leq 6 \}$} & \\
$A_3A_1$ & $r_2, r_3, r_4, r_6$ &  {\Small $\{c_{56}\}, \{a_5, a_6\}, \{b_5, b_6\}, \{a_1, \ldots, a_4\}, \{b_1, \ldots, b_4\}$ }   & $8, 6, 4^2, 2^2, 1$\\
& & {\Small $\{ c_{ij} \mid 1\leq i,j \leq 4\}, \{c_{ij} \mid 1\leq i \leq 4, 5\leq j \leq 6\} $} & \\
$A_2 A_1^2$ & $r_1, r_2, r_3, r_5$ &  {\Small $\{a_6 \}, \{b_6, c_{45} \}, \{b_1, b_2, b_3 \}, \{ c_{16},  c_{26}, c_{36} \}, \{ b_5, b_6, c_{64}, c_{65}\}$ }   & $6^2, 4,  3^2, 2^2, 1 $\\
& & {\Small $\{a_4, a_5 \}, \{ a_i, c_{ij} \mid 1\leq i,j \leq 3\}, \{c_{ij} \mid 1\leq i\leq 3, 4\leq j\leq 5\}$ }  & \\
$A_1^4$ & $r_0, r_2, r_4, r_6$ &  {\Small $\{a_6 \}, \{b_1\}, \{a_1, b_6, c_{23}, c_{45} \}, \{a_2, a_3, c_{12}, c_{13} \}, \{ a_4, a_5, c_{14}, c_{15}  \} $  }  & $4^6, 1^3$ \\
& & {\Small $\{ b_2, b_3, c_{26}, c_{36}\}, \{ b_4, b_5, c_{46}, c_{56}\}, \{ c_{24}, c_{34}, c_{25}, c_{35} \}, \{c_{16}\}$} & \\
$A_3$ & $r_2, r_3, r_4$ &  {\Small $\{  c_{12},  c_{13}, c_{14}, c_{23},  c_{24}, c_{34} \}, \{ c_{15},  c_{25}, c_{35}, c_{45} \}, \{ c_{16},  c_{26}, c_{36}, c_{46} \}$ }   & $6, 4^4, 1^5 $\\
& & {\Small $\{a_1, a_2, a_3,   a_4 \}, \{b_1, b_2, b_3,b_4 \}, \{a_i\}, \{b_i\}, 5\leq i \leq 6, \{c_{56} \}$} & \\
$A_2A_1$ & $r_1, r_2, r_3$ &  {\Small $\{b_1, b_2, b_3 \}, \{c_{14}, c_{24}, c_{34} \}, \{ c_{15},  c_{25}, c_{35}\}, \{a_1, a_2, a_3, c_{12},  c_{13}, c_{23}\}$ }   & $6, 3^4, 2^3, 1^3$\\
& & {\Small $\{ c_{16},  c_{26}, c_{36}  \}, \{b_j , c_{kl} \}, \{j,k,l\} = \{4,5,6\}, \{ a_i \}, 4\leq i \leq 6$} & \\
$A_1^3$ & $r_2, r_4, r_5$ &  {\Small $\{c_{13}, c_{14}, c_{23}, c_{24} \}, \{c_{15}, c_{16}, c_{25}, c_{26} \}, \{c_{35}, c_{36}, c_{45}, c_{46} \}$ }   & $4^3, 2^6, 1^3$\\
& & {\Small $\{a_i, a_{i+1}\}, \{b_i, b_{i+1}\}, \{c_{i i+1}\}, i=1,3,5$} & \\
$A_2$ & $r_2, r_3$ &  {\Small $\{a_1, a_2, a_3\}, \{b_1, b_2, b_3\}, \{c_{12}, c_{13}, c_{23} \}, \{c_{14}, c_{24}, c_{34} \}$ }   & $ 3^6,  1^9$\\
& & {\Small $\{c_{15}, c_{25}, c_{35} \}, \{c_{16}, c_{26}, c_{36} \}, \{a_i\}, \{b_i\}, \{c_{ij}\}, 4\leq i,j \leq 6$} & \\
$A_1^2$ & $r_2, r_4$ &  {\Small $\{c_{13}, c_{23}, c_{14}, c_{24} \}, \{c_{56}\}, \{a_i, a_{i+1}\}, \{b_i, b_{i+1}\}, $ }   & $ 4, 2^8,  1^7$\\
& & {\Small $\{c_{ij}, c_{i+1j}\}, \{a_j\}, \{b_j\}, \{c_{ii+1}\}, i=1,3, j=5,6$ } & \\
$A_1$ & $r_0$ &  {\Small $\{a_i, b_i\}, \{c_{ij}\}, 1\leq i, j \leq 6$} & $2^6, 1^{15}$ \\
\hline
\end{tabular}
\medskip
  \caption{Sublattices and Orbits}
  \label{tab:orbits}
\end{table}

}

We apply Theorem~\ref{thm:toric-rank} to compute the toric ranks associated to the  divisors
\[
E_L:=\sum_{|I|=22} E_{I; L, A_1, (1^{27})}\subset \hh.
\]
Since $\mbox{dim}(L)\geq 5$, using Lemma \ref{lem-sublattices}, we have the following possibilities:
\[
L\in \{E_6, A_5A_1,A_2^3, A_5,D_5,A_4A_1, A_3A_1^2,A_2^2A_1\}.
\]

\begin{proposition}\label{prop-toricrank}
The toric rank of each boundary divisor $E_L$ with $L\neq E_6$ is equal to zero. The toric rank of $E_{E_6}$ is equal to $1$.
\end{proposition}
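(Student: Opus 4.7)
The plan is to invoke Theorem~\ref{thm:toric-rank}, which identifies the toric rank at a general point of $E_{I;L,A_1,(1^{27})}$ with $\dim_{\mathbb Q} H_1(\Gamma(u,O_G,O_H),\mathbb Q)^{(-5)}$, where $G=W(L)$, $H=\langle w_{r_{23}}\rangle$ and $u=\prod_{i\in I}w_i$. Because the two reflections on $R_2$ coincide, namely $w_{23}=w_{24}$, the monodromy around the node satisfies $u=1\in W(E_6)$; so the relevant graph is the bipartite $\Gamma(1,O_G,O_H)$ with $|O_G|+|O_H|$ vertices and $27$ edges $e_s$ (one for each $s\in\overline{27}$), each joining the $G$-orbit to the $H$-orbit of $s$.

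The first substantive step is to identify the cycle space of this bipartite graph with the orthogonal complement (inside $\mathbb Q^{27}$, equipped with its standard inner product) of the invariant subspaces,
\[
H_1(\Gamma(1,O_G,O_H),\mathbb Q)=\bigl((\mathbb Q^{27})^G+(\mathbb Q^{27})^H\bigr)^{\perp},
\]
since the orbit-indicator vectors span the $G$- and $H$-invariants and are exactly the boundary conditions defining cycles. Because $D$ is symmetric and $W(E_6)$-equivariant, the eigenspace decomposition $\mathbb Q^{27}=V^{(10)}\oplus V^{(1)}\oplus V^{(-5)}$ is orthogonal and preserved by $G$ and $H$; projecting the formula above onto $V^{(-5)}$ therefore yields
\[
H_1(\Gamma(1,O_G,O_H),\mathbb Q)^{(-5)}=\bigl((V^{(-5)})^G+(V^{(-5)})^H\bigr)^{\perp}\subset V^{(-5)}.
\]
Under the $W(E_6)$-equivariant isomorphism $V^{(-5)}\simeq E_6\otimes\mathbb Q$, the subspace $(V^{(-5)})^{W(L)}$ becomes $L^{\perp}\subset E_6\otimes\mathbb Q$, while $(V^{(-5)})^H=r_{23}^{\perp}$ is a hyperplane.

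What remains is a dimension count controlled by the combinatorics of Lemma~\ref{lem-sublattices}. The admissibility condition $\langle W(L),w_{r_{23}}\rangle=W(E_6)$ forces $L+\mathbb Z r_{23}$ to span $E_6\otimes\mathbb Q$, so $L^{\perp}\cap r_{23}^{\perp}=(L+\mathbb Z r_{23})^{\perp}=0$. If $L=E_6$, then $L^{\perp}=0$, so $L^{\perp}+r_{23}^{\perp}$ has dimension $5$ and the toric rank is $6-5=1$. If instead $\mathrm{rk}(L)=5$ (so $L$ is one of $D_5,\,A_5,\,A_4A_1,\,A_3A_1^2,\,A_2^2A_1$), then $\dim L^{\perp}=1$ and the spanning condition ensures $L^{\perp}\not\subset r_{23}^{\perp}$, so $L^{\perp}+r_{23}^{\perp}=V^{(-5)}$ and the toric rank vanishes. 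The one delicate point is the identification of the abstract $W(E_6)$-invariants $(V^{(-5)})^{W(L)}$ with the concrete lattice orthogonal $L^{\perp}$; this relies on $W(L)$ being generated by reflections in the roots spanning $L$, together with the identification $V^{(-5)}\simeq E_6\otimes\mathbb Q$ as the standard representation. Once this is in place, the root-theoretic dimension count is immediate.
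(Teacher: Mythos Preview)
Your representation-theoretic approach---identifying $H_1(\Gamma(1,O_G,O_H),\mathbb Q)^{(-5)}$ with $(L^\perp + r_{23}^\perp)^{\perp}$ inside $E_6\otimes\mathbb Q$---is correct and is more conceptual than the paper's direct graph reduction. For $L=E_6$ and for all rank-$5$ sublattices the dimension count you give is valid and matches the paper's conclusions.

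The gap is that you silently restrict to $\mathrm{rk}(L)\le 5$, while the paper explicitly lists the rank-$6$ proper sublattices $A_5A_1$ and $A_2^3$ among the possibilities for $L$. For these, your own formula yields $L^\perp=0$ and hence toric rank $6-\dim r_{23}^\perp=1$, not $0$. A direct check confirms this: take $L=A_5A_1$ (orbits $\{a_i,b_i\}$ and $\{c_{ij}\}$) and $r_{23}=\alpha_{123}$; the six double-six pairs $(a_1,c_{23}),(a_2,c_{13}),(a_3,c_{12}),(c_{56},b_4),(c_{46},b_5),(c_{45},b_6)$ each straddle the two $G$-orbits, three with each orientation, so the $(-5)$-eigenvector $f_1+\cdots+f_6$ lies in $H_1$ of the contracted graph. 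Thus your method cannot establish the proposition for $A_5A_1$ or $A_2^3$; in fact it suggests the statement is not correct for those two lattices. To complete the argument you would need either to show that $E_{A_5A_1}$ and $E_{A_2^3}$ are empty (no such argument is apparent), or to restrict the proposition to the rank-$5$ cases.
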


\begin{proof}
Note that there are $|O_H|=21$ vertices on the right, of which $15$ vertices are ends and thus can be removed without changing the homology of the graph. The remaining $6$ vertices each have degree $2$. Contracting unnecessary edges, we reduce the calculation to a graph with $|O_G|$ vertices and $6$ edges. The $6$ edges correspond to the $6$ transpositions appearing in the decomposition of $w_{23}=w_{24}\in S_{27}$.

Assume $|O_G|=1$, which, by Table~\ref{tab:orbits}, is the case if and only if  $L=E_6$. Then $H_1(\Gamma, \mathbb Z)=\bigoplus_{i=1}^6 \mathbb Z e_i$ and $D(e_i)=-\sum_{j\neq i} e_j$. Therefore
$H_1(\Gamma, \mathbb Q)^{(-5)}$ is $1$-dimensional and generated by the element $e_1+\cdots+e_6$. The other cases follow similarly by direct calculation.
\end{proof}
\end{num}

\begin{remark}
For more details concerning the calculation of the toric rank in the
case $L=D_5$, see Paragraph \ref{num:example-toric-rank}.
\end{remark}

\begin{num} Although the divisor theory of $\hh$ is quite complicated, we now show that most of these divisors are contracted under the Prym-Tyurin map.
We first establish the following:

\begin{theorem}\label{thmboundPT}
  Assume that the image of a component $B$ of $E_{I:L_1,L_2}$ under
  the rational Prym-Tyurin map $\hh\ratmap \aperf$ has codimension $1$ in
  $\aperf$. Then $\{|I|, |I^c|\}=\{2,22\}$.
\end{theorem}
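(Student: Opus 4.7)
The plan is to bound $\dim PT(B)$ directly and show it is strictly less than $20$ whenever $3\le |I|\le 21$. Since $\dim B=20$ equals $\dim \widetilde{\cA}_6-1$, the codimension-one hypothesis forces $PT|_B$ to be generically finite onto a $20$-dimensional divisor; in particular, the generic toric rank $t$ of $PT(C,D)$ along $B$ must satisfy $t\le 1$, and by Theorem \ref{thm:toric-rank} it equals $\dim H_1(\Gamma(u,O_G,O_H),\mathbb{Q})^{(-5)}$, while the abelian part of $PT(C,D)$ has dimension $6-t$.

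My first step is to decompose the Prym-Tyurin variety along the normalization. At a general point of $B$ the curve $C=C_1\cup C_2$ has normalization $\widetilde{C}=\widetilde{C}_1\sqcup \widetilde{C}_2$, and the Kanev correspondence $\widetilde{D}$ respects the partition of $\overline{27}$ into $G$- and $H$-orbits, so it restricts to correspondences on each $\widetilde{C}_j$. This produces abelian subvarieties $A_j\subseteq J\widetilde{C}_j$ with dimensions $d_j$ satisfying $d_1+d_2=6-t$, such that the abelian part of $PT(C,D)$ is an isogeny image of $A_1\oplus A_2$. Each $A_j$ depends only on the class of $[\widetilde{C}_j\to R_j]$ in a Hurwitz moduli space of dimension $|I_j|-2$, where $I_1=I$ and $I_2=I^c$.

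The central dimension estimate I will then derive is
\begin{equation*}
\dim PT(B)\;\le\;\min\!\Bigl(|I|-2,\; \tbinom{d_1+1}{2}\Bigr)+\min\!\Bigl(|I^c|-2,\; \tbinom{d_2+1}{2}\Bigr)+\epsilon_t,
\end{equation*}
where $\epsilon_0=0$ and $\epsilon_1\le\dim(A_1\oplus A_2)=5$ bounds the single extension/Kummer datum when $t=1$. A finite case analysis over $(t,d_1,d_2,|I|)$ with $d_1+d_2=6-t$ and $t\in\{0,1\}$ reveals that the right-hand side reaches $20$ only in the two symmetric cases $\{|I|,|I^c|\}=\{2,22\}$, for which one side is rigid (its Hurwitz moduli vanish), the abelian part must be $(d_1,d_2,t)=(0,6,0)$, and the full $6$-dimensional Prym-Tyurin varies in a $20$-parameter subfamily of $\cA_6$, consistent with Theorem \ref{thmmain1}.

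The main obstacle will be the borderline configurations where the naive bound just attains $20$, most notably $|I|=3$ with $(t,d_1,d_2)=(1,0,5)$, giving $15+5=20$. To eliminate these I will combine two additional inputs: first, the classification of root sublattices $L_1\subsetneq E_6$ of rank at most $3$ from Lemma \ref{lem-sublattices} together with the orbit structure of Table \ref{tab:orbits} severely constrains the admissible products $u=\prod_{i\in I}w_i$ and hence, via the inclusion $H_1(\Gamma(u,O_G,O_H),\mathbb{Q})^{(-5)}\subseteq (E_6\otimes\mathbb{Q})^u$, the achievable toric ranks; second, a sharper analysis of the extension datum, which is not free but is coupled to the variation of the normalization through the node identifications, reduces $\epsilon_1$ strictly below $5$ in the remaining borderline configurations. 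Symmetric analyses dispose of $|I|=21$ and of all intermediate values, yielding the desired conclusion $\{|I|,|I^c|\}=\{2,22\}$.
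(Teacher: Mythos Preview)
Your overall strategy matches the paper's: decompose the semiabelian Prym-Tyurin variety over a boundary point into a toric part (of rank $t\le 1$) and abelian parts $P_1\subset JC_1$, $P_2\subset JC_2$ on the two sides, then bound moduli. Your treatment of the $t=0$ case is fine: the inequality $\binom{d_1+1}{2}+\binom{d_2+1}{2}<20$ for $d_1,d_2>0$ with $d_1+d_2=6$ is exactly the paper's observation that a nontrivial product of ppavs cannot be generic in $\cA_6$, so one $P_j$ vanishes and then $\min(|I_j|-2,21)\ge 20$ forces the $2/22$ split.

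The gap is in your handling of the $t=1$ borderline, and neither of your proposed fixes works. The lattice classification of $L_1$ for $|I|=3$ is a red herring: the paper never restricts the toric rank further, and indeed $t=1$ can genuinely occur for such covers. Your second proposal, to show that the Kummer extension datum contributes ``$\epsilon_1$ strictly below $5$'', is not the right formulation either; the extension class lives in a $5$-dimensional abelian variety and can in principle move freely there. What you are missing is the paper's key observation: when $P_1=0$, the extension class of the rank-one degeneration is obtained by projecting $\phi((D-1)H_1(\Gamma,\mathbb Z))$ first to $JC_2$ and then applying $D_2-1$, and therefore depends only on the cover $C_2\to R_2$ (up to a finite ambiguity), not on $C_1$ at all. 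Hence the full moduli of $PT(C,D)$---abelian part \emph{and} extension---is bounded by the moduli of $C_2\to R_2$, namely $|I^c|-2$. Requiring this to be at least $20$ forces $|I^c|\ge 22$, i.e.\ $|I|=2$. This single argument disposes of all the borderline cases $|I|\in\{3,\dots,7\}$ simultaneously, with no case analysis over sublattices needed.
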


\noindent {\em Proof.}
With notation as in \ref{subsec-not},
denote $P_i:=PT(C_i,D_i)$ the PTK varieties for the two parts
of $R$. Recall that by Lemma \ref{lem:semiabelian-Prym} the abelian part
$\abpt$ is isogenous to $P_1\times P_2$.

Without loss of generality, we may assume that $i:=|I|\geq 12$.
Since $\codim(\aperf\setminus \widetilde{\cA}_6)\ge2$,
if the irreducible components $R_i$, $i=1,2$, of $R$
image of $B$ has codimension $1$, then for
a general point of $B$, the toric rank $k_P$ of the corresponding
PTK variety $P$ is either $0$ or $1$.

\vskip 4pt

Suppose first that $k_P=0$. In this case in fact
$P\cong P_1\times P_2$.  If both $P_1$ and $P_2$ have positive
dimension, then $P$ belongs to a subvariety of $\cA_6$ parametrizing
products and each such subvariety has codimension greater than $1$. So
one of the $P_i$ is zero. The parameter space of $P_1$ has dimension
at most $i-2$, that of $P_2$ has dimension at most $22-i$. Since the
parameter space of $P$ is $20$-dimensional and $i\geq 12$, we have
$P_2 = 0$, and $\mbox{dim}(P_1)=6$. We deduce $i=22$.

\vskip 4pt

Now assume $k_P =1$. In this case the image of $B$ is the boundary
divisor $D_6$ of $\aperf$. Then $P_1\times P_2$ must be a
general abelian variety of dimension $5$.
Once again, one of the $P_i$ is zero. The assumption
$i\geq 12$ implies $P_2=0$, hence $\mbox{dim}(P_1)=5$. The parameter
space of $P_1$ is $15$-dimensional, which implies $i\geq 17$. Let
$\{p_1, \ldots , p_{\ell}\}=C_1\cap C_2$ be the set of the nodes of
$C$, which also label the edges of $\Gamma$. For each $i$, let $p_i'$
and $p_i''$ be the points of $C_1$ and $C_2$ respectively that we
identify to obtain $p_i$ on $C$. Choose the orientation of $\Gamma$ in
such a way that each edge $p_i$ is oriented from $C_1$ to $C_2$. The
extension
\[
0 \lra H^1(\Gamma,\bC^*) \lra JC \lra JC_1 \times JC_2 \lra 0
\]
is given by the map $\phi : H_1 (\Gamma , \bZ) \ra JC_1 \times JC_2$
sending the edge $p_i$ to $p_i''-p_i'$. We observe that the extension
\[
0 \lra \bC^* = (D_T-1)H^1(\Gamma,\bC^*) \lra P \lra \mbox{ab}(P) \lra 0
\]
is given by the map
$\phi_P : (D_{H_1}-1) H_1 (\Gamma , \bZ) \ra \mbox{ab}(P)$. Clearly
the composition of $\phi_P$ with the isogeny $\mbox{ab}(P) \to
P_1\times P_2$ is the restriction of $\phi$ to
$(D_{H_1}-1)H_1(\Gamma,\bZ)$ composed with the projection $JC_1\times JC_2\to
(D_A-1)(JC_1\times JC_2) = P_1\times P_2$.




Since $P_2=0$, this means that the extension class of $P$ does not depend on $C_2$
(up to a finite set). Therefore the moduli of $C_2$ does not produce
positive moduli for the extension class of $P$.  It follows that the
cover $C_1 \ra R_1$ depends on $20$ moduli, hence $R_1$ contains at
least $22$ branch points, therefore $i=22$.  \qed

The following result shows that the boundary divisors have many fewer irreducible components than one would a priori expect. Recall that in \ref{subsec-azy} and \ref{subsec-syz} we introduced the  divisors $E_{\mathrm{azy}}:=\sum_{|I|=22} E_{I:L_1,A_2, (3^6,1^9)}$ and $E_{\mathrm{syz}}:=\sum_{|I|=22} E_{I:L_1, A_1^2, (2^{10}, 1^7)}$ respectively.

\begin{theorem}\label{thm:syzazy}
Assume that $|I|=22$ and $L_2 = A_2$ or $A_1^2$. Then $E_{I:L_1,L_2}$ is empty unless $L_1 = E_6$. In other words, for a general $E_6$-admissible cover
$$\bigl[\pi:C = C_1 \cup C_2 \rightarrow R_1\cup_q R_2, \ p_1, \ldots, p_{24}\bigr]\: \in \: E_{\mathrm{azy}} \: \hbox{ or } \: E_{\mathrm{syz}},$$ the curve
$C_1$ is irreducible with monodromy $W(E_6)$ over $R_1$.
\end{theorem}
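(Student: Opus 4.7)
The plan is to prove the stronger statement that $r_{23}, r_{24} \in L_1$, handling the cases $L_2 = A_2$ and $L_2 = A_1^2$ uniformly. Once this is in hand, $w_{23}, w_{24} \in W(L_1)$ and so $\langle W(L_1), W(L_2)\rangle = W(L_1)$, contradicting $\langle W(L_1), W(L_2)\rangle = W(E_6)$ unless $L_1 = E_6$. Irreducibility of $C_1$ then follows because $W(E_6)$ acts transitively on $\overline{27}$.

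First I would set $u := \prod_{i \in I} w_i = (w_{23} w_{24})^{-1}$ and $V := \operatorname{span}_{\mathbb{Q}}\{r_{23}, r_{24}\} \subset E_6 \otimes \mathbb{Q}$. Since every $w_i$ for $i \in I$ is a reflection in a root of $L_1$, we have $u \in W(L_1)$. The $2$-dimensional subspace $V$ is $u$-invariant: in the syzygetic case it is the $(-1)$-eigenspace of $u$, while in the azygetic case it is the plane on which $u$ acts as an order-$3$ rotation. In either case $u$ acts trivially on $V^{\perp}$ in $E_6 \otimes \mathbb{Q}$; since $u \in W(L_1)$ also acts trivially on $L_1^{\perp} \otimes \mathbb{Q}$, the orthogonal decomposition $E_6 \otimes \mathbb{Q} = (L_1 \otimes \mathbb{Q}) \oplus (L_1^{\perp} \otimes \mathbb{Q})$ forces $V \subseteq L_1 \otimes \mathbb{Q}$.

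Next I would use a short computation with the quadratic form $(p r_{23} + q r_{24})^2$, together with $r_{23}^2 = r_{24}^2 = -2$ and $(r_{23}, r_{24}) \in \{0, \pm 1\}$, to show that the $E_6$-roots lying in $V$ are exactly $\{\pm r_{23}, \pm r_{24}\}$ in the syzygetic case, and the six roots of the $A_2$-subsystem $\langle r_{23}, r_{24}\rangle$ in the azygetic case. I then invoke Carter's formula for reflection length in a finite reflection group: since $V \subseteq L_1 \otimes \mathbb{Q}$, the fixed subspace of $u$ on $L_1 \otimes \mathbb{Q}$ has codimension exactly $2$, so $\ell_T^{W(L_1)}(u) = 2$ and hence $u = w_{a'} w_{b'}$ for some roots $a', b' \in L_1$. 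The $(-1)$-eigenspace (respectively, the order-$3$ moving plane) of $w_{a'} w_{b'}$ is $\operatorname{span}\{a', b'\}$, so necessarily $a', b' \in V$. In the syzygetic case this identifies $\{a', b'\}$ with $\{\pm r_{23}, \pm r_{24}\}$ up to sign, so $r_{23}, r_{24} \in L_1$. In the azygetic case $a', b'$ are two roots of the $A_2$-subsystem sitting inside $L_1$, and the two reflections $w_{a'}, w_{b'}$ already generate $W(A_2) = W(L_2)$; thus $W(L_2) \subseteq W(L_1)$ and again $w_{23}, w_{24} \in W(L_1)$.

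The main conceptual input is Carter's theorem $\ell_T(u) = \codim \operatorname{Fix}(u)$ for reflection length in a finite reflection group; everything else is routine linear algebra with the $E_6$ root system. The step I expect to need the most care is the initial inclusion $V \subseteq L_1 \otimes \mathbb{Q}$, where one has to cleanly separate the roles of $V^{\perp}$ (the fixed subspace of $u$ in $E_6 \otimes \mathbb{Q}$) and $L_1^{\perp} \otimes \mathbb{Q}$ (on which $u$ must act trivially merely by virtue of $u \in W(L_1)$). A pleasant feature of this approach is that it is uniform over all proper root sublattices $L_1 \subsetneq E_6$, requiring no case-by-case inspection via the classification in Lemma~\ref{lem-sublattices}.
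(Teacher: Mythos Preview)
Your argument is correct and genuinely different from the paper's. The paper proceeds geometrically: in the azygetic case, each of the $15$ components of $C_2$ meets $C_1$ in a single point, so connectedness of $C$ forces $C_1$ to be connected, hence $W(L_1)$ acts transitively on $\overline{27}$ and Table~\ref{tab:orbits} gives $L_1=E_6$. In the syzygetic case one component of $C_2$ (the degree-$4$ piece $Z$) meets $C_1$ in two points, so $C_1$ can have at most two components; Table~\ref{tab:orbits} then leaves only $L_1\in\{E_6,A_5A_1\}$, and the paper eliminates $A_5A_1$ by an explicit check with the double-sixes.

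Your route is purely Lie-theoretic and uniform in $L_1$: from $u\in W(L_1)$ and $\mathrm{Fix}(u)=V^\perp$ you extract $V\subset L_1\otimes\mathbb Q$, then Carter's formula $\ell_T(u)=\codim\mathrm{Fix}(u)$ produces $u=w_{a'}w_{b'}$ with $a',b'\in L_1$, and the explicit description of the roots in $V$ forces $L_2\subset L_1$. This avoids the classification in Lemma~\ref{lem-sublattices} and the case elimination entirely, at the cost of invoking Carter's theorem. The paper's argument, by contrast, is elementary and ties the conclusion to the visible geometry of the admissible cover, which is useful elsewhere in the paper. One small point you leave implicit is that $W(L_1)=W(E_6)$ forces $L_1=E_6$; this is immediate since $W(E_6)$ has no nonzero fixed vector in $E_6\otimes\mathbb Q$, so $\mathrm{rk}(L_1)=6$, and among the rank-$6$ root sublattices $E_6,\ A_5A_1,\ A_2^3$ only $E_6$ has Weyl group of order $51840$.
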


\begin{proof}
Consider first the azygetic case $L_2 = A_2$. Then, as we saw in \ref{subsec-azy}, the curve $C_2$ has $15$ components, each of which intersects $C_1$ in exactly one point. Therefore, no component of $C_2$ can connect two components of $C_1$ and $C_1$ is irreducible.

\vskip 3pt

In the syzygetic case, as we saw in \ref{subsec-syz}, the curve $C_2$ has $16$ components. Of the components of $C_2$, only the $4$-sheeted cover (denoted by $Z$ in \ref{subsec-syz}) intersects $C_1$ in two points. All other components of $C_2$ intersect $C_1$ in exactly one point. It follows that $C_1$ has at most two irreducible components. Looking now at Table \ref{tab:orbits}, we see that there are only two possibilities for the lattice $L_1$, namely $L_1 = E_6$ or $L_1 = A_5A_1$.

\vskip 4pt

We now eliminate the possibility $L_1 = A_5A_1$ in the syzygetic case. It is a consequence of Lemma \ref{lem-sublattices} that the $A_5$ summand of $L_1$ is the orthogonal complement of the $A_1$ summand. Hence the lattice $L_1$ and the group $G_1$ generated by the reflections $w_1, \ldots, w_{22}$ are determined by the $A_1$ sublattice. Since all reflections are conjugate, we can assume that the $A_1$ summand is generated by the reflection $w_0$ in the root $r_0$ (see \ref{subsec-roots-diag}). Since $\langle G_1, w_{23} \rangle = \langle G_1, w_{24} \rangle = W(E_6)$, the reflections $w_{23}, w_{24}$ do not belong to $G_1$. Therefore the pairs $(w_0, w_{23})$, $(w_0, w_{24})$ are azygetic. \\
After a permutation of the indices $\{1, \ldots, 6\}$, we can assume that $w_{23}$ is the reflection in the root $\alpha_{123}$ and $w_{24}$ is the reflection in the root $\alpha_{145}$ (see, e.g., \cite[Section 9.1]{dolgachev2012classical-algebraic}). The composition $w_{23}\cdot w_{24}\in S_{27}$, contains the double transposition $(a_1, b_6)(c_{23}, c_{45})$ which acts on the $4$-sheeted cover $Z$. However, $w_{23}\cdot w_{24}$ also contains the transposition $(a_2, c_{13})$ which acts on a degree $2$ component of $C_2$, i.e., the points corresponding to $a_2$ and $c_{13}$ come together over the node. Looking at the orbits of $C_1$ in Table \ref{tab:orbits} in the $A_5A_1$ case, we see that the points $a_2$ and $c_{13}$ belong to two different components of $C_1$ and cannot come together over the node, which is a contradiction.
\end{proof}
\end{num}

We now consider the components of the divisor $E_0$ introduced in \ref{subsec-E_0}. Recall that
\[
E_L:=\sum_{|I|=22} E_{I; L, A_1, (1^{27})}\subset \hh.
\]

\begin{theorem}
For $L \subsetneq E_6$ the divisor $E_L$ is contracted by $PT$.
\end{theorem}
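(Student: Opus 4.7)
The plan is to show that for a general point of $E_L$ with $L \subsetneq E_6$, the Prym-Tyurin variety depends only on the $W(L)$-cover $C_1 \to R_1$, whose moduli space has dimension strictly less than $\dim \widetilde{\cA}_6 - 1$.

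First I would extract the structure of $C_2$. A general point of $E_L$ has form $t = [\pi : C_1 \cup C_2 \to R_1 \cup_q R_2, p_1,\dots,p_{24}]$ with $w_{23} = w_{24} = w$ a reflection (since the fibre over $q$ has partition $(1^{27})$). The cover $\pi_2 : C_2 \to R_2$ is therefore branched only at $p_{23}, p_{24}$ with the same monodromy $w$, which has cycle type $(2^6,1^{15})$ in $S_{27}$. Hence $C_2$ is a disjoint union of $6$ rational curves mapping $2{:}1$ (ramified at $p_{23},p_{24}$) and $15$ rational curves mapping isomorphically. In particular $JC_2 = 0$.

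Next I would invoke Proposition~\ref{prop-toricrank} to conclude that the toric rank of $PT(t)$ vanishes, so by Theorem~\ref{extension1} the map $PT$ is regular at $t$ and $PT(t)\in \cA_6\subset \widetilde{\cA}_6$. The splitting from \eqref{subsec-PTab} gives an isogeny $PT(t) \sim P_1 \times P_2$ with $P_i = \mathrm{Im}(D_i - 1)\subset JC_i$. Since $JC_2 = 0$, we have $P_2 = 0$, and $PT(t) \sim P_1$ depends only on the admissible cover $\pi_1 : C_1 \to R_1$ with its incidence correspondence $D_1$. Because $\prod_{i\in I}w_i = (w_{23}w_{24})^{-1} = 1$, the cover $\pi_1$ is unramified over $q$, so the abstract isomorphism class of $\pi_1$ does not depend on the choice of the point $q \in R_1$; likewise $(R_2, p_{23}, p_{24}, q)$ contributes no moduli (being $3$ points on $\mathbb{P}^1$ modulo $\mathrm{Aut}(\mathbb{P}^1)$).

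Consequently, $PT|_{E_L}$ factors through the Hurwitz space $\mathcal{H}_L$ of degree $27$ $W(L)$-covers of $\mathbb{P}^1$ with $22$ branch points whose local monodromies are reflections in the prescribed roots of $L$ and whose product is the identity. This Hurwitz space has dimension at most $22 - 3 = 19$. Therefore $\dim PT(E_L) \leq 19$, whereas $\dim \widetilde{\cA}_6 = 21$, so the image has codimension $\geq 2$ and the $20$-dimensional divisor $E_L$ is contracted. The only real obstacle is conceptual rather than computational: one must verify that the degree-$27$ correspondence $D$ restricts cleanly to $D_1$ on $C_1$ independently of the data on $R_2$-side and of $q$, so that $P_1$ is intrinsic to $\pi_1$ alone; granted this (which follows from the fibrewise definition of the incidence correspondence and the vanishing $JC_2 = 0$), the dimension count $19 < 20$ closes the argument.
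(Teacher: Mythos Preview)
Your argument is correct and follows essentially the same approach as the paper: use Proposition~\ref{prop-toricrank} to get toric rank zero, observe $JC_2=0$ so $PT(t)$ depends only on $(C_1,D_1)$, then note that since $\pi_1$ is unramified at $q$ the isomorphism class of $C_1$ is independent of $q$, leaving at most $\dim\mathcal{M}_{0,22}=19$ parameters. The paper's proof is slightly terser but identical in substance; your explicit description of $C_2$ and the remark that $(R_2,p_{23},p_{24},q)$ carries no moduli make the dimension count more transparent.
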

\begin{proof}
Let $[\pi:C := C_1 \cup C_2 \ra R_1 \cup_q R_2]$ be a general element of a component $B$ of $E_L$ with $L \subsetneq E_6$. By Proposition \ref{prop-toricrank}, the toric rank of $P := PT(C,D)$ is $0$. As in the proof of Theorem \ref{thmboundPT} and with the notation there, we have $P = P_1 \times P_2 = P_1 = PT(C_1, D_1)$ because all the components of $C_2$ are rational. Furthemore, since $C_1 \ra R_1$ is not ramified at $q$, the isomorphism class of $C_1$ and hence also of $P_1$ is independent of the choice of the point $q$. It follows that $P =P_1$ depends on at most $19=\mbox{dim}(\mathcal{M}_{0,22})$ parameters, hence $B$ is contracted by $PT$.
\end{proof}

We summarize the results of this section in terms of the Hurwitz space $\hhu:=\hh/S_{24}$:

\begin{theorem}\label{contractions} The only boundary divisors of
  $\hhu$ that are not contracted under the Prym-Tyurin map
  $PT: \hhu\ratmap \overline{\cA}_6$
  are $D_{E_6}$, $D_{\mathrm{syz}}$ and $D_{\mathrm{azy}}$. The
  divisor $D_{E_6}$ maps onto the boundary divisor $D_6$ of
  $\overline{\cA}_6$, whereas $D_{\mathrm{syz}}$ and
  $D_{\mathrm{azy}}$ map onto divisors not supported on the boundary
  of $\overline{\cA}_6$.
\end{theorem}

\section{Ordinary Prym varieties regarded as Prym-Tyurin-Kanev varieties}\label{sec:PvPTK}

The aim of this section is to illustrate how $6$-dimensional Prym varieties appear as PTK varieties of type $E_6$ and thus prove Theorem \ref{prym6}.
The Prym moduli space $\cR_7$ has codimension $3$ inside $\cA_6$, where we identify $\cR_7$ with the image of the generically injective Prym map $P:\cR_7\rightarrow \cA_6$. We shall show that the boundary divisor $D_{D_5}$ of $\hhu$ is an irreducible component of $PT^{-1}(\cR_7)$ and we shall explicitly describe the $2$-dimensional fibres of the restriction
$PT_{D_{D_5}}:D_{D_5}\dashrightarrow \overline{\mathcal{R}}_7$.

\begin{num}
Consider an admissible cover $[\pi:C= C_1 \cup C_2 \ra R_1 \cup R_2]$ in the divisor $D_{D_5}$ of $\hhu$. We choose such a cover as follows.
The cover $C_1 \ra R_1$ has $D_5$-monodromy generated by the roots $r_1, \ldots, r_5$, and it is ramified at $22$ distinct points. The local monodromy at each branch point is given by one of the reflections $w_i\in W(D_5)$ associated to $r_i$, choosing the ordering such that $\prod_{i=1}^{22} w_i = 1$. The cover $C_2 \ra R_2$ has $A_1$-monodromy generated by the root $r_0$, and is branched at $2$ points. Both covers are unramified at the point $q \in R_1 \cap R_2$. As listed in Table \ref{tab:orbits}, we have the following irreducible components and orbits for $C_1$:
\begin{eqnarray*}
 F_1: &  &\{b_1, b_2, b_3, b_4, b_5, c_{16}, c_{26}, c_{36}, c_{46},
  c_{56} \}\\
 F_2: &  &\{a_1, a_2, a_3, a_4, a_5, c_{12}, c_{13}, c_{14}, c_{15},
  c_{23}, c_{24}, c_{25}, c_{34}, c_{35}, c_{45}, b_6 \}\\
 F_0: &  &\{a_6 \},
\end{eqnarray*}
and the following irreducible components and orbits for $C_2$:
\begin{eqnarray*}
1\leq i \leq 6 & H_i : & \{a_i, b_i\} \\
7\leq i \leq 21 & H_i : & \{c_{k(i)\ell(i)}\}, \hbox{ for some choice of integers }k(i) < \ell(i), \hbox{ between 1 and 6}.
\end{eqnarray*}
One computes that the three components $F_1, F_2$ and $F_0$ of $C_1$ have genera $13, 29$ and $0$, and
map onto $R_1$ with degree $10$, $16$ and $1$ respectively. The components of $C_2$ are all rational with $H_1, \ldots, H_6$ mapping $2:1$ to $R_2$ and $H_7, \ldots, H_{21}$ mapping isomorphically. The description of the orbits given above also specifies the points of intersection $F_i$ and $H_j$. For instance, $H_6$ intersects $F_2$ at a point corresponding to $b_6$ and it intersects $F_0$ at a point corresponding to $a_6$.
\end{num}

\begin{num}\label{num:example-toric-rank}
In order to compute the toric rank of the Prym-Tyurin variety $P:= (D-1)(JC_1)$, we apply the correspondence $D$ to the homology group
$H_1(\Gamma', \mathbb{Z})$, where $\Gamma'$ denotes the simplified dual graph of the stable curve $C_1 \cup C_2$ (see Section \ref{num:G-prime} for the notation).  The
graph $\Gamma'$ consists of $2$ vertices joined by $5$ edges: $e_1:=(b_1, a_1),  e_2:=(b_2, a_2), e_3:=(b_3, a_3),
e_4:=(b_4, a_4), e_5:=(b_5, a_5)$ and  $H_1(\Gamma', \mathbb{Z})= \bigoplus_{i=1}^4 \mathbb Z\ (e_i- e_{i+1})$
(see  \ref{subsec:homology-gps}).
One computes
$$
D(\partial (e_1 - e_2)) = D(a_1-b_1) - D(a_2-b_2) = b_2 - a_2 -(b_1 - a_1) = \partial (e_1 - e_2).
$$
By Remark \ref{num:corr-computation}, $D$ commutes with  $\partial$, hence $D(e_1- e_2) = (e_1 -e_2)$.
Similarly, one checks that $D(e_i- e_{i+1}) = (e_i -e_{i+1})$, for $i=1,\dots, 4$, hence $(D-1)H_1(\Gamma', \mathbb{Z})=0$.
Therefore, the Prym-Tyurin variety $P:= (D-1)(JC_1)$ has toric rank $0$ and  it is contained in $JC_1$, since $JC_2 = \{0\}$.
\end{num}

\begin{num} As is apparent from the description of the orbits, the correspondence $D$ restricts to a fixed-point-free involution $\iota:F_1\rightarrow F_1$, a correspondence
$D_{2}$ of valence $5$ on $F_2$,
a correspondence $D_{12}:F_1 \to F_2$ and its transpose $D_{21}:F_2 \to F_1$ of degree $8$ over $F_1$ and of degree $5$ over $F_2$.

The variety $P$ is the image of the following endomorphism of $JC_1 = JF_1 \times JF_2$:
\[
\left(
\begin{array}{cc}
\iota -1 & D_{21} \\
D_{12} & D_2 -1
\end{array}
\right).
\]
\end{num}
\begin{num} Let $f: F_1 \ra Y$ be the induced
unramified double cover on the curve $Y:= F_1 / \langle \iota \rangle $ of genus $7$.  Note that the degree $10$ map $\pi_1:F_1 \ra  R_1$ factors
through  a degree $5$ map $h: Y \ra R_1$.
The image $Q_1 := (\iota -1)JF_1 \subset JF_1$ is the ordinary Prym variety $P(F_1, \iota)$ associated to the double cover $[f:F_1 \ra Y]\in \cR_7$.
\end{num}

\begin{num} The relationship between the curves $F_1$ and $F_2$
(or  between the tower $F_1 \stackrel{f}{\to} Y \stackrel{h}{\to} R_1$
and the map $\pi_2:F_2 \to R_1$)
is an instance of the {\it pentagonal construction}  (\cite[Section 5.17]{donagi1992fibres-prym-map}).
This is the $n=5$ case of the $n$-gonal construction,
see \cite[Section 2]{donagi1992fibres-prym-map} or \cite[Section 1]{IzLaSt}, which applies to covers
$F_1 \stackrel{f}{\to} Y \stackrel{h}{\to} R_1$
whose Galois group is the Weyl group $W(D_n)$.
The idea is to consider the following curve inside the symmetric product $F_1^{(n)}$:
$$
h_* F_1:= \bigl\{ G \in F_1^{(n)} \ : \ \mbox{Nm}_{f} (G) =  h^{-1}(t), \textnormal { for some } t \in R_1\bigr\}.
$$
The induced map  $h_*F_1 \ra R_1$ is of degree $2^n=32$ and one checks
that above a branch point $t \in R_1$ there are exactly $2^{n-2}=8$
simple ramification points in $h_*F_1$.
\end{num}

\begin{proposition}
 $h_*F_1$ is the union of two isomorphic components
 $h_*F_1 = X_0 \sqcup X_1$, with $X_0 \simeq X_1$ being smooth curves of genus
$1+2^{n-3}(n+g(Y)-5) =29.$
\end{proposition}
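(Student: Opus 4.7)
The plan is to analyze $h_*F_1$ through the Galois closure $\widetilde F_1\to R_1$ of the tower $F_1\stackrel{f}{\to} Y\stackrel{h}{\to} R_1$, whose Galois group is $W(D_5)=S_5\ltimes(\bZ/2)^4$. Under the standard faithful action of $W(D_5)$, a generic fibre of $F_1\to R_1$ is identified with the $10$-element set $\{\pm 1,\dots,\pm 5\}$, the involution $\iota$ is the simultaneous sign change, and $h$ corresponds to the projection $\{\pm i\}\mapsto i$. Accordingly, a generic fibre of $h_*F_1\to R_1$ is in bijection with the set $\Sigma$ of sections of $\{\pm 1,\dots,\pm 5\}\to\{1,\dots,5\}$, of which there are $2^n=32$.

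First I would verify that $W(D_5)$ acts on $\Sigma$ with exactly two orbits, each of size $16$, distinguished by the parity of the number of minus signs; this follows directly from the fact that the sign-change subgroup $(\bZ/2)^4\subset W(D_5)$ consists precisely of sign changes with an even number of $-$'s, while $S_5$ permutes coordinates. Since the local monodromies at the $22$ branch points of $h$ generate $W(D_5)$, each orbit is the generic fibre of one irreducible component, giving $h_*F_1=X_0\sqcup X_1$ with $\deg(X_i/R_1)=16$. The diagonal involution $\widetilde\iota:F_1^{(5)}\to F_1^{(5)}$ induced by $\iota$ preserves $h_*F_1$ and flips all five signs; since $n=5$ is odd this reverses parity, so $\widetilde\iota$ exchanges $X_0$ and $X_1$, yielding an isomorphism $X_0\simeq X_1$.

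For smoothness and the genus, I would observe that $\widetilde F_1$ is smooth because all local monodromies are involutions (reflections in $W(D_5)$ giving simple ramification), and each $X_i$ is a finite group quotient of the smooth curve $\widetilde F_1$, hence also smooth. To count ramification, pick a branch point $t\in R_1$ with monodromy $w=(+1,+2)(-1,-2)$ (the case of the other type of reflection $w=(+1,-2)(-1,+2)$ being analogous): a direct inspection of the action of $w$ on $\Sigma$ shows that, on each parity class, $w$ fixes $8$ sections and permutes the remaining $8$ in $4$ transpositions. Hence $X_i\to R_1\simeq\bP^1$ has simple ramification at $4$ points above each of the $22$ branch points, and Riemann--Hurwitz gives
\[
2g(X_i)-2 \;=\; 16\cdot(-2)+22\cdot 4 \;=\; 56,
\]
so $g(X_i)=29$, matching $1+2^{n-3}(n+g(Y)-5)$ for $n=5,\ g(Y)=7$. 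The main technical obstacle is the parity bookkeeping in the fixed-point count: one must verify that for each of the two conjugacy classes of reflections in $W(D_5)$, the $8$ fixed sections and $4$ transposition cycles split symmetrically between the two parity classes; once this is in hand, the rest is a mechanical application of Riemann--Hurwitz.
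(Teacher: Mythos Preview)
Your argument is correct and follows essentially the same route as the paper, which simply cites \cite{donagi1992fibres-prym-map} and \cite{IzLaSt} for the splitting and smoothness and invokes the Hurwitz formula for the genus; you have unpacked precisely what those references contain (monodromy analysis via the $W(D_5)$-action on sign vectors, parity splitting, and the ramification count). One small simplification: since $D_5$ is simply laced, all roots lie in a single $W(D_5)$-orbit, so the reflections $(+1,+2)(-1,-2)$ and $(+1,-2)(-1,+2)$ are conjugate and your ``analogous case'' check is automatic rather than an additional verification.
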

\begin{proof}
The splitting is explained in \cite[Section 2.2]{donagi1992fibres-prym-map} and \cite[Section 1]{IzLaSt}. The smoothness is proved in \cite[Lemma 1.1]{IzLaSt}.
The genus calculation follows from the Hurwitz formula.
\end{proof}
Two divisors $G_1, G_2 \in h_*F_1 $ with
$\mbox{Nm}_{f}(G_1) = \mbox{Nm}_{f}(G_2)$ belong to the same component if they share an \emph{even} number of points of $F_1$.

We specialize to the case $n=5$.
Let $X=X_0$ be the component of
$h_*F_1$ whose fiber over a point $t \in R_1$ can be identified with the class of the divisor $c_{16} + \cdots +c_{56}$. The proof of the following result is immediate.
\begin{proposition}
The map
$\psi: F_2\rightarrow X$ given by   $x \mapsto  D_{21}(x) \in h_*F_1$
is an isomorphism.
\end{proposition}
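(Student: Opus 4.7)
The plan is to recognize $\psi$ as a degree-one morphism between two smooth projective curves of genus $29$, which automatically forces it to be an isomorphism. First I would verify that $\psi$ actually lands in $h_*F_1$: for $x\in F_2$ lying over $t\in R_1$, the divisor $D_{21}(x)$ consists of the five points of $F_1$ appearing in $D(x)$. Using the incidence of the $27$ lines (for instance, for $x$ with label $a_1$ one reads off $D_{21}(x)=b_2+b_3+b_4+b_5+c_{16}$), the image $\mathrm{Nm}_f(D_{21}(x))$ equals $h^{-1}(t)$, since under $f\colon F_1\to Y$ the pair $\{b_i,c_{i6}\}$ collapses to a single point $\bar b_i\in Y$. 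Hence $\psi\colon F_2\to h_*F_1$ is a morphism compatible with the natural projections to $R_1$.

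Next I would check that $\psi(F_2)$ lies in the component $X=X_0$. Since $F_2$ is irreducible, its image in $h_*F_1$ is connected and therefore contained in one of the two components $X_0,X_1$. It thus suffices to test a single fiber: $X_0$ is by construction the component containing $G_0=c_{16}+\cdots+c_{56}$, and two divisors in a fiber of $h_*F_1\to R_1$ lie in the same component precisely when they meet in an odd number of points. At the sample point $x=a_1$ above, the intersection $|D_{21}(x)\cap G_0|=1$ is odd, so $\psi(x)\in X_0$ and connectedness forces $\psi(F_2)\subset X_0$.

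Finally I would compare degrees over $R_1$. The map $\pi_2\colon F_2\to R_1$ has degree $16$ (the cardinality of the $F_2$-orbit in Table~\ref{tab:orbits}), and $X_0\to R_1$ also has degree $16$, being one of two equal components of the degree-$2^5=32$ cover $h_*F_1\to R_1$. Compatibility of $\psi$ with these projections forces $\deg\psi=1$, so $\psi$ is birational; between smooth projective curves of the same genus this suffices to conclude that $\psi$ is an isomorphism. The only mildly delicate step is the parity computation distinguishing $X_0$ from $X_1$, which reduces to the single sample calculation above.
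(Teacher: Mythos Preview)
Your proof is correct and fills in exactly the steps the paper leaves as ``immediate'': well-definedness of $\psi$ into $h_*F_1$, landing in the component $X_0$, and degree $1$ over $R_1$ between smooth curves. One refinement: for the component check you can bypass the parity argument entirely by observing that $D_{21}(b_6)=c_{16}+\cdots+c_{56}=G_0$, so $\psi(b_6)\in X_0$ directly and irreducibility of $F_2$ does the rest. This also sidesteps the fact that for odd $n=5$ the correct rule is \emph{odd} intersection $\Leftrightarrow$ same component (your stated convention is the right one, since $|G\cap G'|=n-|S\triangle S'|$, even though it differs from the paper's phrasing just above the proposition).
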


\begin{remark}\label{remngonal}
Under the above identification, the restriction $D_2$ of the Kanev (incidence) correspondence coincides with the correspondence $D$ defined in \cite[Section 2]{IzLaSt}. Also, the restriction $D_{21}$ of the Kanev correspondence coincides with the correspondence $S$ defined in \cite[Section 2]{IL}. It follows from \cite[Corollary 6.2]{IzLaSt} that the image $Q_2$ of the ordinary Prym variety $Q_1$ in $JF_2$ by $D_{21}$ is the eigen-abelian variety of $D_2$ for the eigenvalue $-n+2 = -3$. It also follows from \cite[Section 6.6]{IzLaSt} and \cite[Theorem 3.1]{Kan87} that in this case $Q_2$ is a Prym-Tyurin variety of dimension $6$ and exponent $-(-3)+1=4$ for the correspondence $D_2$. The restriction $\rho$ of the correspondence $D-1$ to $Q_1 \subset JF_1$ gives the sequence of isogenies of principally polarized abelian varieties
\[
\begin{array}{ccccc}
Q_1 & \stackrel{\rho}{\lra} & P & \lra & Q_2 \\
x_1 & \longmapsto & ((\iota -1)x_1, D_{12} x_1) & \longmapsto & D_{12} x_1.
\end{array}
\]
\end{remark}

\begin{proposition} \label{isoPryms}
The map $\rho$ factors through multiplication by $2$ to induce an isomorphism $Q_1 := P(F_1, \iota) \simeq P$ and a surjection $Q_1 \ra Q_2 := P(F_2, D_2)$ whose kernel is a maximal isotropic subgroup $\bH$ (with respect to the Weil pairing) of the group of points of order $2$ in $Q_1$.
\end{proposition}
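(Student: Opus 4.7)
\emph{Proof plan.} The strategy is to compute $\rho$ explicitly and then to invoke the $n=5$ case of the $n$-gonal construction to control the second coordinate.

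Since $Q_1 = \mathrm{Im}(1 - \iota) \subset JF_1$, the involution $\iota$ acts on $Q_1$ as $-1$. For $x_1 \in Q_1$ one has $(\iota - 1)x_1 = -2 x_1$, so the map $\rho$ of Remark~\ref{remngonal} takes the explicit form
\[
\rho(x_1) \;=\; \bigl(-2 x_1,\; D_{12} x_1\bigr) \;\in\; JF_1 \times JF_2.
\]
The first coordinate is visibly divisible by $2$, and the question becomes whether $D_{12}|_{Q_1}$ factors through multiplication by $2$ on $Q_2$ in a compatible way.

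Next, via the identification $\psi: F_2 \xrightarrow{\sim} X \subset F_1^{(5)}$ of the preceding proposition, the correspondence $D_{12}$ is (a restriction of) the standard pentagonal correspondence associated to the tower $F_1 \to Y \to R_1$, cf.\ \cite[Sect.~2]{IL}. By \cite[Corollary~6.2]{IzLaSt} together with \cite[Theorem~3.1]{Kan87}, $D_{12}(Q_1) \subseteq Q_2$. The refined input we need is that the induced homomorphism $D_{12}|_{Q_1}: Q_1 \to Q_2$ is $2$-divisible, namely $D_{12}|_{Q_1} = 2\tau$ for a surjective isogeny $\tau: Q_1 \twoheadrightarrow Q_2$ between $6$-dimensional ppavs, with $\ker(\tau) =: \bH \subseteq Q_1[2]$. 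This is the output of the isogeny triangle produced by the pentagonal construction; comparing the principal polarizations of $Q_1$ and $Q_2$ (of respective exponents $2$ and $4$ inside $JF_1$ and $JF_2$) then forces $|\bH| = 2^{6}$ and the maximal isotropy of $\bH$ with respect to the Weil pairing.

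Granting $\tau$, define $\sigma: Q_1 \to JF_1 \times JF_2$ by $\sigma(x_1) := (-x_1,\, \tau(x_1))$. Then $\rho = 2\sigma$; since $\sigma(Q_1)$ is an abelian subvariety on which $[2]$ is surjective, $\sigma(Q_1) = [2]\sigma(Q_1) = \rho(Q_1) \subseteq P$. The homomorphism $\sigma$ has trivial kernel (detectable already in the first coordinate), so $\sigma: Q_1 \to P$ is an isogeny of abelian sixfolds. Comparing Kanev's relation $\Theta_C|_P = 6 \Xi_P$ with a direct computation yielding $\rho^*(\Theta_C|_P) = (2\sigma)^*(6\Xi_P) = 24\, \sigma^*\Xi_P$ against $24$ times the Prym principal polarization $\Xi_{Q_1}$ on $Q_1$ forces $\sigma^* \Xi_P = \Xi_{Q_1}$, so $\sigma$ is an isomorphism of ppavs. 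By construction the composition $Q_1 \xrightarrow{\sigma} P \xrightarrow{\mathrm{pr}_2} Q_2$ equals $\tau$, which provides the claimed surjection with kernel $\bH$. The main obstacle is the divisibility statement for $D_{12}|_{Q_1}$ and the identification of $\bH$ as maximal isotropic; these are the content of the pentagonal construction as developed in \cite{IzLaSt, IL}, while the factorization of $\rho$ through multiplication by $2$ and the polarization compatibility then follow formally.
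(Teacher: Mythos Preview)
Your argument is essentially the same as the paper's: both rest on the key input from \cite{IL} (a straightforward generalization of Proposition~3.3 there) that $D_{12}|_{Q_1}$ factors as $2\varphi$ for an isogeny $\varphi\colon Q_1\to Q_2$ with $\varphi^*\Xi_{Q_2}=\Xi_{Q_1}$, whence $\ker\varphi\subset Q_1[2]$ is maximal isotropic; combined with $(\iota-1)|_{Q_1}=-2_{Q_1}$, this gives $\rho=\psi\circ 2_{Q_1}$ with $\psi$ injective, hence an isomorphism. Your map $\tau$ is the paper's $\varphi$ and your $\sigma$ is its $\psi$; the only cosmetic difference is that you spell out an additional polarization check for $\sigma$, but note that your ``direct computation'' of $\rho^*(\Theta_C|_P)=24\,\Xi_{Q_1}$ already presupposes $\tau^*\Xi_{Q_2}=\Xi_{Q_1}$, so that step is not independent of the cited input.
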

\begin{proof}
For an abelian variety $A$, we denote by $n_A:A\rightarrow A$ the morphism given by multiplication by $n\in \mathbb Z$. It follows from \cite[Corollary 2.3]{IL} that $D_{21}\circ D_{12} = 8_{Q_1}$. A straightforward generalization of the proof of \cite[Proposition 3.3]{IL} implies that $D_{12} = \varphi \circ 2_{Q_1}$, for an isogeny $\varphi : Q_1 \ra Q_2$ such that $\varphi^* \Theta_{Q_2} = \Theta_{Q_1}$, where $\Theta_{Q_i}$ is the polarization of $Q_i$. Therefore we have $\varphi\circ  \varphi^t = 2_{Q_2}$. It follows that the kernel of $\varphi$ is a maximal isotropic subgroup $\bH$ of the group $Q_1[2]$ of points of order $2$ in $Q_1$.
Since the restriction of $\iota -1$ to $Q_1$ is $-2_{Q_1}$, its kernel is the subgroup of points of order $2$. Therefore $\rho = \psi \circ 2_{Q_1}$, where now $\psi: Q_1 \ra P$ is injective, hence an isomorphism.
\end{proof}

\section{The Weyl-Petri realization of the Hodge eigenbundles}\label{sec:WP}

Since the pull-back of the Hodge class from $\overline{\cA}_6$ is precisely the class $\lambda^{(-5)}$ on $\hhu$, describing it in terms intrinsic to the Hurwitz space is of obvious importance. Here we show that, at least on an open dense subset of $\hhu$, both Hodge eigenbundles $\mathbb E^{(+1)}$ and $\mathbb E^{(-5)}$ admit a Petri-like incarnation, which makes them amenable to intersection-theoretic calculations.

\begin{num} For a smooth $E_6$-cover $[\pi: C\rightarrow \mathbb P^1]\in \Hur$, set $L:=\pi^*(\cO_{\mathbb P^1}(1))\in W^1_{27}(C)$ and let
$$\mu(L):H^0(C,L)\otimes H^0(C, \omega_C\otimes L^{\vee})\rightarrow H^0(C,\omega_C)$$ be the Petri map given by multiplication of global sections.
Assume $h^0(C,L)=2$, so that $h^0(C, \omega_C\otimes L^{\vee})=20$. If, as expected, for a general choice of $[C,L]\in \Hur$, the map $\mu(L)$ is injective, then $\mbox{Im } \mu(L)$ is a codimension $6$ subspace of $H^0(C, \omega_C)$. Remarkably, this is the $(+1)$-eigenspace of $H^0(C,\omega_C)$. At the end of this paper, we shall establish that a general covering from $\Hur$ is Petri general:

\end{num}

\begin{theorem}\label{petri}
For a general point $[C,L]\in \Hur$, the multiplication map $\mu(L)$ is injective.
\end{theorem}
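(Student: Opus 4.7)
The plan is a semicontinuity argument combined with an explicit degeneration to an $E_6$-admissible cover on the boundary of $\hhu$.

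First, I would reformulate Petri injectivity as a balanced-splitting condition on the push-forward bundle. Write
\[
\pi_*\cO_C \;=\; \cO_{\bP^1}\oplus V^{\vee}, \qquad V \;=\; \bigoplus_{i=1}^{26}\cO_{\bP^1}(a_i),
\]
so that $h^0(C,L)=2$ forces $a_i\ge 2$ and $\chi(\cO_C)=-45$ forces $\sum a_i=72$. By the base-point-free pencil trick, the kernel of $\mu(L)$ is canonically $H^0(C,K_C\otimes L^{\otimes(-2)})$, which via the projection formula equals $H^0(\bP^1,V(-4))$. Hence $\mu(L)$ is injective if and only if every $a_i\le 3$, i.e.\ the splitting of $V$ is the most balanced one $V=\cO_{\bP^1}(2)^{\oplus 6}\oplus\cO_{\bP^1}(3)^{\oplus 20}$. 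This is an open condition on $\ohur$, so it suffices to produce a single cover (smooth or admissible) at which it holds.

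Next, I would specialize $[\pi\colon C\to\bP^1]$ to an admissible cover sitting over a sufficiently degenerate point of $\mm_{0,24}$. The natural choice is one of the covers used in Section~4 to prove Theorem~\ref{thmmain1}: the target is a trivalent tree of $\bP^1$'s labelled by $12$ roots $r_1,\dots,r_{12}\in E_6$, and the source $C$ is nodal with irreducible components mapping to their target components by low-degree covers dictated by the cyclic decomposition of the monodromy reflections $w_i$. The push-forward $\pi_*\cO_C$ then assembles from the (easily determined) push-forwards on each component, glued across the preimages of the nodes of the target; its splitting on the total target is controlled by the same combinatorial input (dual graph $\Gamma$ plus the labelling by roots) that governed the monodromy computation of Theorem~\ref{thm:pt-monodromy}. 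A direct linear-algebra check on this degeneration, parallel to the verification that the monodromy matrices $M_i$ of Theorem~\ref{thm:pt-monodromy} are linearly independent, should confirm that $h^0(\bP^1,V(-4))=0$ for a suitable choice of roots. Openness then propagates this to a neighbourhood of the boundary point in $\ohur$, and the intersection with $\Hur$ is dense open, completing the proof.

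The main obstacle I anticipate is the bookkeeping of push-forwards across the nodes: at each node of $C$ the stalks of $V$ from the two branches must be glued respecting the monodromy, and ``accidental'' sections of $V(-4)$ arising from such gluings must be ruled out. A secondary technicality is to choose the specialization so that no individual component of $C$ already contributes an unbalanced summand to $V$; once these combinatorial choices are made, a computation entirely parallel to the one in Section~4 should close the argument.
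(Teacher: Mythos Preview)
Your overall strategy---semicontinuity plus a computation at a degenerate cover---is the right one, and your reformulation of Petri injectivity as $h^0(V(-4))=0$ via the base-point-free pencil trick is correct and is exactly how the paper motivates the problem. However, the paper's execution differs from yours in two notable ways.

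First, the degeneration. You propose the maximally degenerate boundary point from Section~4, where the \emph{target} is a trivalent tree of $\bP^1$'s. The paper instead lets the $24$ branch points coalesce in $12$ pairs with equal monodromy $(w_i,w_i)$, so that the target remains a \emph{single} smooth $\bP^1$ with $12$ branch points $q_1,\dots,q_{12}$, while the source $C$ becomes a union of $27$ copies of $\bP^1$ (one per line on the cubic surface) with $72$ nodes. This has the decisive advantage that $L=\pi^*\cO_{\bP^1}(1)$ is unambiguous and the whole apparatus of $H^0(C,\omega_C)$, $H^0(C,\omega_C\otimes L^{\vee})$, etc.\ is described explicitly via log-differentials $\sum_i c_i\,\frac{dz}{z-q_i}$ on each sheet with residue-matching across nodes. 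All the gluing bookkeeping you flag as the main obstacle simply disappears, because there are no nodes on the target.

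Second, the criterion checked. Rather than computing the splitting of $V$ or $h^0(K_C-2L)$ directly, the paper verifies that the two subspaces $H^0(C,\omega_C\otimes L^{\vee})\otimes\langle x_0\rangle$, $H^0(C,\omega_C\otimes L^{\vee})\otimes\langle x_1\rangle$ together with $H^0(C,\omega_C)^{(-5)}$ span $H^0(C,\omega_C)$. Since Remark~\ref{inclusion1} gives $\mathrm{Im}\,\mu(L)\subset H^0(C,\omega_C)^{(+1)}$ for any admissible cover, this spanning forces $\dim\mathrm{Im}\,\mu(L)=40$, hence $\mu(L)$ is injective. The verification becomes a rank computation for an explicit linear system in $72$ unknowns (the residues $x_{ij}$), carried out for a concrete choice of $12$ roots and $q_i=i$. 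So both approaches terminate in a computer check, but the paper's is set up over a single $\bP^1$ and avoids the sheaf-theoretic gluing on a tree that your proposal would require.
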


Postponing the proof, we have the following description of the Hodge eigenbundles.

\begin{theorem}\label{incarnations}
Let $[C,L]\in \hhu$ be an element corresponding to a nodal curve of genus $46$ and a base point free line bundle $L\in W^1_{27}(C)$, such that $h^0(C,L)=2$ and the Petri map $\mu(L)$ is injective. One has the following canonical identifications:
$$\mathrm{(i)}\ \ \mbox{ } \  H^0(C,\omega_C)^{(+1)}=H^0(C,L)\otimes H^0(C,\omega_C\otimes L^{\vee}).$$
$$\mathrm{(ii)}\ \ \mbox{ }\  H^0(C,\omega_C)^{(-5)}=\left(\frac{H^0(C,L^{\otimes 2})}{S^2 H^0(C,L)}\right)^{\vee}\otimes \bigwedge^2 H^0(C,L).$$
\end{theorem}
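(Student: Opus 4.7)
The plan is to descend everything to $\bP^1$ via $\pi_*$ and realise the two Hodge eigenbundles as the spaces of global sections of explicit eigen-subsheaves of $\pi_*K_C$. The Kanev correspondence $\widetilde D\subset C\times C$ is $\pi$-equivariant, hence it induces an $\mathcal{O}_{\bP^1}$-linear endomorphism of $\pi_*\mathcal{O}_C$ which fibre-wise is the incidence matrix on $27$ lines in a cubic surface. That matrix has eigenvalues $10,1,-5$ with multiplicities $1,20,6$ (the $10$-eigenspace being spanned by the sum of the $27$ lines, while the $(+1)$- and $(-5)$-eigenspaces recover the Kanev decomposition of $\mathrm{Ker}(\deg)$), so globally
\[
\pi_*\mathcal{O}_C = \mathcal{O}_{\bP^1}\oplus \mathcal{E}^{(+1)}\oplus \mathcal{E}^{(-5)},
\]
and Grothendieck duality (combined with $K_{\bP^1}=\mathcal{O}(-2)$) transports this to
\[
\pi_*K_C = \mathcal{O}(-2)\oplus \mathcal{F}^{(+1)}(-2)\oplus \mathcal{F}^{(-5)}(-2),
\]
where $\mathcal{F}^{(\pm)}:=(\mathcal{E}^{(\pm)})^{\vee}$ has rank $20$, respectively $6$. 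The projection formula propagates the same decomposition to $\pi_*(K_C\otimes L^{\otimes m})=\pi_*K_C\otimes\mathcal{O}(m)$ for every $m$, and taking global sections identifies $H^0(C,K_C)^{(\pm)}=H^0(\bP^1,\mathcal{F}^{(\pm)}(-2))$.

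The heart of the proof is then to pin down the splitting types of the bundles $\mathcal{F}^{(+1)}$ and $\mathcal{F}^{(-5)}$. Injectivity of $\mu(L)$ is equivalent, via the base-point-free pencil trick, to the vanishing $H^0(C,K_C\otimes L^{\otimes(-2)})=0$, which in turn forces every line-bundle summand of $\mathcal{F}^{(\pm)}$ to have degree at most $3$. Combining this with the equalities $h^0(\mathcal{F}^{(+1)}(-2))=40$ and $h^0(\mathcal{F}^{(-5)}(-2))=6$, the inequality $h^0\ge\chi$ on $\bP^1$ together with $\deg\mathcal{F}^{(+1)}+\deg\mathcal{F}^{(-5)}=\deg\mathcal{F}=72$ forces both $\deg\mathcal{F}^{(+1)}=60$, $\deg\mathcal{F}^{(-5)}=12$ and $h^1(\mathcal{F}^{(\pm)}(-2))=0$, from which each summand must have degree at least $1$. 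Elementary bookkeeping on line bundles on $\bP^1$ with degrees in $\{1,2,3\}$ then yields $\mathcal{F}^{(+1)}\cong\mathcal{O}(3)^{\oplus 20}$ and, using $h^0(K_C\otimes L^{\vee})=20$ as a last input to exclude the remaining admissible degree patterns, $\mathcal{F}^{(-5)}\cong\mathcal{O}(2)^{\oplus 6}$.

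Part (i) is then immediate: $\mathcal{F}^{(-5)}(-3)\cong\mathcal{O}(-1)^{\oplus 6}$ is acyclic, so $H^0(C,K_C\otimes L^{\vee})$ lies entirely in the $(+1)$-eigen subspace of $H^0(\pi_*K_C\otimes\mathcal{O}(-1))$. Multiplication by $H^0(L)=H^0(\bP^1,\mathcal{O}(1))$ is $\mathcal{O}_{\bP^1}$-linear and therefore preserves the Kanev eigen-decomposition, so $\operatorname{Im}\mu(L)\subseteq H^0(C,K_C)^{(+1)}$; Petri injectivity then gives $\dim\operatorname{Im}\mu(L)=2\cdot 20=40=\dim H^0(C,K_C)^{(+1)}$, and the inclusion is an equality.

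For (ii) I will apply the base-point-free pencil trick a second time. The Koszul sequence $0\to \bigwedge^2 H^0(L)\otimes L^{\vee}\to H^0(L)\otimes\mathcal{O}_C\to L\to 0$, tensored with $K_C\otimes L^{\vee}$ and followed by cohomology together with Serre duality on $C$, produces an exact piece
\[
H^0(L)\otimes H^0(K_C\otimes L^{\vee})\xrightarrow{\mu(L)} H^0(C,K_C)\to \bigwedge^2 H^0(L)\otimes H^0(L^{\otimes 2})^{\vee}\to H^0(L)\otimes H^0(L)^{\vee}.
\]
A direct linear-algebra computation identifies the kernel of the last arrow with $\bigwedge^2 H^0(L)\otimes \bigl(H^0(L^{\otimes 2})/S^2H^0(L)\bigr)^{\vee}$. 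Since by (i) one has $\operatorname{coker}(\mu(L))=H^0(C,K_C)^{(-5)}$, this yields the claimed identification. The main technical point of the whole argument is the splitting-type analysis of step two; everything else is formal.
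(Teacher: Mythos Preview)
Your proof is correct and your part (ii) is essentially the paper's own argument (the Koszul/base-point-free pencil trick followed by Serre duality), but your route to part (i) is genuinely different from the paper's.

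The paper fixes a single fibre $\Gamma\in|L|$ and works with the short exact sequence $0\to\mathcal{O}_C\to L\to\mathcal{O}_\Gamma(\Gamma)\to0$. The $27$-dimensional space $H^0(\mathcal{O}_\Gamma(\Gamma))$ carries the incidence endomorphism with eigenvalue decomposition $1\oplus20\oplus6$ (trivial, $(+1)$, $(-5)$). Since the coboundary $\alpha:H^0(\mathcal{O}_\Gamma(\Gamma))\to H^1(\mathcal{O}_C)$ commutes with $D$ and its one-dimensional kernel lies in the trivial piece, the $(-5)$-summand injects into $H^1(\mathcal{O}_C)^{(-5)}$; dualising gives $\operatorname{Im}\mu(L)\subseteq H^0(K_C)^{(+1)}$, and equality follows by dimension count under the Petri hypothesis. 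Your argument instead globalises: you decompose $\pi_*\omega_C$ into eigen-subbundles on $\bP^1$ and determine their splitting types by a degree/$h^0$ squeeze, obtaining $\mathcal{F}^{(+1)}\cong\mathcal{O}(3)^{\oplus20}$ and $\mathcal{F}^{(-5)}\cong\mathcal{O}(2)^{\oplus6}$, from which the same inclusion is read off as the vanishing $H^0(\mathcal{F}^{(-5)}(-3))=0$.

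What each buys: the paper's fibrewise argument is shorter and isolates the representation-theoretic content cleanly. Your splitting-type argument is more explicit and in fact establishes the finer statement (alluded to in the paragraph just before the theorem in the paper) that Petri injectivity is equivalent to the balanced splitting type $V\cong\mathcal{O}(2)^{\oplus6}\oplus\mathcal{O}(3)^{\oplus20}$, while simultaneously identifying which summands carry which eigenvalue. This extra information is useful downstream (e.g.\ for the Chern-class computations in the paper's Section~8), so your approach, while longer, is not wasted effort.
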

\begin{proof}
Let $L$ be an $E_6$-pencil on $C$. Consider a general divisor $\Gamma\in |L|$ and the exact sequence
\begin{equation}\label{standard}
0 \lra \cO_C \stackrel{\cdot s}\lra L \lra \cO_{\Gamma}(\Gamma) \lra 0
\end{equation}
induced by a section $s \in H^0(C,L)$ with $\mbox{div}(s)=\Gamma$, and its cohomology sequence
\begin{equation}\label{cohstandard}
0 \lra H^0 (C, \cO_C) \lra H^0(C,L) \lra H^0(\cO_{\Gamma}(\Gamma)) \stackrel{\alpha}{\lra} H^1(C, \cO_C) \lra H^1(C,L) \lra 0.
\end{equation}
There is an action of $W(E_6)$ on $H^0(\cO_{\Gamma}(\Gamma))$ compatible with the trivial action on $H^0 (C,L)$, because $L$ and $H^0(L)$ are pull-backs from $\bP^1$. We identify the space $H^0(\cO_{\Gamma}(\Gamma))$ with the vector space generated by the $27$ lines on a smooth cubic surface; each line is represented by a point of $\Gamma$ and the incidence correspondence of lines is the Kanev correspondence $D$. Therefore the representation of $W(E_6)$ on $H^0(\cO_{\Gamma}(\Gamma))$ splits into the sum of three irreducible representations: the trivial $1$-dimensional one, the $6$-dimensional one which coincides with the representation on the primitive cohomology of a cubic surface and the $20$-dimensional one, which coincides with the one on the space of linear equivalences on a cubic surface, see for instance \cite{AV}.

\vskip 3pt

The Kanev correspondence $D$ induces an endomorphism on $H^0(\cO_{\Gamma}(\Gamma))$ compatible with the endomorphism $D^{\vee}\in  \mbox{End}\bigl(H^1(C,\cO_C)\bigr)$ via the cohomology sequence \eqref{cohstandard}. On a cubic surface, the action of the incidence correspondence on the primitive cohomology is equal to multiplication by $-5$ and its action on the space of rational equivalences is the identity. Therefore this is also how the action on $H^0(\cO_{\Gamma}(\Gamma))$ can be described. It follows that the image $\alpha\bigl(H^0(\cO_{\Gamma}(\Gamma))\bigr)$ contains the $(-5)$-eigenspace, that is, we have an inclusion
$$\bigl(H^0(C,\omega_C)^{(-5)}\bigr)^{\vee}\subseteq \alpha\bigl(H^0(\cO_{\Gamma}(\Gamma))\bigr) = \left(\frac{H^0(C,\omega_C)}{H^0(C,L)\otimes H^0(C, \omega_C\otimes L^{\vee})}\right)^{\vee}.$$
When the Petri map $\mu(L)$ is injective, the two spaces appearing in this inclusion have the same dimension and the inclusion becomes an equality, which establishes the first claim.

\vskip 4pt
To prove the second claim, we start by observing that the
Base Point Free Pencil Trick yields the sequence  $0\rightarrow \bigwedge ^2 H^0(C,L)\otimes L^{\vee}\rightarrow H^0(C,L)\otimes \mathcal{O}_C\rightarrow L\rightarrow 0$. After tensoring with $L$ and taking cohomology, we arrive at the following exact sequence
$$0\longrightarrow \frac{H^0(C,L^{\otimes 2})}{\mbox{Sym}^2 H^0(C,L)} \longrightarrow \bigwedge ^2 H^0(C,L)\otimes H^1(C, \mathcal{O}_C)\stackrel{u}\longrightarrow H^0(C,L)\otimes H^1(C,L)\longrightarrow 0.$$
To describe the map $u$ in this sequence, let us choose a basis $s_1, s_2\in H^0(C,L)$. Then,
$$u(s_1\wedge s_2\otimes f)=s_1\otimes (s_2\cdot f)-s_2\otimes (s_1\cdot f)\in H^0(C,L)\otimes H^1(C,L).$$ It follows via Serre duality, that $\mbox{Ker}(u)$ consists of all linear maps $v \colon H^0(C,\omega_C)\rightarrow \mathbb C$ vanishing on $H^0(C,L)\otimes H^0(C,\omega_C\otimes L^{\vee})\subset H^0(C,\omega_C)$, which proves the claim.
\end{proof}

\vskip 5pt

The identifications provided by Theorem \ref{incarnations} extend to isomorphisms of vector bundles over a partial compactification of $\Hur$ which we shall introduce now. This allows us to express to Hodge classes
$\lambda^{(+1)}$ and $\lambda^{(-5)}$ in terms of certain tautological classes and define the Petri map globally at the level of the moduli stack.


\vskip 3pt

\begin{num} Let $\widetilde{\cM}_{46}$ be the open subvariety of $\overline{\cM}_{46}$ parametrizing irreducible curves  and denote by $\mathcal{G}^1_{27}\rightarrow \widetilde{\cM}_{46}$ the stack parametrizing pairs $[C, L]$, where $[C]\in \widetilde{\cM}_{46}$ (in particular $C$ is a \emph{stable} curve) and $L$ is a torsion free sheaf of degree $27$ on $C$ with $h^0(C,L)\geq 2$. Note that $\mathcal{G}^1_{27}$ is a locally closed substack of the universal Picard stack of degree $27$ over $\widetilde{\cM}_{46}$. Let $\mathcal G_{E_6}$ be the locus of pairs $[C,L]\in \mathcal{G}^1_{27}$, where $L$ is \emph{locally free} and \emph{base point free} with $h^0(C,L)=2$  and the monodromy of the pencil $|L|$ is equal to $W(E_6)$. We denote by $\sigma:\mathcal{G}_{E_6}\rightarrow \widetilde{\cM}_{46}$ the projection map given by $\sigma([C,L]):=[C]$.
\end{num}

\begin{num} One has a birational map $\beta:\hhu\dashrightarrow \mathcal{G}_{E_6}\subset \mathcal{G}^1_{27}$ which can be extended over each boundary divisor of $\hhu$
 not contracted under the Prym-Tyurin map (see Theorem \ref{contractions} for a description of these divisors). Let $t:=[\pi:C=C_1\cup C_2\rightarrow R_1\cup_q R_2]$ be a general point of one of the divisors $D_{E_6}$, $D_{\mathrm{azy}}$ or $D_{\mathrm{syz}}$, where we recall that $C_i:=\pi^{-1}(R_i)$ for $i=1,2$ and only two branch points of $\pi$ specialize to $R_2$. We assign to $t$ the point $\bigl[\mathrm{st}(C), \mathrm{st}(f^*\mathcal{O}_{R_1\cup R_2}(1,0))\bigr]\in \mathcal{G}_{E_6}$, where $\mathrm{st}$ is the map assigning to a nodal curve $X$ its stable model $\mathrm{st}(X)$  and to a line bundle $L$ on $X$ the line bundle $\mathrm{st}(L)$ on $\mathrm{st}(C)$ obtained by adding base points to each destabilizing component of $X$ which is contracted. Geometrically, for the general point of each of the divisors $D_{E_6}$, $D_{\mathrm{azy}}$ or $D_{\mathrm{syz}}$, the map $\beta:\hhu\dashrightarrow \mathcal{G}_{E_6}$ contracts the curve $C_2$. We still denote by $D_{\mathrm{azy}}, D_{\mathrm{syz}}$ and $D_{E_6}$ the images under $\beta$ of the boundary divisors denoted by the same symbols on $\hhu$.

\vskip 4pt

We now describe the effect of $\beta$ along each of the boundary divisors in question. If $t\in D_{\mathrm{azy}}$ is a general point, then $C_1$ is smooth and $L:=\pi_{C_1}^*(\mathcal{O}_{R_1}(1))\in W^1_{27}(C_1)$ has $6$ triple ramification points over the branch point $q\in R_1$. Then $\beta(t)=[C_1,L]\in \mathcal{G}_{E_6}$. If, on the other hand, $t$ is a general point of $D_{\mathrm{syz}}$, then retaining the notation of Remark \ref{aut1}, $C_1$ is a smooth curve of genus $45$, meeting the smooth rational component $Z$ in two points $u, v\in \pi^{-1}(q)$. Then $\beta(t)=[C',L]$, where $C':=C_1/u\sim v$ is an irreducible $1$-nodal curve of genus $46$ and $L\in W^1_{27}(C')$ is the pencil inducing the map $\pi$. Finally, if $t\in D_{E_6}$, then $\beta(t)=[C',L]$, where $C'$  is a $6$-nodal curve obtained from $C_1$ by identifying the points of $\pi^{-1}(q)$ which belong to the same component of $C_2$.

\vskip 4pt

We record the formula $\lambda=\frac{33}{46}[D_0]+\frac{7}{46} [D_{\mathrm{azy}}]+\frac{17}{92} [D_{\mathrm{syz}}]+\cdots \in CH^1(\mathcal{G}_{E_6})$ for the Hodge class at the level of $\mathcal{G}_{E_6}$. The factor of $\frac{1}{2}$ in front of $[D_{\mathrm{syz}}]$ compared to (\ref{lamhhu}) is explained by the fact that the general point of $D_{\mathrm{syz}}\subset \hhu$ has an automorphism of order $2$, whereas its image under $\beta$ has only trivial automorphsms.
\end{num}

\begin{num} At the level of $\mathcal{G}_{E_6}$  one can introduce several tautological classes along the lines of \cite{F09}. We denote by $f:\cC_{E_6}\rightarrow \mathcal{G}_{E_6}$ the universal genus $46$ curve and choose a universal line bundle $\mathcal{L}\in \mbox{Pic}(\cC_{E_6})$  satisfying the property $\cL_{| f^{-1}([C,L])}=L\in W^1_{27}(C)$, for each $[C,L]\in \mathcal{G}_{E_6}$. We then define the following tautological classes:
$$\mathfrak{A}:=f_*\bigl(c_1^2(\mathcal{L})\bigr), \mbox{ } \mbox{  }  \mathfrak{B}:=f_*\bigl(c_1(\mathcal{L})\cdot c_1(\omega_f)\bigr), \ \kappa:=f_*\bigl(c_1^2(\omega_f)\bigr)\in CH^1(\mathcal{G}_{E_6}).$$
Via Grauert's theorem, we observe that $\cV:=f_*\mathcal{L}$ is a locally free sheaf  of rank two on $\mathcal{G}_{E_6}$. Similarly, the sheaf
$$\cV_2:=f_*(\mathcal{L}^{\otimes 2})$$ is locally free of rank $9$ over $\mathcal{G}_{E_6}$. Globalizing at the level of moduli the multiplication map of global
sections $\mbox{Sym}^2 H^0(C,L)\rightarrow H^0(C, L^{\otimes 2})$, we define the rank $6$ vector bundle $\mathcal{E}_2$ over $\mathcal{G}_{E_6}$ via the following exact sequence:
$$0\longrightarrow \mbox{Sym}^2(\cV)\longrightarrow \cV_2\longrightarrow \mathcal{E}_2\longrightarrow 0.$$
\end{num}

\begin{num} The choice of  $\mathcal{L}$ is not unique; replacing $\mathcal{L}$ by $\mathcal{L}':=\mathcal{L}\otimes f^*(\alpha)$, where $\alpha\in \mbox{Pic}(\mathcal{G}_{E_6})$ and denoting the corresponding tautological classes by $\mathfrak{A}', \mathfrak{B}'\in CH^1(\mathcal{G}_{E_6})$ respectively, we find the relations
$$\mathfrak{A}'=\mathfrak{A}+2\cdot 27 \cdot \alpha \ \mbox{ and } \ \mathfrak{B}'=\mathfrak{B}+(2\cdot 46 -2)\cdot \alpha.$$
It follows that $\mathfrak{B}'-\frac{5}{3}\mathfrak{A}'=\mathfrak{B}-\frac{5}{3}\mathfrak{A}$, that is, the class
\begin{equation}\label{gamma}
\gamma:=\mathfrak B-\frac{5}{3}\mathfrak A\in CH^1(\mathcal G_{E_6})
\end{equation}
is well-defined and independent of the choice of a Poincar\'e bundle $\mathcal{L}$. We now describe in a series of calculations the Chern classes of the vector bundles we have just introduced.
\end{num}

\begin{proposition}\label{v2}
The following relations hold in $CH^1(\mathcal{G}_{E_6})$:
$$c_1(\cV_2)=\lambda-\mathfrak{B}+2\mathfrak{A} \  \ \mbox{ and }  \  \ c_1\Bigl(R^1f_*(\omega_f\otimes \mathcal{L}^{\vee})\Bigr)=\lambda+\frac{\mathfrak{A}}{2}-\frac{\mathfrak{B}}{2}-c_1(\cV).$$
\end{proposition}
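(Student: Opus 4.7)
The plan is to apply Grothendieck--Riemann--Roch to the universal family $f:\cC_{E_6}\to \mathcal{G}_{E_6}$. For any line bundle $\mathcal{M}$ on $\cC_{E_6}$, expanding $\mathrm{ch}(\mathcal{M})\cdot \mathrm{td}(T_f)$ to degree two and pushing forward gives, after folding the Todd contribution from the nodal locus into $\lambda$ via the Mumford identity $\lambda=c_1(f_!\cO)$, the single master formula
\begin{equation*}
c_1(f_!\mathcal{M})=\lambda+\tfrac{1}{2}f_*\bigl(c_1(\mathcal{M})^{2}-c_1(\mathcal{M})\cdot c_1(\omega_f)\bigr).
\end{equation*}
Both displayed identities fall out of this formula applied to a well-chosen $\mathcal{M}$, so no new geometric input is required.

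First I would take $\mathcal{M}=\mathcal{L}^{\otimes 2}$. Substituting $c_1(\mathcal{L}^{\otimes 2})=2c_1(\mathcal{L})$ into the master formula yields $c_1(f_!\mathcal{L}^{\otimes 2})=\lambda+2\mathfrak{A}-\mathfrak{B}$. Since by hypothesis $\cV_2=f_*\mathcal{L}^{\otimes 2}$ is locally free of rank $9=\chi(C,L^{\otimes 2})$, cohomology and base change force $R^1f_*\mathcal{L}^{\otimes 2}=0$; hence $f_!\mathcal{L}^{\otimes 2}=\cV_2$ in $K$-theory and the first formula follows.

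Next I would take $\mathcal{M}=\omega_f\otimes \mathcal{L}^{\vee}$, so $c_1(\mathcal{M})=c_1(\omega_f)-c_1(\mathcal{L})$. A short expansion shows that the $c_1(\omega_f)^{2}$ terms cancel, leaving $c_1(\mathcal{M})^{2}-c_1(\mathcal{M})\cdot c_1(\omega_f)=c_1(\mathcal{L})^{2}-c_1(\mathcal{L})\cdot c_1(\omega_f)$, so the master formula gives $c_1(f_!(\omega_f\otimes \mathcal{L}^{\vee}))=\lambda+\tfrac{1}{2}\mathfrak{A}-\tfrac{1}{2}\mathfrak{B}$. Relative Serre duality identifies $R^1f_*(\omega_f\otimes \mathcal{L}^{\vee})$ with $(f_*\mathcal{L})^{\vee}=\cV^{\vee}$, whose first Chern class is $-c_1(\cV)$. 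Splitting $f_!(\omega_f\otimes \mathcal{L}^{\vee})=f_*(\omega_f\otimes \mathcal{L}^{\vee})-R^1f_*(\omega_f\otimes \mathcal{L}^{\vee})$ in $K$-theory and combining the last two identities yields the second displayed formula.

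The whole argument is a direct GRR computation; the main obstacle is bookkeeping rather than geometry. The two places to be careful are: (i) verifying $R^1f_*\mathcal{L}^{\otimes 2}=0$ over all of $\mathcal{G}_{E_6}$, which is secured by the very definition of $\mathcal{G}_{E_6}$ as the locus where $\cV_2$ has the generic Riemann--Roch rank, and (ii) correctly attributing the Mumford/Noether boundary correction to $\lambda$ in the master formula, so that no explicit $\delta$-classes appear on the right.
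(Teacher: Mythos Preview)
Your approach is the same as the paper's: a direct Grothendieck--Riemann--Roch computation, folding the Todd/nodal contribution into $\lambda$ via Mumford's identity $f_*\bigl(c_1^2(\Omega_f^1)+c_2(\Omega_f^1)\bigr)=12\lambda$. Your packaging of both cases into a single master formula is a clean touch, but not essentially different.

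One thing worth flagging: your own Serre duality step reveals that the proposition as printed contains a typo. You correctly identify $R^1f_*(\omega_f\otimes\mathcal{L}^{\vee})\cong\cV^{\vee}$, so its first Chern class is simply $-c_1(\cV)$; the quantity your $K$-theory splitting actually produces is
\[
c_1\bigl(f_*(\omega_f\otimes\mathcal{L}^{\vee})\bigr)=\lambda+\tfrac{\mathfrak{A}}{2}-\tfrac{\mathfrak{B}}{2}-c_1(\cV),
\]
i.e.\ the formula for the \emph{direct image} rather than for $R^1f_*$. That is the intended reading: the subsequent identification $\mathbb{E}^{(+1)}=f_*(\omega_f\otimes\mathcal{L}^{\vee})\otimes\cV$ must have rank $20\times 2=40$, and the formula $\lambda^{(+1)}=2\lambda-\gamma+\mathfrak{n}$ in Proposition~\ref{che} only comes out right with $f_*$, not $R^1f_*$. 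So your argument is correct and in fact pins down what was meant.

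A minor quibble: your justification for $R^1f_*\mathcal{L}^{\otimes 2}=0$ appeals to ``the very definition of $\mathcal{G}_{E_6}$ as the locus where $\cV_2$ has the generic Riemann--Roch rank'', but that is not how the paper defines $\mathcal{G}_{E_6}$ (it imposes $h^0(L)=2$, base-point-freeness, and $W(E_6)$-monodromy). The paper simply asserts via Grauert that $\cV_2$ is locally free of rank $9$, and you are using the same assertion; neither argument spells out why $h^0(C,L^{\otimes 2})=9$ holds for every point of $\mathcal{G}_{E_6}$.
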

\begin{proof} We apply Grothendieck-Riemann-Roch to $f:\mathcal{C}_{E_6}\rightarrow \mathcal{G}_{E_6}$ and write
$$c_1(\cV_2)=f_*\Bigl[\Bigl(1+2c_1(\mathcal{L})+2c_1^2(\mathcal{L})\Bigr)\cdot \Bigl(1-\frac{c_1(\omega_f)}{2}+\frac{c_1^2(\Omega^1_f)+c_2(\Omega^1_f)}{12}\Bigr)\Bigr]_2.$$
Now use Mumford's formula $f_*\bigl(c_1^2(\Omega_f^1)+c_2(\Omega_f^1)\bigr)=12\lambda$, see \cite{HM} p. 49,  and conclude. \end{proof}

\begin{num} Theorem \ref{petri} (to be proved in Section 10) shows that the Petri map $\mu(L)$ is injective for a general point of $[C,L]\in \mathcal{G}_{E_6}$. However, we cannot rule out the (unlikely) possibility that $\mu(L)$ is not injective along a divisor $\mathfrak{N}$ on $\mathcal{G}_{E_6}$. We denote by $\mathfrak{n}:=[\mathfrak{N}]\in CH^1(\mathcal{G}_{E_6})$. This (possibly zero) class  is effective. Globalizing Theorem \ref{incarnations}, we obtain isomorphisms of vector bundles over $\mathcal{G}_{E_6}-\mathfrak{N}$:
$$\mathbb E^{(+1)}=R^1f_*\bigl(\omega_f\otimes \mathcal{L}^{\vee}\bigr)\otimes \cV \  \ \mbox{ and } \ \ \mathbb E^{(-5)}= \mathcal{E}_2^{\vee} \otimes \mbox{det}(\cV).$$
Extending this to $\mathcal{G}_{E_6}$, there  exists an injection of vector bundles $R^1f_*\bigl(\omega_f\otimes \mathcal{L}^{\vee}\bigr) \otimes \cV \hookrightarrow \mathbb E^{(+1)}$, with quotient a sheaf supported on $\mathfrak{N}$ and on possibly other higher codimension cycles.
\end{num}
\begin{proposition}\label{che}
The following formulas hold at the level of $\mathcal{G}_{E_6}$:
$$\lambda^{(+1)}=2\lambda-\gamma+\mathfrak{n} \ \mbox{ and } \lambda^{(-5)}=-\lambda+\gamma-\mathfrak{n}.$$
\end{proposition}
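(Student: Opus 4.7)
The plan is to derive the formula for $\lambda^{(-5)}$ on the Petri-open locus $\mathcal{G}_{E_6}\setminus\mathfrak{N}$ from the identification $\mathbb{E}^{(-5)} \cong \mathcal{E}_2^{\vee}\otimes\det(\mathcal{V})$ of Theorem \ref{incarnations}, extend it across $\mathfrak{N}$ by accounting for the Petri correction, and then obtain the formula for $\lambda^{(+1)}$ from the identity $\lambda = \lambda^{(+1)} + \lambda^{(-5)}$.

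The first step is an auxiliary identity that removes the $c_1(\mathcal{V})$-dependence from subsequent formulas: $27\, c_1(\mathcal{V}) = \mathfrak{A}$ in $CH^1(\mathcal{G}_{E_6})$. Indeed, the base-point-freeness of $|L|$ on each fiber globalizes to a surjection $f^*\mathcal{V}\twoheadrightarrow \mathcal{L}$, whose kernel $\mathcal{M}$ is a line bundle on $\mathcal{C}_{E_6}$. From the associated short exact sequence one reads
\[
c_2(f^*\mathcal{V}) \;=\; c_1(\mathcal{M})\cdot c_1(\mathcal{L}) \;=\; \bigl(f^*c_1(\mathcal{V}) - c_1(\mathcal{L})\bigr)\cdot c_1(\mathcal{L}).
\]
Pushing forward by $f$ and using $f_*f^*(\cdot) = 0$ (fibers have positive dimension), together with $f_*c_1(\mathcal{L}) = 27$ and $f_*c_1^2(\mathcal{L}) = \mathfrak{A}$, yields $0 = 27\,c_1(\mathcal{V}) - \mathfrak{A}$. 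The two sides transform identically under $\mathcal{L}\mapsto \mathcal{L}\otimes f^*\alpha$, confirming the relation is well-posed.

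On the open set $\mathcal{G}_{E_6}\setminus\mathfrak{N}$, the Base Point Free Pencil Trick provides the exact sequence $0\to \mathrm{Sym}^2\mathcal{V}\to \mathcal{V}_2\to \mathcal{E}_2\to 0$. Since $c_1(\mathrm{Sym}^2\mathcal{V}) = 3c_1(\mathcal{V})$, Proposition \ref{v2} gives $c_1(\mathcal{E}_2) = \lambda - \mathfrak{B} + 2\mathfrak{A} - 3c_1(\mathcal{V})$. Because $\mathcal{E}_2^{\vee}$ has rank $6$ and $\det\mathcal{V}$ is a line bundle,
\[
\lambda^{(-5)} \;=\; c_1\bigl(\mathcal{E}_2^{\vee}\otimes\det\mathcal{V}\bigr) \;=\; 6\,c_1(\mathcal{V}) - c_1(\mathcal{E}_2) \;=\; 9\,c_1(\mathcal{V}) - \lambda + \mathfrak{B} - 2\mathfrak{A}.
\]
Substituting $c_1(\mathcal{V}) = \mathfrak{A}/27$ gives $\lambda^{(-5)} = -\lambda + \mathfrak{B} - \tfrac{5}{3}\mathfrak{A} = -\lambda + \gamma$ on the Petri-open locus.

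Finally, to extend the equality across $\mathfrak{N}$, observe that the Petri map $\Phi\colon \mathcal{V}\otimes f_*(\omega_f\otimes\mathcal{L}^{\vee})\to \mathbb{E}^{(+1)}$ and its Serre-dual incarnation $\mathcal{E}_2^{\vee}\otimes\det\mathcal{V} \to \mathbb{E}^{(-5)}$ are morphisms of vector bundles of the same respective ranks $40$ and $6$ that are isomorphisms away from $\mathfrak{N}$ and drop rank exactly along $\mathfrak{N}$. Comparing first Chern classes of source and target yields a correction supported on $\mathfrak{N}$, which by definition of $\mathfrak{n} = [\mathfrak{N}]$ produces $\lambda^{(-5)} = -\lambda + \gamma - \mathfrak{n}$ in $CH^1(\mathcal{G}_{E_6})$. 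The companion formula $\lambda^{(+1)} = 2\lambda - \gamma + \mathfrak{n}$ then follows immediately from $\lambda = \lambda^{(+1)} + \lambda^{(-5)}$. The delicate point, which is the main obstacle to a fully detailed argument, is verifying that the Petri map degenerates simply along $\mathfrak{N}$, so that the first-Chern-class correction is exactly $\mathfrak{n}$ (and not some multiple of it or a class with extra higher-codimension contributions); this requires a local analysis of the cokernel of $\Phi$ along the generic point of $\mathfrak{N}$.
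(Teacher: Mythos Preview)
Your argument is correct and follows essentially the same route as the paper. The only cosmetic differences are: (i) you compute $\lambda^{(-5)}$ first via the identification $\mathbb{E}^{(-5)}\cong\mathcal{E}_2^{\vee}\otimes\det\mathcal{V}$ and then deduce $\lambda^{(+1)}$ from $\lambda=\lambda^{(+1)}+\lambda^{(-5)}$, whereas the paper works with the rank-$40$ identification $\mathbb{E}^{(+1)}\cong \mathcal{V}\otimes f_*(\omega_f\otimes\mathcal{L}^{\vee})$ first; and (ii) you re-derive the relation $\mathfrak{A}=27\,c_1(\mathcal{V})$, which is already recorded as Proposition~\ref{basepoint}. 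Your flagged ``delicate point'' about the multiplicity of the correction along $\mathfrak{N}$ is apt: the paper is equally terse there, simply asserting that the quotient of the Petri injection is supported on $\mathfrak{N}$ (plus higher codimension) and taking the divisorial contribution to be $[\mathfrak{N}]$.
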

\begin{proof} We have that $\lambda^{(+1)}=c_1\Bigl(R^1f_*\bigl(\omega_f\otimes \mathcal{L}^{\vee}\bigr)\otimes \cV \Bigr)-[\mathfrak{N}]\in CH^1(\mathcal{G}_{E_6})$ and the rest is  a consequence of Theorem \ref{incarnations} coupled with Proposition \ref{v2}.
\end{proof}

\begin{proposition}\label{basepoint}
We have that $\mathfrak{A}=27c_1(\cV)\in CH^1(\mathcal{G}_{E_6})$.
\end{proposition}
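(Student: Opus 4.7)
The plan is to exploit the defining property of $\mathcal{G}_{E_6}$, namely that at every point $[C,L]$ the pencil $|L|$ is base point free with $h^0(C,L)=2$, and to upgrade the classical base point free pencil trick to a short exact sequence of vector bundles on the universal curve. Since the dimension $h^0$ is constant, $\mathcal{V}=f_*\mathcal{L}$ is locally free of rank two (and formation commutes with base change), while the evaluation map $f^*\mathcal{V}\to \mathcal{L}$ is fibrewise surjective, hence surjective. Its kernel is forced, by taking determinants, to be the line bundle $f^*\det(\mathcal{V})\otimes \mathcal{L}^{\vee}$, yielding the global base point free pencil trick sequence
\begin{equation*}
0\longrightarrow f^*\det(\mathcal{V})\otimes \mathcal{L}^{\vee}\longrightarrow f^*\mathcal{V}\longrightarrow \mathcal{L}\longrightarrow 0.
\end{equation*}

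From this sequence I would read off the second Chern class. Writing $v:=c_1(\mathcal{V})\in CH^1(\mathcal{G}_{E_6})$ and $\ell:=c_1(\mathcal{L})\in CH^1(\mathcal{C}_{E_6})$, the multiplicativity of the total Chern class gives
\begin{equation*}
c_2(f^*\mathcal{V})=(f^*v-\ell)\cdot \ell= f^*v\cdot \ell-\ell^2.
\end{equation*}
On the other hand, $f^*\mathcal{V}$ is pulled back from $\mathcal{G}_{E_6}$, so $c_2(f^*\mathcal{V})=f^*c_2(\mathcal{V})$. The projection formula, coupled with $f_*(1_{\mathcal{C}_{E_6}})=0$ (as $f$ has positive relative dimension), then gives $f_*(f^*c_2(\mathcal{V}))=c_2(\mathcal{V})\cdot f_*(1)=0$. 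Pushing forward the equality above and using the projection formula once more, together with the relative degree identity $f_*(\ell)=27$, yields
\begin{equation*}
0=f_*\bigl(f^*v\cdot \ell-\ell^2\bigr)=27\, v-\mathfrak{A},
\end{equation*}
which is precisely the desired relation $\mathfrak{A}=27\, c_1(\mathcal{V})$.

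The main point that requires care, rather than a genuine obstacle, is to verify that the base point free pencil trick sequence exists as a sequence of \emph{vector bundles} over the whole of $\mathcal{G}_{E_6}$, including along the boundary divisors described in Section 8. This amounts to checking that $L$ remains base point free and $h^0(C,L)=2$ for the nodal limits appearing as images of $D_{\mathrm{azy}}$, $D_{\mathrm{syz}}$ and $D_{E_6}$ under $\beta$; this was noted in the descriptions preceding the statement, since in each of these three cases the stable model of $L$ still defines a degree $27$ morphism to $\mathbb{P}^1$. As a consistency check the formula is compatible with the change of Poincar\'e bundle rule $\mathfrak{A}'=\mathfrak{A}+2\cdot 27\,\alpha$ and $c_1(\mathcal{V}')=c_1(\mathcal{V})+2\alpha$ for $\mathcal{L}'=\mathcal{L}\otimes f^*\alpha$, so the identity is independent of the choice of $\mathcal{L}$, as it must be.
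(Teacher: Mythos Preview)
Your proof is correct and is essentially the same argument as the paper's. The paper phrases it via Porteous' formula applied to the degeneracy locus of $f^*\cV\to\mathcal{L}$ (which is empty since $L$ is base point free by the very definition of $\mathcal{G}_{E_6}$), obtaining $f_*\bigl(c_2(f^*\cV)-c_1(f^*\cV)\cdot c_1(\mathcal{L})+c_1^2(\mathcal{L})\bigr)=0$; your exact sequence version computes the same identity $c_2(f^*\cV)=(f^*v-\ell)\ell$ by Whitney and pushes it forward in exactly the same way.
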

\begin{proof} Recall that $\mathcal{G}_{E_6}$ has been defined as a locus of pairs $[C,L]$ such that $L$ is a base point free pencil. In particular, the image under $f$ of the codimension $2$  locus in $\mathcal{C}_{E_6}$ where the morphism of vector bundles  $f^*(\cV)\rightarrow \mathcal{L}$ is not surjective is empty, hence by Porteous' formula
$$0=f_*\Bigl(c_2(f^*\cV)-c_1(f^*\cV)\cdot c_1(\mathcal{L})+c_1^2(\mathcal{L})\Bigr)=-27c_1(\cV)+\mathfrak{A}.$$
\end{proof}

Essential in all the ensuing calculations is the following result expressing the divisor $D_{\mathrm{azy}}$ in terms of Hodge eigenbundles and showing that its class is quite positive:

\begin{theorem}\label{azy3} The following relation holds:
$$[D_{\mathrm{azy}}]= 5\lambda+\lambda^{(-5)}-3[D_{E_6}]-\frac{5}{6}[D_{\mathrm{syz}}]+\mathfrak{n}\in CH^1(\mathcal{G}_{E_6}).$$
\end{theorem}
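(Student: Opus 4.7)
The plan is to use Proposition \ref{che} to eliminate $\lambda^{(-5)}$ from the statement. Substituting $\lambda^{(-5)} = -\lambda + \gamma - \mathfrak n$, with $\gamma := \mathfrak B - \tfrac{5}{3}\mathfrak A$, the Petri contribution $\mathfrak n$ cancels and the claim becomes the cleaner identity
$$
\gamma \;=\; -4\lambda + [D_{\mathrm{azy}}] + 3[D_{E_6}] + \tfrac{5}{6}[D_{\mathrm{syz}}] \in CH^1(\mathcal G_{E_6}),
$$
which no longer involves the Hodge eigenbundles. The entire task therefore reduces to a boundary expansion of $\gamma$.

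To compute $\gamma$, I would factor the universal curve $f:\mathcal C_{E_6}\to \mathcal G_{E_6}$ as $f = g\circ \pi$, with $g: \widetilde{\mathcal P}\to \mathcal G_{E_6}$ the universal family of $24$-pointed (orbi-)rational curves and $\pi: \mathcal C_{E_6}\to \widetilde{\mathcal P}$ the universal $27{:}1$ cover. Since $\gamma$ does not depend on the Poincar\'e line bundle by \eqref{gamma}, I pick $M\in \mathrm{Pic}(\widetilde{\mathcal P})$ of relative degree one with $\mathcal L\cong \pi^* M$. Riemann--Hurwitz yields $c_1(\omega_f) = \pi^* c_1(\omega_g) + [R]$, where the ramification divisor satisfies $\pi_* R = 6B$ for $B\subset\widetilde{\mathcal P}$ the total branch locus. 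The projection formula gives
$$
\mathfrak A = 27\, g_*\bigl(c_1(M)^2\bigr), \qquad \mathfrak B = 27\, g_*\bigl(c_1(M)\, c_1(\omega_g)\bigr) + 6\, g_*\bigl(c_1(M)\cdot B\bigr),
$$
so that
$$
\gamma \;=\; g_*\bigl[c_1(M)\cdot\bigl(27\,c_1(\omega_g) - 45\,c_1(M) + 6B\bigr)\bigr].
$$

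The key observation is that the bracketed integrand is a codimension-two cycle on $\widetilde{\mathcal P}$ whose restriction to any smooth $\mathbb P^1$-fiber of $g$ vanishes, because both $c_1(M)|_{\mathbb P^1}$ and $(27 c_1(\omega_g) - 45 c_1(M) + 6B)|_{\mathbb P^1}$ are $0$-cycles on a curve. Equivalently, writing $N:=\omega_g\otimes M^{\otimes 2}$, one has $N|_{\mathbb P^1}\cong \mathcal O$, and one may replace $c_1(\omega_g)$ by $c_1(N)-2c_1(M)$ in the formula above. Consequently $\gamma$ is supported on the boundary divisors of $\mathcal G_{E_6}$ lying over $\partial\widetilde{\mathcal M}_{0,24}$: these are the components of $D_0$, together with $D_{\mathrm{azy}}$, $D_{\mathrm{syz}}$, and the higher-order components $D_{i:\mu}$ with $i\ge 3$. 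Since the Hodge class $\lambda$ on $\mathcal G_{E_6}$ is itself entirely boundary-supported by (the $\mathcal G_{E_6}$-analogue of) Theorem \ref{lam}, both sides of the reduced identity live in the span of boundary divisors, and it suffices to match coefficients on each one.

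The main obstacle is the detailed coefficient matching. I would handle it by test-curve arguments: for each relevant boundary divisor $D$, choose a one-parameter family in $\mathcal G_{E_6}$ transverse to $D$ at a generic point, describe the limiting admissible cover and the line bundle $M$ explicitly, and compute the local contribution to $g_*[c_1(M)\cdot c_1(N)]$. At a general point of $D_{E_6}$, the $6$-nodal stabilization of the limiting source (one node per two-cycle of the common reflection $w_{23}=w_{24}$) accounts for the coefficient $3$. At $D_{\mathrm{syz}}$, the generic $\mathbb Z_2$-automorphism of the admissible cover (which is washed out when passing to $\mathcal G_{E_6}$, cf.\ Remark \ref{aut1}) is responsible for the factor $\tfrac{5}{6}$. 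At $D_{\mathrm{azy}}$, the partition $(3^6,1^9)$ of the monodromy over the colliding node governs the local pushforward and produces the coefficient $1$. Finally, the coefficients at the remaining boundary components (components of $D_0$ with sublattice $L\neq E_6$, and $D_{i:\mu}$ with $i\ge 3$) must match those of $-4\lambda$; this is a bookkeeping check using Theorem \ref{lam}, and what makes it delicate is keeping straight the differing orbifold-weight conventions between $\hh$, $\hhu$, and $\mathcal G_{E_6}$, together with the factors of $\mathrm{lcm}(\mu)$ appearing in $\mathfrak b^*B_i$.
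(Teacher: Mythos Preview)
Your reduction via Proposition~\ref{che} to the identity $\gamma = -4\lambda + [D_{\mathrm{azy}}] + 3[D_{E_6}] + \tfrac{5}{6}[D_{\mathrm{syz}}]$ is correct, and the push-forward expression for $\gamma$ through the universal target curve is derived correctly. But from that point on the proposal is only an outline: the test-curve computations that would pin down the boundary coefficients are never carried out, and the one-line heuristics you offer (``the $6$-nodal stabilization accounts for $3$'', ``the $\mathbb Z_2$-automorphism is responsible for $\tfrac{5}{6}$'') are not calculations but post-hoc labels attached to numbers you already know. Nothing in them forces those specific values. Separately, your argument that $\gamma$ is boundary-supported is confused: that $c_1(M)$ and its partner restrict to $0$-cycles on a fibre $\mathbb P^1$ says nothing about the push-forward of their product. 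The correct reason is simply that $CH^1(\Hur)_{\mathbb Q}=0$, since $\mathfrak{br}$ is \'etale over $\mathcal M_{0,24}/S_{24}$, whose rational Picard group vanishes.

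The paper avoids all of this by a different and more direct idea: it recognizes $D_{\mathrm{azy}}$ geometrically as the locus where the pencil $|L|$ acquires a point of triple ramification, hence as the push-forward of the degeneracy locus of the jet-bundle map $\nu_2\colon f^*\mathcal V \to J_f^2(\mathcal L)$. A Porteous computation gives $f_*c_2\bigl(J_f^2(\mathcal L)/f^*\mathcal V\bigr)=6\gamma+2\kappa_1$, and the only local work needed is to determine the excess contribution of $\nu_2$ at the nodes of the source curve: an explicit calculation in coordinates $xy=t$ shows each node of a $D_{\mathrm{syz}}$-cover contributes $3$, while the six nodes of a $D_{E_6}$-cover contribute $6$. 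This yields $6[D_{\mathrm{azy}}]=6\gamma+2\kappa_1-6[D_{E_6}]-3[D_{\mathrm{syz}}]$ directly; substituting Mumford's formula $\kappa_1=12\lambda-6[D_{E_6}]-[D_{\mathrm{syz}}]$ and Proposition~\ref{che} finishes. Your route could reach the same destination, but only after doing the actual intersection numbers you have deferred.
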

\begin{proof}
The idea is to represent $D_{\mathrm{azy}}$ as the push-forward of the codimension two locus in the universal curve $\mathcal{C}_{E_6}$ of the locus of pairs $[C,L,p]$ such that $h^0(C, L(-3p))\geq 1$. We form the fibre product of the universal curve $\mathcal{C}_{E_6}$ together with its projections:
$$
\begin{CD}
{\mathcal{C}_{E_6}} @<\pi_1<< {\mathcal C_{E_6}\times_{\mathcal{G}_{E_6}}\mathcal{C}_{E_6}} @>\pi_2>> {\mathcal{C}_{E_6}} \\
\end{CD}.
$$
For each $k\geq 1$, we consider the \emph{locally free} jet bundle $J_k(\mathcal{L})$ defined, e.g., in \cite{E96}, as a locally free replacement of the sheaf of principal parts $\mathcal P_{f}^k(\cL):=(\pi_2)_{*}\Bigl(\pi_1^*(\mathcal{L})\otimes \mathcal{I}_{(k+1)\Delta}\Bigr)$ on $\mathcal{C}_{E_6}$.
Note that $\mathcal P_f^k(\mathcal{L})$ is not locally free along the codimension two locus in $\mathcal{C}_{E_6}$ where $f$ is not smooth. To remedy this problem,  we consider the \emph{wronskian} locally free replacements $J_f^k(\cL)$, which are related by the following commutative diagram for each $k\geq 1$:
$$
     \xymatrix{
         0 \ar[r] & \Omega_f^{k}\otimes \cL \ar[r]^{} \ar[d]_{} & \mathcal{P}_f^k(\cL) \ar[d]_{} \ar[r]^{} & \mathcal{P}_f^{k-1}(\cL) \ar[d] \ar[r] & 0 \\
          0 \ar[r] & \omega_f^{\otimes k}\otimes \mathcal{L} \ar[r] &J_f^k(\mathcal{L}) \ar[r]      & J_f^{k-1}(\mathcal{L}) \ar[r] & 0 }
$$
Here $\Omega_f^k$ denotes the $\mathcal O_{\mathcal G_{E_6}}$-module $\mathcal{I}_{k\Delta}/\mathcal{I}_{(k+1)\Delta}$. The first vertical row here is induced by the canonical map $\Omega_f^k\rightarrow \omega_f^{\otimes k}$, relating the sheaf of relative K\"ahler differentials to the relative dualizing sheaf of the family $f$. The sheaves $\mathcal P^k_f(\cL)$ and $J^k_f(\cL)$ differ only along the codimension two singular locus of $f$.
Furthermore, for each integer $k\geq 0$ there is a vector bundle morphism $\nu_k:f^*(\cV)\rightarrow J_f^k(\cL)$, which for points $[C,L,p]\in \mathcal{G}_{E_6}$ such that $p\in C_{\mathrm{reg}}$, is just the evaluation morphism $H^0(C,L)\rightarrow H^0(L_{| (k+1)p})$. We specialize now to the case $k=2$ and consider the codimension two locus $Z\subset \mathcal{C}_{E_6}$ where $\nu_2:f^*(\cV)\rightarrow J_f^2(\mathcal{L})$ is not injective. Then, at least over the locus of smooth curves, $D_{\mathrm{azy}}$ is the set-theoretic image of $Z$. A simple local analysis shows that the morphism $\nu_2$ is simply degenerate for each point $[C,L,p]$, where $p\in C_{\mathrm{sing}}$. Taking into account that a general point of $D_{\mathrm{azy}}$ corresponds to a pencil with \emph{six} triple points aligned over one branch point, and that the stable model of a general element of the divisor $D_{\mathrm{syz}}$ corresponds to a curve with \emph{one} node, whereas that of a general point of $D_{E_6}$ to a curve with \emph{six} nodes and so on, we obtain the formula:
\[
6[D_{\mathrm{azy}}]= f_* c_2\left(\frac{J_f^2(\mathcal{L})}{f^*(\cV)}\right)-6[D_{E_6}]-3[D_{\mathrm{syz}}]\in CH^1(\mathcal{G}_{E_6}).
\]
The fact that $D_{\mathrm{syz}}$ appears with multiplicity $3$ is a result of the following local computation. We choose a family $F:X\rightarrow B$ of curves of genus $46$ over a smooth $1$-dimensional base $B$, such that $X$ is smooth, and there is a point $b_0\in B$ such that $X_b:=F^{-1}(b)$ is smooth for $b\in B\setminus \{b_0\}$, whereas $X_{b_0}$ has a unique node $N\in X$. Assume also that $L\in \mbox{Pic}(X)$ is a line bundle such that $L_b:=L_{|X_b}$ is a pencil with $E_6$-monodromy on $X_b$ for each $b\in B$, and furthermore $[X_{b_0}, L_{b_0}]\in D_{\mathrm{syz}}$. Choose a local parameter $t\in \mathcal{O}_{B, b_0}$ and $x, y\in \mathcal{O}_{X,N}$, such that $xy=t$ represents the local equation of $X$ around the point $N$. Then $\omega_F$ is locally generated by the meromorphic differential $\tau=\frac{dx}{x}=-\frac{dy}{y}$. We choose two sections $s_1, s_2\in H^0(X,L)$, where $s_1$ does not vanish at $N$ and $s_2$ vanishes with order $2$ at $N$ along both branches of $X_{b_0}$. Then we have the relation $s_{2,N}=(x^2+y^2)s_{1,N}$ between the germs of the two sections $s_1$ and $s_2$ at $N$. We compute
$$d(s_2)=2xdx+2ydy=2(x^2-y^2)\tau, \ \ \mbox{ and } \ \ d(x^2-y^2)=2(x^2+y^2)\tau.$$
In local coordinates, the map $H^0\bigl(X_{b_0}, L_{b_0}\bigr) \rightarrow H^0\bigl(X_{b_0}, L_{b_0|3N}\bigr)$ is then given by the $2\times 2$ minors of the following matrix:
$$\begin{pmatrix}
1 & 0& 0\\
x^2+y^2 & x^2-y^2 & x^2+y^2 \\
\end{pmatrix}.
$$
This completes the proof that $[D_{\mathrm{syz}}]$ appears with multiplicity $3$ in the degeneracy locus.

\vskip 4pt

We compute: $c_1(J_f^2(\cL))=3c_1(\cL)+3c_1(\omega_f)$ and $c_2(J_f^2(\cL))=3c_1^2(\cL)+6c_1(\cL)\cdot c_1(\omega_f)+2c_1^2(\omega_f)$,
hence
$$f_* c_2\left(\frac{J_f^2(\mathcal{L})}{f^*(\cV)}\right)=3\mathfrak{A}+6\mathfrak{B}-3(d+2g-2)c_1(\cV)+2\kappa_1=6\gamma+2\kappa_1.$$
Furthermore, $\kappa_1=12\lambda-6[D_{E_6}]-[D_{\mathrm{syz}}]-\cdots$, hence after applying Proposition \ref{che}, we obtain the claimed formula.
\end{proof}

We can also express the divisors $D_{\mathrm{syz}}$ and $D_{\mathrm{azy}}$ in terms of the Hodge eigenclasses.

\begin{proposition}\label{prop:syzazy4}
The following formulas hold in $CH^1(\mathcal{G}_{E_6})$:
$$[D_{\mathrm{azy}}]=\frac{25}{16}\lambda+\frac{51}{16}\lambda^{(-5)}+\frac{3}{4}[D_{E_6}]+\frac{51}{16}\mathfrak{n} \ \mbox { and } \ [D_{\mathrm{syz}}]=\frac{33}{8}\lambda-\frac{21}{8}\lambda^{(-5)}-\frac{9}{2}[D_{E_6}]-\frac{21}{8}\mathfrak{n}.$$
\end{proposition}

\begin{proof} Combine Theorem \ref{azy3} with the expression of the Hodge class $\lambda$ in terms of the boundary divisor classes on $\mathcal{G}_{E_6}$.
\end{proof}

\begin{corollary}\label{syzbound}
We have $[D_{\mathrm{syz}}]\leq \frac{33}{8}\lambda-\frac{21}{8}\lambda^{(-5)}-\frac{9}{2}[D_{E_6}]$.
\end{corollary}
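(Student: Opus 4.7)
The plan is very short: the corollary is an immediate consequence of Proposition \ref{prop:syzazy} together with the effectivity of the class $\mathfrak{n}$. Recall that $\mathfrak{n} = [\mathfrak{N}] \in CH^1(\mathcal{G}_{E_6})$ was defined as the class of the (possibly empty) divisorial locus of pairs $[C,L]\in \mathcal{G}_{E_6}$ where the Petri map $\mu(L)$ fails to be injective. Since $\mathfrak{N}$ is an honest effective divisor (or else the zero cycle), we have $\mathfrak{n}\ge 0$ in the cone of effective divisor classes on $\mathcal{G}_{E_6}$.

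Starting from the exact equality
\[
[D_{\mathrm{syz}}]=\frac{33}{8}\lambda-\frac{21}{8}\lambda^{(-5)}-\frac{9}{2}[D_{E_6}]-\frac{21}{8}\mathfrak{n}
\]
established in Proposition \ref{prop:syzazy}, I subtract the non-negative term $\tfrac{21}{8}\mathfrak{n}$ on the right-hand side and apply the convention $\le$ introduced just before Proposition \ref{scaling}, namely that $D_1\le D_2$ means $D_2-D_1$ is effective. This immediately yields the stated bound
\[
[D_{\mathrm{syz}}]\le \frac{33}{8}\lambda-\frac{21}{8}\lambda^{(-5)}-\frac{9}{2}[D_{E_6}].
\]

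There is essentially no obstacle here: all the real work was done in Theorem \ref{azy3} (the Porteous-type computation on the universal curve, together with the local analysis at the syzygetic nodes showing that $[D_{\mathrm{syz}}]$ appears with multiplicity $3$ in the degeneracy locus) and in the bookkeeping of Proposition \ref{prop:syzazy} that converted $\gamma$ and $\kappa_1$ into Hodge eigenclasses and boundary divisors. The corollary simply records the one-sided consequence obtained by discarding the unknown effective contribution coming from the hypothetical Petri divisor, which is the form in which the estimate will be used in the bigness argument for $K_{\hhu}+E$ in the proof of Theorem \ref{thmmain2}.
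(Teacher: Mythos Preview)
Your proof is correct and matches the paper's approach exactly: the corollary follows immediately from Proposition \ref{prop:syzazy} by dropping the effective term $\tfrac{21}{8}\mathfrak{n}$. The paper itself offers no separate proof for this corollary, treating it as self-evident from the preceding proposition.
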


\vskip 3pt

We are now in a position to determine the class of the ramification divisor of the Prym-Tyurin map in terms of the classes already introduced. Recall that $D_6:=\overline{\cA}_6 \setminus \cA_6$ is the irreducible boundary divisor of the perfect cone compactification of $\cA_6$ and $\lambda_1\in CH^1(\overline{\cA}_6)$ denotes the Hodge class. Note that $K_{\overline{\cA}_6}=7\lambda_1-[D_6]$, see \cite{mumford1983on-the-kodaira-dimension}.

\begin{theorem}\label{ram}
The ramification divisor of the map $PT:\mathcal{G}_{E_6} \dashrightarrow \overline{\cA}_6$ is given by
$$\bigl[\mathrm{Ram}(PT)\bigr]=\frac{73}{32}\lambda-\frac{221}{32}\lambda^{(-5)}-\frac{9}{8} [D_{E_6}]+\frac{3}{32}\mathfrak{n}.$$
\end{theorem}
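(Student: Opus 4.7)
The plan is to apply the standard canonical-class comparison for a dominant rational map of smooth (Deligne--Mumford) stacks of the same dimension. On the open locus $U\subseteq\mathcal{G}_{E_6}$ where $PT$ extends to a morphism to $\widetilde{\cA}_6$ (which by Theorem \ref{extension1} contains the generic points of $D_{E_6}$, $D_{\mathrm{azy}}$ and $D_{\mathrm{syz}}$), one has
\[
[\mathrm{Ram}(PT)] = K_{\mathcal{G}_{E_6}} - PT^*\bigl(K_{\widetilde{\cA}_6}\bigr),
\]
the equality holding modulo divisors contracted by $PT$. So the task splits into computing each side of this difference and then expressing the result in the preferred basis $\{\lambda,\lambda^{(-5)},[D_{E_6}],\mathfrak{n}\}$.

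First I would compute $K_{\mathcal{G}_{E_6}}$ by transporting Theorem \ref{kanonisch} along the birational map $\beta\colon\hhu\dashrightarrow\mathcal{G}_{E_6}$. Under $\beta$ the boundary divisors $D_L$ with $L\subsetneq E_6$ (components of $D_0$) and all divisors $D_{i:\mu}$ with $i\ge 3$ are contracted; the surviving non-contracted boundary divisors are exactly $D_{E_6}$, $D_{\mathrm{azy}}$ and $D_{\mathrm{syz}}$. The only stacky adjustment is for $D_{\mathrm{syz}}$, whose generic $\mathbb{Z}/2$-automorphism is destroyed by $\beta$ (cf.\ Remark \ref{aut1} and the formula for $\lambda$ on $\mathcal G_{E_6}$), so its coefficient is halved. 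Modulo contracted divisors this gives
\[
K_{\mathcal{G}_{E_6}} \;=\; -\tfrac{25}{46}[D_{E_6}] \;+\; \tfrac{19}{46}[D_{\mathrm{syz}}] \;+\; \tfrac{17}{46}[D_{\mathrm{azy}}].
\]

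Next I would compute $PT^*(K_{\widetilde{\cA}_6})$. Mumford's formula reads $K_{\widetilde{\cA}_6}=7\lambda_1-[\widetilde{D}_6]$, and $PT^*\lambda_1=\lambda^{(-5)}$ by construction. By Proposition \ref{prop-toricrank} and the Corollary closing Section \ref{secboundary}, the unique non-contracted boundary divisor of $\mathcal{G}_{E_6}$ whose image lies in $\widetilde{D}_6$ is $D_{E_6}$, so $PT^*[\widetilde{D}_6]=m\,[D_{E_6}]$ for a positive integer $m$. The key local claim --- which is the main obstacle --- is that $m=1$. I would establish this via the toric description of $PT$ near $D_{E_6}$: by Theorem \ref{thm:pt-monodromy}, the degeneration of the Prym-Tyurin variety along a generic point of $D_{E_6}$ has toric rank $1$ and the monodromy on the $1$-dimensional $(-5)$-eigenspace is a \emph{primitive} quadratic form, forcing the pullback multiplicity of the boundary divisor to be $1$. (Equivalently, one checks that a general formal $1$-parameter smoothing transverse to $D_{E_6}$ maps transversely to $\widetilde{D}_6$.)

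Finally, combining the two computations,
\[
[\mathrm{Ram}(PT)] \;=\; \tfrac{21}{46}[D_{E_6}] \;+\; \tfrac{19}{46}[D_{\mathrm{syz}}] \;+\; \tfrac{17}{46}[D_{\mathrm{azy}}] \;-\; 7\lambda^{(-5)},
\]
and I would then substitute the expressions for $[D_{\mathrm{syz}}]$ and $[D_{\mathrm{azy}}]$ from Proposition \ref{prop:syzazy} in terms of $\lambda,\lambda^{(-5)},[D_{E_6}],\mathfrak{n}$. The verification is a direct coefficient check: e.g.\ the $\lambda$-coefficient comes to $\frac{19}{46}\cdot\frac{33}{8}+\frac{17}{46}\cdot\frac{25}{16}=\frac{1679}{736}=\frac{73}{32}$, and similarly the $\lambda^{(-5)}$, $[D_{E_6}]$, and $\mathfrak{n}$ coefficients match $-\tfrac{221}{32}$, $-\tfrac{9}{8}$, and $\tfrac{3}{32}$ respectively. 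The only substantive input beyond Theorem \ref{kanonisch} and Proposition \ref{prop:syzazy} is the multiplicity computation $m=1$; everything else is bookkeeping.
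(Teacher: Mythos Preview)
Your proposal is correct and follows essentially the same approach as the paper: both apply the Hurwitz formula $[\mathrm{Ram}(PT)]=K_{\mathcal{G}_{E_6}}-7\lambda^{(-5)}+[D_{E_6}]$, with $PT^*[\widetilde{D}_6]=[D_{E_6}]$, and then rewrite everything in the basis $\{\lambda,\lambda^{(-5)},[D_{E_6}],\mathfrak{n}\}$. The only cosmetic difference is that the paper first converts $K_{\mathcal{G}_{E_6}}$ directly to this basis via Theorem~\ref{azy3} (obtaining $K_{\mathcal{G}_{E_6}}=\frac{73}{32}\lambda+\frac{3}{32}\lambda^{(-5)}-\frac{17}{8}[D_{E_6}]+\frac{3}{32}\mathfrak{n}$) and then subtracts, whereas you substitute for $[D_{\mathrm{syz}}]$ and $[D_{\mathrm{azy}}]$ using Proposition~\ref{prop:syzazy} after forming the difference; since Proposition~\ref{prop:syzazy} is itself a consequence of Theorem~\ref{azy3}, the two routes are equivalent.
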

\begin{proof} The general point of $D_{E_6}$ corresponds to a semi-abelian variety of torus rank $1$, whereas for all the other boundary divisors in $\mathfrak{br}^*(\widetilde{B}_2)$ the corresponding torus rank is zero. Moreover $PT^*(D_6)=D_{E_6}$ (recall that in this proof, the map $PT$ is defined on the partial compactification $\mathcal{G}_{E_6}$, the formula above does not hold on $\hhu$). Via the Hurwitz formula, we obtain that
$$\bigl[\mathrm{Ram}(PT)\bigr]=K_{\mathcal{G}_{E_6}}-PT^*\bigl(7\lambda_1-[D_6]\bigr)=K_{\mathcal{G}_{E_6}}-7\lambda^{(-5)}+[D_{E_6}].$$
Recall that the canonical class $K_{\hhu}$ has been expressed in terms of boundary divisors on $\hhu$. Using Theorem \ref{azy3}, we can pass to a new basis in $CH^1(\mathcal{G}_{E_6})$ involving the Hodge eigenbundles and one boundary divisor, namely $D_{E_6}$. After simple manipulations we obtain
\begin{equation}\label{cang}
K_{\mathcal{G}_{E_6}}=\frac{73}{32}\lambda+\frac{3}{32}\lambda^{(-5)}- \frac{17}{8}[D_{E_6}]+\frac{3}{32}\mathfrak{n},
\end{equation}
which then leads to the claimed formula.
\end{proof}

We now complete the proof of Theorem \ref{thmmain2}. In what follows, we revert to the notation of the introduction and $PT:\hhu\ratmap \overline{\cA}_6$ denotes the
extended Prym-Tyurin map.

\begin{theorem}
The canonical class of the partial compactification $\mathcal{G}_{E_6}$ of $\Hur$ is big. It follows that there exists a divisor $E$ on $\hhu$ with $PT_*(E)=0$, such that
$K_{\hhu}+E$ is big.
\end{theorem}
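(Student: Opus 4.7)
The plan is to combine three ingredients: the bigness of the Hodge eigenclass $\lambda^{(-5)}$, the canonical formula \eqref{cang}, and the identity for $[D_{\mathrm{syz}}]$ from Proposition \ref{prop:syzazy}, and then to transfer the conclusion from $\mathcal{G}_{E_6}$ to $\hhu$. The main obstacle throughout is the large negative contribution $-\tfrac{17}{8}[D_{E_6}]$ appearing in \eqref{cang}, and the key device is that Proposition \ref{prop:syzazy} provides an equality (not merely an inequality) allowing its clean elimination.

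First I claim that $\lambda^{(-5)} \in CH^1(\mathcal{G}_{E_6})$ is big. By Theorem \ref{thmmain1}, the morphism $PT:\Hur_{\mathrm{tor.rk}\leq 1}\to \widetilde{\cA}_6$ of Theorem \ref{extension1} is generically finite, and $\lambda^{(-5)} = PT^*(\lambda_1)$. Since $\lambda_1$ is the pullback of an ample class on the Satake compactification $\overline{\cA}_6^{\mathrm{Sat}}$, it is big on $\widetilde{\cA}_6$, and therefore its pullback under the generically finite $PT$ is big on $\mathcal{G}_{E_6}$.

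Next, solving the identity for $[D_{\mathrm{syz}}]$ in Proposition \ref{prop:syzazy} yields
\[
[D_{E_6}] = \tfrac{11}{12}\lambda - \tfrac{7}{12}\lambda^{(-5)} - \tfrac{2}{9}[D_{\mathrm{syz}}] - \tfrac{7}{12}\mathfrak{n}.
\]
Substituting into \eqref{cang} and collecting terms produces the clean identity
\[
K_{\mathcal{G}_{E_6}} = \tfrac{1}{3}\lambda + \tfrac{4}{3}\lambda^{(-5)} + \tfrac{17}{36}[D_{\mathrm{syz}}] + \tfrac{4}{3}\mathfrak{n}.
\]
Every coefficient on the right-hand side is positive, and each class is either nef ($\lambda$ and $\lambda^{(-5)}$ by Koll\'ar's semipositivity theorem for the Hodge bundle) or effective ($[D_{\mathrm{syz}}]$ and $\mathfrak{n}$). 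Combined with the bigness of $\lambda^{(-5)}$, this shows that $K_{\mathcal{G}_{E_6}}$ is big, establishing the first assertion.

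For the second assertion, by Section \ref{secboundary} the birational map $\beta:\hhu \dashrightarrow \mathcal{G}_{E_6}$ is an isomorphism outside the boundary divisors of $\hhu$ contracted by $PT$. Passing to a common smooth resolution with morphisms $\phi:Z\to \hhu$ and $\psi:Z\to \mathcal{G}_{E_6}$, the difference $\phi^*(K_{\hhu}) - \psi^*(K_{\mathcal{G}_{E_6}})$ is supported on these $PT$-contracted divisors. Choosing $E$ to be a sufficiently positive effective combination of such divisors on $\hhu$, we obtain $\phi^*(K_{\hhu}+E) \geq \psi^*(K_{\mathcal{G}_{E_6}})$; since the right-hand side is big by the first part, so is $\phi^*(K_{\hhu}+E)$, and hence $K_{\hhu}+E$ itself. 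By construction $PT_*(E)=0$, completing the proof.
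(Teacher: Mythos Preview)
Your proof is correct and follows the same overall strategy as the paper---combine the identities of Proposition~\ref{prop:syzazy} with the canonical class formula and the bigness of $\lambda^{(-5)}$---but your elimination is slightly cleaner. The paper substitutes the expression for $[D_{\mathrm{azy}}]$ into the boundary formula $K_{\mathcal{G}_{E_6}}=-\frac{25}{46}[D_{E_6}]+\frac{19}{46}[D_{\mathrm{syz}}]+\frac{17}{46}[D_{\mathrm{azy}}]$, obtaining
\[
K_{\mathcal{G}_{E_6}} \ge \tfrac{867}{736}\lambda^{(-5)}+\tfrac{425}{736}\lambda-\tfrac{49}{184}[D_{E_6}],
\]
which still carries a negative $[D_{E_6}]$ coefficient; this is then handled via Proposition~\ref{dzero} and the $\lambda$-formula to compare the ratio of the $\lambda$ and $[D_{E_6}]$ coefficients. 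You instead solve the $[D_{\mathrm{syz}}]$-identity of Proposition~\ref{prop:syzazy} for $[D_{E_6}]$ and plug into \eqref{cang}, landing directly on
\[
K_{\mathcal{G}_{E_6}} = \tfrac{1}{3}\lambda + \tfrac{4}{3}\lambda^{(-5)} + \tfrac{17}{36}[D_{\mathrm{syz}}] + \tfrac{4}{3}\mathfrak{n},
\]
with every coefficient positive. This bypasses the extra estimate entirely. Your transfer of the conclusion from $\mathcal{G}_{E_6}$ to $\hhu$ is the same as the paper's, just spelled out in a bit more detail.
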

\begin{proof}
The varieties $\mathcal{G}_{E_6}$ and $\hhu$ differ in codimension one only along boundary divisors that are collapsed under the Prym-Tyurin map. Showing that $K_{\mathcal{G}_{E_6}}$ is big implies therefore the second half of the claim, and thus Theorem \ref{thmmain2}. Using Theorem \ref{kanonisch} (note the caveat about the already mentioned factor $\frac{1}{2}$ in front of the coefficient of $[D_{\mathrm{syz}}]$ when passing from $\hhu$ to $\mathcal{G}_{E_6}$), coupled with Proposition  \ref{prop:syzazy4}, we write:
$$K_{\mathcal{G}_{E_6}}=-\frac{25}{46}[D_{E_6}]+\frac{19}{46}[D_{\mathrm{syz}}]+\frac{17}{46}[D_{\mathrm{azy}}]\geq -\frac{25}{46}[D_{E_6}]+\frac{19}{46}[D_{\mathrm{syz}}]+\frac{17}{46}\Bigl(\frac{25}{16}\lambda+\frac{51}{16}\lambda^{(-5)}+\frac{3}{4}[D_{E_6}]\Bigr)$$
$$=\frac{867}{736}\lambda^{(-5)}+\frac{425}{736}\Bigl(\lambda-\frac{196}{425}[D_{E_6}]\Bigr).$$
Putting Proposition \ref{dzero} together with the fact that $\lambda^{(-5)}$ is big, the conclusion follows by comparing the ratio of the $\lambda$ and $[D_{E_6}]$-coefficients of the last expression. Indeed, it is shown in  (\ref{moriwaki3}) by pulling-back the Moriwaki class from $\mm_{46}$ that the $\mathbb Q$-class $\lambda-\frac{6}{8+\frac{4}{g}}[D_{E_6}]=\lambda-\frac{23}{31}[D_{E_6}]$ is effective on $\mathcal{G}_{E_6}$. It follows that $\lambda-\frac{196}{425}[D_{E_6}]$ is then also an effective class, hence $K_{\mathcal{G}_{E_6}}$ is big.
\end{proof}

\section{The ramification divisor of the Prym-Tyurin map}\label{sec:ramif}

The aim of this section is to describe the differential of the Prym-Tyurin map $PT$ and prove Theorem \ref{rampt}. In this section, the tangent spaces we consider are those of the corresponding moduli stacks. As in the previous section, we fix a smooth $E_6$-cover
$\pi:C\rightarrow \mathbb P^1$ with branch divisor $B:=p_1+\cdots+p_{24}$ and denote $L:=\pi^*(\cO_{\bP^1}(1))$.

Via the \'etale map $\mathfrak{br}:\Hur \rightarrow \cM_{0,24}/S_{24}$, we identify the  cotangent space $T^{\vee}_{[C,\pi]}(\Hur)$ with $H^0(\bP^1, \omega_{\bP^1}^{\otimes 2}(B))$.  The cotangent space $T^{\vee}_{[P(C,f)]}(\cA_6)$ is identified with
$\Sym^2 H^0(C,\omega_C^{\otimes 2})^{(-5)}$.

\begin{definition}
Let $R$ and $A$ be the ramification and antiramification
divisors of $\pi$, that is, the effective divisors of $C$ defined by the formulas
\begin{displaymath}
    \pi^*(B) = 2R + A, \quad
    K_C = \pi^*(K_{\bP^1})  + R, \quad
    2K_C + A = \pi^*(2K_{\bP^1} + B).
  \end{displaymath}
\end{definition}

\begin{definition}
Let $\mbox{tr}: \pi_* \cO_C(-A) \to \cO_{\bP^1}$ be the trace map on regular
functions. For an open affine subset $U\subset \bP^1$, a regular
function $\varphi\in \Gamma(U, \cO_C(-A))$, and  a point $y\in U$, one has
$$\mbox{tr}(\varphi)(y) = \sum_{x\in f\inv(y)} \varphi(x),$$ counted with
multiplicities. Note that $\mbox{tr}$ is surjective.
Let $\pi_* \cO_C(2K_C) \to \cO_{\bP^1}\bigl(2K_{\bP^1}+ B\bigr)$
be the induced trace map at the level of quadratic differentials. We denote
the corresponding map on global sections by $\mbox{Tr}: H^0(C, \omega_C^{\otimes 2}) \to
H^0(\bP^1, \omega_{\bP^1}^{\otimes 2}(B))$.
\end{definition}

\begin{theorem}
The codifferential $(dPT)^{\vee}_{[C,\pi] }: T_{[PT(C, \pi)]}^{\vee}\bigl(\cA_6\bigr)\rightarrow T^{\vee}_{[C,\pi]}\bigl(\Hur \bigr)$ is given by the following
composition of maps:
\begin{displaymath}
\Sym^2 H^0(\omega_C)^{(-5)}
\into \Sym^2 H^0(\omega_C)
\xrightarrow{\mathrm{mul}} H^0(\omega_C^{\otimes 2})
\xrightarrow{\mathrm{Tr}} H^0(\omega_{\bP^1}^{\otimes 2}(p_1+\cdots+p_{24})).
\end{displaymath}
\end{theorem}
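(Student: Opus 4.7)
The plan is to decompose the Prym--Tyurin map as a chain of natural morphisms of moduli stacks and to identify the codifferential piece by piece. Concretely, $PT$ factors as
\[
\Hur \xrightarrow{\varphi} \cM_{46} \xrightarrow{\tau} \cA_{46} \dashrightarrow \cA_6,
\]
where $\tau$ is the Torelli map and the last arrow extracts the Prym--Tyurin component. The PT-selector is well-defined on the image of $\varphi$ because the Kanev correspondence $D$ varies in a family over $\Hur$, so one has a universal sub-abelian scheme $\mathcal{P}\subset \mathcal{J}$ of the relative Jacobian; its classifying morphism is $PT$.

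First I would compute the codifferentials of the last two maps. The codifferential of Torelli at $[C]\in\cM_{46}$ is the classical multiplication map $\mathrm{mul}: \Sym^2 H^0(K_C)\to H^0(K_C^{\otimes 2})$, obtained by dualizing the cup product $H^1(C, T_C)\to \mathrm{Hom}(H^0(K_C), H^1(\mathcal{O}_C))$ via Serre duality. The codifferential of the PT-selector $\cA_{46}\dashrightarrow\cA_6$ at $(JC, D)$ is the natural inclusion $\Sym^2 H^0(K_C)^{(-5)}\hookrightarrow \Sym^2 H^0(K_C)$: indeed, the cotangent space at the origin of $PT(C,D)=\mathrm{Im}(D-1)$ is precisely $H^0(K_C)^{(-5)}$, the $(-5)$-eigenspace of $D$ acting on $H^0(K_C)$, and hence the symmetric square inclusion describes the induced codifferential at the level of $\cA_g$.

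Second, the key step is to identify the codifferential of $\varphi:\Hur\to\cM_{46}$ with the trace map. Since $\mathfrak{br}:\Hur\to\cM_{0,24}/S_{24}$ is \'etale, we have $T_{[C,\pi]}\Hur\cong H^1(\bP^1, T_{\bP^1}(-B))$. The differential of $\varphi$ is the Kodaira--Spencer map of the universal family of covers. Using $T_C=\pi^*T_{\bP^1}(-R)$ and $\pi^*\mathcal{O}(-B)=\mathcal{O}_C(-2R-A)$, one obtains $\pi^*T_{\bP^1}(-B)=T_C(-R-A)$, and the Kodaira--Spencer map factors as
\[
H^1(\bP^1, T_{\bP^1}(-B)) \xrightarrow{\pi^*} H^1(C, T_C(-R-A)) \hookrightarrow H^1(C, T_C).
\]
Dualizing by Serre duality on both $C$ and $\bP^1$, and rewriting $K_C^{\otimes 2}=\pi^*\bigl(K_{\bP^1}^{\otimes 2}(B)\bigr)\otimes\mathcal{O}_C(-A)$ together with the projection formula
\[
\pi_*K_C^{\otimes 2} = K_{\bP^1}^{\otimes 2}(B)\otimes \pi_*\mathcal{O}_C(-A),
\]
the codifferential dual to $\pi^*$ becomes exactly the trace map $\pi_*\mathcal{O}_C(-A)\to\mathcal{O}_{\bP^1}$ tensored with the identity on $K_{\bP^1}^{\otimes 2}(B)$; that is, the map $\mathrm{Tr}$ of the statement.

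The main technical obstacle I expect is in the last step: verifying that the Serre-duality dual of the Kodaira--Spencer pullback is indeed the trace $\mathrm{Tr}$. This is a Grothendieck--Serre duality compatibility for the finite morphism $\pi$: the trace furnishes an isomorphism $\pi_!K_C\cong K_{\bP^1}(B)$ (reflecting the Riemann--Hurwitz relation $K_C=\pi^*K_{\bP^1}(R)$ combined with $\pi^*(B)=2R+A$), and under the $\pi^*\dashv\pi_*$ adjunction on cohomology this identifies the dual of the pullback on $H^1$ with the trace on $H^0$ of the dual sheaves. Once the three codifferentials are assembled, the composite is precisely the stated map, proving the theorem.
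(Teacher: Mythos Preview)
Your proposal is correct and follows essentially the same approach as the paper: factor $PT$ as $\Hur\to\cM_{46}\to\cA_{46}\dashrightarrow\cA_6$ and identify each codifferential separately as the trace, the multiplication map, and the symmetric-square inclusion of the $(-5)$-eigenspace. In fact you supply more detail than the paper does for the identification of the codifferential of $\Hur\to\cM_{46}$ with $\mathrm{Tr}$ (via the Riemann--Hurwitz/projection-formula computation and Grothendieck--Serre duality for the finite map $\pi$), which the paper's proof simply asserts.
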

\begin{proof}
The second map is the codifferential of the Torelli
map $\cM_{46}\to \cA_{46}$.
The first map is the codifferential of the map from the moduli
space of ppav of dimension $46$ together with an endomorphism $D$
having eigenvalues $(+1)$ and $(-5)$ (with eigenspaces of dimensions $20$ and $6$ respectively) to $\cA_6$.
The third map is the codifferential of the map $\Hur\to \cM_{46}$.
\end{proof}


\begin{num} We now analyze the differential $d PT$ at a point $[C, \pi]\in \Hur$ in detail.  For each of the $24$
branch points $p_i\in \bP^1$, let $\{r_{ij}\}_{j=1}^6\subset C$ be the
ramification points lying over $p_i$. The formal neighborhoods of the
points $r_{ij}$ are naturally identified, so that we can choose a
single local parameter $x$ and write any quadratic differential
$\gamma\in H^0(C, \omega_C^{\otimes 2})$ as
\begin{displaymath}
  \gamma = \varphi_{ij}(x) \cdot (dx)^{\otimes 2} \ \
  \text{near } r_{ij}\in C.
\end{displaymath}
Choose a local parameter $y$ at the point $p_i$, so that $\pi$ is given locally by the map $y=x^2$. We
can use the same local parameter at the remaining $15$ antiramification points $\{q_{ik}\}_{k=1}^{15}$
over $p_i$ at which $\pi$ is unramified, and write $\gamma =
\psi_{ik}(y)\cdot (dy)^{\otimes 2}$ near $q_{ik}\in C$, for $k=1, \ldots, 15$.
\end{num}

\begin{lemma}\label{lem:ker-of-trace}
  The kernel of the trace map $\mathrm{Tr}: H^0(C, \omega_C^{\otimes 2}) \to H^0\bigl(\bP^1, \omega_{\bP^1}^{\otimes 2}(B)\bigr)$ consists of quadratic differentials $\gamma$ such that
$$\sum_{j=1}^6 \varphi_{ij}(r_{ij}) =0, \ \ \mbox{ for } \ i=1, \ldots, 24.$$
\end{lemma}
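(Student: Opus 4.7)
The plan is to compute $\mathrm{Tr}(\gamma)$ in local coordinates around each branch point $p_i$ and to read off the pole behaviour, then use the vanishing of holomorphic quadratic differentials on $\bP^1$ to conclude.

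First, I would fix a branch point $p_i$ and a local coordinate $y$ near $p_i$ so that the $6$ ramification points $r_{ij}$ and $15$ antiramification points $q_{ik}$ above $p_i$ carry the local data described before the lemma. Near $r_{ij}$, where $\pi$ looks like $y=x^2$, from $dx=\tfrac{dy}{2x}$ one has $(dx)^{\otimes 2}=\tfrac{(dy)^{\otimes 2}}{4y}$. Writing $\varphi_{ij}(x)=\sum_{n\ge 0} a^{(ij)}_n x^n$, the two sheets of $\pi$ near $r_{ij}$ are $x=\pm\sqrt{y}$, so that summing the values of $\varphi_{ij}(x)(dx)^{\otimes 2}$ over the two sheets yields the contribution
\[
\frac{\varphi_{ij}(\sqrt{y})+\varphi_{ij}(-\sqrt{y})}{4y}\,(dy)^{\otimes 2}
=\Bigl(\frac{a^{(ij)}_0}{2y}+\frac{a^{(ij)}_2}{2}+\frac{a^{(ij)}_4}{2}y+\cdots\Bigr)(dy)^{\otimes 2}
\]
to $\mathrm{Tr}(\gamma)$. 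Near each antiramification point $q_{ik}$ the map $\pi$ is \'etale, so its contribution $\psi_{ik}(y)(dy)^{\otimes 2}$ is holomorphic in $y$. Adding these up, $\mathrm{Tr}(\gamma)$ has at worst a simple pole at $p_i$ with polar part
\[
\frac{1}{2y}\sum_{j=1}^{6}\varphi_{ij}(r_{ij})\,(dy)^{\otimes 2},
\]
consistent with $\mathrm{Tr}(\gamma)\in H^0(\bP^1,K_{\bP^1}^{\otimes 2}(B))$.

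Next I would observe that $H^0(\bP^1,K_{\bP^1}^{\otimes 2})=H^0(\bP^1,\mathcal O(-4))=0$, so that any section of $K_{\bP^1}^{\otimes 2}(B)$ which is holomorphic at every $p_i$ is identically zero. Therefore $\mathrm{Tr}(\gamma)=0$ if and only if $\mathrm{Tr}(\gamma)$ has no pole at any $p_i$, i.e.\ if and only if the $24$ coefficients $\sum_{j=1}^{6}\varphi_{ij}(r_{ij})$ vanish. This is exactly the asserted description of $\ker(\mathrm{Tr})$.

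The only real step is the explicit local expansion of the trace at a simple ramification point; the rest is the trivial remark that $\bP^1$ carries no nonzero holomorphic quadratic differentials. As a sanity check on dimensions, note that $\dim H^0(\bP^1,K_{\bP^1}^{\otimes 2}(B))=21$, while the $24$ linear functionals $\gamma\mapsto\sum_{j}\varphi_{ij}(r_{ij})$ factor through the surjection onto the residues of $\mathrm{Tr}(\gamma)$, and the three relations among $24$ residues of a section of $\mathcal O_{\bP^1}(20)$ (obtained by checking regularity at $\infty$) account for the discrepancy $24-21=3$.
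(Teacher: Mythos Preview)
Your proof is correct and follows essentially the same approach as the paper: compute $\mathrm{Tr}(\gamma)$ locally near each branch point using $y=x^2$ to identify the polar part $\frac{1}{2y}\sum_{j}\varphi_{ij}(r_{ij})\,(dy)^{\otimes 2}$, then invoke $H^0(\bP^1,K_{\bP^1}^{\otimes 2})=0$ for the converse. The paper's argument is slightly terser, and your final dimension sanity check is extra but harmless.
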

\begin{proof}
From $y=x^2$, we get $dy= 2x\, dx$ and $(dx)^{\otimes 2} = (dy)^{\otimes 2} / 4y$.
We have that
\begin{displaymath}
\mbox{Tr}(\gamma) = \left( \frac1{4y}
    \sum_{j=1}^6 \big( \varphi_{ij} (x) + \varphi_{ij} (-x)\big)
    + \sum_{k=1}^{15} \psi_{ik}(y)
    \right) \cdot (dy)^{\otimes 2}
    \quad \text{near } p_i.
\end{displaymath}
Suppose $\mbox{Tr}(\gamma)=0$. Then the leading coefficient
$\frac12\sum_{j=1}^6 \varphi_{ij}(r_{ij})$ is zero. Conversely,
assume that the $24$ expressions are zero. Then $\mbox{Tr}(\gamma) \in
H^0(\bP^1, \omega_{\bP^1}^{\otimes 2})=0$.
\end{proof}

\vskip 3pt

In order to understand the condition in Lemma~\ref{lem:ker-of-trace},
we recall the action of the endomorphism  $D\colon H^0(C, \omega_C)\to
H^0(C, \omega_C)$ induced by the Kanev correspondence in local coordinates at the points $p\in C$ and
$q\in \pi\inv(p)$, see also  Theorem \ref{incarnations}.

\begin{num}{\bf{The unramified case.}} Suppose that $\pi$ is unramified at
$p$, thus $\Gamma:=\pi\inv(p)=\sum_{s=1}^{27} q_s$. Since $\pi$ is
\'etale, we can use the same local parameter $y$ at $p$, as well as at each
$q_s\in C$. Let $\alpha\in H^0(C, \omega_C)$. In a formal
neighborhood of each point $q_s$, we write locally $\alpha=
\alpha_s(y)dy$.

Assume $p=[0:1]\in\bP^1$. One has $\sum_{s=1}^{27}\mathrm{Res}_{q_s}(\alpha\cdot
\frac{x_0}{x_1})=0$, so $\sum_{s=1}^{27} \alpha_s(q_s)=0$. The
action of the correspondence on $(\alpha_s)$ is described by an
endomorphism of
$\hat\cO^{26} = \left\{ \sum_{s=1}^{27} \alpha_s=0 \right\} \subset \hat\cO^{27}$, where
$\hat\cO=\hat{\cO}_{\mathbb P^1, p}$.
This endomorphism is given by the same integral $(26\times 26)$-matrix
as the action of $D$ on $H^0(\cO_{\Gamma}(\Gamma))/H^0(C,L)$, as in the proof of Theorem \ref{incarnations}. Thus, $D$ has two eigenvalues $(+1)$ and $(-5)$ with the
eigenspaces of dimensions $20$ and $6$ respectively. Choose a basis $\{v_m\}_{m=1}^6$
of the $(-5)$-eigenspace in $\bZ^{26}$. Then an element
$\alpha^{(-5)}\in H^0(C, \omega_C)^{(-5)}$ can be locally written uniquely as
\begin{displaymath}
  \alpha^{(-5)} = \sum_{m=1}^6 \delta_m  v_m \in \hat\cO^{27},
  \quad \text{for some } \delta_m\in \hat\cO.
\end{displaymath}
\end{num}

\begin{num}{\bf{The ramified case.}} Suppose $\pi$ is branched at $p$ and  $\pi \inv(p)$ consists of ramification
points $r_1, \ldots, r_6$ and $15$ antiramification points $q_k$. The points $r_i$
correspond to the ordered pairs $(a_i, b_i)$ of sheets coming
together. On the sheets, the correspondence is defined by
\begin{displaymath}
a_i \mapsto \sum_{j\neq i} (b_j+ c_{ij})  \ \mbox{ and }
\quad
b_i \mapsto \sum_{j\neq i}(a_j+c_{ij}),  \qquad \mbox{ for } i=1, \ldots, 6.
\end{displaymath}
As above, we use a local coordinate $y$ for $p\in \bP^1$ and the $15$ points $q_k\in C$, and a local coordinate $x$ for the ramification points $r_i$, with
$y=x^2$. Thus, we write locally
\begin{displaymath}
  \alpha = \alpha_{r_i}(x) dx \quad\text{near } r_i, \  \mbox{ and }\
  \alpha = \alpha_{q_k}(y) dy \quad\text{near } q_k.
\end{displaymath}
The local involution $x\mapsto -x$ splits the
differential form into the odd and even parts:
\begin{eqnarray*}
  \alpha_{r_i}(x)dx =& \alpha_{r_i}^{\mathrm{odd}}(x^2)dx  +
  \alpha_{r_i}^{\mathrm{ev}}(x^2)xdx, \\
  \alpha_{r_i}(-x)d(-x) =& -\alpha_{r_i}^{\mathrm{odd}}(x^2)dx  + \alpha_{r_i}^{\mathrm{ev}}(x^2)xdx .
\end{eqnarray*}
The even part can be written in terms of $y$ as
$\frac12 \alpha_{r_i}^{\mathrm{ev}}(y) dy$. The odd parts have no
such interpretation and we claim that they do not
mix with the $15$ sheets on which $\pi$ is \'etale:
\end{num}

\begin{lemma}\label{lem:action-ram-case}
The correspondence $D$ induces an endomorphism on the $6$-dimensional
$\hat\cO_y$-module of odd parts $\langle\alpha_{r_i}^{\mathrm{odd}}dx\rangle$.
It is given by a matrix which has $0$ on the main diagonal and $(-1)$
elsewhere. The $(-5)$ eigenspace is $1$-dimensional with generator
$(1,\dotsc,1)$, and, for every element $\alpha\in H^0(C, \omega_C)^{(-5)}$, one has
\begin{displaymath}
(\alpha_{r_1}^{\mathrm{odd}}, \dotsc, \alpha_{r_6}^{\mathrm{odd}}) =
(\phi, \dotsc, \phi),
\quad \text{for some } \phi=\phi(y)
\text{ independent of } i=1, \ldots, 6.
\end{displaymath}
\end{lemma}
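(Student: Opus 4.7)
The plan is to compute in local coordinates how the Kanev correspondence moves the six odd parts among themselves at the branch point $p$, and to verify that the fifteen antiramification sheets contribute nothing to that interaction. First I would fix a uniformizer $y$ at $p\in\bP^1$ and, on the two branches of $C$ meeting at $r_i$, pick the local sections $x=\sqrt{y}$ and $x=-\sqrt{y}$ for $a_i$ and $b_i$ respectively. Writing $\alpha$ as $g_s(y)\,dy$ on each sheet $s$, the decomposition $\alpha_{r_i}(x)\,dx=\alpha_{r_i}^{\mathrm{odd}}(x^2)\,dx+\alpha_{r_i}^{\mathrm{ev}}(x^2)\,x\,dx$ gives
\[
g_{a_i}(y)=\frac{\alpha_{r_i}^{\mathrm{odd}}(y)}{2\sqrt{y}}+\frac{\alpha_{r_i}^{\mathrm{ev}}(y)}{2},\qquad g_{b_i}(y)=-\frac{\alpha_{r_i}^{\mathrm{odd}}(y)}{2\sqrt{y}}+\frac{\alpha_{r_i}^{\mathrm{ev}}(y)}{2},
\]
so that the six-dimensional $\hat{\cO}_y$-module of odd parts is canonically identified with the space of differences $v_i:=g_{a_i}-g_{b_i}=\alpha_{r_i}^{\mathrm{odd}}(y)/\sqrt{y}$.

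Next I would use the incidence description of $D$ recalled in Remark~\ref{num:corr-computation}, which yields
\[
(D\alpha)_{a_i}=\sum_{j\ne i}\bigl(g_{b_j}+g_{c_{ij}}\bigr),\qquad (D\alpha)_{b_i}=\sum_{j\ne i}\bigl(g_{a_j}+g_{c_{ij}}\bigr).
\]
The crucial observation is that both $a_i$ and $b_i$ meet \emph{the same} five antiramification sheets $c_{ij}$ for $j\ne i$, so those contributions cancel in the difference and we obtain
\[
(D\alpha)_{a_i}-(D\alpha)_{b_i}=-\sum_{j\ne i}\bigl(g_{a_j}-g_{b_j}\bigr).
\]
This simultaneously shows that the module of odd parts is $D$-stable (the fifteen \'etale sheets drop out upon subtracting) and that the matrix of the restriction has zero on the diagonal and $-1$ off the diagonal, i.e.\ equals $M=I-J$ with $J$ the $6\times 6$ all-ones matrix.

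Finally, a one-line spectral analysis of $I-J$ gives eigenvalue $-5$ with one-dimensional eigenspace spanned by $(1,\dotsc,1)$, and eigenvalue $1$ with multiplicity $5$. For any $\alpha\in H^0(C,K_C)^{(-5)}$ the tuple $(\alpha_{r_1}^{\mathrm{odd}},\dotsc,\alpha_{r_6}^{\mathrm{odd}})$ must therefore be a scalar multiple of $(1,\dotsc,1)$, so equals $(\phi,\dotsc,\phi)$ for a single $\phi(y)\in\hat{\cO}_y$, as asserted.

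The point that requires some care is the cancellation of the antiramification contributions in the difference $(D\alpha)_{a_i}-(D\alpha)_{b_i}$. This is not formal: it depends on the specific fact, encoded in the $E_6$ incidence geometry of Remark~\ref{num:corr-computation}, that the five $c_{ij}$ incident to $a_i$ (apart from the $b_j$'s) coincide with the five incident to $b_i$ (apart from the $a_j$'s). Once this symmetry is in hand, the rest is the spectral analysis of $I-J$.
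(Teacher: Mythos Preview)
Your proof is correct and follows essentially the same route as the paper's. Both arguments identify the odd part at $r_i$ with the difference $g_{a_i}-g_{b_i}$, use the incidence fact from Remark~\ref{num:corr-computation} that the set of $c$-lines meeting $a_i$ coincides with the set meeting $b_i$ (namely $\{c_{ij}:j\ne i\}$) so that these contributions cancel in the difference, and then carry out the spectral analysis of the resulting matrix $I-J$. The only presentational difference is that the paper phrases the local computation as a limit of the unramified case (the ramification point $r_i$ being the coalescence of two points on sheets $a_i,b_i$ with local parameters $x$ and $-x$), whereas you work directly with the two branches $x=\pm\sqrt{y}$; this is the same calculation.
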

\begin{proof}
This case is obtained by taking a limit of the unramified case.  We work in the complex-analytic topology.  A ramification
point $r\in C$ is a limit of two points $q_{a_i},q_{b_i}\in C$ on the sheets
$a_i$ and $b_i$ respectively. The local parameters $x,x'$ at $q_{a_i},q_{b_i}$
are identified as $x'=-x$. One has $y=x^2 = (x')^2$ and we look at
  the limit as $x$ tends to  $0$. From $dx'=-dx$ it follows
  \begin{eqnarray*}
    \alpha_{q_{a_i}}(x)dx =&\alpha_{r_i}^{\mathrm{odd}}(x^2)dx + \alpha_{r_i}^{\mathrm{ev}}(x^2)x dx, \\
    \alpha_{q_{b_i}}(x)dx =&  -\alpha_{r_i}^{\mathrm{odd}}(x^2)dx + \alpha_{r_i}^{\mathrm{ev}}(x^2)x dx.
  \end{eqnarray*}
Under the correspondence between the $27$ sheets, the $\alpha^{\mathrm{odd}}$
contributions from $a_i$ and $b_i$ to the $5$ sheets $c_{ij}$  ($j\neq i$) cancel out.
Conversely, the terms $\alpha_{c_{ij}}$ contribute to $\alpha^{\mathrm{ev}}$ but not to $\alpha^{\mathrm{odd}}$ at the point
$r_i$.

It follows that the homomorphism $D$ sends the $\hat\cO^6$ block of
the odd parts $\alpha^{\mathrm{odd}}_{r_i}$ to itself. The matrix of this linear
map is the same as the matrix of an endomorphism of $\bZ^6$
with the basis of vectors $a_i-b_i$, that is,
\begin{math}
a_i-b_i \mapsto -\sum_{j\ne i} (a_j-b_j).
\end{math}
It is easy to see that this linear map has eigenvalues $(+1)$ and
$(-5)$ and that the $(-5)$-eigenspace is one-dimensional and is
generated by the vector $(1,\dotsc,1)$. The statement now follows.
\end{proof}

\begin{corollary}\label{cor:phis-equal}
Let $\beta\in\Sym^2 H^0(C, \omega_C)^{(-5)}$ and let $\gamma=\mathrm{mul}(\beta)$ be its
image in $H^0(C, \omega_C^{\otimes 2})$. Then in the notation of
Lemma~\ref{lem:ker-of-trace}, one has
$\varphi_{ij}(r_{ij})= \varphi_{ij'}(r_{ij'})$, for all $i=1, \ldots, 24$ and all $1\le j,j'\le 6$.
\end{corollary}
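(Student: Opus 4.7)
The plan is to trace the statement back to Lemma~\ref{lem:action-ram-case}, which already isolates exactly the piece of the local Taylor expansion at the ramification points that controls $\varphi_{ij}(r_{ij})$. By bilinearity of the multiplication map $\mathrm{mul}\colon \Sym^2 H^0(C,K_C)^{(-5)} \to H^0(C,K_C^{\otimes 2})$, it suffices to treat pure tensors $\beta=\alpha\otimes\alpha'$ with $\alpha,\alpha'\in H^0(C,K_C)^{(-5)}$; the general case follows by summing.

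Near each ramification point $r_{ij}$ over $p_i$, I would use the local parameter $x$ on $C$ with $y=x^2$ downstairs and split the local one-form into its odd and even parts under $x\mapsto -x$:
\begin{displaymath}
  \alpha = \alpha^{\mathrm{odd}}_{ij}(y)\, dx + \alpha^{\mathrm{ev}}_{ij}(y)\, x\, dx,
  \qquad
  \alpha' = {\alpha'}^{\mathrm{odd}}_{ij}(y)\, dx + {\alpha'}^{\mathrm{ev}}_{ij}(y)\, x\, dx.
\end{displaymath}
The content of Lemma~\ref{lem:action-ram-case} is that on the six ramification sheets over $p_i$, the odd parts $\alpha^{\mathrm{odd}}_{ij}$ for $j=1,\dotsc,6$ all coincide with a single power series $\phi_i^{\alpha}(y)\in\hat{\cO}_{p_i}$, depending only on $i$ and $\alpha$, and similarly $\alpha'^{\mathrm{odd}}_{ij}=\phi_i^{\alpha'}(y)$ is independent of $j$.

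Multiplying the two local expressions and keeping only the constant term in $x$ gives
\begin{displaymath}
  \varphi_{ij}(x) = \phi_i^{\alpha}(x^2)\,\phi_i^{\alpha'}(x^2)
  + \bigl(\text{terms vanishing at } x=0\bigr),
\end{displaymath}
so that $\varphi_{ij}(r_{ij})=\phi_i^{\alpha}(0)\,\phi_i^{\alpha'}(0)$, which indeed does not depend on $j$. This is what needed to be shown, and the conclusion then extends to a general $\beta=\sum_k \alpha_k\otimes\alpha'_k$ by linearity. There is no real obstacle here: once the $(-5)$-eigenvector structure at a ramification point has been laid bare in Lemma~\ref{lem:action-ram-case}, the corollary is a one-line consequence of multiplying two such local forms and reading off the constant term. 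The only thing to verify carefully is that the odd/even decomposition is the correct local avatar of the $(-5)$-eigenspace condition, which is precisely the content of the lemma being cited.
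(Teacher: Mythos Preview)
Your proof is correct and follows essentially the same approach as the paper: reduce to pure tensors, use the odd/even local decomposition at the ramification points, invoke Lemma~\ref{lem:action-ram-case} to get that the odd parts are independent of $j$, and read off $\varphi_{ij}(r_{ij})=\phi_i^{\alpha}(0)\,\phi_i^{\alpha'}(0)$. The paper's proof is the same computation compressed into a single line.
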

\begin{proof}
Let $\alpha,\alpha'\in H^0(C, \omega_C)^{(-5)}$. Then in the notation of
Lemma~\ref{lem:action-ram-case}, one has \linebreak
\begin{math}
\mathrm{mul}(\alpha\otimes\alpha')(r_{ij}) =
\alpha^{\mathrm{odd}}_{r_{ij}}(r_{ij}) \cdot (\alpha')^{\mathrm{odd}}_{r_{ij}}(r_{ij})
    = \phi(0)\phi'(0),
\end{math}
which is independent of $j=1, \ldots, 6$.
\end{proof}

Lemma \ref{lem:action-ram-case}  has consequences for the geometry of the Abel-Prym-Tyurin canonical curve
$$\varphi_{(-5)}=\varphi_{H^0(C,\omega_C)^{(-5)}}:C\rightarrow \mathbb P^5.$$ In stark contrast with the case of ordinary Prym-canonical curves,
the map $\varphi_{(-5)}$ is far from being an embedding.

\begin{proposition}\label{prymcan}
For an $E_6$-cover $\pi:C\rightarrow \bP^1$, we have $\varphi_{(-5)}(r_{i1})=\cdots=\varphi_{(-5)}(r_{i6})$, for each $i=1, \ldots, 24$.
\end{proposition}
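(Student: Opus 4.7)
The plan is to deduce the proposition directly from Lemma~\ref{lem:action-ram-case}, which already contains the essential content; what remains is just to translate the local statement about odd parts of $(-5)$-eigenforms into a geometric statement about the Abel-Prym-Tyurin map.

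First I would fix $i \in \{1, \ldots, 24\}$, choose a local parameter $y$ at $p_i \in \bP^1$, and for each ramification point $r_{ij}$ lying over $p_i$, use the local parameter $x_j$ on $C$ satisfying $y = x_j^2$. With these choices, the sections $dx_j$ trivialize $K_C$ at each of the six points $r_{ij}$. Under this trivialization, the evaluation functional
\[
\mathrm{ev}_{r_{ij}}\colon H^0(C,K_C)^{(-5)} \longrightarrow \bC, \qquad \alpha \longmapsto \alpha_{r_{ij}}(0),
\]
is well-defined, and the image $\varphi_{(-5)}(r_{ij}) \in \bP\bigl(H^0(C,K_C)^{(-5)}\bigr)^\vee$ is precisely the projective class of $\mathrm{ev}_{r_{ij}}$.

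Next, I would apply Lemma~\ref{lem:action-ram-case} to any $\alpha \in H^0(C,K_C)^{(-5)}$. Writing $\alpha = \alpha_{r_{ij}}^{\mathrm{odd}}(x_j^2)dx_j + \alpha_{r_{ij}}^{\mathrm{ev}}(x_j^2) x_j\, dx_j$, the lemma produces a power series $\phi(y) \in \hat{\cO}_{\bP^1, p_i}$, depending on $\alpha$ and $i$ but \emph{not} on $j$, such that $\alpha_{r_{ij}}^{\mathrm{odd}} = \phi$ for every $j = 1, \ldots, 6$. Evaluating at $x_j = 0$ gives
\[
\mathrm{ev}_{r_{ij}}(\alpha) = \alpha_{r_{ij}}(0) = \phi(0) + \alpha_{r_{ij}}^{\mathrm{ev}}(0) \cdot 0 = \phi(0),
\]
which is independent of $j$. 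Hence $\mathrm{ev}_{r_{i1}} = \cdots = \mathrm{ev}_{r_{i6}}$ as linear functionals on $H^0(C,K_C)^{(-5)}$, and in particular they define the same point of $\bP^5$. This gives the claimed equality $\varphi_{(-5)}(r_{i1}) = \cdots = \varphi_{(-5)}(r_{i6})$.

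There is really no serious obstacle here: the content is already packaged in Lemma~\ref{lem:action-ram-case}, and the only thing to watch is the bookkeeping with the trivializations of $K_C$ at the six ramification points. The one subtle point worth emphasizing is that the natural choice of local parameter $x_j$ with $y = x_j^2$ gives a \emph{canonical} identification (up to a global sign) of $K_C \otimes k(r_{ij})$ for all $j$, so that the odd parts of $(-5)$-forms can be compared across the six points, and the lemma forces them to coincide to zeroth order.
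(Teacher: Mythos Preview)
Your argument is correct and is essentially the same as the paper's. Both proofs invoke Lemma~\ref{lem:action-ram-case} to see that for $\alpha\in H^0(C,K_C)^{(-5)}$ one has $\alpha_{r_{ij}}(0)=\alpha_{r_{ij}}^{\mathrm{odd}}(0)=\phi(0)$ independent of $j$; the paper phrases the conclusion as ``vanishing at $r_{i1}+\cdots+r_{i6}$ is a single linear condition $\phi(0)=0$, hence $\dim|H^0(C,K_C)^{(-5)}(-\sum_j r_{ij})|=4$'', while you phrase it equivalently as ``the six evaluation functionals coincide''.
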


\begin{proof} This is a consequence of Lemma \ref{lem:action-ram-case}: the condition that $\alpha\in H^0(C,\omega_C)^{(-5)}$ vanishes along the divisor $r_{i1}+\cdots+r_{i6}$ is expressed by a
\emph{single} condition $\phi(0)=0$, therefore  \\
$\mbox{dim } \bigl|H^0(C,\omega_C)^{(-5)}(-r_{i1}-\cdots-r_{i6})\bigr|=4$.
\end{proof}

\vskip 4pt

Finally we are in a position to describe the ramification divisor of the map $PT$. Like in the classical Prym case, it turns out that the infinitesimal study of the Prym-Tyurin map can be reduced to the projective geometry of he Abel-Prym-Tyurin curve:

\vskip 4pt

\noindent \emph{Proof of Theorem \ref{rampt}.}
Using Lemma \ref{lem:ker-of-trace} and Corollary \ref{cor:phis-equal}, it follows that the map $PT$ is ramified at a point $[C,\pi]\in\Hur$, if and only if there
exists $0\ne\beta\in\Sym^2 H^0(C, \omega_C)^{(-5)}$ such that
$$\mathrm{mul}(\beta)\in
H^0(C, 2K_C-R)=H^0(C, K_C-2L)=\mbox{Ker}(\mu(L)),$$
where the last equality follows from the Base Point Free Pencil Trick applied to the Petri map $\mu(L)$. If now  $\mu(L)$ is injective, it follows that $\mathrm{mul}(\beta)=0$, which finishes the proof. \hfill $\Box$

\section{A Petri theorem on $\hhu$}\label{sec:Petri}

We now prove the  Petri-like Theorem \ref{petri},  using a degeneration similar to the one used to establish the dominance of the map $PT$. We start with a cover $\pi_t:C_t\to \bP^1$ ramified in $24$ points such that the local monodromy elements are reflections $w_i$ in $12$ pairs of
roots $r_1, \ldots, r_{12}$ generating the lattice $E_6$. We
consider a degeneration in which the $12$ pairs of roots with the same
label come together. The degenerate cover $\pi:C\to
\bP^1$ is branched at $12$ points $q_1, \ldots, q_{12}\in \mathbb P^1$. Over each point $q_i$ there are
$6$ simple ramification points. The curve $C$ is nodal
with $12\times 6=72$ ordinary double points. We record the following fact:

\begin{lemma}
The curve $C$ has $27$ irreducible components isomorphic to $\bP^1$ and
the restriction of $\pi$ to each of them is an isomorphism.
\end{lemma}

Note that the cover $\pi$ is not \emph{admissible} in the sense of Section 5. The corresponding admissible cover is
obtained by replacing each point $q_i\in\bP^1$ by an inserted
$\bP^1$ with two additional marked points $p_i$, $p_{i+12}$, and
modifying the curve $C$ accordingly.

\vskip 3pt

\begin{num}
The $27$ irreducible components $\{X_s\simeq \bP^1\}_{s=1}^{27}$ of $C$ are in bijection with the lines
$\{\ell_s\}_{s=1}^{27}$ on a cubic surface. Let $\Gamma$ be the dual graph of $C$.  For each root $r_i$ with $i=1, \ldots, 12$, there are $6$ pairs
of lines $(a_{ij},b_{ij})$ such that $r_i\cdot a_{ij}=1$, $b_{ij} = a_{ij}+r_i$, hence
$r_i\cdot b_{ij}=-1$. To each pair we associate an edge
$(a_{ij}, b_{ij})$ of $\Gamma$ directed from the vertex $a_{ij}$ to
the vertex $b_{ij}$.  We also fix  $12$ ramification points $q_i\in \bP^1\setminus \{0, \infty\}=\mathbb C^*$ and denote by $\{p_{si}\}_{i=1}^{n_s}$ the nodes of $C$ lying on $X_s$.
Clearly, $\pi(p_{si})\in \{q_1, \ldots, q_{12}\}$, for all $s$ and $i$.
\end{num}

\begin{lemma}
The space $H^0(C, \omega_C)$ is naturally identified with
\begin{displaymath}
H_1(\Gamma,\mathbb C) =\mathrm{Ker} \Bigl\{\bigoplus_{i=1}^{12}\bigoplus_{j=1}^6
    \mathbb C(a_{ij},b_{ij}) \to \bigoplus_{s=1}^{27} \mathbb C \ell_s \Bigr\}.
\end{displaymath}
To an edge $(a_{ij},b_{ij})$ over a root $r_i$ one associates a
differential form $\omega_{ij}$ equal to $\frac{dz}{z-q_i}$ on $X_{a_{ij}}$,
to $-\frac{dz}{z-q_i}$ on $X_{b_{ij}}$ and $0$ on $X_s$, for $s\ne a_{ij},b_{ij}$.
Then $H^0(C, \omega_C)$ is the subspace of
\begin{displaymath}
\bigoplus_{s=1}^{27} H^0\Bigl(X_s, K_{X_s}\bigl(\sum_{i=1}^{n_s} p_{si}\bigr)\Bigr)
\end{displaymath}
of the forms $\omega = \sum c_{ij}\omega_{ij}$, such that for
$1\le s\le 27$ the
sum of residues of $\omega$ on $X_s$ is zero.
Equivalently, for each $1\le s\le 27$, one considers a space of forms
\begin{displaymath}
\omega_s = \sum_{s\in \{a_{ij}, b_{ij}\}} c_i \frac{dz}{z-q_i},
\quad \text{such that } \sum_i c_i = 0.
\end{displaymath}
Then a form $\omega\in H^0(C, \omega_C)$ is equivalent to a collection
of log forms $\{\omega_s\}_{s=1}^{27}$ satisfying the $72$ conditions
$\mathrm{Res}_{q_i}(\omega_{a_{ij}}) + \mathrm{Res}_{q_i}(\omega_{b_{ij}}) = 0$, for each edge
$(a_{ij}, b_{ij})$ of $\Gamma$.
\end{lemma}
\begin{proof}
This follows by putting together two well known facts:

\noindent (1) Let $C$ be a nodal curve with normalization $\nu\colon \wt C\to
C$ and nodes $p_i\in C$ such that $\nu\inv(p_i)= \{ p_i^+,
p_i^-\}$. Then $H^0(C, \omega_C)$ is identified with the space of sections
$$\tilde\omega\in H^0\Bigl(\widetilde{C}, K_{\widetilde{C}}(\sum (p_i^+ +  p_i^-))\Bigr) \
\mbox{ satisfying } \mbox{Res}_{p_i^+}(\tilde\omega) + \mbox{Res}_{p_i^-}(\tilde\omega)=0.$$

\noindent (2) A section of $H^0\bigl(\bP^1, K_{\mathbb P^1}(\sum_i q_i)\bigr)$ is a
linear combination $\sum_i c_i \frac{dz}{z-q_i}$, with $\sum_i
c_i=0$.
\end{proof}

In practice, assume that $H^0(X_s, \omega_{C|X_s})$ is
identified with the space of fractions
\begin{displaymath}
\frac{P_s(x)}{\prod_{i=1}^{n_s}(x-\pi(p_{si}))} \, dx,
\end{displaymath}
where $P_s(x)$ is a polynomial of degree $n_s-2$. Then $H^0(C, \omega_C)\subseteq \bigoplus_{s=1}^{27} H^0(X_s, \omega_{C|X_s})$ is characterized  by the condition that for every
node of $p_{sj}=p_{s'j'}\in C$ joining components $X_s$ and $X_{s'}$, the \emph{sum} of the
residues $$\mathrm{Res}_s:=\frac{P_s(\pi(p_{sj}))}{\prod_{i\ne j}\bigl(\pi(p_{sj})-\pi(p_{si})\bigr)}$$ and $\mathrm{Res}_{s'}$ respectively is $0$.

\vskip 5pt

\begin{num}
We wish to show the injectivity of the Petri map $$\mu(L):H^0(C,L)\otimes H^0(C, \omega_C\otimes L^{\vee})\rightarrow H^0(C, \omega_C)$$
 for a $72$-nodal curve $C$ corresponding to a cover $\pi:C\rightarrow \mathbb P^1$ as above.

\begin{lemma}
Let $H^0(\bP^1, \cO(1) ) = \langle x_0, x_1\rangle\subset H^0(C,L)$. Then the
subspace
\begin{displaymath}
H^0(C, \omega_C\otimes L^{\vee}) \otimes \langle x_0 \rangle \subset H^0(C, \omega_C\otimes L^{\vee}) \otimes H^0(C, L)
\subset H^0(C, \omega_C)
\end{displaymath}
consists of elements $\{\omega_s\}_{s=1}^{27}$ as above, satisfying for
each $1\le s\le 27$ the additional condition $\sum c_i q_i
=0$. Similarly, the subspace $ H^0(C, \omega_C\otimes L^{\vee}) \otimes \langle x_1 \rangle \subseteq H^0(C, \omega_C)$ consists of elements $\{\omega_s\}_{s=1}^{27}$ as above,
  satisfying for $1\le s\le 27$ an additional condition $\sum \frac{c_i}{q_i}=0$.
\end{lemma}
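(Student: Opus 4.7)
The plan is to identify each subspace as the kernel of a simple evaluation map, using the short exact sequence induced by multiplication by a section of $L$. First I would choose affine coordinates on $\bP^1$ so that the branch values $q_1,\dotsc,q_{12}$ lie in $\bC^*$; then $x_0$ may be taken to be the section vanishing at $\infty$, and $Z(x_0)=\pi^{-1}(\infty)$ consists of $27$ smooth points of $C$, one on each irreducible component $X_s$, call it $x_{s,\infty}$. Multiplication by $x_0$ fits into the short exact sequence
\[
0\lra \omega_C\otimes L^{\vee}\xrightarrow{\cdot x_0}\omega_C\lra \omega_C|_{Z(x_0)}\lra 0,
\]
whose long exact sequence in cohomology identifies the image of the injection $H^0(C,\omega_C\otimes L^{\vee})\otimes\langle x_0\rangle\hookrightarrow H^0(C,\omega_C)$ with the kernel of the evaluation map $H^0(C,\omega_C)\to\bigoplus_{s=1}^{27}\omega_C|_{x_{s,\infty}}$. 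Thus the subspace is cut out by $27$ independent scalar conditions, one per component.

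The remaining step is a direct local computation. On a component $X_s$, a form $\omega_s=\sum_i c_i\frac{dz}{z-q_i}$ satisfying the residue condition $\sum_i c_i=0$ becomes, in the local parameter $w=1/z$ at $z=\infty$,
\[
\omega_s\;=\;-\Bigl(\sum_i c_iq_i\;+\;w\sum_i c_iq_i^2\;+\;w^2\sum_i c_iq_i^3+\cdots\Bigr)\,dw,
\]
which is regular at $w=0$ with value $-(\sum_i c_iq_i)\,dw$. Hence the vanishing of $\omega_s$ at $x_{s,\infty}$ amounts precisely to the condition $\sum_i c_iq_i=0$, as asserted in the first half of the lemma.

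The second assertion follows by the identical argument applied to $x_1$, the section vanishing at $0\in\bP^1$; here $Z(x_1)=\pi^{-1}(0)$ again lies in the smooth locus of $C$ and meets each component in a single point. Evaluating $\omega_s$ directly at $z=0$ yields $\omega_s(0)=-(\sum_i c_i/q_i)\,dz$, so that the additional vanishing condition on each component reads $\sum_i c_i/q_i=0$. I expect no serious obstacle: the proof is purely structural modulo the elementary Laurent expansion, and the only input is that the coordinate on the target $\bP^1$ has been chosen so that both $0$ and $\infty$ avoid all the branch points $q_i$.
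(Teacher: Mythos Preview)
Your argument is correct and is essentially the same as the paper's: both identify $H^0(C,\omega_C\otimes L^\vee)\otimes\langle x_0\rangle$ with the space of forms vanishing along $\pi^{-1}(\infty)$ and then read off that vanishing condition component by component. The only cosmetic difference is that the paper does the last step via the residue theorem applied to $\omega_s\cdot\frac{x_1}{x_0}=\omega_s\cdot z$ (so that $\mathrm{Res}_\infty = -\sum_i c_i q_i$), whereas you carry out the equivalent Laurent expansion in $w=1/z$ directly; the computations are interchangeable.
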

\begin{proof}
We identify $H^0(C, \omega_C\otimes L^{\vee})$ with $H^0(C, \omega_C(-\pi^*\infty))$, hence
$H^0(C, \omega_C\otimes L^{\vee}) \otimes \langle x_0\rangle $ is the space of forms
$\{\omega_s\}_{s=1}^{27}$ such that for each $s=1, \ldots, 27$, they satisfy the equality
\begin{displaymath}
0=\mbox{Res}_{\infty} \big( \omega_s \cdot \frac{x_1}{x_0} \big) =
    - \sum \mbox{Res}_{q_i} \big( \omega_s \cdot \frac{x_1}{x_0} \big) =
    - \sum c_i q_i.
\end{displaymath}
\end{proof}
\end{num}

\vskip  4pt

\noindent To prove Theorem \ref{petri}, it is sufficient to find one
degeneration $\pi:C\to \bP^1$ such that
\begin{enumerate}
\item $h^0(C,L)=2$.
\item The linear subspaces $H^0(C, \omega_C\otimes L^{\vee}) \otimes \langle x_0\rangle $, $H^0(C, \omega_C\otimes L^{\vee})
  \otimes \langle x_1\rangle $ and $H^0(C, \omega_C)^{(-5)}$ generate the vector space $H^0(C, \omega_C)$.
\end{enumerate}

\vskip 4pt

The initial input consists of $12$ points $q_i\in\bP^1\setminus\{0, \infty\}=\mathbb C^*$,  and
 $12$ roots $r_i$ generating the lattice $E_6$. We obtain a system of linear equations in the $72$ variables
$$x_{ij}=\mbox{Res}_{q_i}(\omega_{a_{ij}})=-\mbox{Res}_{q_i}(\omega_{b_{ij}}), \ \mbox{ for } \   i=1,\ldots, 12 \mbox{ and }\  j=1,\ldots, 6.$$  For
each of the spaces $H^0(C_0, \omega_C\otimes L^{\vee}) \otimes \langle x_0\rangle $, respectively  $H^0(C, \omega_C \otimes L^{\vee})\otimes \langle x_1\rangle $, we get $2\times 27$ equations.  By Lemma \ref{lem:action-ram-case}, $H^0(C,\omega_C)^{(-5)}$ is the subspace of $H^0(C,\omega_C)$
of forms $\{\omega_s\}_{s=1}^{27}$ satisfying
$x_{ij}=x_{ij'}$ for all $1\le j,j' \le 6$ and $i=1,\ldots, 12$. This gives a system of
$27+12\times 5$ equations.

\begin{lemma}
The above conditions are satisfied for the following choices of
roots and ramification points:
\begin{enumerate}
\item $r_1=\alpha_{135}$, $r_2=\alpha_{12}$, $r_3=\alpha_{23}$,
$r_4=\alpha_{34}$, $r_5=\alpha_{45}$, $r_6=\alpha_{56}$,
$r_7=\alpha_{\max}$, $r_8=\alpha_{124}$, $r_9=\alpha_{234}$,
$r_{10}=\alpha_{35}$, $r_{11}=\alpha_{13}$, $r_{12}=\alpha_{36}$.
\item $q_i = i$, for $i=1, \ldots, 12$.
\end{enumerate}
\end{lemma}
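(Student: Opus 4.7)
The proof will proceed by reducing the two claims to a concrete and explicit linear algebra problem, which is then verified by direct symbolic computation on the $72$-nodal limit cover $\pi\colon C\to\bP^1$ constructed above.

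First I would dispense with the claim $h^0(C,L)=2$. Since $r_1,\dots, r_6$ generate $E_6$, the monodromy representation is transitive on the $27$ sheets, so $C$ is connected. The line bundle $L=\pi^*\cO_{\bP^1}(1)$ therefore has $h^0(C,L)\ge 2$, and the standard semicontinuity argument (together with Riemann--Roch, which forces $h^1(C,L)=20$ when equality holds) reduces the verification of $h^0(C,L)=2$ to checking that the $46$-dimensional space $H^0(C,\omega_C)$ analyzed in the previous lemma does not collapse under the coboundary, equivalently to the non-vanishing of a single determinant in the $x_{ij}$'s.

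Next I would reformulate the spanning claim. By Theorem~\ref{incarnations}, if the Petri map $\mu(L)$ is injective then $H^0(C,\omega_C)^{(+1)}=H^0(C,L)\otimes H^0(C,\omega_C\otimes L^\vee)$, and since $H^0(C,\omega_C) = H^0(C,\omega_C)^{(+1)}\oplus H^0(C,\omega_C)^{(-5)}$ with summands of dimensions $40$ and $6$, the injectivity of $\mu(L)$ is equivalent to the statement that the three subspaces
\[
H^0(C,\omega_C\otimes L^\vee)\otimes \langle x_0\rangle,\quad H^0(C,\omega_C\otimes L^\vee)\otimes \langle x_1\rangle,\quad H^0(C,\omega_C)^{(-5)}
\]
of respective dimensions $20$, $20$, $6$ together span $H^0(C,\omega_C)$. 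Since $20+20+6=46=g(C)$, spanning is equivalent to the triviality of their pairwise intersections in the sense that the three systems of linear conditions in the $72$ variables $x_{ij}$ cut out a space of the expected dimension zero outside a common codimension count. Concretely, I would assemble the defining equations as follows:
\begin{itemize}
\item the $72$ residue-antimatching conditions from the nodes of $C$ defining $H^0(C,\omega_C)\subset \mathbb C^{72}$;
\item for each $s=1,\dots,27$, the additional conditions $\sum c_i q_i = 0$ (resp.\ $\sum c_i/q_i = 0$) on the $s$-th component, encoding membership in the subspace twisted by $x_0$ (resp.\ $x_1$);
\item for each $i=1,\dots,12$ and $j=1,\dots,5$, the five equalities $x_{ij}=x_{i,j+1}$ encoding $H^0(C,\omega_C)^{(-5)}$.
\end{itemize}

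Finally, I would substitute the explicit data $q_i=i$ and the listed roots $r_1,\dots,r_{12}$, produce the associated $46\times 46$ matrix whose columns are a chosen basis of each of the three subspaces, and verify via a symbolic computation (in the style of the Mathematica notebook at \cite{supporting-computer-computations}) that this matrix has nonzero determinant. The main obstacle is not conceptual, since all the reductions follow directly from Lemma~\ref{lem:action-ram-case} and the preceding lemma, but rather combinatorial: one must pick a configuration of roots for which the $6\times 72$ incidence pattern encoded by the pairs $(a_{ij},b_{ij})$ produces a full rank linear system. The chosen roots are precisely arranged so that $r_1,\dots,r_6$ generate $E_6$ (ensuring connectivity) while the remaining six roots are sufficiently generic with respect to the Schl\"afli labeling that the three subspaces become transverse; verifying this transversality reduces to a single determinant computation, completing the proof.
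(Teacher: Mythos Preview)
Your proposal is correct and takes essentially the same approach as the paper: reduce everything to an explicit linear algebra problem in the $72$ residue variables $x_{ij}$ and verify the resulting rank condition by direct symbolic computation (the paper's own proof is a one-line reference to a Mathematica notebook). The only minor slip is that once you parametrize by the $x_{ij}$ the anti-matching at the nodes is already built in, so $H^0(C,\omega_C)\subset\mathbb C^{72}$ is cut out by the $27$ vertex conditions $\sum c_i=0$, not by $72$ conditions.
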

\begin{proof}
This is now a straightforward linear algebra computation, which we
performed in Mathematica. It can be found at
\cite{supporting-computer-computations}.
\end{proof}

\renewcommand{\MR}[1]{}
\bibliographystyle{amsalpha}

\def\cprime{$'$}
\providecommand{\bysame}{\leavevmode\hbox to3em{\hrulefill}\thinspace}
\providecommand{\MR}{\relax\ifhmode\unskip\space\fi MR }
\providecommand{\MRhref}[2]{%
  \href{http://www.ams.org/mathscinet-getitem?mr=#1}{#2}
}
\providecommand{\href}[2]{#2}

\end{document}